\documentclass[11pt,a4paper]{article}

\usepackage[T1]{fontenc}
\usepackage{lmodern,microtype}
\usepackage{amsfonts,amsmath,amssymb,amsthm}
\usepackage{aliascnt}
\usepackage{mathtools,bm,mathrsfs,mleftright}
\usepackage{xcolor,enumitem,autobreak}
\allowdisplaybreaks[4]
\usepackage{wrapfig,subcaption,multirow,placeins,needspace}
\usepackage{hyperref,cleveref}
\usepackage{natbib}

\usepackage{silence}
\crefname{theorem}{Theorem}{Theorems}
\Crefname{theorem}{Theorem}{Theorems}
\crefname{lemma}{Lemma}{Lemmas}
\Crefname{lemma}{Lemma}{Lemmas}
\crefname{corollary}{Corollary}{Corollaries}
\Crefname{corollary}{Corollary}{Corollaries}
\crefname{proposition}{Proposition}{Propositions}
\Crefname{proposition}{Proposition}{Propositions}
\crefname{definition}{Definition}{Definitions}
\Crefname{definition}{Definition}{Definitions}
\crefname{remark}{Remark}{Remarks}
\Crefname{remark}{Remark}{Remarks}
\crefname{example}{Example}{Examples}
\Crefname{example}{Example}{Examples}
\crefname{assumption}{Assumption}{Assumptions}
\Crefname{assumption}{Assumption}{Assumptions}
\crefname{condition}{Condition}{Conditions}
\Crefname{condition}{Condition}{Conditions}
\crefname{section}{Section}{Sections}
\Crefname{section}{Section}{Sections}
\crefname{subsection}{Subsection}{Subsections}
\Crefname{subsection}{Subsection}{Subsections}
\crefname{subsubsection}{Subsection}{Subsections}
\Crefname{subsubsection}{Subsection}{Subsections}

\crefformat{equation}{(#2#1#3)}
\crefrangeformat{equation}{(#3#1#4)--(#5#2#6)}
\crefmultiformat{equation}{(#2#1#3)}{ and (#2#1#3)}{, (#2#1#3)}{, and (#2#1#3)}

\newcommand{\E}{\mathbb{E}}

\DeclareMathOperator{\Ran}{Ran}
\DeclareMathOperator{\Tr}{Tr}
\DeclareMathOperator{\Ker}{Ker}

\newcommand{\xk}[1]{\left(#1\right)}
\newcommand{\zk}[1]{\left[#1\right]}
\newcommand{\dk}[1]{\left\{#1\right\}}

\providecommand{\abs}[1]{\left\lvert{#1}\right\rvert}
\providecommand{\norm}[1]{\left\lVert{#1}\right\rVert}

\providecommand{\dd}{~\mathrm{d}}

\theoremstyle{plain}
\newtheorem{theorem}{Theorem}[section]
\newaliascnt{lemma}{theorem}
\newtheorem{lemma}[lemma]{Lemma}
\aliascntresetthe{lemma}
\newaliascnt{corollary}{theorem}
\newtheorem{corollary}[corollary]{Corollary}
\aliascntresetthe{corollary}

\newaliascnt{proposition}{theorem}
\newtheorem{proposition}[proposition]{Proposition}
\aliascntresetthe{proposition}
\theoremstyle{definition}
\newaliascnt{definition}{theorem}

\aliascntresetthe{definition}
\newaliascnt{remark}{theorem}
\newtheorem{remark}[remark]{Remark}
\aliascntresetthe{remark}

\newtheorem{assumption}{Assumption}
\newtheorem{condition}{Condition}
 
\setcitestyle{numbers,square,comma,sort}
\usepackage{geometry,appendix}
\hypersetup{
  colorlinks=true,
  linkcolor=[rgb]{0.10,0.20,0.35},
  citecolor=[rgb]{0.10,0.30,0.20},
  urlcolor=[rgb]{0.10,0.25,0.55}
}

\title{Risk Equivalence between RKHS Regression and Sequence Models for Lipschitz Spectral Algorithms}
\author{%
  Yicheng Li\textsuperscript{1,*},
  Yuqian Cheng\textsuperscript{2,*},
  Zhuo Chen\textsuperscript{3},
  and Qian Lin\textsuperscript{4,\textdagger}
}
\date{\today}

\hypersetup{
  pdftitle={Risk Equivalence between RKHS Regression and Sequence Models for Lipschitz Spectral Algorithms},
  pdfauthor={Yicheng Li, Yuqian Cheng, Zhuo Chen, and Qian Lin},
  pdfkeywords={kernel spectral algorithms, exact risk, sequence model, Pinsker filter, Pinsker minimax risk, double operator integrals, spectral clipping}
}

\begin{document}
\maketitle
\footnotetext[1]{\texttt{ycli@sfs.ecnu.edu.cn}.
  KLATASDS-MOE, School of Statistics, East China Normal University, Shanghai, China.}
\footnotetext[2]{\texttt{yuqiancheng@hkbu.edu.cn}.
  Department of Mathematics, Hong Kong Baptist University, Hong Kong, China.}
\footnotetext[3]{\texttt{chenzhuo@tsinghua.edu.cn}.
  Department of Mathematics, Tsinghua University, Beijing 100084, China.}
\footnotetext[4]{\texttt{qianlin@tsinghua.edu.cn}.
  Department of Statistics and Data Science, Tsinghua University, Beijing 100084, China.}
\begingroup
  \renewcommand{\thefootnote}{\fnsymbol{footnote}}
  \footnotetext[1]{Yicheng Li and Yuqian Cheng contributed equally to this work.}
  \footnotetext[2]{Corresponding author: Qian Lin.}
\endgroup
\begin{abstract}
  Kernel spectral algorithms are often summarized by convergence rates, which hide how their risk depends jointly on regularization, noise, the population spectrum, target coefficients, and the chosen filter.
Gaussian sequence models arise as a simplified but characteristic setting for studying the interplay of these factors, where the kernel spectral algorithm corresponds to a coordinatewise shrinkage estimator.
Under mild assumptions, we show that the risk of a kernel spectral estimator is asymptotically equivalent to that of the corresponding Gaussian sequence model estimator with the same filter.
The explicit sequence model risk then yields a full characterization of the risk of kernel spectral algorithms in terms of the population spectrum, target coefficients, and filter.
We establish this equivalence for a broad class of spectral filters, covering kernel ridge regression, generalized ridge regression, iterated kernel ridge regression, gradient flow, stable gradient descent, smoothed spectral cutoff, spectral clipping, and Pinsker shrinkage.
Our risk equivalence not only holds in the classical fixed-dimensional regime but also applies to the high dimensional regime where the input dimension scales with the sample size.
As applications, our risk equivalence recovers the minimax upper rates, and establishes the exact Pinsker constant in RKHS regression.
 \end{abstract}

\section{Introduction}
\label{sec:introduction}

Consider the nonparametric regression model
\[
  y=f^*(x)+\varepsilon,\quad x\sim\mu,\quad \E[\varepsilon|x]=0,\quad \E[\varepsilon^2|x]=\sigma^2,
\]
with \(n\) independent observations \((x_i,y_i)\) from this model.
Regression in a reproducing kernel Hilbert space (RKHS) is a classical nonparametric method that has been widely studied in statistics and machine learning.
Often, the RKHS estimator is implemented through a spectral algorithm that applies a scalar filter  \( \varphi_\lambda \)  to the empirical kernel covariance operator.
Its statistical performance is often summarized by a convergence rate, but a rate hides how the risk depends jointly on the regularization scale, the noise level, the population spectrum, the target coefficients, and the chosen filter.
Hence, it is of interest to obtain a more informative characterization of the risk that retains this dependence, rather than only upper and lower rate bounds.

Gaussian sequence models~\citep{johnstone2017_GaussianEstimation} provide a simplified but characteristic setting in which the same statistical factors remain visible.
The eigenbasis \( (e_j)_{j \geq 1} \) of the population covariance operator \( L \) provides a natural coordinate system in which the kernel spectral algorithm becomes a coordinatewise shrinkage rule.
Let \(\lambda_j\) be the corresponding eigenvalues.
Write \(f^*=\sum_{j\ge1} f_j^* e_j\) in this basis and denote \( \bm{f}^* = (f_{j}^*)_{j\geq1} \).
For a regularization function \(\varphi_\lambda\), define the shrinkage profile \(q_\lambda(t)=t\varphi_\lambda(t)\) and the residual profile \(\psi_\lambda(t)=1-q_\lambda(t)\).
The associated Gaussian sequence model is
\[
  z_j=f_j^*+\frac{\sigma}{\sqrt n}\xi_j,
  \qquad
  \xi_j \stackrel{\mathrm{i.i.d.}}{\sim}\mathcal{N}(0,1).
\]
The population counterpart of the kernel spectral algorithm is the coordinatewise shrinkage rule obtained by applying the same profile in the eigenbasis:
\[
  \widehat{f}_{\lambda,j}^{\mathsf{seq}}
  =
  q_\lambda(\lambda_j)z_j,\quad
  \widehat{\bm{f}}_{\lambda}^{\mathsf{seq}} = \xk{\widehat{f}_{\lambda,j}^{\mathsf{seq}}}_{j \geq 1}.
\]
It is easy to see that the risk of this estimator is given by the deterministic expression
\[
  \mathcal{E}_n^{\mathsf{seq}}(q_\lambda;f^*)
  \coloneqq \E \norm{\widehat{\bm{f}}_{\lambda}^{\mathsf{seq}} - \bm{f}^*}_{\ell^2}^2
  =
  \sum_{j\ge1} \psi_\lambda(\lambda_j)^2(f_j^*)^2
  +
  \frac{\sigma^2}{n}\sum_{j\ge1} q_\lambda(\lambda_j)^2.
\]
The first term captures the approximation error through the target coefficients, population spectrum, and residual profile, while the second captures the stochastic contribution of the spectrum, noise level, and shrinkage profile.

The central contribution of this paper is to establish that kernel regression and its sequence model counterpart have asymptotically equivalent risks.
For an estimator \(\widehat{f}_\lambda\) under the nonparametric regression model, we consider the conditional (excess) risk and the risk in expectation
\begin{equation}
  \mathcal{E}_n^{\mathsf{np}}(q_\lambda;f^*\mid X)
  \coloneqq
  \E\zk{
    \norm{\widehat{f}_\lambda-f^*}_{L^2(\mu)}^2
    \,\middle|\, X
  },\quad
  \mathcal{E}_n^{\mathsf{np}}(q_\lambda;f^*) = \E \zk{\norm{\widehat{f}_\lambda-f^*}_{L^2(\mu)}^2}.
\end{equation}
Under mild assumptions, and along a deterministic admissible sequence \(\lambda=\lambda(n)\), our main theorems establish
\[
  \mathcal{E}_n^{\mathsf{np}}(q_\lambda;f^*\mid X)
  =
  \xk{1+o_{\mathbb{P}}(1)}
  \mathcal{E}_n^{\mathsf{seq}}(q_\lambda;f^*),\quad
   \mathcal{E}_n^{\mathsf{np}}(q_\lambda;f^*) = (1 + o(1))\mathcal{E}_n^{\mathsf{seq}}(q_\lambda;f^*).
\]
See \Cref{thm:conditional-risk-equivalence,thm:risk-equivalence-expectation}.
The explicit sequence model risk therefore yields a full characterization of the risk of kernel spectral algorithms in terms of the population spectrum, target coefficients, noise level, and filter function.

The equivalence covers kernel ridge regression, generalized ridge regression, iterated kernel ridge regression, gradient flow, stable gradient descent, smoothed spectral cutoff, spectral clipping, and Pinsker shrinkage.
These examples include both classical smooth filters and continuous nonsmooth profiles such as spectral clipping, so the equivalence is not confined to analytic filters~\citep{li2024_GeneralizationError}.
As an application, we use the equivalence to establish Pinsker's sharp asymptotic constant for the minimax risk in RKHS regression.

Moreover, our equivalence holds uniformly over families of problem instances satisfying the corresponding uniformity conditions.
This allows the input dimension, design distribution, and kernel spectrum to vary with the sample size, and therefore covers high-dimensional regimes in which both \(n\) and \(d\) diverge.
For spherical kernels in the polynomial regime \(n\asymp d^\gamma\), we verify these conditions at suitable regularization scales and use the resulting equivalence to recover optimal rates and sharp Pinsker constants in the regimes specified in \Cref{subsec:high-dimensional-optimality}.

The main difficulty is that the kernel estimator applies the filter to the empirical covariance operator \(T_X\), whereas prediction error is measured through the population covariance operator \(T\).
Because these operators need not commute, the empirical estimator is not diagonal in the population eigenbasis and scalar filter identities do not directly control its bias and variance.
The comparison must also cover filter functions with less regularity.
The analytic functional calculus approach of \citet{li2024_GeneralizationError} handles analytic filters but does not cover filters with kinks.
The key technical contribution is a perturbation analysis for scale-regular Lipschitz filters that remains effective when the empirical and population covariance operators do not commute.
Scalar Lipschitz continuity alone does not ensure operator Lipschitz continuity, so direct operator-norm perturbation bounds are generally unavailable.
We overcome this obstruction through finite Schatten class estimates, using in particular the Schur multiplier criterion of \citet{condeAlonso2023_SchurMultipliersSchatten}.
The resulting comparison bounds accommodate filters with kinks and are sharp enough to preserve the leading constants in the prediction risk.
The techniques developed here may be useful for other problems in the RKHS framework and are of independent interest.

\subsection{Related work}

\subsubsection{Spectral algorithms}

Spectral regularization provides a common algorithmic formulation for kernel ridge regression, iterative regularization, and other supervised-learning procedures through a family of filter functions \citep{gerfo2008_SpectralAlgorithms}.
Classical analyses establish consistency and convergence rates for broad families of linear regularization methods, including Tikhonov regularization and Landweber iteration \citep{bauer2007_RegularizationAlgorithms}.
Optimal rate results were subsequently developed for kernel ridge regression and for spectral regularization in statistical inverse learning problems \citep{caponnetto2007_OptimalRates,blanchard2018_OptimalRates}.
Gaussian sequence models provide a population-coordinate benchmark in which a spectral algorithm becomes a shrinkage rule with an explicit bias--variance risk \citep{johnstone2017_GaussianEstimation}.
Our contribution links these two viewpoints at the level of conditional generalization error by applying the same filter in both models.

\subsubsection{Le Cam equivalence for nonparametric regression}

Le Cam's theory compares statistical experiments through the deficiency distance.
Two sequences of experiments are asymptotically equivalent if randomized transformations in both directions have vanishing approximation error.
The classical equivalence between nonparametric regression and white noise was established by \citet{brown1996_AsymptoticEquivalence}.
For one-dimensional nonparametric regression with random design, \citet{brown2002_AsymptoticEquivalence} construct explicit mappings between the regression and Gaussian white noise experiments and establish global asymptotic equivalence over Lipschitz and Sobolev classes with smoothness greater than \( 1/2 \).
Using approximation spaces, \citet{reiss2008_AsymptoticEquivalence} develops a constructive framework for fixed and random designs, extends the equivalence to multivariate periodic Sobolev classes with smoothness greater than \( d/2 \), and derives explicit bounds on the Le Cam distance.
These results permit statistical procedures to be transferred between regression and white noise experiments with asymptotically matching risks under bounded losses.
Because the resulting Le Cam bounds are additive, they do not by themselves provide a multiplicative risk expansion for a prescribed empirical spectral estimator or an explicit learning curve determined by its filter.
Our results address this algorithm-specific question by identifying the risk of a kernel spectral algorithm with the explicit risk of its sequence model counterpart.

\subsubsection{Learning curves in kernel regression}

Earlier work on Gaussian process regression derived approximations and bounds for learning curves from covariance eigenvalues \citep{sollich2002_LearningCurves}.
More recent analyses study curves that depend on the spectrum for kernel regression and wide neural networks in specific model geometries \citep{bordelon2020_SpectrumDependent} and derive learning curve bounds for kernel ridge regression under broad conditions via a Gaussian equivalent property \citep{cheng2024_ComprehensiveAnalysis}.
Exact learning curves for kernel ridge regression under power-law spectral structure were obtained by \citet{li2023_AsymptoticLearning}.
\citet{li2024_GeneralizationError} developed generalization error curves for analytic spectral algorithms under power-law decay.
\citet{velikanov2024_GeneralizationError} derived generalization error functionals for spectral algorithms in high-dimensional Gaussian and low-dimensional translation-invariant models, again under power-law assumptions.
Complementary work studies rate optimality, misspecification, and saturation for kernel and spectral algorithms \citep{zhang2024_OptimalityMisspecified,lu2024_SaturationEffects}.
Our results give a multiplicative comparison with the population sequence risk for prescribed spectral estimators, including continuous filters with kinks, without requiring power-law eigenvalue decay.

\subsubsection{Operator perturbation theory}

Double operator integrals express differences of functions of self-adjoint operators in terms of scalar divided differences, providing a way to compare empirical and population spectral estimators even when their covariance operators do not commute.
This calculus originates in the work of Birman and Solomyak and is developed in the perturbation theory surveyed by Peller \citep{birman2003_DoubleOperator,peller2016_MultipleOperator}.
A central distinction is that scalar Lipschitz continuity does not in general imply operator Lipschitz continuity \citep{aleksandrov2016_OperatorLipschitz}.
For Schatten classes \(\mathfrak S_r\), however, \citet{potapov2011_OperatorLipschitzFunctions} established Lipschitz perturbation bounds for \(1<r<\infty\), and \citet{caspers2014_BestConstants} determined the optimal order of their dependence on \(r\).
These results allow nonsmooth scalar profiles to be treated through finite Schatten norms.

Of particular relevance here is the Schur multiplier theorem of \citet{condeAlonso2023_SchurMultipliersSchatten}.
Their criterion gives complete boundedness on \(\mathfrak S_r\) from H\"ormander--Mikhlin derivative bounds on the multiplier kernel, with explicit dependence of order \(r^2/(r-1)\).
It applies to kernels beyond the divided differences of a single Lipschitz function.
In our setting, such kernels arise when comparing the filtered operators in the population prediction norm.
The one-dimensional criterion controls an auxiliary multiplier in logarithmic spectral coordinates; combined with the estimates of divided difference above, it yields the interaction bound needed for the bias comparison.
Its linear growth in \(r\ge2\) is essential to our use of Schatten interpolation, which converts these bounds into a perturbation estimate with only a logarithmic loss.
 \section{Preliminaries}
\label{sec:preliminaries}

\subsection{Reproducing kernel Hilbert spaces and kernel operators}
\label{subsec:prelim-kernel}

Let \((\mathcal{X},\mu)\) be a probability space and let \(k:\mathcal{X}\times\mathcal{X}\to\mathbb{R}\) be a measurable positive definite kernel.
Write \(\mathcal{H}\) for its associated separable RKHS and \(k_x=k(x,\cdot)\).
Throughout, we assume that
\[
  \sup_{x\in\mathcal{X}} k(x,x)\le \kappa^2.
\]
Under these assumptions, RKHS functions are measurable and the feature map \(x\mapsto k_x\) is strongly measurable.

Let \(S:\mathcal{H}\to L^2(\mu)\) be the canonical embedding, understood as the map sending an RKHS function to its \(\mu\)-equivalence class.
This map need not be injective.
The reproducing property and the diagonal bound give
\[
  \norm{Sh}_{L^2(\mu)}^2
  =
  \int_{\mathcal{X}}|h(x)|^2\dd\mu(x)
  \le
  \int_{\mathcal{X}} k(x,x)\dd\mu(x)\norm{h}_{\mathcal{H}}^2
  \le
  \kappa^2 \norm{h}_{\mathcal{H}}^2.
\]
Thus \(S\) is bounded with \(\norm{S}\le\kappa\).
Its adjoint \(S^*:L^2(\mu)\to \mathcal{H}\) is the Bochner integral map
\[
  S^*g
  =
  \int_{\mathcal{X}} k_x g(x)\dd\mu(x),
  \qquad
  g\in L^2(\mu).
\]
We further define
\[
  L=SS^*:L^2(\mu)\to L^2(\mu),
  \qquad
  T=S^*S:\mathcal{H}\to\mathcal{H}.
\]
Writing \(\mathfrak{S}_2\) for the Hilbert--Schmidt class, if \((v_r)\) is an orthonormal basis of \(\mathcal{H}\), then Tonelli's theorem and Parseval's identity give
\[
  \norm{S}_{\mathfrak{S}_2}^2
  =
  \sum_r \norm{Sv_r}_{L^2(\mu)}^2
  =
  \int_{\mathcal{X}} k(x,x)\dd\mu(x)
  \le
  \kappa^2.
\]
Consequently, \(S\) is Hilbert--Schmidt and compact, while \(T\) and \(L\) are positive, self-adjoint, trace-class operators satisfying
\[
  \Tr(T)=\Tr(L)=\norm{S}_{\mathfrak{S}_2}^2.
\]
Their integral representations are
\[
  (Lf)(x)
  =
  \int_{\mathcal{X}} k(x,x')f(x')\dd\mu(x'),
  \qquad
  Th
  =
  \int_{\mathcal{X}} k_x h(x)\dd\mu(x).
\]
Equivalently, as a trace-class Bochner integral,
\[
  T
  =
  \int_{\mathcal{X}} k_x \otimes_{\mathcal{H}} k_x \dd\mu(x)
  =
  \E\zk{k_X \otimes_{\mathcal{H}} k_X },
  \qquad X\sim\mu.
\]
Moreover,
\[
  \overline{\Ran(T)}
  =
  \overline{\Ran(S^*)}
  =
  \Ker(S)^\perp,
  \qquad
  \overline{\Ran(L)}
  =
  \overline{\Ran(S)}.
\]
The separability of \(\mathcal{H}\) allows one to choose a common full-measure set on which every element of \(\Ker(S)\) vanishes.
Hence
\[
  k_x \in\overline{\Ran(S^*)}
  \qquad
  \text{for \(\mu\)-almost every \(x\)}.
\]
The operators \(T\) and \(L\) have the same nonzero eigenvalues, including multiplicities.

The nonzero spectrum is finite or countable.
Write its distinct eigenvalues as
\[
  \mu_1>\mu_2>\cdots>0.
\]
All sums below are understood to terminate in the finite-rank case.
Let \(V_m=\operatorname{Ker}(L-\mu_m I)\) be the eigenspace of \(L\) corresponding to \(\mu_m\), \(d_m=\dim(V_m)\), and \(P_m\) be the \(L^2(\mu)\)-orthogonal projection onto \(V_m\).
The nonzero eigenvalues of \(T\) are the same \(\mu_m\), with the same multiplicities \(d_m\).
Choose an orthonormal basis \(e_{m,1},\dots,e_{m,d_m}\) of \(V_m\), so that
\[
  L e_{m,l}=\mu_m e_{m,l},
  \qquad
  l=1,\dots,d_m.
\]
The vectors
\[
  u_{m,l}
  =
  \mu_m^{-1/2} S^*e_{m,l}
\]
form an orthonormal eigenbasis of \(\overline{\Ran(T)}\), with
\[
  Tu_{m,l}=\mu_m u_{m,l},
  \qquad
  Su_{m,l}=\mu_m^{1/2} e_{m,l}.
\]
The canonical measurable representative is
\[
  e_{m,l}(x)=\mu_m^{-1/2} u_{m,l}(x).
\]
For \(\mu\)-almost every \(x\), the feature expansion
\[
  k_x
  =
  \sum_{m\ge1} \sum_{l=1}^{d_m}
  \mu_m^{1/2} e_{m,l}(x)u_{m,l}
\]
holds in \(\mathcal{H}\).
The Hilbert--Schmidt spectral theorem therefore gives the kernel expansion
\[
  k(x,x')
  = \sum_{m\ge1} \mu_m
  \sum_{l=1}^{d_m} e_{m,l}(x)e_{m,l}(x'),
\]
with convergence in \(L^2(\mu\otimes\mu)\) and equality for \(\mu\otimes\mu\)-almost every \((x,x')\).
For each \(m\), the shell sum
\(
\sum_{l=1}^{d_m} e_{m,l}(x)e_{m,l}(x')
\)
is independent of the choice of orthonormal basis of \(V_m\).
For later formulas, we flatten the multiplicities into a single sequence.
Write \( (\lambda_j)_{j \geq 1} \) for the descending eigenvalues counting multiplicity and \( (e_j)_{j \geq 1} \) for the corresponding eigenfunctions.

For a target \(f^*\in\overline{\Ran(L)}\), write
\[
  f^*
  =
  \sum_{m\ge1} \sum_{l=1}^{d_m} f_{m,l} e_{m,l},
  \qquad
  \bar{f}_m^2
  \coloneqq
  \sum_{l=1}^{d_m} f_{m,l}^2
  =
  \norm{P_m f^*}_{L^2(\mu)}^2
\]
on this closed range.

For every bounded Borel function \(r\) on \([0,\kappa^2]\),
\begin{equation}
  \label{eq:spectral-intertwining}
  S^*r(L)=r(T)S^*,
  \qquad
  Sr(T)=r(L)S,
  \qquad
  Sr(T)S^*=Lr(L).
\end{equation}
Thus population-coordinate statements are moved between \(L^2(\mu)\) and the RKHS only through the canonical embedding \(S\) and its adjoint \(S^*\).
In particular, an RKHS estimator is evaluated in the prediction norm by applying \(S\), not by identifying RKHS spectral coefficients with \(L^2(\mu)\) coefficients.

\subsection{RKHS regression and spectral algorithms}
\label{subsec:spectral-algorithms}

The RKHS regression model consists of independent pairs
\[
  y_i=f^*(x_i)+\varepsilon_i,
  \qquad
  i=1,\dots,n,
\]
where \(x_i \sim\mu\), \(\E[\varepsilon_i \mid x_i]=0\), and \(\E[\varepsilon_i^2 \mid x_i]=\sigma^2>0\).
Here \(f^*\) denotes a fixed measurable representative of the regression function.
Write
\[
  Z=((x_i,y_i))_{i=1}^n,
  \qquad
  X=(x_1,\dots,x_n),
  \qquad
  y=(y_1,\dots,y_n).
\]
Equip \(\mathbb{R}^n\) with the empirical inner product
\[
  \langle a,b\rangle_n
  =
  \frac{1}{n}\sum_{i=1}^n a_i b_i,
\]
and denote the resulting Hilbert space by \(\mathbb{R}_X^n\).
Define the sampling operator \(S_X:\mathcal{H}\to\mathbb{R}_X^n\) and its adjoint by
\[
  (S_X h)_i=h(x_i),
  \qquad
  S_X^*a=\frac{1}{n}\sum_{i=1}^n a_i k_{x_i}.
\]
The empirical covariance operator and sample representer are
\[
  T_X=S_X^*S_X
  =
  \frac{1}{n}\sum_{i=1}^n k_{x_i} \otimes_{\mathcal{H}} k_{x_i}
  \quad\text{on }\mathcal{H},
  \qquad
  \widehat{g}_Z=S_X^*y
  =
  \frac{1}{n}\sum_{i=1}^n y_i k_{x_i}.
\]

The kernel ridge regression estimator is
\begin{align}
  \widehat{f}_{\lambda}^{\mathsf{KR}}=(T_{X}+\lambda)^{-1}\widehat{g}_{Z}.
\end{align}

A spectral algorithm is specified by a regularization function \(\varphi_\lambda:[0,\kappa^2]\to[0,\infty)\) and returns
\[
  \widehat{f}_\lambda=\varphi_\lambda(T_X)\widehat{g}_{Z},
\]
where \(\lambda>0\) is a regularization parameter and \( \varphi_\lambda(T_X) \) is defined by the spectral functional calculus.

It is useful to separate the regularized inverse from the shrinkage profile:
\[
  q_\lambda(t)=t\varphi_\lambda(t),
  \qquad
  \psi_\lambda(t)=1-q_\lambda(t).
\]
We focus on filters satisfying \(0\le q_\lambda(t)\le1\).
This covers the filters of interest in this manuscript, including smoothed spectral cutoff, hard spectral cutoff, and spectral clipping.

For example, kernel ridge regression, gradient flow at time \(\lambda^{-1}\), and spectral soft thresholding correspond respectively to
\begin{align*}
  \varphi_\lambda^{\mathsf{KR}}(t)
  &=\frac{1}{t+\lambda},
  &
  q_\lambda^{\mathsf{KR}}(t)
  &=\frac{t}{t+\lambda},
  &
  \psi_\lambda^{\mathsf{KR}}(t)
  &=\frac{\lambda}{t+\lambda},
  \\
  \varphi_\lambda^{\mathsf{GF}}(t)
  &=\frac{1-e^{-t/\lambda}}{t},
  &
  q_\lambda^{\mathsf{GF}}(t)
  &=1-e^{-t/\lambda},
  &
  \psi_\lambda^{\mathsf{GF}}(t)
  &=e^{-t/\lambda},
  \\
  \varphi_\lambda^{\mathsf{ST}}(t)
  &=\frac{(t-\lambda)_+}{t^2},
  &
  q_\lambda^{\mathsf{ST}}(t)
  &=\xk{1-\frac{\lambda}{t}}_+,
  &
  \psi_\lambda^{\mathsf{ST}}(t)
  &=\min\xk{\frac{\lambda}{t},1}.
\end{align*}
Here \((u)_+=\max\{u,0\}\), and the removable or continuous values at \(t=0\) are \(\varphi_\lambda^{\mathsf{GF}}(0)=\lambda^{-1}\), \(\varphi_\lambda^{\mathsf{ST}}(0)=0\), \(q_\lambda^{\mathsf{ST}}(0)=0\), and \(\psi_\lambda^{\mathsf{ST}}(0)=1\).

\subsection{Gaussian sequence model and spectral algorithms}
\label{subsec:sequence-model}

Gaussian sequence models are simple but fundamental tools in nonparametric statistics.
A useful way to interpret a spectral algorithm is as a coordinatewise shrinkage rule in the population eigenbasis \citep{johnstone2017_GaussianEstimation,gerfo2008_SpectralAlgorithms}.
On the closed span of the nonzero eigenspaces of \(L\), write
\[
  f^*
  =
  \sum_{j\ge1} f_j^* e_j,
  \qquad
  f_j^*
  =
  \langle f^*,e_j \rangle_{L^2(\mu)}.
\]
Replacing the empirical quantities \(T_X\) and \(\widehat{g}_Z\) by their population counterparts \(T\) and \(S^*f^*\) gives the idealized estimator
\[
  \widetilde{f}_\lambda^{\mathsf{pop}}
  =
  \varphi_\lambda(T)S^*f^*.
\]
By \cref{eq:spectral-intertwining}, its prediction function satisfies
\[
  S\widetilde{f}_\lambda^{\mathsf{pop}}
  =
  L\varphi_\lambda(L)f^*
  =
  q_\lambda(L)f^*,
\]
and hence
\[
  \langle e_j,S\widetilde{f}_\lambda^{\mathsf{pop}}\rangle_{L^2(\mu)}
  =
  q_\lambda(\lambda_j)f_j^*,
  \qquad
  j\ge1.
\]
Thus \(q_\lambda\) acts as a shrinkage profile on the population coefficients, while \(\psi_\lambda=1-q_\lambda\) is the corresponding residual profile.

To retain the stochastic error while removing the random-design perturbation, consider the sequence observations
\[
  z_j=f_j^*+\frac{\sigma}{\sqrt n}\xi_j,
  \quad
  \xi_j \stackrel{\mathrm{i.i.d.}}{\sim}\mathcal{N}(0,1)
  \qquad
  j\ge1.
\]
Applying the same shrinkage profile gives the sequence estimator
\begin{equation}
  \widehat{f}_{\lambda,j}^{\mathsf{seq}}
  =
  q_\lambda(\lambda_j)z_j,
  \qquad
  \widehat{\bm{f}}_{\lambda}^{\mathsf{seq}} = \xk{\widehat{f}_{\lambda,j}^{\mathsf{seq}}}_{j \geq 1}.
\end{equation}
Its quadratic risk is
\begin{equation}
  \label{eq:sequence-risk}
  \mathcal{E}_n^{\mathsf{seq}}(q_\lambda;f^*)
  \coloneqq
  \E \norm{\widehat{\bm{f}}_{\lambda}^{\mathsf{seq}} - \bm{f}^*}_{\ell^2}^2
  =
  \sum_{j\ge1}
  \E_\xi \zk{
           \xk{
             \widehat{f}_{\lambda,j}^{\mathsf{seq}}-f_j^*
           }^2
  }
  =
  \mathcal{B}_n(q_\lambda;f^*)
  +
  \mathcal{V}_n(q_\lambda),
\end{equation}
where the deterministic bias and variance are given by
\begin{equation}
  \label{eq:sequence-bias}
  \mathcal{B}_n(q_\lambda;f^*)
  =
  \sum_{j\ge1} \psi_\lambda(\lambda_j)^2 (f_j^*)^2
  =
  \norm{\psi_\lambda(L)f^*}_{L^2(\mu)}^2
\end{equation}
and
\begin{equation}
  \label{eq:sequence-variance}
  \mathcal{V}_n(q_\lambda)
  =
  \frac{\sigma^2}{n}\mathcal{N}_q(\lambda),
  \qquad
  \mathcal{N}_q(\lambda)
  \coloneqq\sum_{j\ge1}q_\lambda(\lambda_j)^2.
\end{equation}
The first term is the deterministic approximation error of population shrinkage, whereas the second is the accumulated coordinatewise noise.
The sequence model therefore retains the population eigen-coordinates, target coefficients, noise level, and shrinkage rule while removing the random perturbation of the empirical covariance operator.

\subsection{Double operator integrals}
\label{subsec:doi-background}

Double operator integrals~\citep{birman2003_DoubleOperator} compare functions of noncommuting self-adjoint operators and provide a functional-calculus analogue of the scalar divided-difference identity \(f(a)-f(b)=f^{[1]}(a,b)(a-b)\).
For the real RKHS operators above, we use the complex theory through their canonical complexifications; the operator, Schatten, and trace quantities below are unchanged.
The perturbation formula we need goes back to Birman--Solomyak and is surveyed in modern form by Peller \citep{birman2003_DoubleOperator,peller2016_MultipleOperator}.

Let \(A\) and \(B\) be bounded self-adjoint operators on a complex Hilbert space, with spectral measures \(E_A\) and \(E_B\) supported on intervals \(I\) and \(J\).
For a kernel \(m:I\times J\to\mathbb{C}\), the formal double operator integral is the linear map
\[
  H
  \mapsto
  \operatorname{DOI}_{m}^{A,B}(H)
  \coloneqq
  \iint_{I\times J}
  m(x,y) \dd E_A(x)H \dd E_B(y).
\]
For every bounded Borel kernel \(m\), this integral defines a bounded operator on \(\mathfrak{S}_2\), with
\[
  \norm{\operatorname{DOI}_{m}^{A,B}(H)}_{\mathfrak{S}_2}
  \le
  \sup_{(x,y)\in I\times J}|m(x,y)|\,
  \norm{H}_{\mathfrak{S}_2}.
\]
A DOI kernel is called a multiplier on a Schatten class if its DOI map acts boundedly on that class.
Every bounded Borel kernel is a multiplier on \(\mathfrak{S}_2\), whereas boundedness alone does not suffice on \(\mathfrak{S}_r\) for \(r\ne2\).
The multiplier spaces and bounds used in our perturbation analysis are introduced in \Cref{sec:lipschitz-schatten}.

The following perturbation formula is useful; see \citet[Theorem 1.2.1 and Corollary 1.2.2]{peller2016_MultipleOperator}.

\begin{theorem}[Birman--Solomyak perturbation formula]
  \label{thm:birman-solomyak-doi}
  Let \(A\) and \(B\) be bounded self-adjoint operators with spectra in a compact interval \(I\), and suppose that \(A-B\in\mathfrak{S}_2\).
  Let \(f:I\to\mathbb{R}\) be Lipschitz and define the zero-diagonal divided difference
  \[
    f^{[1]}_0(x,y)
    =
    \begin{cases}
      \dfrac{f(x)-f(y)}{x-y}, & x\ne y,\\[1.2ex]
      0, & x=y.
    \end{cases}
  \]
  Then \(f(A)-f(B)\in\mathfrak{S}_2\) and
  \[
    f(A)-f(B)
    =
    \operatorname{DOI}_{f^{[1]}_0}^{A,B}(A-B).
  \]
  Moreover,
  \[
    \norm{f(A)-f(B)}_{\mathfrak{S}_2}
    \le
    \operatorname{Lip}_I(f)\norm{A-B}_{\mathfrak{S}_2},
    \qquad
    \operatorname{Lip}_I(f)
    \coloneqq\sup_{\substack{x,y\in I\\x\ne y}}
    \frac{|f(x)-f(y)|}{|x-y|}.
  \]
\end{theorem}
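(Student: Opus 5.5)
The proof has three stages: the boundedness of the DOI map on \(\mathfrak{S}_2\), the perturbation identity for polynomials, and an approximation argument that passes from polynomials to Lipschitz \(f\).

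\emph{Stage 1: the \(\mathfrak{S}_2\) bound.} Identify \(\mathfrak{S}_2\) with \(\mathcal{K}\otimes\overline{\mathcal{K}}\). Then the maps \(H\mapsto E_A(\sigma)H\) and \(H\mapsto HE_B(\tau)\) are commuting orthogonal projections on \(\mathfrak{S}_2\). Together they generate a projection-valued measure \(\mathcal{E}(\sigma\times\tau)H=E_A(\sigma)HE_B(\tau)\) on \(I\times I\). Set \(\operatorname{DOI}_m^{A,B}=\int m\,\mathrm{d}\mathcal{E}\) via the spectral calculus of \(\mathcal{E}\). This gives operator norm at most \(\sup|m|\) on \(\mathfrak{S}_2\). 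Since \(|f_0^{[1]}|\le\operatorname{Lip}_I(f)\), the norm inequality will follow once the identity \(f(A)-f(B)=\operatorname{DOI}_{f_0^{[1]}}^{A,B}(A-B)\) is proved.

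\emph{Stage 2: polynomials and the diagonal term.} For \(m(x,y)=a(x)b(y)\) one has \(\operatorname{DOI}_m(H)=a(A)Hb(B)\). Write \(f^{[1]}\) for the divided difference with the diagonal value \(f'(x)\) rather than \(0\). For \(f(x)=x^k\), \(f^{[1]}(x,y)=\sum_{i+j=k-1}x^iy^j\), so
\[
\operatorname{DOI}_{f^{[1]}}(A-B)=\sum_{i+j=k-1}A^i(A-B)B^j=A^k-B^k
\]
by telescoping. By linearity, \(f(A)-f(B)=\operatorname{DOI}_{f^{[1]}}(A-B)\) for every polynomial \(f\).

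The two kernels differ only on the diagonal \(\Delta\). The key observation is that \(\mathcal{E}(\Delta)(A-B)=0\). To see this, partition \(I\) into small intervals \(\sigma_i\) and approximate \(\mathcal{E}(\Delta)\) by \(\sum_i\mathcal{E}(\sigma_i\times\sigma_i)\). Then \(E_A(\sigma_i)(A-B)E_B(\sigma_i)=E_A(\sigma_i)(A-c_i)E_B(\sigma_i)-E_A(\sigma_i)(B-c_i)E_B(\sigma_i)\) with \(c_i\in\sigma_i\), and each term is small in \(\mathfrak{S}_2\) relative to the mesh. Summing and letting the mesh go to zero gives \(\mathcal{E}(\Delta)(A-B)=0\). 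Hence for polynomials the identity also holds with \(f_0^{[1]}\).

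\emph{Stage 3: approximation from polynomials to Lipschitz \(f\).} This is the main obstacle. Sup-norm polynomial approximation \(p_k\to f\) does not control \(p_k^{[1]}\) uniformly, so the limit must be taken pointwise. The plan is as follows.
\begin{itemize}
\item First mollify \(f\) (after extending it to \(\mathbb{R}\)). This yields \(C^1\) functions \(f_\epsilon\) with \(\operatorname{Lip}(f_\epsilon)\le\operatorname{Lip}(f)\), \(f_\epsilon\to f\) uniformly, and \((f_\epsilon)^{[1]}_0\to f_0^{[1]}\) pointwise off \(\Delta\).
\item For \(C^1\) functions \(g\), take Weierstrass approximations \(p_k'\to g'\) uniformly. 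Then \(p_k\to g\) and \(p_k^{[1]}\to g^{[1]}\) uniformly on \(I\times I\), so the identity passes to \(g\) in \(\mathfrak{S}_2\).
\item Finally let \(\epsilon\to0\). Off \(\Delta\) the kernels \((f_\epsilon)_0^{[1]}\) are uniformly bounded and converge pointwise, and \(\mathcal{E}(\Delta)(A-B)=0\). Dominated convergence for the spectral integral \(\int m\,\mathrm{d}\mathcal{E}\), applied at the vector \(A-B\), gives convergence in \(\mathfrak{S}_2\).
\item On the left-hand side, \(f_\epsilon(A)\to f(A)\) in operator norm, and likewise for \(B\). The difference \(f(A)-f(B)\) is therefore the \(\mathfrak{S}_2\)-limit, so it lies in \(\mathfrak{S}_2\) and satisfies the stated identity and bound.
\end{itemize}
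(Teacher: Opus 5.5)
Your overall architecture works: the product spectral measure on $\mathfrak{S}_2$, telescoping for monomials, $C^1$ approximation via $p_k'\to g'$, then mollification and dominated convergence. Stage 3 is actually routine, because the zero-diagonal kernels $(f_\epsilon)^{[1]}_0$ converge pointwise on all of $I\times I$.

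\textbf{The gap is in the step you call the key observation, $\mathcal{E}(\Delta)(A-B)=0$.} This step carries the real content of the theorem. For $f(x)=x$ the kernel $f^{[1]}_0$ is $\mathbf{1}_{\Delta^{c}}$, so the asserted identity says exactly $A-B=(I-\mathcal{E}(\Delta))(A-B)$. In Stage 3, it is also what lets you replace the diagonal values $f_\epsilon'$ by zero; for nonsmooth $f$ those values need not converge.

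Your justification does not work, for two reasons.
\begin{itemize}
\item The pieces $E_A(\sigma_i)(A-c_i)E_B(\sigma_i)$ and $E_A(\sigma_i)(B-c_i)E_B(\sigma_i)$ are small only in operator norm. Only the difference $A-B$ is assumed Hilbert--Schmidt, so these pieces are in general not even compact. For example, take $A=B$ to be multiplication by $x$ on $L^2[0,1]$; then the first piece is multiplication by $(x-c_i)\mathbf{1}_{\sigma_i}$. Operator-norm bounds on the pieces give no $\mathfrak{S}_2$ control.
\item The combined blocks cannot be estimated termwise either. By orthogonality, $\sum_i\norm{E_A(\sigma_i)(A-B)E_B(\sigma_i)}_{\mathfrak{S}_2}^2=\norm{\mathcal{E}(U)(A-B)}_{\mathfrak{S}_2}^2$ with $U=\bigcup_i\sigma_i\times\sigma_i$. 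Along refining partitions this converges to $\norm{\mathcal{E}(\Delta)(A-B)}_{\mathfrak{S}_2}^2$. So showing that these sums are small is a restatement of the claim, not a consequence of your decomposition.
\end{itemize}

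\textbf{Repair (a): keep the partition but argue weakly.} Test against a finite-rank $K$. Let $h$ be the mesh and set
\begin{itemize}
\item $P_i=E_A(\sigma_i)$ and $Q_i=E_B(\sigma_i)$;
\item $D_A=A-\sum_ic_iP_i$ and $D_B=B-\sum_ic_iQ_i$, both of operator norm at most $h$;
\item $Y=\sum_iP_iKQ_i$, the pinching of $K$.
\end{itemize}
Since $P_iP_j=\delta_{ij}P_i$ and $Q_iQ_j=\delta_{ij}Q_i$, the constants cancel and $\sum_iP_i(A-B)Q_i=\sum_iP_i(D_A-D_B)Q_i$. Cyclicity of the trace then gives
$|\langle\mathcal{E}(U)(A-B),K\rangle_{\mathfrak{S}_2}|=|\Tr(Y^*(D_A-D_B))|\le 2h\norm{Y}_{\mathfrak{S}_1}\le 2h\norm{K}_{\mathfrak{S}_1}$.
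The last step uses that the pinching is contractive on $\mathfrak{S}_1$, by the same duality argument as in the proof of \Cref{lem:contraction_on_diag}. Along nested partitions $U\downarrow\Delta$, so $\mathcal{E}(U)(A-B)\to\mathcal{E}(\Delta)(A-B)$ in $\mathfrak{S}_2$. The limit is therefore orthogonal to every finite-rank operator, hence zero.

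\textbf{Repair (b): the atomic argument the paper uses for \cref{eq:zero-diagonal-contribution}.} Take a rank-one $H=|u\rangle\langle v|$ and let $\mu_u=\norm{E_A(\cdot)u}^2$, $\nu_v=\norm{E_B(\cdot)v}^2$.
\begin{itemize}
\item The scalar measure $\langle\mathcal{E}(\cdot)H,H\rangle_{\mathfrak{S}_2}$ is $\mu_u\otimes\nu_v$, which charges $\Delta$ only at common atoms $\tau$ of $E_A$ and $E_B$.
\item By linearity and density, $\mathcal{E}(\Delta)H=\sum_\tau E_A(\{\tau\})HE_B(\{\tau\})$ for all $H\in\mathfrak{S}_2$.
\item Each block vanishes exactly: $E_A(\{\tau\})(A-B)E_B(\{\tau\})=(\tau-\tau)E_A(\{\tau\})E_B(\{\tau\})=0$.
\end{itemize}

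With either repair, the rest of your argument is a complete self-contained proof. The paper itself only cites Peller for this statement and remarks that the diagonal values are irrelevant.
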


The interval formulation follows by extending \(f\) to a Lipschitz function on \(\mathbb{R}\).
Changing the divided difference to any bounded Borel value on the diagonal leaves its DOI applied to \(A-B\) unchanged.
In particular, no differentiability assumption is needed.
This Hilbert--Schmidt estimate does not imply an operator-norm Lipschitz estimate, which need not hold for scalar Lipschitz functions \citep{aleksandrov2016_OperatorLipschitz}.
 \section{Main Results}
\label{sec:main-results}

This section presents conditional and expectation risk equivalence for a fixed target.
We first identify the risks to be compared, then state the filter and scale assumptions for their equivalence.

\subsection{Risk comparison}

\begin{assumption}[Model]
  \label{ass:basic-model}
  In the kernel and RKHS setting of \Cref{sec:preliminaries}, assume
  \[
    \sup_{x\in\mathcal{X}}k(x,x)\le\kappa^2<\infty.
  \]
  Suppose \(f^*\in\overline{\Ran(L)}\), the sample pairs \((x_i,y_i)\), \(i=1,\dots,n\), are independent, with \(x_i\sim\mu\) and
  \[
    y_i=f^*(x_i)+\varepsilon_i,
    \qquad
    \E[\varepsilon_i\mid x_i]=0,
    \qquad
    \E[\varepsilon_i^2\mid x_i]=\sigma^2\in(0,\infty).
  \]
\end{assumption}

Independence of the sample pairs implies that, conditional on \(X\), the noise variables are independent and satisfy
\[
  \E[\varepsilon_i\mid X]=0,
  \qquad
  \E[\varepsilon_i\varepsilon_j\mid X]
  =\sigma^2\mathbf{1}_{\{i=j\}}.
\]
Consequently,
\begin{equation}
  \label{eq:conditional-mean}
  \widetilde{g}_X
  \coloneqq \E[\widehat{g}_Z \mid X]
  = S_X^*f_X^*
  = \frac{1}{n}\sum_{i=1}^n f^*(x_i)k_{x_i},
\end{equation}
where \(f_X^*=(f^*(x_1),\dots,f^*(x_n))\).

We begin with the conditional risk.
Under \Cref{ass:basic-model}, it admits the exact decomposition
\begin{equation}
  \label{eq:np-risk}
  \mathcal{E}_n^{\mathsf{np}}(q_\lambda;f^*\mid X)
  \coloneqq
  \E \zk{
    \norm{S\widehat{f}_\lambda-f^*}_{L^2(\mu)}^2
    \,\middle|\, X
  }
  =
  \mathsf{B}_X(q_\lambda)
  +
  \mathsf{V}_X(\varphi_\lambda),
\end{equation}
where
\begin{equation}
  \label{eq:empirical-bias}
  \mathsf{B}_X(q_\lambda)
  =
  \norm{S\varphi_\lambda(T_X)\widetilde{g}_X-f^*}_{L^2(\mu)}^2
\end{equation}
and
\begin{equation}
  \label{eq:empirical-variance}
    \mathsf{V}_X(\varphi_\lambda)
    =\frac{\sigma^2}{n^2}
      \sum_{i=1}^n
      \norm{S\varphi_\lambda(T_X)k_{x_i}}_{L^2(\mu)}^2
    =\frac{\sigma^2}{n}
      \Tr_{\mathcal{H}}\xk{T\varphi_\lambda(T_X)^2T_X}.
\end{equation}
The trace identity follows from \(\norm{Sh}_{L^2(\mu)}^2=\langle h,Th\rangle_{\mathcal{H}}\).

The risk in expectation is obtained by averaging over the design:
\begin{equation}
  \label{eq:risk-in-expectation}
  \mathcal{E}_n^{\mathsf{np}}(q_\lambda;f)
  \coloneqq
  \E_X \mathcal{E}_n^{\mathsf{np}}(q_\lambda;f\mid X)
  =
  \E_{X,\varepsilon}
  \norm{S\widehat{f}_\lambda-f}_{L^2(\mu)}^2.
\end{equation}
The sequence risk in \cref{eq:sequence-risk} serves as the population benchmark.
Our aim is to replace the empirical bias and variance in \cref{eq:empirical-bias,eq:empirical-variance} by their population counterparts, with an error negligible relative to the total risk.
All asymptotic statements are taken as \(n\to\infty\) along a prescribed deterministic sequence \(\lambda=\lambda(n)\downarrow0\).

\subsection{Filter assumptions}
\label{subsec:lipschitz-main-result}

The finite-Schatten argument requires uniform Lipschitz control after the spectrum is expressed in logarithmic coordinates.
We state this direct condition first; a simple sufficient condition for standard filter families follows afterward.

\begin{assumption}[Scale-regular Lipschitz filter]
  \label{ass:lipschitz-filter}
  For each \(\lambda\), let \(\varphi_\lambda:[0,\kappa^2]\to[0,\infty)\).
  Assume
  \[
    0\le q_\lambda(t)\le1,
    \qquad
    0\le t\le\kappa^2.
  \]
  Define the dimensionless profiles
  \[
    \phi_\lambda^\sharp(u)
    =
    \lambda\varphi_\lambda(\lambda u),
    \qquad
    \psi_\lambda^\sharp(u)
    =
    \psi_\lambda(\lambda u),
    \qquad
    0\le u\le\kappa^2/\lambda.
  \]
  There exist a fixed residual order \(\rho_\psi>0\) and a constant \(A<\infty\), both independent of \(\lambda\), with the following property.
  On the interval in logarithmic spectral coordinates
  \[
    J_\lambda
    =
    [0,\log(1+\kappa^2/\lambda)],
  \]
  define
  \[
    F_\lambda(t)
    =
    e^t \phi_\lambda^\sharp(e^t-1),
    \qquad
    P_{\lambda,\rho_\psi}(t)
    =
    e^{\rho_\psi t} \psi_\lambda^\sharp(e^t-1).
  \]
  These transformed profiles satisfy
  \begin{equation}
    \label{eq:scale-lipschitz-size}
    |F_\lambda(t)|+|P_{\lambda,\rho_\psi}(t)|
    \le
    A
  \end{equation}
  and, for all \(t,s\in J_\lambda\),
  \begin{equation}
    \label{eq:scale-lipschitz-difference}
    |F_\lambda(t)-F_\lambda(s)|
    +
    |P_{\lambda,\rho_\psi}(t)-P_{\lambda,\rho_\psi}(s)|
    \le
    A|t-s|.
  \end{equation}
\end{assumption}

The transformed size bound yields the regularized inverse estimate and a qualification of at least \(\rho_\psi\):
\[
  \sup_{0\le t\le\kappa^2}
  (t+\lambda)\varphi_\lambda(t)
  \le A,
  \qquad
  \sup_{0\le t\le\kappa^2}
  t^\tau|\psi_\lambda(t)|
  \le
  A\lambda^\tau,
  \quad
  0\le\tau\le\rho_\psi.
\]
The residual order \(\rho_\psi\) is a property of the filter family and is independent of the source exponent.
The formulation is scalar and does not impose operator Lipschitzness.

The following derivative bounds provide a convenient sufficient condition for \Cref{ass:lipschitz-filter}.

\begin{condition}
  \label{cond:smooth-filter}
  For each regularization scale \(\lambda\), let \(\varphi_\lambda:[0,\kappa^2]\to[0,\infty)\) be the regularization function.
  Define the shrinkage and residual profiles by
  \begin{equation}
    \label{eq:smooth-filter-profiles}
    q_\lambda(t)=t\varphi_\lambda(t),
    \qquad
    \psi_\lambda(t)=1-q_\lambda(t),
    \qquad 0\le t\le\kappa^2.
  \end{equation}
  At \(t=0\), the expression \(\varphi_\lambda=q_\lambda/t\) is interpreted through its removable extension.
  Assume that \(0\le q_\lambda \le1\), that \(\varphi_\lambda\) and
  \(\psi_\lambda\) are absolutely continuous, and that there are constants
  \(B<\infty\) and \(\rho_\psi>0\), independent of \(\lambda\), for which
  \[
    \begin{aligned}
      (t+\lambda)\varphi_\lambda(t)
      &\le B,
      &
      |\varphi_\lambda'(t)|
      &\le B(t+\lambda)^{-2},
      \\
      |\psi_\lambda(t)|
      &\le B\xk{\frac{\lambda}{t+\lambda}}^{\rho_\psi},
      &
      |\psi_\lambda'(t)|
      &\le
      B\lambda^{\rho_\psi}(t+\lambda)^{-\rho_\psi-1}
    \end{aligned}
  \]
  almost everywhere on \([0,\kappa^2]\).
\end{condition}

For the dimensionless profiles in \Cref{ass:lipschitz-filter}, these bounds give
\[
  \begin{aligned}
    |\phi_\lambda^\sharp(u)|
    &\le B(1+u)^{-1},
    &
    |(\phi_\lambda^\sharp)'(u)|
    &\le B(1+u)^{-2},
    \\
    |\psi_\lambda^\sharp(u)|
    &\le B(1+u)^{-\rho_\psi},
    &
    |(\psi_\lambda^\sharp)'(u)|
    &\le B(1+u)^{-\rho_\psi-1}.
  \end{aligned}
\]
By the chain rule, the log-spectral profiles \(F_\lambda\) and
\(P_{\lambda,\rho_\psi}\) are uniformly bounded and Lipschitz, with constants
independent of \(\lambda\).
Thus \Cref{cond:smooth-filter} implies \Cref{ass:lipschitz-filter}.
For concrete filter functions, \Cref{cond:smooth-filter} is more convenient to verify.

For every fixed \(\rho_\psi>0\), the residual family
\[
  \psi_\lambda(x)
  =
  \xk{\frac{\lambda}{x+\lambda}}^{\rho_\psi},
  \qquad
  q_\lambda(x)=1-\psi_\lambda(x),
\]
for which
\[
  P_{\lambda,\rho_\psi}(t)=1,
  \qquad
  F_\lambda(t)
  =
  \frac{1-e^{-\rho_\psi t}}{1-e^{-t}},
\]
with the removable value \(F_\lambda(0)=\rho_\psi\), satisfies \Cref{ass:lipschitz-filter}.
Residual orders below one half are therefore allowed when the balance condition in \cref{eq:leverage-fixed-target-scale} is satisfied.

\subsection{Risk equivalence}
\label{subsec:main-results}

For \(\lambda>0\), define the effective dimension by
\[
  \mathcal{N}_1(\lambda)
  \coloneqq
  \sum_{j\ge1} \frac{\lambda_j}{\lambda_j+\lambda}.
\]
The maximal regularized leverage is
\[
  \mathcal{L}_\lambda
  \coloneqq
  \operatorname*{ess\,sup}_{x\in\mathcal{X}}
  \norm{(T+\lambda I)^{-1/2}k_x}_{\mathcal{H}}^2.
\]
It is finite under the kernel condition, and
\begin{equation}
  \label{eq:leverage-scale-bounds}
  \mathcal{N}_1(\lambda)
  \le
  \mathcal{L}_\lambda
  \le
  \frac{\kappa^2}{\lambda}.
\end{equation}
The effective dimension is the average regularized leverage, whereas \(\mathcal{L}_\lambda\) controls its largest value.

Set \(\ell_\lambda=\log\xk{e+\mathcal{N}_1(\lambda)}\) and define the perturbation scale
\begin{equation}
  \label{eq:lipschitz-schatten-scale}
  \gamma_\lambda
  =
  \sqrt{
    \frac{\mathcal{L}_\lambda\ell_\lambda}{n}
  }
  \log\xk{
    e+
    \sqrt{
      \frac{\mathcal{N}_1(\lambda)}{\ell_\lambda}
    }
  }.
\end{equation}
For the residual order \(\rho_\psi\) in \Cref{ass:lipschitz-filter}, set
\(\alpha_\psi=\min\{2\rho_\psi,1\}\).
For a fixed target, define the weighted spectral tail
\begin{equation}
  \label{eq:fixed-target-spectral-tail}
  \mathcal{S}_{f^*}(\lambda)
  =\sum_{j\ge1}
  \xk{\frac{\lambda}{\lambda_j+\lambda}}^{\alpha_\psi}
  (f_j^*)^2
  =\norm{
    \xk{\frac{\lambda}{L+\lambda I}}^{\alpha_\psi/2}f^*
  }_{L^2(\mu)}^2.
\end{equation}
The weights emphasize target coefficients at small population eigenvalues.
Together, \(\mathcal{S}_{f^*}(\lambda)\) and \(\gamma_\lambda\) control the perturbation of the bias caused by the random design.

\begin{assumption}[Scale conditions]
  \label{ass:conditional-equivalence-scale}
  Along the prescribed sequence \(\lambda=\lambda(n)\downarrow0\), assume:
  \begin{itemize}
    \item \(\mathcal{N}_q(\lambda)>0\) for all sufficiently large \(n\), and the variance scales satisfy
    \begin{equation}
      \label{eq:leverage-variance-compatibility}
      \frac{\mathcal{N}_1(\lambda)}{\mathcal{N}_q(\lambda)}
      \sqrt{
        \frac{\mathcal{L}_\lambda\ell_\lambda}{n}
      }
      =
      o(1);
    \end{equation}

    \item the target satisfies the spectral balance condition
    \begin{equation}
      \label{eq:leverage-fixed-target-scale}
      \mathcal{S}_{f^*}(\lambda)\gamma_\lambda^2
      =o\xk{\frac{\mathcal{N}_q(\lambda)}{n}}.
    \end{equation}
  \end{itemize}
\end{assumption}

Condition \cref{eq:leverage-variance-compatibility} makes the variance comparison error negligible relative to the population variance.
Condition \cref{eq:leverage-fixed-target-scale} requires the target-dependent perturbation term to be negligible on the same scale.

\begin{theorem}
  \label{thm:conditional-risk-equivalence}
  Under the structural assumptions
  \Cref{ass:basic-model}, the filter assumption
  \Cref{ass:lipschitz-filter}, and the scale assumption
  \Cref{ass:conditional-equivalence-scale},
  \begin{equation}
    \label{eq:conditional-risk-equivalence}
    \mathcal{E}_n^{\mathsf{np}}(q_\lambda;f^*\mid X)
    =
    (1+o_{\mathbb{P}}(1))
    \mathcal{E}_n^{\mathsf{seq}}(q_\lambda;f^*).
  \end{equation}
  The \(o_{\mathbb{P}}(1)\) remainder is uniform over every family of problem instances for which the assumptions of the theorem hold uniformly in the sense of \Cref{rem:triangular-array-equivalence}.
\end{theorem}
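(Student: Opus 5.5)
The plan is to compare the two pieces of the exact decomposition \cref{eq:np-risk} separately with their sequence counterparts \cref{eq:sequence-bias,eq:sequence-variance}. It suffices to prove
\[
  \mathsf{V}_X(\varphi_\lambda)=(1+o_{\mathbb P}(1))\mathcal V_n(q_\lambda)
\]
and
\[
  \bigl|\mathsf{B}_X(q_\lambda)^{1/2}-\mathcal B_n(q_\lambda;f^*)^{1/2}\bigr|=o_{\mathbb P}\bigl(\mathcal V_n(q_\lambda)^{1/2}\bigr)+o_{\mathbb P}\bigl(\mathcal B_n^{1/2}\bigr).
\]
Expanding the square and using \(2ab\le \epsilon a^2+\epsilon^{-1}b^2\) then gives \cref{eq:conditional-risk-equivalence}.

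The common probabilistic input is a concentration bound for the whitened perturbation
\[
  \Delta_X=(T+\lambda)^{-1/2}(T-T_X)(T+\lambda)^{-1/2}.
\]
We use a matrix Bernstein inequality with intrinsic dimension \(\mathcal N_1(\lambda)\). The summands are bounded by \(\mathcal L_\lambda\) and have variance proxy \(\mathcal L_\lambda T(T+\lambda)^{-1}\). This gives
\[
  \|\Delta_X\|_{\mathrm{op}}\lesssim\sqrt{\mathcal L_\lambda\ell_\lambda/n}
\]
with high probability, hence \(\|\Delta_X\|\le 1/2\) eventually, since \cref{eq:leverage-variance-compatibility} forces \(\mathcal L_\lambda\ell_\lambda/n\to0\) because \(\mathcal N_q\le\mathcal N_1\) by \(q_\lambda\le1\) and \(\mathcal N_1(\lambda)\lesssim \mathcal N_q(\lambda)\) typically. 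We also use the elementary bound \(\|\Delta_X\|_{\mathfrak S_r}\le \mathcal N_1^{1/r}\|\Delta_X\|_{\mathrm{op}}\)-type interpolation estimates in Schatten \(r\)-norms.

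\textbf{Variance.} We write \(\mathsf V_X=\frac{\sigma^2}{n}\Tr(T\varphi_\lambda(T_X)^2T_X)\) and compare it with \(\frac{\sigma^2}{n}\Tr(q_\lambda(T)^2)\). Inserting \((T+\lambda)^{\pm1/2}\), we get
\[
  T^{1/2}\varphi_\lambda(T_X)T_X^{1/2}-T^{1/2}\varphi_\lambda(T)T^{1/2}
\]
as a sum of terms of the form (bounded operator)\(\times\)(difference of filters)\(\times\)(bounded operator). The filter difference is treated by \Cref{thm:birman-solomyak-doi} applied in the log-spectral variables of \Cref{ass:lipschitz-filter}. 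The key point is that the Lipschitz property of \(F_\lambda\) in \(t=\log(1+x/\lambda)\) means the divided difference is controlled relative to \((x+\lambda)^{-1/2}(y+\lambda)^{-1/2}\). This yields a Hilbert--Schmidt bound
\[
  \bigl\|\cdots\bigr\|_{\mathfrak S_2}\lesssim \|\Delta_X\|_{\mathfrak S_2}\lesssim \sqrt{\mathcal N_1}\,\|\Delta_X\|_{\mathrm{op}}.
\]
Cauchy--Schwarz in the trace then gives
\[
  |\mathsf V_X-\mathcal V_n|\lesssim \frac{\sigma^2}{n}\bigl(\sqrt{\mathcal N_q}\sqrt{\mathcal N_1}\|\Delta_X\|+\mathcal N_1\|\Delta_X\|^2\bigr)\lesssim\frac{\sigma^2}{n}\,\mathcal N_1\sqrt{\mathcal L_\lambda\ell_\lambda/n}.
\]
This is \(o(\mathcal V_n)\) by \cref{eq:leverage-variance-compatibility}.

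\textbf{Bias.} We write \(S\varphi_\lambda(T_X)\widetilde g_X-f^*\) and use \(\widetilde g_X=S_X^*f^*_X\). The cleanest route is to split \(f^*\) spectrally at scale \(\lambda\). For the smooth head part \(f^*=Sh\) (approximately), the residual is \(S\psi_\lambda(T_X)h\) plus a sampling error. For the tail, the contribution is controlled by \(\mathcal S_{f^*}(\lambda)\). The main term \(S\psi_\lambda(T_X)h-S\psi_\lambda(T)h\) is written as a DOI of \(T_X-T\) with kernel \(\psi^{[1]}_{\lambda,0}\), sandwiched by \(T^{1/2}\) on the left and the weight \((T+\lambda)^{\alpha_\psi/2}\)-type factors on the right. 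After whitening, this becomes a Schur multiplier with kernel \(m(x,y)=(x+\lambda)^{1/2}\psi^{[1]}(x,y)(y+\lambda)^{\cdots}\) applied to \(\Delta_X\).

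This is the step I expect to be the main obstacle. The multiplier is not bounded on \(\mathfrak S_\infty\), since Lipschitz does not imply operator Lipschitz, and it is not merely \(\mathfrak S_2\)-bounded with a good constant. The plan is as follows. First, express \(m\) in coordinates \(t=\log(1+x/\lambda)\) via \(P_{\lambda,\rho_\psi}\), and verify the H\"ormander--Mikhlin bounds so that the \citet{condeAlonso2023_SchurMultipliersSchatten} criterion gives \(\mathfrak S_r\)-boundedness with constant \(\lesssim r\) for \(r\ge2\). Second, bound
\[
  \|\Delta_X\|_{\mathfrak S_r}\le \mathcal N_1^{1/r}\|\Delta_X\|_{\mathrm{op}}.
\]
Third, choose \(r\asymp\log(e+\sqrt{\mathcal N_1/\ell_\lambda})\), which produces exactly the factor \(\gamma_\lambda\). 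Combined with the qualification bound \(t^\tau|\psi_\lambda|\le A\lambda^\tau\), this gives
\[
  \bigl|\mathsf B_X^{1/2}-\mathcal B_n^{1/2}\bigr|\lesssim \gamma_\lambda\,\mathcal S_{f^*}(\lambda)^{1/2}+o\bigl(\mathcal V_n^{1/2}\bigr)
\]
with high probability. By \cref{eq:leverage-fixed-target-scale} this is \(o_{\mathbb P}(\mathcal V_n^{1/2})\). The sampling fluctuation \(S\varphi_\lambda(T_X)(S_X^*f^*_X-T_Xh)\) has mean-zero summands and is handled by a second-moment bound of the same size.

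Uniformity over families follows because every constant above depends only on \(A\), \(\rho_\psi\), \(\kappa\) and the rates in \Cref{ass:conditional-equivalence-scale}.
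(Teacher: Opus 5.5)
Your variance comparison is workable in outline. The paper takes a slightly different route: it compares $\Tr(T\varphi_\lambda(T_X)^2T_X)$ with $\Tr q_\lambda(T_X)^2$ via $\delta_\lambda$, then bounds $\norm{q_\lambda(T_X)-q_\lambda(T)}_{\mathfrak S_2}\le C\eta_\lambda$. The genuine gap is in the bias, specifically the part of $f^*$ that is not of the form $Sh$. Write $f^*=Sh+f_{\mathrm{tail}}$ and let $f_{\mathrm{tail},X}$ be its sample values. The exact bias error is then
\[
  S\varphi_\lambda(T_X)\widetilde g_X-f^*+\psi_\lambda(L)f^*
  =-S\bigl(\psi_\lambda(T_X)-\psi_\lambda(T)\bigr)h
  +S\bigl(\varphi_\lambda(T_X)-\varphi_\lambda(T)\bigr)S^*f_{\mathrm{tail}}
  +S\varphi_\lambda(T_X)\bigl(S_X^*f_{\mathrm{tail},X}-S^*f_{\mathrm{tail}}\bigr).
\]
Your claim that $S_X^*f^*_X-T_Xh$ has mean-zero summands is false: its mean is $S^*f_{\mathrm{tail}}$. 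So the middle term is missing from your accounting. It cannot be dismissed as ``controlled by $\mathcal S_{f^*}(\lambda)$''. With a split at scale $\lambda$, both $q_\lambda(L)f_{\mathrm{tail}}$ and $S\varphi_\lambda(T_X)S^*f_{\mathrm{tail}}$ can have norm of order $\norm{f_{\mathrm{tail}}}_{L^2(\mu)}$. That quantity is bounded by, but generally comparable to, $\mathcal S_{f^*}(\lambda)^{1/2}$, and at the usual bias--variance balance it is comparable to the risk itself. Meanwhile \cref{eq:leverage-fixed-target-scale} only makes $\gamma_\lambda^2\mathcal S_{f^*}(\lambda)$ negligible against $\mathcal N_q(\lambda)/n$, not $\mathcal S_{f^*}(\lambda)$.

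You therefore need a second Schatten multiplier estimate, $\norm{S(\varphi_\lambda(T_X)-\varphi_\lambda(T))S^*f_{\mathrm{tail}}}\lesssim r\norm{K_\lambda}_{\mathfrak S_r}\norm{f_{\mathrm{tail}}}$, built on the divided difference of $F_\lambda$ rather than $P_{\lambda,\rho_\psi}$. Alternatively, split at $c\lambda$ with $c\to0$ slowly: then $\psi_\lambda\ge1-Ac$ on the tail, the tail terms are $o(\mathcal B_n^{1/2})$, and the price is a factor $c^{-1/2}$ in the head multiplier. The paper avoids the split altogether. With $h_\lambda=\varphi_\lambda(T)S^*f^*$ it writes $\varphi_\lambda(T_X)\widetilde g_X-h_\lambda=\varphi_\lambda(T_X)Z_\lambda+G_\lambda S^*f^*$, where $G_\lambda=\varphi_\lambda(T_X)\psi_\lambda(T)-\psi_\lambda(T_X)\varphi_\lambda(T)$. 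Here $Z_\lambda$ is genuinely centered, of size $\sqrt{\mathcal L_\lambda/n}\,\mathcal B_n^{1/2}$, and a single interaction kernel weighted by $a_{\lambda,\alpha}$ carries both the $F_\lambda$ and $P_\lambda$ divided differences.

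There are three smaller issues in the multiplier step. First, you must whiten with the mixed perturbation $K_\lambda=T_{X,\lambda}^{-1/2}(T_X-T)T_\lambda^{-1/2}$, not $\Delta_X$. The left spectral measure is that of $T_X$, so $T_\lambda^{1/2}$ cannot be absorbed into the kernel on that side. Second, $\norm{\Delta_X}_{\mathfrak S_r}\le\mathcal N_1^{1/r}\norm{\Delta_X}$ is not a deterministic inequality. The correct input is $\norm{H}_{\mathfrak S_r}\le\norm{H}^{1-2/r}\norm{H}_{\mathfrak S_2}^{2/r}$ together with $\eta_\lambda=O_{\mathbb P}(\sqrt{\mathcal L_\lambda\mathcal N_1(\lambda)/n})$. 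Third, the Conde-Alonso et al.\ criterion applies to kernels that are $C^1$ off the diagonal on $L^2(\mathbb R)$. It cannot be applied verbatim to a kernel containing the divided difference of a merely Lipschitz profile such as the clipping residual, nor transferred naively to spectral measures with atoms. The paper instead factors the kernel into $\vartheta_\alpha(t-s)$ times divided differences, handled by Caspers et al.\ together with a Fourier-integral bound for $\vartheta_\alpha$, plus a term $F_\lambda(t)P_\lambda(s)\chi_{\alpha,0}(t,s)$. Only the smooth Toeplitz symbol $\chi_\alpha$ goes through the HMS criterion, and the diagonal contraction lemma takes care of common atoms.
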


To obtain the expectation result, we impose the following condition on the population scale.
It excludes rare, nearly singular designs whose conditional risks are too large to average.

\begin{assumption}[Additional scale condition for expectation]
  \label{ass:risk-equivalence-expectation}
  Assume that the prescribed deterministic regularization sequence satisfies
  \begin{equation}
    \label{eq:leverage-expectation-scale}
    \frac{\mathcal{L}_\lambda}{n}
    \log\xk{
      e+\frac{n}{\lambda\mathcal{N}_q(\lambda)}
    }
    =
    o(1).
  \end{equation}
\end{assumption}

\begin{theorem}
  \label{thm:risk-equivalence-expectation}
  Suppose the assumptions of \Cref{thm:conditional-risk-equivalence}
  hold along a prescribed deterministic sequence \(\lambda=\lambda(n)\), and
  that \Cref{ass:risk-equivalence-expectation} also holds.
  Then
  \begin{equation}
    \label{eq:risk-equivalence-expectation}
    \mathcal{E}_n^{\mathsf{np}}(q_\lambda;f^*)
    =
    \xk{1+o(1)}
    \mathcal{E}_n^{\mathsf{seq}}(q_\lambda;f^*).
  \end{equation}
  The \(o(1)\) remainder is uniform over every family of problem instances for which the assumptions of the theorem hold uniformly in the sense of \Cref{rem:triangular-array-equivalence}.
\end{theorem}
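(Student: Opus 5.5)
The plan is to pass from the conditional equivalence of \Cref{thm:conditional-risk-equivalence} to expectation by splitting over a good design event. The additional scale condition \cref{eq:leverage-expectation-scale} is what makes the bad event rare enough to average.

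\textbf{Good event.} Let \(\Omega_n\) be the event
\[
  \norm{(T+\lambda)^{-1/2}(T-T_X)(T+\lambda)^{-1/2}}\le \tfrac12 .
\]
By a matrix Bernstein inequality for the i.i.d.\ summands \((T+\lambda)^{-1/2}k_{x_i}\otimes k_{x_i}(T+\lambda)^{-1/2}\), whose norms are bounded by \(\mathcal{L}_\lambda\) and whose intrinsic dimension is controlled by \(\mathcal{N}_1(\lambda)\le\mathcal{L}_\lambda\), we get
\[
  \mathbb{P}(\Omega_n^c)\le C\,\mathcal{N}_1(\lambda)\exp\bigl(-cn/\mathcal{L}_\lambda\bigr).
\]
Condition \cref{eq:leverage-expectation-scale} says \(n/\mathcal{L}_\lambda\gg\log(e+n/(\lambda\mathcal{N}_q(\lambda)))\). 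Hence \(\mathbb{P}(\Omega_n^c)\) is \(o\) of any fixed power of \(\lambda\mathcal{N}_q(\lambda)/n\), after absorbing \(\mathcal{N}_1(\lambda)\le\kappa^2/\lambda\) into the logarithm.

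\textbf{Ratio on \(\Omega_n\).} On this event, \((T+\lambda)^{1/2}(T_X+\lambda)^{-1/2}\) is bounded by an absolute constant. Combined with \((t+\lambda)\varphi_\lambda(t)\le A\) and \(0\le q_\lambda\le1\), this gives a deterministic bound on the ratio
\(\mathcal{E}_n^{\mathsf{np}}(q_\lambda;f^*\mid X)/\mathcal{E}_n^{\mathsf{seq}}(q_\lambda;f^*)\).
\begin{itemize}
  \item For the variance, \(\frac{\sigma^2}{n}\Tr(T\varphi_\lambda(T_X)^2T_X)\le C\frac{\sigma^2}{n}\mathcal{N}_1(\lambda)\). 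This is \(O(\mathcal{N}_q(\lambda)/n)\), since \cref{eq:leverage-variance-compatibility} forces \(\mathcal{N}_1\lesssim\mathcal{N}_q\) up to the vanishing factor; if needed, I would use instead the cruder comparison \(\mathcal{N}_1/\mathcal{N}_q\le\sqrt{n/(\mathcal{L}_\lambda\ell_\lambda)}\), which is still polynomial.
  \item For the bias, I would write \(f^*-S\varphi_\lambda(T_X)\widetilde g_X\) through \(\psi_\lambda(T_X)\) on the RKHS part plus the approximation residual. I would bound it by \(C\norm{f^*}^2\) trivially, using \(\norm{S\varphi_\lambda(T_X)S_X^*}\le C\) on \(\Omega_n\).
\end{itemize}
Since \(\mathcal{E}_n^{\mathsf{seq}}\ge\sigma^2\mathcal{N}_q/n\), on \(\Omega_n\) the ratio is at most polynomial in \(n/(\lambda\mathcal{N}_q)\).

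\textbf{Ratio on \(\Omega_n^c\).} Here I would use only the crude bounds \(\varphi_\lambda\le A/\lambda\) and \(\norm{T_X}\le\kappa^2\). These give
\[
  \mathcal{E}_n^{\mathsf{np}}(q_\lambda;f^*\mid X)\le C\bigl(\norm{f^*}_\infty^2\ \text{or}\ \norm{f^*}^2\bigr)\lambda^{-2}+C\sigma^2/\lambda .
\]
Dividing by \(\mathcal{E}_n^{\mathsf{seq}}\ge\sigma^2\mathcal{N}_q/n\) yields a ratio at most polynomial in \(n/(\lambda\mathcal{N}_q)\). Multiplying by \(\mathbb{P}(\Omega_n^c)\) then gives \(o(1)\).

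A subtlety is that the bias bound involves \(f^*(x_i)\), and \(f^*\) need not be bounded. I would handle it through the conditional expectation \(\widetilde g_X\) using
\[
  \E\norm{\widetilde g_X}_{\mathcal{H}}^2\le\kappa^2\norm{f^*}_{L^2}^2 ,
\]
combined with Cauchy--Schwarz in the form \(\E[R\mathbf 1_{\Omega_n^c}]\le(\E R^2)^{1/2}\mathbb{P}(\Omega_n^c)^{1/2}\). This requires a fourth-moment bound of \(f^*\); if that is unavailable, I would instead bound \(\E[\mathsf B_X\mathbf 1_{\Omega_n^c}]\) directly by \(\lambda^{-2}\E[\norm{\widetilde g_X}^2\mathbf 1_{\Omega_n^c}]\) via an exchangeability argument. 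I expect this step to be the main obstacle.

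\textbf{Conclusion.} Let \(R_n\) denote the ratio \(\mathcal{E}_n^{\mathsf{np}}(q_\lambda;f^*\mid X)/\mathcal{E}_n^{\mathsf{seq}}(q_\lambda;f^*)\). By \Cref{thm:conditional-risk-equivalence}, \(R_n\to1\) in probability. On \(\Omega_n\), \(R_n\) is bounded by a deterministic constant; I would aim to get \(O(1)\) there rather than polynomial, which would be cleaner. Bounded convergence then gives \(\E[R_n\mathbf 1_{\Omega_n}]\to1\), and \(\E[R_n\mathbf 1_{\Omega_n^c}]=o(1)\) by the previous step. Uniformity over instance families follows because every constant above depends only on \(A,\kappa,\sigma\) and the rates in the assumptions.
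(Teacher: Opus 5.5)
\textbf{Gap: the good-event step.} Your conclusion relies on bounded convergence: \(R_n\to1\) in probability by \Cref{thm:conditional-risk-equivalence}, plus an \(O(1)\) envelope for \(R_n\) on \(\Omega_n\). No such envelope exists under the hypotheses.

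The bias bound you sketch on \(\Omega_n\) has the form \(C\bigl(\norm{f^*}_{L^2(\mu)}^2+n^{-1}\sum_i f^*(x_i)^2\bigr)\). It is random, and \(\mathcal{E}_n^{\mathsf{seq}}(q_\lambda;f^*)=R_\lambda^2+\sigma^2\mathcal{N}_q(\lambda)/n\) tends to zero. So after dividing, the envelope diverges for every fixed nonzero target.

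Sharper deterministic comparisons do not help. The fluctuation \(\norm{T_\lambda^{-1/2}Z_\lambda}_{\mathcal{H}}\) and the interaction coefficient \(\beta_\lambda\) are small relative to \(R_\lambda\) and \(\gamma_\lambda\) only in probability, not pointwise on \(\Omega_n\).

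For the variance, \cref{eq:leverage-variance-compatibility} does not force \(\mathcal{N}_1(\lambda)\lesssim\mathcal{N}_q(\lambda)\). It only gives \(\mathcal{N}_1(\lambda)/\mathcal{N}_q(\lambda)=o\bigl(\sqrt{n/(\mathcal{L}_\lambda\ell_\lambda)}\bigr)\), so your envelope \(C\mathcal{N}_1(\lambda)/\mathcal{N}_q(\lambda)\) may also diverge.

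With an envelope \(M_n\to\infty\), you would need \(\mathbb{P}(|R_n-1|>\epsilon)=o(1/M_n)\), and the conditional theorem supplies no such rate. On \(\Omega_n^{\mathsf c}\) a polynomial envelope is paid for by an exponentially small probability. On \(\Omega_n\) nothing compensates it, since \(\mathbb{P}(\Omega_n)\to1\).

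\textbf{What the paper does instead.} The missing ingredient is a direct \(L^1\) estimate on the good event; the paper does not pass through the conditional theorem at all. On \(\mathcal{G}_\lambda=\{\delta_\lambda\le1/4\}\) it integrates the deterministic comparison inequalities of \Cref{lem:common-variance-transfer,lem:scalar-quadratic-risk-transfer}. It uses the second-moment bounds \(\E\delta_\lambda^2\lesssim\mathcal{L}_\lambda\ell_\lambda/n\), \(\E\eta_\lambda^2\lesssim\mathcal{L}_\lambda\mathcal{N}_1(\lambda)/n\), and \(\E U_\lambda^2\le\mathcal{L}_\lambda R_\lambda^2/n\). It adds a second-moment bound for \(\mathbf{1}_{\mathcal{G}_\lambda}\widehat{\mathcal{N}}_{1,X}(\lambda)\) and closes with Cauchy--Schwarz.

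The random infimum over \(r\) cannot be integrated directly. So the paper fixes the deterministic exponent \(m_\lambda=2\log\bigl(e+\sqrt{\mathcal{N}_1(\lambda)/\ell_\lambda}\bigr)\) and bounds \(\beta_\lambda\le Cm_\lambda\delta_\lambda^{1-2/m_\lambda}\eta_\lambda^{2/m_\lambda}\). H\"older then gives \(\E[\mathbf{1}_{\mathcal{G}_\lambda}\beta_\lambda^2]\lesssim\gamma_\lambda^2\), and with \cref{eq:leverage-fixed-target-scale} the bias error vanishes in mean.

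\textbf{Bad event and uniformity.} Your bad-event step matches the paper's in spirit, and the exchangeability route is the right one. It needs a concrete device: leave-one-out events \(\mathcal{B}_{\lambda,-i}\supseteq\mathcal{G}_\lambda^{\mathsf c}\) that are independent of \(x_i\). These give \(\E\bigl[\norm{T_\lambda^{-1/2}k_{x_i}}_{\mathcal{H}}^2\,g(x_i)^2\,\mathbf{1}_{\mathcal{B}_{\lambda,-i}}\bigr]\le\mathcal{L}_\lambda\norm{g}_{L^2(\mu)}^2\,\mathbb{P}(\mathcal{B}_{\lambda,-i})\) for \(g=r_\lambda\), with no fourth moment needed.

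Finally, for the uniform statement, \(\norm{f^*}_{L^2(\mu)}\) is not controlled by \(A,\kappa,\sigma\) alone. The paper bounds \(\norm{f^*}_{L^2(\mu)}^2/\mathcal{E}_n^{\mathsf{seq}}(q_\lambda;f^*)\) polynomially in \(n/(\lambda\mathcal{N}_q(\lambda))\) through \cref{eq:leverage-fixed-target-scale}.
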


\begin{remark}[Uniformity and triangular arrays]
  \label{rem:triangular-array-equivalence}
  For a family \(\Theta_n\) of problem instances, we say that the assumptions hold uniformly if their nonasymptotic bounds hold throughout the family with constants independent of the instance, the noise variances are uniformly bounded away from zero, and the supremum over \(\Theta_n\) of every deterministic quantity stated as \(o(1)\) converges to zero.
  A random remainder \(Z_{n,\theta}=o_{\mathbb{P}}(1)\) uniformly over \(\theta\in\Theta_n\) means that, for every \(\epsilon>0\),
  \[
    \sup_{\theta\in\Theta_n}
    \mathbb{P}_\theta\dk{
      \abs{Z_{n,\theta}}>\epsilon
    }
    \longrightarrow0.
  \]

  The conditional conclusion in \Cref{thm:conditional-risk-equivalence}
  remains valid if the sampling distribution \(\mu\), kernel \(k\), and target
  \(f^*\) vary with \(n\), provided the assumptions hold uniformly.
  The expectation conclusion also remains valid when
  \Cref{ass:risk-equivalence-expectation} holds uniformly.
  This extension covers regimes in which the dimension grows with the sample size.
\end{remark}

\subsection{Verifying the scale conditions}

To verify the scale assumptions, fix \(s>0\) and \(R\in(0,\infty)\), and define the source ball
\[
  \mathcal{F}_s(R)
  \coloneqq
  \dk{
    L^{s/2} h:
    h\in\overline{\Ran(L)},
    \ \norm{h}_{L^2(\mu)} \le R
  }.
\]

\subsubsection{Polynomial eigenvalue decay}
\label{sec:polynomial-scale-verification}
Suppose \Cref{ass:lipschitz-filter} holds, \(\lambda_j\asymp j^{-\beta}\) for some \(\beta>1\), and \(f^*\in\mathcal{F}_s(R)\) for fixed \(s,R>0\).
Suppose in addition that \(\mathcal{L}_\lambda\lesssim\mathcal{N}_1(\lambda)\).
If
\[
  \lambda\asymp n^{-\theta},
  \qquad 0<\theta<\beta,
\]
then all conditions in \Cref{ass:conditional-equivalence-scale} and \Cref{ass:risk-equivalence-expectation} hold.
Hence, together with the structural assumptions, risk equivalence holds.
See \Cref{prop:polynomial-scale-verification} for the verification.

If no leverage control beyond the universal bound \(\mathcal{L}_\lambda\le\kappa^2/\lambda\) in \cref{eq:leverage-scale-bounds} is available,
the polynomial conclusion still holds in the smaller region
\[
  \lambda\asymp n^{-\theta},
  \qquad
  0<\theta<1,
  \qquad
  \min\{s,\alpha_\psi\}>1-1/\beta.
\]
See \Cref{prop:polynomial-universal-leverage-verification} for the verification.

\subsubsection{Exponential eigenvalue decay}
\label{sec:exponential-scale-verification}
Suppose \Cref{ass:lipschitz-filter} holds,
\(
  \lambda_j\asymp \exp(-c_0j^\nu)
\)
for some \(c_0,\nu>0\), and \(f^*\in\mathcal{F}_s(R)\) for fixed
\(s,R>0\).
Suppose in addition that
\(\mathcal{L}_\lambda\lesssim\mathcal{N}_1(\lambda)\).
If \(\lambda\asymp n^{-\theta}\) for any \(\theta>0\), then all conditions in
\Cref{ass:conditional-equivalence-scale} and
\Cref{ass:risk-equivalence-expectation} hold.
Hence, together with the structural assumptions, risk equivalence holds.
See \Cref{prop:exponential-scale-verification} for the verification.

\subsubsection{High-dimensional spherical kernels}
\label{sec:high-dimensional-scale-verification}
Consider the spherical setting of \citet{zhang2025_OptimalRates}, with
\(x_i\) uniformly distributed on \(\mathbb S^{d-1}\), \(d\to\infty\), and
\[
  n\asymp d^\gamma,
  \qquad
  k_d(x,y)=\Phi(\langle x,y\rangle).
\]
Here \(\Phi\in C^\infty([-1,1])\) is fixed independently of \(d\) and has the expansion
\[
  \Phi(t)=\sum_{j\ge0}a_jt^j,
  \qquad a_j>0\quad(j\ge0).
\]
Thus the kernel bound \(\kappa^2=\Phi(1)\) is independent of \(d\).
Assume \Cref{ass:basic-model,ass:lipschitz-filter} hold with a fixed positive noise variance and filter constants independent of \(d\).
Let \(f_d^*=L_d^{s/2}h_d\), where \(L_d\) is the population integral operator and \(\norm{h_d}_{L^2(\mu_d)}\le R\) for fixed \(s,R>0\).
For any fixed integer \(m\) satisfying \(1\le m<\gamma\), choose
\[
  \lambda=c\,d^{-m},
\]
where \(c>0\) is a sufficiently small fixed constant depending only on \(m\), \(\Phi\), and the filter constants.
Then \Cref{ass:conditional-equivalence-scale,ass:risk-equivalence-expectation} hold uniformly over this source ball, so both conditional and expectation risk equivalence follow.
In particular, \(m=1\) gives an admissible scale for every fixed \(\gamma>1\) and \(s>0\).
See \Cref{prop:high-dimensional-scale-verification} for the verification.
\Cref{subsec:high-dimensional-optimality} shows that suitable filters at these integer scales attain the minimax rate and the Pinsker constant when \(\gamma\ge s+1\).
 \section{Applications}
\label{sec:applications}

We now give several applications of our main results.

\subsection{Examples of spectral filters}
\label{subsec:filter-examples}

We first provide several examples of spectral filters satisfying our assumptions.
Throughout this subsection, implicit constants may depend on fixed algorithmic parameters, but not on \(\lambda\).
These parameters include the exponent in generalized ridge, the iteration order \(\nu\), the step size stability margin, the cutoff profile, and \(\kappa\).

\subsubsection{Kernel ridge regression}
\label{subsubsec:krr-filter-example}

Kernel ridge regression uses
\begin{equation}
  \label{eq:krr-filter}
  \varphi_\lambda^{\mathsf{KR}}(t)=\frac{1}{t+\lambda},
  \qquad
  q_\lambda^{\mathsf{KR}}(t)=\frac{t}{t+\lambda},
  \qquad
  \psi_\lambda^{\mathsf{KR}}(t)=\frac{\lambda}{t+\lambda}.
\end{equation}
With \(u=t/\lambda\), the dimensionless profiles are
\[
  \phi_\lambda^\sharp(u)=\psi_\lambda^\sharp(u)=(1+u)^{-1}.
\]
The shrinkage lies in \([0,1]\), and these profiles, together with their first
derivatives, satisfy the four bounds in
\Cref{cond:smooth-filter} with \(\rho_\psi=1\).
Thus kernel ridge regression satisfies that elementary condition.

\subsubsection{Generalized ridge regression}
\label{subsubsec:generalized-ridge}

Fix \(p\ge2\).
The generalized ridge filter is
\begin{equation}
  \label{eq:generalized-ridge-filter}
  \varphi_{\lambda,p}^{\mathsf{GR}}(t)
  =
  \frac{t^{p-1}}{t^p+\lambda^p},
  \qquad
  q_{\lambda,p}^{\mathsf{GR}}(t)
  =
  \frac{t^p}{t^p+\lambda^p},
  \qquad
  \psi_{\lambda,p}^{\mathsf{GR}}(t)
  =
  \frac{\lambda^p}{t^p+\lambda^p}.
\end{equation}
The endpoint \(p=1\) recovers kernel ridge regression in
\Cref{eq:krr-filter}.
With \(u=t/\lambda\), the dimensionless profiles are
\[
  \phi_{\lambda,p}^\sharp(u)
  =
  \frac{u^{p-1}}{1+u^p},
  \qquad
  \psi_{\lambda,p}^\sharp(u)
  =
  \frac{1}{1+u^p}.
\]
They satisfy
\[
  |\phi_{\lambda,p}^\sharp(u)|
  \lesssim
  (1+u)^{-1},
  \qquad
  |\psi_{\lambda,p}^\sharp(u)|
  \lesssim
  (1+u)^{-p}.
\]
Moreover,
\[
  (\phi_{\lambda,p}^\sharp)'(u)
  =
  \frac{(p-1)u^{p-2}-u^{2p-2}}{(1+u^p)^2},
  \qquad
  (\psi_{\lambda,p}^\sharp)'(u)
  =
  -\frac{p u^{p-1}}{(1+u^p)^2}.
\]
Since \(p\ge2\), splitting into \(0\le u\le1\) and \(u\ge1\) gives
\[
  |(\phi_{\lambda,p}^\sharp)'(u)|
  \lesssim
  (1+u)^{-2},
  \qquad
  |(\psi_{\lambda,p}^\sharp)'(u)|
  \lesssim
  (1+u)^{-p-1}.
\]
The shrinkage lies in \([0,1]\), and the value
\(\varphi_{\lambda,p}^{\mathsf{GR}}(0)=0\) supplies the removable extension.
Rescaling the preceding bounds from \(u=t/\lambda\) to \(t\) verifies the four bounds in
\Cref{cond:smooth-filter} with residual order \(\rho_\psi=p\).
Hence generalized ridge regression satisfies the elementary scale-regular filter condition for every fixed \(p\ge2\).

\subsubsection{Iterated kernel ridge regression}
\label{subsubsec:iterated-krr-filter-example}

Fix an integer \(\nu\ge1\).
The \(\nu\)-fold iterated kernel ridge filter is
\begin{equation}
  \label{eq:iterated-krr-filter}
  \psi_\lambda^{\mathsf{IKR},\nu}(t)
  =
  \xk{\frac{\lambda}{t+\lambda}}^\nu,
  \qquad
  q_\lambda^{\mathsf{IKR},\nu}(t)=1-\psi_\lambda^{\mathsf{IKR},\nu}(t),
\end{equation}
and
\begin{equation}
  \label{eq:iterated-krr-phi}
  \varphi_\lambda^{\mathsf{IKR},\nu}(t)
  =
  \frac{1-\xk{\lambda/(t+\lambda)}^\nu}{t}
  =
  \sum_{\ell=1}^\nu
  \lambda^{\ell-1}(t+\lambda)^{-\ell}.
\end{equation}
The final expression supplies the removable value at \(t=0\).
With \(u=t/\lambda\),
\[
  \phi_\lambda^\sharp(u)
  =
  \sum_{\ell=1}^\nu(1+u)^{-\ell},
  \qquad
  \psi_\lambda^\sharp(u)=(1+u)^{-\nu}.
\]
The shrinkage lies in \([0,1]\), and these profiles and their first derivatives
satisfy the bounds in
\Cref{cond:smooth-filter} with \(\rho_\psi=\nu\).

\subsubsection{Gradient flow}
\label{subsubsec:gradient-flow-filter-example}

At time \(t_\lambda=\lambda^{-1}\), gradient flow has
\begin{equation}
  \label{eq:gradient-flow-filter}
  \psi_\lambda^{\mathsf{GF}}(t)=e^{-t/\lambda},
  \qquad
  q_\lambda^{\mathsf{GF}}(t)=1-e^{-t/\lambda},
  \qquad
  \varphi_\lambda^{\mathsf{GF}}(t)=\frac{1-e^{-t/\lambda}}{t},
\end{equation}
with removable value \(\varphi_\lambda^{\mathsf{GF}}(0)=\lambda^{-1}\).
With \(u=t/\lambda\), write
\[
  \phi_\lambda^\sharp(u)=g(u),
  \qquad
  \psi_\lambda^\sharp(u)=e^{-u},
  \qquad
  g(u)=\frac{1-e^{-u}}{u},\quad g(0)=1.
\]
The function \(g\) and its first derivative satisfy
\[
  |g(u)|\lesssim(1+u)^{-1},
  \qquad
  |g'(u)|\lesssim(1+u)^{-2}.
\]
The corresponding bounds for \(e^{-u}\) and its first derivative are
immediate; together with \(0\le q_\lambda^{\mathsf{GF}}\le1\),
\Cref{cond:smooth-filter} holds with \(\rho_\psi=1\).

\subsubsection{Stable gradient descent}
\label{subsubsec:gradient-descent}

Assume that the step size satisfies \(\eta\kappa^2 \le\rho<1\).
Let \(m=(\eta\lambda)^{-1}\), initially assuming that \(m\) is an integer.
The gradient descent filter is
\begin{equation}
  \label{eq:gradient-descent-filter}
  \psi_\lambda^{\mathsf{GD}}(t)=(1-\eta t)^m,
  \qquad
  q_\lambda^{\mathsf{GD}}(t)=1-(1-\eta t)^m,
  \qquad
  \varphi_\lambda^{\mathsf{GD}}(t)=\frac{1-(1-\eta t)^m}{t},
\end{equation}
with removable value \(\varphi_\lambda^{\mathsf{GD}}(0)=m\eta=\lambda^{-1}\).
On \(0\le t\le\kappa^2\), stability gives \(0\le1-\eta t\le1\), hence
\(0\le q_\lambda^{\mathsf{GD}}\le1\).
With \(u=t/\lambda\), the dimensionless profiles are
\[
  \phi_\lambda^\sharp(u)
  =
  \int_0^1(1-su/m)^{m-1}\dd s,
  \qquad
  \psi_\lambda^\sharp(u)=(1-u/m)^m.
\]
Since \(u\le\rho m\), the integrand and its first \(u\)-derivative are
bounded by an exponential in \(-su\), uniformly for all sufficiently small
\(\lambda\).  Integrating gives the first two bounds in
\Cref{cond:smooth-filter}; the same exponential bound for
\(\psi_\lambda^\sharp\) and its derivative gives the remaining two with
\(\rho_\psi=1\).
Thus stable gradient descent satisfies \Cref{cond:smooth-filter}.
For noninteger \(m=(\eta\lambda)^{-1}\), the same argument applies using the real branch on \(1-\eta t>0\).

\subsubsection{Smoothed spectral cutoff}
\label{subsubsec:smoothed-cutoff-filter-example}

Let \(\chi\in C^1(\mathbb{R})\) be nondecreasing, with \(0\le\chi\le1\), \(\chi(u)=0\) for \(u\le1\), and \(\chi(u)=1\) for \(u\ge2\).
The smoothed spectral cutoff is
\begin{equation}
  \label{eq:smoothed-cutoff-filter}
  q_\lambda^{\mathsf{SC}}(t)=\chi(t/\lambda),
  \qquad
  \psi_\lambda^{\mathsf{SC}}(t)=1-\chi(t/\lambda),
  \qquad
  \varphi_\lambda^{\mathsf{SC}}(t)=\frac{\chi(t/\lambda)}{t},
\end{equation}
where \(\varphi_\lambda^{\mathsf{SC}}(t)=0\) for \(t\le\lambda\).
Since \(0\le\chi\le1\), we have \(0\le q_\lambda^{\mathsf{SC}}\le1\).
The dimensionless profiles are
\[
  \phi_\lambda^\sharp(u)=\frac{\chi(u)}{u},
  \qquad
  \psi_\lambda^\sharp(u)=1-\chi(u),
\]
where the first profile is zero on \([0,1]\).
They are absolutely continuous; their values and first derivatives have the
required bounds in \Cref{cond:smooth-filter} with \(\rho_\psi=1\), by treating
\([0,1]\), \([1,2]\), and \([2,\infty)\) separately.

\subsubsection{Spectral clipping}
\label{subsubsec:spectral-clipping}

For spectral clipping,
\[
  \varphi_\lambda^{\mathsf{clip}}(t)=\frac{1}{\max(t,\lambda)},
  \qquad
  q_\lambda^{\mathsf{clip}}(t)=\min(t/\lambda,1),
\]
and
\[
  \psi_\lambda^{\mathsf{clip}}(t)=\xk{1-\frac{t}{\lambda}}_+.
\]
The basic filter conditions hold since \(0\le q_\lambda^{\mathsf{clip}}(t)\le1\).
Its dimensionless profiles are
\[
  \phi_\lambda^\sharp(u)
  =
  \begin{cases}
    1,&0\le u\le1,\\
    u^{-1},&u\ge1,
  \end{cases}
  \qquad
  \psi_\lambda^\sharp(u)=(1-u)_+.
\]
These profiles are absolutely continuous.
With residual order \(\rho_\psi=1\), they satisfy
\[
  |\phi_\lambda^\sharp(u)|
  \lesssim
  (1+u)^{-1},
  \qquad
  |(\phi_\lambda^\sharp)'(u)|
  \lesssim
  (1+u)^{-2},
\]
and
\[
  |\psi_\lambda^\sharp(u)|
  \lesssim
  (1+u)^{-1},
  \qquad
  |(\psi_\lambda^\sharp)'(u)|
  \lesssim
  (1+u)^{-2}
\]
almost everywhere, with constants independent of \(\lambda\).
Hence spectral clipping satisfies \Cref{cond:smooth-filter}.

\subsection{Saturation and interpolation}
\label{subsec:related-phenomena}

The sequence risk representation also clarifies two phenomena that are less visible in analyses based only on rates.
Under the hypotheses of the relevant equivalence theorem, both conclusions carry over to the kernel risk.

\subsubsection{Saturation effects}
\label{subsubsec:saturation-effects}

For a source ball \(\mathcal{F}_s(R)\), the worst-case bias in the sequence model is exactly
\[
  \sup_{f\in\mathcal{F}_s(R)}
  \sum_{j\ge1}
  \psi_\lambda(\lambda_j)^2 f_j^2
  =
  R^2
  \sup_{j\ge1}
  \lambda_j^s \psi_\lambda(\lambda_j)^2.
\]
This identity shows the role of qualification directly.
If the residual has qualification at least \(s/2\), the preceding bias is \(O(R^2 \lambda^s)\).
When the filter has only finite qualification, additional target smoothness need not improve this order.
For example, the kernel ridge residual in \Cref{eq:krr-filter} gives the worst-case bias bound \(O(\lambda^{\min(s,2)})\), while the \(\nu\)-fold iterated residual in \Cref{eq:iterated-krr-filter} gives \(O(\lambda^{\min(s,2\nu)})\).
Above their respective thresholds, these bounds are sharp because any fixed positive population eigenvalue contributes at orders \(\lambda^2\) and \(\lambda^{2\nu}\), respectively.
Thus iteration postpones the saturation threshold.
Gradient flow, stable gradient descent, and smoothed spectral cutoff have arbitrary qualification, so they do not suffer from saturation caused by finite qualification.

\subsubsection{Interpolation regime}
\label{subsubsec:interpolation-regime}

The variance comparison yields a lower bound for every filter covered by the main theorem, even without the conditions required for a full bias comparison.
Suppose that, for some \(\rho>0\) and \(F_\rho \in(0,\infty)\), its residual satisfies the qualification bound
\[
  0\le \psi_\lambda(t)\le
  F_\rho \xk{\frac{\lambda}{t}}^\rho,
  \qquad t>0.
\]
This holds under \Cref{ass:lipschitz-filter}, with its positive residual order.
Setting \(c_\rho=(2F_\rho)^{1/\rho}\), we have
\[
  q_\lambda(t)=1-\psi_\lambda(t)\ge \frac{1}{2},
  \qquad t\ge c_\rho \lambda,
\]
and therefore
\[
  \mathcal{N}_q(\lambda)
  =
  \sum_{j\ge1} q_\lambda(\lambda_j)^2
  \ge
  \frac{1}{4}\#\{j:\lambda_j \ge c_\rho \lambda\}.
\]
Under \Cref{ass:basic-model,ass:lipschitz-filter} and
\cref{eq:leverage-variance-compatibility},
\Cref{prop:lipschitz-variance-comparison} and nonnegativity of the squared bias yield
\[
  \mathcal{E}_n^{\mathsf{np}}(q_\lambda;f^*\mid X)
  \ge
  \mathsf{V}_X(\varphi_\lambda)
  \ge
  \xk{1-o_{\mathbb{P}}(1)}
  \frac{\sigma^2}{4n}
  \#\{j:\lambda_j \ge c_\rho \lambda\}.
\]
Thus every admissible filter incurs a variance lower bound determined by the number of population eigenvalues resolved at scale \(\lambda\); as the regularization scale decreases, this bound marks the onset of the interpolation regime.
The empirical pseudoinverse, for which \(q^{\mathsf{int}}(t)=\mathbf{1}\{t>0\}\), is the limiting extreme rather than the only filter to which the conclusion applies.

\subsection{Pinsker minimax constant}
\label{subsec:pinsker-constants-interpretation}

We now ask a sharper question than rate optimality: what is the exact asymptotic constant for the minimax risk over \(\mathcal{F}_s(R)\), averaged over the design and noise, and can a concrete kernel spectral filter attain it?
Balancing the worst-case bias and variance identifies a regularization scale of the optimal order but not the sharp constant, which depends on the exact asymptotics of the spectral sums and the shape of the shrinkage profile.

Motivated by Pinsker's sharp Gaussian sequence result \citep{pinsker1980_OptimalFiltering}, we consider the following Pinsker profile and transfer the resulting sequence model calculation to the kernel estimator with random design.

Fix \(s\ge1\) and \(R>0\).
For \(\lambda>0\), define the Pinsker shrinkage profile
\[
  q_{\lambda,s}^{\mathsf{Pin}}(t)
  =
  \begin{cases}
    0,
    &0\le t\le \lambda,
    \\
    1-(\lambda/t)^{s/2},
    &t>\lambda,
  \end{cases}
\]
its regularization function
\[
  \varphi_{\lambda,s}^{\mathsf{Pin}}(t)
  =
  \begin{cases}
    0,
    &0\le t\le \lambda,
    \\
    t^{-1} \xk{1-(\lambda/t)^{s/2}},
    &t>\lambda,
  \end{cases}
\]
and the residual
\[
  \psi_{\lambda,s}^{\mathsf{Pin}}(t)
  =
  \begin{cases}
    1,
    &0\le t\le \lambda,
    \\
    (\lambda/t)^{s/2},
    &t>\lambda.
  \end{cases}
\]
The corresponding kernel estimator is
\[
  \widehat{f}_{\lambda,s}^{\mathsf{Pin}}
  =
  \varphi_{\lambda,s}^{\mathsf{Pin}}(T_X)\widehat{g}_Z.
\]

\begin{assumption}
\label{ass:gaussian-pinsker-spectrum}
The noise variables are i.i.d.\ \(\mathcal{N}(0,\sigma^2)\) and independent of the design.
Moreover, the eigenvalue counting function, with multiplicities included, satisfies
\[
  N_L(t)
  \coloneqq
  \#\{j:\lambda_j \ge t\}
  \sim
  Dt^{-1/\beta},
  \qquad
  t\downarrow0,
\]
for fixed constants \(D>0\) and \(\beta>1\).
\end{assumption}

Let
\(
  \mathcal{G}=\overline{\Ran(L)}\subset L^2(\mu)
\),
and let \(\mathfrak{D}_n\) be the class of all jointly measurable prediction rules
\[
  \widehat{g}_n:
  \mathcal{X}^n \times\mathbb{R}^n
  \longrightarrow
  \mathcal{G}.
\]
For \(\widehat{g}_n \in\mathfrak{D}_n\), set
\[
  \mathcal{R}_n(\widehat{g}_n,f)
  =
  \E_{X,\varepsilon} \zk{
    \norm{\widehat{g}_n(X,y)-f}_{L^2(\mu)}^2
  }.
\]
The minimax risk is
\begin{equation}
  \label{eq:pinsker-minimax-risk}
  \mathfrak{R}_n
  \zk{\mathcal{F}_s(R)}
  \coloneqq
  \inf_{\widehat{g}_n \in\mathfrak{D}_n}
  \sup_{f\in\mathcal{F}_s(R)}
  \mathcal{R}_n(\widehat{g}_n,f).
\end{equation}

Put \(\rho=s\beta\), and define the deterministic oracle scale
\begin{equation}
  \label{eq:pinsker-oracle-scale}
  \lambda_n^*
  =
  \xk{
    \frac{
      \sigma^2 D \rho
    }{
      nR^2(\rho+1)(\rho+2)
    }
  }^{\beta/(\rho+1)}.
\end{equation}
This is an oracle scale because it depends on the fixed class and model parameters \(s,\beta,R,D,\sigma^2\).

Set
\begin{equation}
  \label{eq:pinsker-upper-constant}
  C_{\mathsf{Pin}}
  =
  D^{\rho/(\rho+1)}
  R^{2/(\rho+1)}
  (\sigma^2)^{\rho/(\rho+1)}
  (\rho+1)^{1/(\rho+1)}
  \xk{
    \frac{\rho}{\rho+2}
  }^{\rho/(\rho+1)}.
\end{equation}

\begin{corollary}[Pinsker upper bound]
\label{cor:pinsker-upper}
Under \Cref{ass:basic-model,ass:gaussian-pinsker-spectrum}, for every fixed \(s\ge1\) and \(R>0\),
\begin{align}
  &
  \sup_{f\in\mathcal{F}_s(R)}
  \mathcal{E}_n^{\mathsf{np}}
  (q_{\lambda_n^*,s}^{\mathsf{Pin}};f)
  \notag
  \\
  &\quad=
  \xk{1+o(1)}
  \zk{
    R^2(\lambda_n^*)^s
    +
    \frac{\sigma^2}{n}
    \sum_{j\ge1}
    \xk{
      1-\xk{\frac{\lambda_n^*}{\lambda_j}}^{s/2}
    }_+^2
  }
  \notag
  \\
  &\quad=
  \xk{C_{\mathsf{Pin}}+o(1)}
  n^{-\rho/(\rho+1)}.
  \label{eq:pinsker-upper}
\end{align}
\end{corollary}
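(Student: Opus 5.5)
The plan is to go through \Cref{thm:risk-equivalence-expectation}, applied uniformly over the source ball $\Theta_n=\mathcal{F}_s(R)$ in the sense of \Cref{rem:triangular-array-equivalence}, and then evaluate the sequence risk in closed form.

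\emph{Step 1: the filter satisfies \Cref{ass:lipschitz-filter}.} The dimensionless profiles are
\[
  \phi^\sharp(u)=u^{-1}\bigl(1-u^{-s/2}\bigr)_+,
  \qquad
  \psi^\sharp(u)=\min\bigl(1,u^{-s/2}\bigr).
\]
Both are continuous, with a single kink at $u=1$, and independent of $\lambda$. In log coordinates $F(t)=e^t\phi^\sharp(e^t-1)$ is bounded and Lipschitz. Taking $\rho_\psi=s/2$, the function $P(t)=e^{\rho_\psi t}\psi^\sharp(e^t-1)$ is bounded and Lipschitz, because $(e^t/(e^t-1))^{s/2}$ is smooth and bounded for $t\ge\log 2$. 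Hence $\alpha_\psi=\min\{s,1\}=1$, since $s\ge1$.

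\emph{Step 2: the scale conditions.} From \Cref{ass:gaussian-pinsker-spectrum} we get $\lambda_j\asymp j^{-\beta}$ in the averaged sense that $N_L(t)\sim Dt^{-1/\beta}$. Also $\lambda_n^*\asymp n^{-\beta/(\rho+1)}$ with $\theta=\beta/(\rho+1)<1$. No leverage bound beyond $\mathcal{L}_\lambda\le\kappa^2/\lambda$ is assumed, so I would rely on the universal-leverage verification \Cref{prop:polynomial-universal-leverage-verification}. It requires $\theta<1$ and $\min\{s,\alpha_\psi\}=1>1-1/\beta$, and both hold. Its proof relies only on counting-function asymptotics ($\mathcal{N}_1(\lambda)\asymp\mathcal{N}_q(\lambda)\asymp\lambda^{-1/\beta}$), which follow from $N_L$. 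The quantity $\mathcal{S}_{f}(\lambda)\le R^2\sup_j\lambda_j^s(\lambda/(\lambda_j+\lambda))\lesssim R^2\lambda^{\min(s,1)}$ is uniform over the ball. This makes \eqref{eq:leverage-fixed-target-scale} and the other conditions hold uniformly over $\mathcal{F}_s(R)$.

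The main obstacle is checking this carefully. We have $\gamma_\lambda^2\lesssim(\lambda n)^{-1}\log^3 n$, so we need $R^2\lambda\cdot(\lambda n)^{-1}\log^3 n=o(\lambda^{-1/\beta}/n)$, that is, $\lambda^{1/\beta}\log^3 n\to0$, which is fine. Condition \eqref{eq:leverage-variance-compatibility} needs $\lambda n\gg\log n$, which holds since $\theta<1$. \Cref{ass:risk-equivalence-expectation} holds similarly. Then \Cref{thm:risk-equivalence-expectation} gives, uniformly over the ball,
\[
  \mathcal{E}_n^{\mathsf{np}}=(1+o(1))\,\mathcal{E}_n^{\mathsf{seq}},
\]
so the supremum of the former equals $(1+o(1))$ times the supremum of the latter.

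\emph{Step 3: the sequence supremum.} The variance does not depend on $f$. The bias supremum equals
\[
  R^2\sup_j\lambda_j^s\psi(\lambda_j)^2=R^2\sup_j\min\bigl(\lambda_j^s,\lambda^s\bigr)=R^2\lambda^s,
\]
with equality since $\lambda_1\ge\lambda$ eventually. This gives the first equality. For the second, write the variance sum as a Stieltjes integral against $N_L$:
\[
  \sum_j\bigl(1-(\lambda/\lambda_j)^{s/2}\bigr)_+^2
  =\int_\lambda^{\infty}\bigl(1-(\lambda/t)^{s/2}\bigr)^2\,\mathrm{d}(-N_L(t)).
\]
Integrate by parts and substitute $N_L(t)\sim Dt^{-1/\beta}$ (a regular-variation argument, since the integrand is bounded and continuous in $t/\lambda$). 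The result is
\[
  (1+o(1))\,D\lambda^{-1/\beta}\int_1^\infty\bigl(1-v^{-s/2}\bigr)^2\,\mathrm{d}\bigl(-v^{-1/\beta}\bigr)
  =(1+o(1))\,D\lambda^{-1/\beta}\frac{\rho^2}{(\rho+1)(\rho+2)}\cdot\frac{1}{\rho}\cdots,
\]
which is an elementary Beta-type integral evaluated as $\frac{s^2\beta^2}{(1+s\beta)(2+s\beta)}$ up to normalization. Plugging in $\lambda=\lambda_n^*$, which is exactly the minimizer of $R^2\lambda^s+\sigma^2 n^{-1}c\,\lambda^{-1/\beta}$, and simplifying the exponents yields $C_{\mathsf{Pin}}n^{-\rho/(\rho+1)}$.
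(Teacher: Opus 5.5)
Your proposal is correct and follows the paper's proof essentially step for step. You verify the filter assumption, then obtain all three scale conditions uniformly over $\mathcal{F}_s(R)$ from \Cref{prop:polynomial-universal-leverage-verification}, using $\theta=\beta/(\rho+1)<1$ and $\min\{s,\alpha_\psi\}=1>1-1/\beta$. You then apply the uniformity clause of \Cref{thm:risk-equivalence-expectation} and evaluate the worst-case sequence risk exactly as $R^2\lambda^s$ plus the target-free variance; your Stieltjes integration by parts is the same computation as \Cref{lem:pinsker-spectral-sum}.

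Two small remarks. Your choice $\rho_\psi=s/2$, rather than the paper's $\min\{s/2,1\}$, is also admissible and gives the same $\alpha_\psi=1$. Your displayed integral $\int_1^\infty(1-v^{-s/2})^2\,\mathrm{d}(-v^{-1/\beta})$ equals exactly $\rho^2/((\rho+1)(\rho+2))$, with no extra factor $1/\rho$; that product would be the prior-energy constant $K_{\mathsf{E}}$. It is precisely with $\rho^2/((\rho+1)(\rho+2))$ that $\lambda_n^*$ is the critical point of $R^2\lambda^s+\frac{\sigma^2 D}{n}\frac{\rho^2}{(\rho+1)(\rho+2)}\lambda^{-1/\beta}$.
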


\begin{theorem}[Pinsker minimax constant]
\label{thm:pinsker-minimax}
Under \Cref{ass:basic-model,ass:gaussian-pinsker-spectrum}, for every fixed \(s\ge1\) and \(R>0\),
\begin{equation}
  \label{eq:pinsker-minimax}
  n^{\rho/(\rho+1)}
  \mathfrak{R}_n
  \zk{\mathcal{F}_s(R)}
  \longrightarrow
  C_{\mathsf{Pin}}.
\end{equation}
Moreover,
\begin{equation}
  \label{eq:pinsker-minimax-attainment}
  \sup_{f\in\mathcal{F}_s(R)}
  \mathcal{E}_n^{\mathsf{np}}
  (q_{\lambda_n^*,s}^{\mathsf{Pin}};f)
  =
  \xk{1+o(1)}
  \mathfrak{R}_n
  \zk{\mathcal{F}_s(R)}.
\end{equation}
\end{theorem}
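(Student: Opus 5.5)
The upper half of \cref{eq:pinsker-minimax} comes directly from \Cref{cor:pinsker-upper}. Since \(\widehat f^{\mathsf{Pin}}_{\lambda_n^*,s}\) belongs to \(\mathfrak{D}_n\), we get
\[
  \limsup_n n^{\rho/(\rho+1)}\mathfrak{R}_n\zk{\mathcal{F}_s(R)}\le C_{\mathsf{Pin}}.
\]
Once the matching lower bound \(\liminf_n n^{\rho/(\rho+1)}\mathfrak{R}_n\ge C_{\mathsf{Pin}}\) is proved, \cref{eq:pinsker-minimax-attainment} follows at once: both sides equal \((C_{\mathsf{Pin}}+o(1))n^{-\rho/(\rho+1)}\) and \(C_{\mathsf{Pin}}>0\). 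So the whole task is the lower bound, and this is a Pinsker-type argument carried out in the random-design regression experiment.

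For the lower bound I would use a Bayesian argument with Gaussian priors on the leading population coordinates. Fix \(\delta\in(0,1)\) and set \(N=N_L(\lambda')\), where \(\lambda'\) is a slightly enlarged oracle scale. Put independent priors \(f_j\sim\mathcal{N}(0,(1-\delta)\tau_j^2)\) for \(j\le N\) and \(f_j=0\) otherwise. The variances \(\tau_j^2\) are the Pinsker least-favourable ones, \(\tau_j^2=\frac{\sigma^2}{n}\bigl((\lambda_j/\lambda_n^*)^{s/2}-1\bigr)_+\), which make the Bayes linear risk in the sequence model \(z_j=f_j+\sigma n^{-1/2}\xi_j\) equal to \(\frac{\sigma^2}{n}\sum_j\tau_j^2/(\tau_j^2+\sigma^2/n)\). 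A direct computation with \(N_L(t)\sim Dt^{-1/\beta}\) gives
\[
  \sum_j\lambda_j^{-s}\tau_j^2\approx R^2,
\]
and the Bayes risk equals \((C_{\mathsf{Pin}}+o(1))n^{-\rho/(\rho+1)}\). This is the same spectral integral computation used in \Cref{cor:pinsker-upper}, with the profile \(1-(\lambda/t)^{s/2}\). Because \(\sum_j\lambda_j^{-s}f_j^2\) concentrates (it is a sum of \(N\to\infty\) independent terms with no dominant one), the prior mass outside \(\mathcal{F}_s(R)\) is exponentially small. Standard truncation then gives
\[
  \mathfrak{R}_n\ge \text{Bayes risk}-o(n^{-\rho/(\rho+1)}),
\]
once one notes that projecting any estimator onto the ball \(\mathcal{F}_s(R)\) (convex) does not increase its loss. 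Letting \(\delta\to0\) at the end yields the constant.

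The main obstacle is computing the Bayes risk in the random-design experiment rather than in the sequence model. Conditional on \(X\), with Gaussian noise and a Gaussian prior on \(f=\sum_{j\le N}f_je_j\), the posterior is Gaussian. The Bayes risk equals \(\Tr\bigl((\Sigma^{-1}+\frac{n}{\sigma^2}\widehat G)^{-1}\bigr)\), where \(\Sigma=\operatorname{diag}((1-\delta)\tau_j^2)\) and \(\widehat G_{jk}=\frac1n\sum_i e_j(x_i)e_k(x_i)\) is the empirical Gram matrix of the first \(N\) eigenfunctions. The Bayes risk is convex in \(\widehat G\), because \(G\mapsto\Tr((\Sigma^{-1}+cG)^{-1})\) is operator convex, and \(\E\widehat G=I\). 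Jensen's inequality then gives
\[
  \E_X\,\text{Bayes risk}\ge\Tr\bigl((\Sigma^{-1}+\tfrac{n}{\sigma^2}I)^{-1}\bigr),
\]
which is exactly the sequence-model Bayes risk. The random design can therefore only hurt, and no concentration of \(\widehat G\) is needed for the lower bound. I would try this convexity route first. If a technical point arises, namely that restricting estimators to \(\mathcal{G}\) while the loss involves components outside \(\operatorname{span}\{e_j\}_{j\le N}\) matters, I would handle it by noting that orthogonal projection onto that span only decreases the loss under this prior.

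The remaining bookkeeping is routine. The discrepancy between the \(\lambda'\)-truncation and the oracle spectral sums is controlled by \(N_L(t)\sim Dt^{-1/\beta}\) through Riemann–Stieltjes approximation. Finally, take \(\delta\downarrow0\) after \(n\to\infty\).
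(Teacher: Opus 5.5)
Your proposal is correct and follows the paper's argument. The upper half comes from \Cref{cor:pinsker-upper}; the lower half uses a Gaussian prior dual to Pinsker shrinkage whose source norm concentrates below \(R^2\), and operator convexity of the inverse together with \(\E\widehat G=I\) (Jensen) shows that the random-design Bayes risk dominates the sequence Bayes risk. The differences are cosmetic and both variants go through: you scale the prior variances by \(1-\delta\) at the oracle threshold instead of shaving the threshold to \(\lambda_{n,\delta}\), and you handle the conditioning by projecting estimators onto the convex ball rather than evaluating the conditioned-prior posterior mean under the Gaussian prior.
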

Thus the Pinsker spectral estimator with the oracle scale attains the sharp minimax constant over \(\mathcal{F}_s(R)\).

\subsection{Optimal rates and Pinsker constants in high dimensions}
\label{subsec:high-dimensional-optimality}

The integer scales in \Cref{sec:high-dimensional-scale-verification} also yield sharp minimax procedures in the spherical setting.
The gaps between successive spherical eigenvalues allow a smooth filter to retain entire spectral subspaces, with partial shrinkage of the last retained subspace at a transition point.
We transfer the resulting sequence risk through \Cref{thm:risk-equivalence-expectation} and use the matching lower bound of \citet{lu2024_PinskerBound}.

Retain the spherical kernel and source ball of \Cref{sec:high-dimensional-scale-verification}, writing \(\mathcal F_{s,d}(R)\) for the ball at dimension \(d\).
In this subsection, let \(s,R>0\), let the noise be i.i.d.\ \(\mathcal N(0,\sigma^2)\) independently of the design, and assume
\[
  n=\eta d^\gamma(1+o(1)),
  \qquad \eta>0,
  \qquad \gamma\ge s+1.
\]
Let \(\mathfrak R_{n,d}(\mathcal F_{s,d}(R))\) denote the minimax risk in \cref{eq:pinsker-minimax-risk} for this dimension-dependent model.
Set
\[
  p=\left\lfloor\frac{\gamma}{s+1}\right\rfloor\ge1,
  \qquad \delta=\gamma-p(s+1)\in[0,s+1),
  \qquad \zeta=\min\{\gamma-p,(p+1)s\},
\]
and define
\[
  A_j=R^2(a_jj!)^s,
  \qquad B_p=\frac{\sigma^2}{\eta p!}.
\]
For a fixed \(w\in(0,1]\), choose a smooth nondecreasing profile \(\chi_w:[0,\infty)\to[0,1]\) with
\[
  \chi_w(u)=0\quad(u\le1/2),
  \qquad \chi_w(1)=w,
  \qquad \chi_w(u)=1\quad(u\ge2).
\]
Use the filter \(q_\lambda(t)=\chi_w(t/\lambda)\), with \(\varphi_\lambda(t)=q_\lambda(t)/t\) for \(t>0\) and \(\varphi_\lambda(0)=0\), at the scale
\begin{equation}
  \label{eq:high-dimensional-optimal-filter}
  \lambda_d=\mu_{d,p}\asymp d^{-p},
  \qquad
  w=
  \begin{cases}
    A_p/(A_p+B_p),&\delta=0,\\
    1,&\delta>0.
  \end{cases}
\end{equation}
Here \(\mu_{d,p}\) is the population eigenvalue of degree \(p\).
The choice is deterministic and depends on the kernel and the fixed model parameters.

\begin{corollary}[High-dimensional Pinsker constant]
  \label{cor:high-dimensional-pinsker}
  Under the preceding assumptions, the filter in \cref{eq:high-dimensional-optimal-filter} satisfies
  \begin{align}
    \sup_{f\in\mathcal F_{s,d}(R)}
      \mathcal E_n^{\mathsf{np}}(q_{\lambda_d};f)
    =\bigl(C_{s,\gamma}+o(1)\bigr)d^{-\zeta}
    =\bigl(1+o(1)\bigr)
      \mathfrak R_{n,d}(\mathcal F_{s,d}(R)),
    \label{eq:high-dimensional-pinsker-attainment}
  \end{align}
  where
  \[
    C_{s,\gamma}=
    \begin{cases}
      A_pB_p/(A_p+B_p),&\delta=0,\\
      B_p,&0<\delta<s,\\
      B_p+A_{p+1},&\delta=s,\\
      A_{p+1},&s<\delta<s+1.
    \end{cases}
  \]
\end{corollary}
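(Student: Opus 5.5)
The plan has three parts. First, show that the filter is admissible and the scale conditions hold uniformly. Second, transfer the problem to the sequence model with \Cref{thm:risk-equivalence-expectation}. Third, compute the worst-case sequence risk on the spherical spectrum and compare it with the lower bound of \citet{lu2024_PinskerBound}.

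\textbf{Admissibility.} The profile \(q_\lambda(t)=\chi_w(t/\lambda)\) is a smoothed spectral cutoff, with transition window \([1/2,2]\) in place of \([1,2]\). Exactly as in \Cref{subsubsec:smoothed-cutoff-filter-example}, \Cref{cond:smooth-filter} holds with constants depending only on \(\chi_w\), so \Cref{ass:lipschitz-filter} holds, with constants independent of \(d\) and with qualification of any order.

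\textbf{Transfer to the sequence model.} The scale \(\lambda_d=\mu_{d,p}\asymp d^{-p}\) is of the integer form \(c\,d^{-p}\) with \(p<\gamma\), since \(\gamma\ge s+1>1\). The constant \(c\) is not a free small constant here, so I would rerun the argument of \Cref{prop:high-dimensional-scale-verification} at this scale. That argument uses only two facts:
\begin{itemize}
  \item \(\mathcal N_1(\lambda_d)\asymp\mathcal L_\lambda\asymp d^{p}\), which follows from the spectral gaps \(\mu_{d,k}\asymp d^{-k}\) and the multiplicities \(N(d,k)\asymp d^k/k!\);
  \item \(\mathcal N_q(\lambda_d)\gtrsim d^p\), because \(q=1\) on degrees \(k<p\) and \(q=w>0\) on degree \(p\).
\end{itemize}
With these, \Cref{thm:risk-equivalence-expectation} applies uniformly over \(\mathcal F_{s,d}(R)\) by \Cref{rem:triangular-array-equivalence}, giving \(\sup_f\mathcal E^{\mathsf{np}}_n=(1+o(1))\sup_f\mathcal E^{\mathsf{seq}}_n\). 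One caveat needs care: uniformity over the ball must cover the supremum. The remark guarantees this because the ratio remainder is uniform in \(f\).

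\textbf{Sequence-model computation.} On degree \(k\) we have \(q(\mu_{d,k})=1\) for \(k<p\), \(q(\mu_{d,p})=w\), and \(q(\mu_{d,k})=0\) for \(k>p\), since \(\mu_{d,p+1}/\mu_{d,p}\to0\). Write \(t_k=\mu_{d,k}^s\). Then
\[
\sup_{f\in\mathcal F_{s,d}(R)}\mathcal E^{\mathsf{seq}}_n
=R^2\max\bigl\{(1-w)^2t_p,\ t_{p+1}\bigr\}+\frac{\sigma^2}{n}\Bigl[\sum_{k<p}N(d,k)+w^2N(d,p)\Bigr],
\]
where the bias supremum is a maximum over shells, as in \Cref{subsubsec:saturation-effects}. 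I would then use \(\mu_{d,k}\sim a_kk!\,d^{-k}\), \(N(d,k)\sim d^k/k!\) and \(n\sim\eta d^\gamma\). These give
\[
R^2 t_k\sim A_k d^{-ks},\qquad \frac{\sigma^2}{n}N(d,p)\sim B_p d^{p-\gamma}.
\]
The case analysis on \(\delta\) compares the exponents \(ps\), \((p+1)s\) and \(\gamma-p\).
\begin{itemize}
  \item If \(\delta=0\), then \(\gamma-p=ps\). The risk is \(d^{-ps}\bigl[A_p(1-w)^2+B_pw^2\bigr]\), plus a lower-order \(A_{p+1}\) term. This is minimized at \(w=A_p/(A_p+B_p)\), with value \(A_pB_p/(A_p+B_p)\).
  \item If \(\delta>0\), take \(w=1\). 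The risk is \(B_pd^{p-\gamma}+A_{p+1}d^{-(p+1)s}\), whose leading term gives the three remaining cases according to the sign of \(\delta-s\).
\end{itemize}
This yields \(C_{s,\gamma}d^{-\zeta}\).

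\textbf{Lower bound (main obstacle).} The remaining step is to match \(C_{s,\gamma}d^{-\zeta}\) with the minimax lower bound of \citet{lu2024_PinskerBound} for this spherical setting with Gaussian noise, checking that its constant coincides with \(C_{s,\gamma}\) in each case. I expect this identification of constants across the four \(\delta\)-regimes to be the main obstacle, especially the boundary cases \(\delta=0\) and \(\delta=s\), together with verifying that the random-design sums are uniform. If the cited lower bound is stated only up to constants in some regime, I would instead derive it from a Bayesian Gaussian prior on the shells of degrees \(p\) and \(p+1\), in Pinsker's manner.
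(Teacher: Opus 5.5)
Your proposal is correct and follows the paper's proof essentially step for step. The paper verifies admissibility of the smoothed cutoff through \Cref{cond:smooth-filter}, then obtains the scale conditions from the general clause of \Cref{prop:high-dimensional-scale-verification} (any $\lambda\asymp d^{-m}$ with $\liminf_{d\to\infty} q_\lambda(\mu_{d,m})>0$, which holds here since $q_{\lambda_d}(\mu_{d,p})=w$); it then applies uniform expectation equivalence, writes the exact block formula for the worst-case sequence risk, and carries out the same four-way case analysis in $\delta$.

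For the lower bound, the paper simply cites Theorem~3.1 and Appendix~D of \citet{lu2024_PinskerBound} after translating parameters: their radius parameter is your $R^2$, their sample-size constant is $\eta$, and the noise is Gaussian with variance $\sigma^2$. This gives exactly $(C_{s,\gamma}-o(1))d^{-\zeta}$, so the fallback Gaussian-prior construction you mention is not needed.
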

The rate \(d^{-\zeta}\asymp n^{-\zeta/\gamma}\) has the plateau structure of the high-dimensional minimax risk.
Away from \(\delta=0\), a smooth cutoff already attains the sharp constant; at \(\delta=0\), partial shrinkage of degree \(p\) balances its bias and variance.
All these filters satisfy the comparison scales because \(1\le p<\gamma\).
The proof is given in \Cref{subsec:high-dimensional-pinsker-proof}.

The same comparison also recovers the optimal KRR rate in the range \(0<s\le1\) studied by \citet{zhang2025_OptimalRates}.
\begin{corollary}[High-dimensional KRR rate]
  \label{cor:high-dimensional-krr-rate}
  Under the assumptions of this subsection, suppose \(0<s\le1\) and choose
  \[
    \lambda_d\asymp d^{-a},
    \qquad a=p+\frac12\min\{\delta,s\}.
  \]
  Then
  \[
    \sup_{f\in\mathcal F_{s,d}(R)}
      \mathcal E_n^{\mathsf{np}}(q_{\lambda_d}^{\mathsf{KR}};f)
    \asymp d^{-\zeta}
    \asymp\mathfrak R_{n,d}(\mathcal F_{s,d}(R)).
  \]
\end{corollary}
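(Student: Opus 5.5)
The plan is to reduce the statement to \Cref{thm:risk-equivalence-expectation} together with a direct sequence-model computation and a matching lower bound. For the upper bound, KRR satisfies \Cref{cond:smooth-filter} with \(\rho_\psi=1\), so \(\alpha_\psi=1\). The scale \(\lambda_d\asymp d^{-a}\) with \(a=p+\tfrac12\min\{\delta,s\}\) satisfies \(p\le a<p+1\le\gamma\) (since \(\gamma\ge p(s+1)\ge p+1\) when \(s>0\)... indeed \(\gamma\ge s+1>1\), and \(a<\gamma\) because \(a\le p+s/2<p(s+1)+?\); more precisely \(a\le p+\delta/2\le \gamma -p s-\delta/2<\gamma\)). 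I would check \Cref{ass:conditional-equivalence-scale,ass:risk-equivalence-expectation} uniformly over \(\mathcal F_{s,d}(R)\), mimicking \Cref{prop:high-dimensional-scale-verification}. That proposition treats integer scales \(cd^{-m}\), so I need to extend it to the intermediate scale \(\lambda_d\) lying strictly between \(\mu_{d,p+1}\asymp d^{-p-1}\) and \(\mu_{d,p}\asymp d^{-p}\). In the spherical setting, rotation invariance gives \(\mathcal L_\lambda=\mathcal N_1(\lambda)\). The spherical eigen-structure then gives the following, with multiplicities \(N(d,k)\asymp d^k\):
\[
\mathcal N_1(\lambda_d)\asymp d^{p}+d^{p+1}\cdot d^{-p-1}/\lambda_d\asymp d^{a},\qquad \mathcal N_q(\lambda_d)\asymp d^{p}+d^{2(p+1)}\mu_{d,p+1}^2/\lambda_d^2 .
\]
For the \(p\)-th level, \(q^{\mathsf{KR}}_\lambda(\mu_{d,p})\to1\). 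For level \(p+1\), the contribution to \(\mathcal N_q\) is \(\asymp d^{2a-p-1}\), which is \(\lesssim d^p\) when \(a\le p+\tfrac12\).

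This is where I expect the main obstacle. Condition \cref{eq:leverage-variance-compatibility} requires \((\mathcal N_1/\mathcal N_q)\sqrt{\mathcal N_1\ell/n}\to0\), that is, roughly \(d^{a-p}\,d^{(a-\gamma)/2}\log d\to0\), i.e.\ \(3a<2p+\gamma\). Since \(a-p\le\delta/2\) and \(\gamma=p(s+1)+\delta\), this reduces to \(3\delta/2<ps+\delta\) when \(\delta\le s\), which holds because \(\delta\le s\le ps\) (the equality case \(\delta=s=ps\) still needs care). In the case \(\delta>s\) with \(a=p+s/2\), the same comparison is \(3s/2<ps+\delta\), which holds. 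The balance condition \cref{eq:leverage-fixed-target-scale} uses \(\mathcal S_{f^*}(\lambda)\le R^2\sup_j\lambda_j^s\lambda/(\lambda_j+\lambda)\lesssim\lambda^{s}\) for \(s\le1\), together with \(\gamma_\lambda^2\lesssim d^{a-\gamma}\log^3 d\). The requirement is therefore \(d^{-as+a-\gamma}\log^3d=o(d^{p-\gamma})\), i.e.\ \(a(1-s)<p\). This is the most delicate inequality to verify when \(s\) is small, and the boundary cases may need the exact level sums rather than crude bounds. \Cref{ass:risk-equivalence-expectation} follows from \(a<\gamma\). If the conditions fail only at boundary cases, I would fall back on a one-sided comparison, namely the variance comparison plus a bias upper bound, which suffices for \(\asymp\).

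Next I would compute the sequence risk. The bias is \(\sup_f\sum\psi^2f_j^2=R^2\sup_k\mu_{d,k}^s\lambda^2/(\mu_{d,k}+\lambda)^2\). This is attained at \(k=p\), giving \(d^{-ps}\lambda^2d^{2p}=d^{2p-2a-ps}\), or at \(k=p+1\), giving \(d^{-(p+1)s}\). The variance is \(\asymp d^{p-\gamma}+d^{2a-p-2-\gamma+1}\). Substituting \(a\) in each case (\(\delta\le s\) versus \(\delta>s\)) yields \(d^{-\zeta}\) with \(\zeta=\min\{\gamma-p,(p+1)s\}\); for instance, \(\delta\le s\) gives bias \(d^{-ps-\delta}=d^{p-\gamma}\) and variance \(d^{p-\gamma}\). \Cref{thm:risk-equivalence-expectation}, applied uniformly via \Cref{rem:triangular-array-equivalence}, then transfers this to \(\sup_f\mathcal E^{\mathsf{np}}\).

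For the lower bound \(\mathfrak R_{n,d}\gtrsim d^{-\zeta}\), I would invoke the Pinsker-type lower bound of \citet{lu2024_PinskerBound}, exactly as in \Cref{cor:high-dimensional-pinsker}. There it is established that \(\mathfrak R_{n,d}\asymp d^{-\zeta}\) with an exact constant, which in particular gives the rate. This completes \(\asymp\).
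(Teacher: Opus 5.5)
Your proposal follows the paper's proof: you verify the scale conditions directly at the non-integer scale using $\mathcal L_\lambda=\mathcal N_1(\lambda)\asymp d^{a}$ and $\mathcal N_q(\lambda)\asymp d^{p}$, compute the level-wise sequence bias and variance, transfer by uniform expectation risk equivalence, and take the minimax lower bound from Lu et al. The argument is complete except for one step you leave open, which closes in one line.

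\textbf{The spectral balance condition.} You reduce it correctly to $a(1-s)<p$, equivalently $h(1-s)<ps$ with $h=\tfrac12\min\{\delta,s\}$. You then call it delicate and hedge with a fallback. In fact
\[
h(1-s)\le \tfrac{s(1-s)}{2}<s\le ps \quad\text{since } p\ge1,
\]
so there is no boundary case and no need for exact level sums. Your fallback would not have worked anyway: the variance comparison plus an upper bound on the empirical bias $\mathsf B_X(q_\lambda)$ requires exactly the interaction bound together with the balance condition, which is what you were trying to avoid.

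\textbf{The variance condition.} It reduces to $3h<ps+\delta$. For $\delta\le s$ this reads $\delta<2ps$, which holds strictly because $\delta\le s\le ps$. For $\delta>s$ we have $ps+\delta>2s>3s/2$. So the case $\delta=s=ps$ needs no extra care.

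\textbf{Minor slips.}
\begin{itemize}
\item The level-$(p+1)$ contribution to $\mathcal N_q$ is $D_{d,p+1}(\mu_{d,p+1}/\lambda_d)^2\asymp d^{2a-p-1}$. Your displayed formula has a stray factor $d^{2(p+1)}$ in place of $d^{p+1}$.
\item When $\delta=0$, $a=p$ and $q^{\mathsf{KR}}_{\lambda}(\mu_{d,p})$ stays bounded away from $0$ and $1$ rather than tending to $1$. This still gives $\mathcal N_q\asymp d^p$.
\item The inequality $p+1\le\gamma$ can fail when $s<1$. Your corrected argument $a\le p+\delta/2<\gamma$ is the one you actually need.
\end{itemize}

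\textbf{A small difference from the paper.} Your sequence computation gives $\sup_f\mathcal E^{\mathsf{seq}}\asymp d^{-\zeta}$ in both directions, so you get the KRR lower bound directly from the equivalence. The paper proves only the upper bound and obtains the KRR lower bound from the minimax lower bound. Either works.
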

This is a statement about the worst-case risk over the source ball; it does not require every target to have nonvanishing energy in each spectral subspace.
The verification is also given in \Cref{subsec:high-dimensional-pinsker-proof}.
 \section{Numerical Experiments}
\label{sec:numerical-experiments}

This section presents numerical illustrations of the risk equivalence results.
Let \(x=(x_1,\ldots,x_d)\) have independent Rademacher coordinates and let \(e_j(x)=x_j\).
We use \(d=255\) and a bounded kernel with population eigenvalues
\[
  \mu_j=j^{-3/2},
  \qquad
  j=1,\ldots,d.
\]
Thus, the \(e_j\)'s are bounded orthonormal eigenfunctions, and the covariance matrix in population coordinates is \(\Lambda=\operatorname{diag}(\mu_1,\ldots,\mu_d)\).
We take \(\sigma^2=1\) and, for source order \(s\), use the normalized fixed target
\[
  f_j^*=c_s \mu_j^{s/2} j^{-1},
  \qquad
  \sum_{j=1}^d(f_j^*)^2=1.
\]

For a design matrix \(E_X=(e_j(x_i))_{i,j}\), write
\[
  \widehat{T}_X
  =
  \Lambda^{1/2}
  \xk{\frac{E_X^{\mathsf{T}} E_X}{n}}
  \Lambda^{1/2}.
\]
We evaluate the finite-rank conditional risk exactly:
\[
  \left\|
    \Lambda^{1/2} q_\lambda(\widehat{T}_X)
    \Lambda^{-1/2} \bm{f}^*-
    \bm{f}^*
  \right\|_2^2
  +
  \frac{\sigma^2}{n}
  \Tr\xk{
    \Lambda\varphi_\lambda(\widehat{T}_X)^2\widehat{T}_X
  }.
\]
This is the coordinate form of \cref{eq:np-risk}, which we compare with the corresponding finite-rank sequence risk in \cref{eq:sequence-risk}.
For each displayed parameter setting, we draw 200 paired, independent designs.
The marker gives the mean exact conditional risk across these designs, and the shaded band shows its 10th--90th percentiles.
The regularization parameters are fixed from population quantities before the designs are drawn.

\subsection{Risk equivalence for regular filters}
\label{subsec:regular-filter-numerics}

We begin with \(s=1\) and the prescribed scale path \(\lambda_n=n^{-3/5}\).
We compare kernel ridge regression, gradient flow, the smoothed
spectral cutoff in \Cref{subsubsec:smoothed-cutoff-filter-example}, and spectral
clipping from \Cref{subsubsec:spectral-clipping}.
This comparison covers regular filters under the main theorem.

\begin{figure}[htpb]
  \centering
  \includegraphics[width=\textwidth]{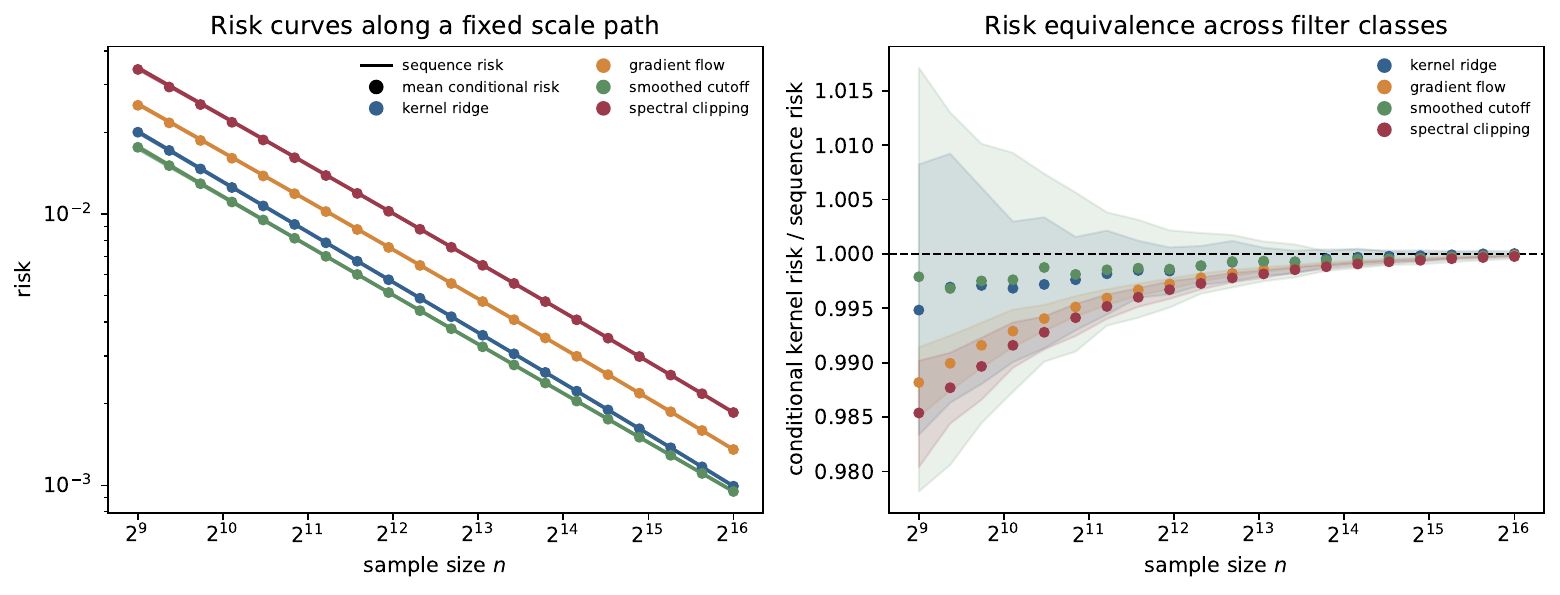}
  \caption{
    Exact finite-rank risks for four regular spectral filters over 200 paired designs at each displayed sample size.
    In the left panel, solid curves give the deterministic sequence risks, markers give mean conditional kernel risks, and bands give their 10th--90th conditional percentiles.
    The right panel shows the corresponding ratios of conditional risks.
  }
\label{fig:regular-filter-equivalence}
\end{figure}
\FloatBarrier

In \Cref{fig:regular-filter-equivalence}, the sequence curves closely track the learning curves of the kernel estimators across the displayed range of sample sizes.
The ratios approach one along the prescribed scale path for all four filters, including spectral clipping, whose kink lies outside the analytic filter class.

\subsection{Learning curves}
\label{subsec:filter-learning-curves-numerics}

We next fix \(n=8192\), retain \(s=1\), and vary the regularization over a deterministic logarithmic grid.
The comparison includes the four filters above and the Pinsker profile \(q^{\mathsf{Pin}}_{\lambda,1}\) in \Cref{subsec:pinsker-constants-interpretation}.
All filters use the same designs and fixed target under the common regularization grid.

\begin{figure}[htpb]
  \centering
  \includegraphics[width=0.5\textwidth]{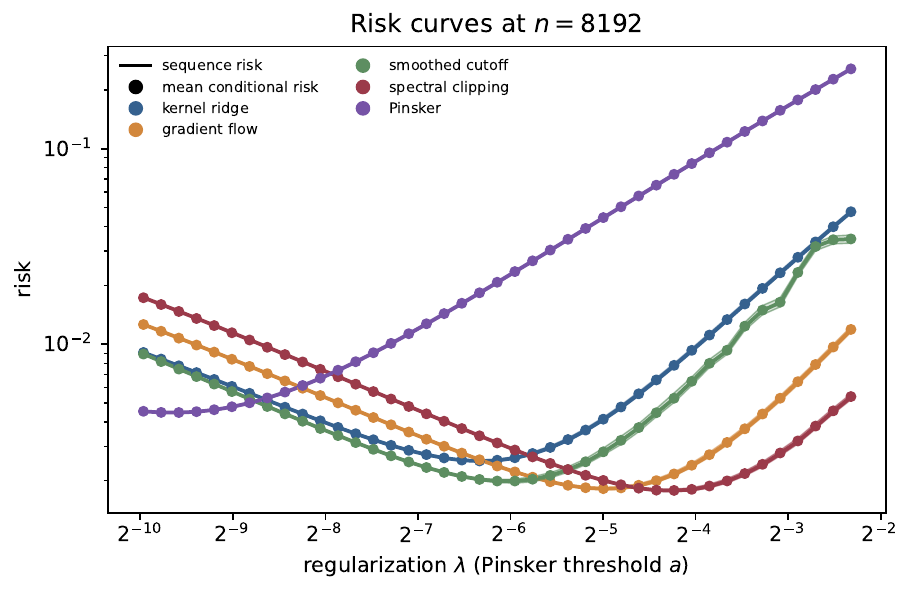}
  \caption{
    Exact risk curves at a fixed sample size over 200 paired designs.
    Solid curves give deterministic sequence risks, markers give mean conditional kernel risks, and bands give the 10th--90th conditional percentiles over the same regularization grid.
  }
\label{fig:learning-curves}
\end{figure}
\FloatBarrier

The solid curves in \Cref{fig:learning-curves} predict the risk profiles of the kernel estimators throughout the displayed regularization range.
The Pinsker curve compares spectral profiles for a fixed target, whereas its sharp optimality concerns the source-ball minimax risk under the additional assumptions of \Cref{thm:pinsker-minimax}.

\subsection{Worst-case risks over a source ball}
\label{subsec:source-ball-worst-case-numerics}

To examine the worst-case risk underlying Pinsker optimality, we consider the finite-rank source ball
\[
  \mathcal{F}^{(d)}_{s,R}
  =
  \dk{
    f=L^{s/2} h:\ \sum_{j=1}^d h_j^2 \le R^2
  }.
\]
For a spectral profile \(q_\lambda\), the exact worst-case sequence risk over this ball is
\[
  \mathcal{W}_{n,R}(q_\lambda)
  =
  R^2 \max_{1\le j\le d}
  \mu_j^s \xk{1-q_\lambda(\mu_j)}^2
  +
  \frac{\sigma^2}{n}\sum_{j=1}^d q_\lambda(\mu_j)^2.
\]
The maximum arises because the worst-case source element concentrates its \(h\)-energy on the coordinate with the largest residual bias.
We use \(s=R=1\), \(n=8192\), and the same spectrum and noise variance as above.

\begin{figure}[htpb]
  \centering
  \includegraphics[width=0.5\textwidth]{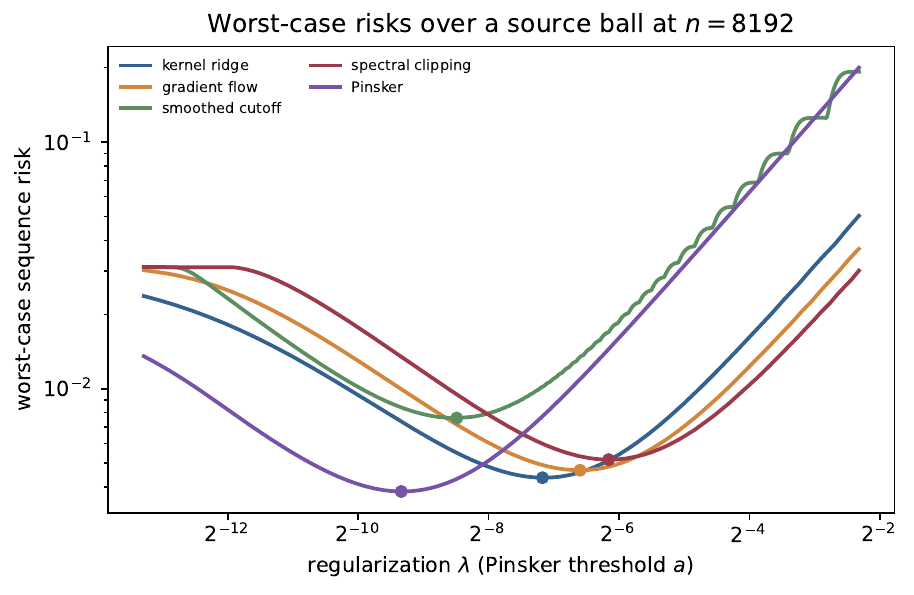}
  \caption{
    Exact finite-rank worst-case sequence risks over the source ball \(\mathcal{F}^{(255)}_{1,1}\) at \(n=8192\).
    Each curve is \(\mathcal{W}_{n,R}(q_\lambda)\), and its marker identifies the minimum over the displayed regularization grid.
  }
\label{fig:source-ball-worst-case-risk}
\end{figure}
\FloatBarrier

As shown in \Cref{fig:source-ball-worst-case-risk}, the Pinsker profile has the smallest minimum risk among the filters compared over the displayed regularization grid.
This exact finite-rank sequence-model calculation complements the conditional kernel risk curves in \Cref{fig:learning-curves}.

\subsection{Hard spectral cutoff}
\label{subsec:hard-cutoff-numerics}

The equivalence results above require regular spectral filters.
For a discontinuous filter such as the hard spectral cutoff \(q_\lambda(t)=\mathbf{1}\{t\ge\lambda\}\), upper and lower polynomial eigenvalue envelopes do not control how the empirical spectrum crosses the discontinuity.
Without finer local information near the threshold---for example, control of the threshold eigenspace and the signal it carries---learning curve equivalence need not hold.

We examine this boundary in a finite-rank example.
Let \(e_j(x)=x_j\) be uniformly bounded orthonormal coordinate functions on a product Rademacher space, and retain the first \(d=4^5-1=1023\) coordinates.
We compare a smooth polynomial model with a block model whose geometric blocks are
\[
  I_\ell
  =
  \{4^\ell,\ldots,4^{\ell+1}-1\},
  \qquad
  \ell=0,\ldots,4,
\]
and whose coefficients are
\[
  \mu_j=4^{-3\ell/2},
  \qquad
  (f_j^*)^2=c_\ell4^{-2\ell},
  \qquad
  j\in I_\ell,
  \qquad
  c_\ell=
  \begin{cases}
    4,&\ell\ \text{even},\\
    1,&\ell\ \text{odd}.
  \end{cases}
\]
The smooth model uses \(\mu_j=j^{-3/2}\) and \((f_j^*)^2=j^{-2}\), while both models have matching upper and lower polynomial orders:
\[
  \mu_j \asymp j^{-3/2},
  \qquad
  (f_j^*)^2\asymp j^{-2}.
\]
Both targets satisfy \(f^*=L^{s/2} h\) for every \(s<2/3\), in particular for \(s=1/2\).

Choose \(\lambda\) so that the population rule retains coordinates \(1,\ldots,63\).
In the block model, the 48-dimensional block \(16,\ldots,63\) lies exactly at the threshold.
For each independent Rademacher design, we evaluate both sides of \cref{eq:np-risk,eq:sequence-risk} exactly, with \(\sigma^2=1\).
The number of retained population coordinates is \(63\), below every sample size considered: \(n\in\{1024,4096,16384\}\).

\begin{figure}[t]
  \centering
  \includegraphics[width=\textwidth]{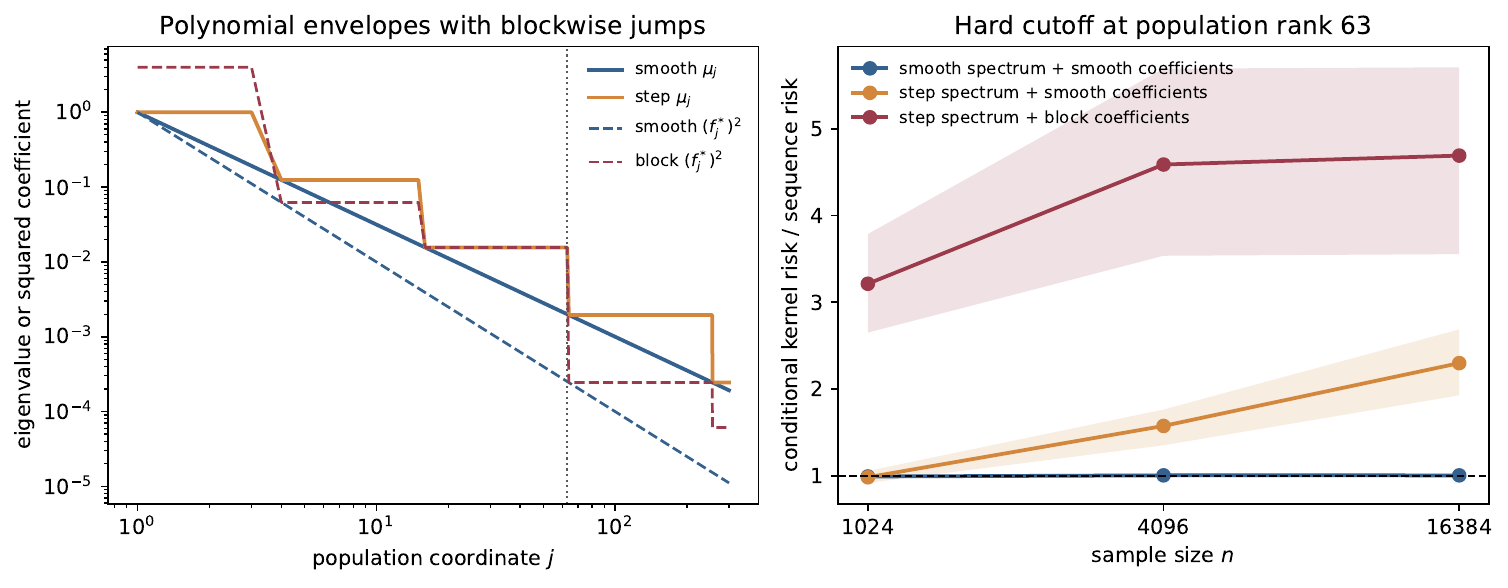}
  \caption{
    Risk comparison for hard spectral cutoff over 40 independent designs.
    The left panel shows the two spectral and coefficient constructions together with the population cutoff.
    The right panel shows the mean ratio of the conditional risk of the kernel estimator to the sequence-model risk, with shaded 10th--90th percentile bands.
  }
\label{fig:hard-cutoff-plateau}
\end{figure}
\FloatBarrier

\Cref{fig:hard-cutoff-plateau} makes the mechanism explicit.
In the smooth model, the empirical and population cutoffs produce nearly identical risks.
In the block model, the empirical cutoff splits the spectral plateau and retains fewer directions than the population rule, creating a persistent discrepancy that increases when the signal coefficients vary across blocks.
The example shows that vanishing errors in the empirical eigenvalues alone do not ensure learning curve equivalence for a discontinuous filter: the local spectral configuration at the threshold determines which directions are retained.
 \section{Conclusion}
\label{sec:conclusion}

This paper characterizes the risk of random-design kernel spectral regression through a population sequence model.
Under the stated structural, filter, and scale conditions, the risk of the kernel spectral estimator is asymptotically equivalent to that of the corresponding population Gaussian sequence estimator.
The resulting sequence risk gives an explicit first-order learning curve determined by the population eigenvalues, target coefficients, noise level, and shrinkage profile.
The proof combines finite-Schatten perturbation bounds with scale-regular Lipschitz bounds in logarithmic spectral coordinates, and therefore covers continuous nonsmooth filters such as spectral clipping.
The remainders are uniform over families of problem instances that satisfy the corresponding uniformity conditions.

Under Gaussian noise and exact polynomial asymptotics for the eigenvalue counting function, the Pinsker profile attains the minimax constant over every fixed source ball with \(s\ge1\), among all measurable prediction rules.
The sequence risk also shows how finite filter qualification produces saturation and how the number of population eigenvalues resolved at the regularization scale marks the onset of interpolation.

The risk equivalence concerns prescribed spectral estimators along deterministic regularization sequences, with uniformity over admissible families of problem instances.
Hard spectral cutoff remains outside the present equivalence theory because global spectral decay alone does not determine how empirical eigenvalues cross a discontinuity.
Equivalence for such filters will require finer control of the local spectral projector and the signal near the threshold.
When the regularization parameter is selected using the responses, the filter depends on the noise, so the conditional bias--variance identity for a prescribed filter no longer applies directly.
 \section*{Acknowledgements}
We thank Yuxuan Hou for helpful suggestions on this work.
 \phantomsection

\clearpage
\appendix
\clearpage{}\section{Empirical Operator Control}
\label{sec:operator-concentration}

This section proves the estimates for random designs used in the
Schatten argument for scale-regular Lipschitz filters.
All statements are consequences of the structural assumptions in \Cref{sec:main-results}.
Set
\[
  T_\lambda \coloneqq T+\lambda I,
  \qquad
  T_{X,\lambda} \coloneqq T_X+\lambda I,
\]
and define
\[
  \delta_\lambda
  \coloneqq
  \norm{
    T_\lambda^{-1/2}(T_X-T)T_\lambda^{-1/2}
  },
  \qquad
  \eta_\lambda
  \coloneqq
  \norm{
    T_\lambda^{-1/2}(T_X-T)T_\lambda^{-1/2}
  }_{\mathfrak{S}_2}.
\]
Here \(\mathfrak{S}_2\) denotes the Hilbert--Schmidt class.
We also write
\[
  v_\lambda
  \coloneqq
  \frac{\mathcal{L}_\lambda\ell_\lambda}{n}.
\]
For the operator norm bound, define
\[
  \varrho_\lambda
  \coloneqq
  \norm{T_\lambda^{-1/2} TT_\lambda^{-1/2}},
  \qquad
  \mathfrak{d}_\lambda
  \coloneqq
  \begin{cases}
    \mathcal{N}_1(\lambda)/\varrho_\lambda,
      & \varrho_\lambda>0,\\
    1,
      & \varrho_\lambda=0,
  \end{cases}
  \qquad
  \ell_\lambda^{\mathrm{int}}
  \coloneqq
  \log(e+\mathfrak{d}_\lambda).
\]
When \(\varrho_\lambda>0\), \(\mathfrak{d}_\lambda\) is the intrinsic dimension
of the regularized population covariance; the convention
\(\mathfrak{d}_\lambda=1\) covers \(T=0\).

\subsection{Concentration of the regularized covariance operator}
\label{subsec:covariance-concentration}

\begin{proposition}[Concentration of the regularized covariance operator]
\label{prop:covariance-concentration}
Under \Cref{ass:basic-model} and
\cref{eq:leverage-variance-compatibility},
\[
  \delta_\lambda
  =
  O_{\mathbb{P}} \xk{
    \sqrt{
      \frac{
        \mathcal{L}_\lambda
        \varrho_\lambda
        \ell_\lambda^{\mathrm{int}}
      }{n}
    }
    +
    \frac{
      \mathcal{L}_\lambda
      \ell_\lambda^{\mathrm{int}}
    }{n}
  },
  \qquad
  \eta_\lambda
  =
  O_{\mathbb{P}}\xk{\sqrt{\frac{\mathcal{L}_\lambda\mathcal{N}_1(\lambda)}{n}}}.
\]
If \(T\) is fixed and nonzero, then
\(\varrho_\lambda \to1\) and
\(\ell_\lambda^{\mathrm{int}}\asymp\ell_\lambda\).
If, in addition, \(\sup_{0\le t\le\kappa^2}(t+\lambda)\varphi_\lambda(t)\)
is bounded uniformly in \(\lambda\), then
\[
  \delta_\lambda=O_{\mathbb{P}}(\sqrt{v_\lambda})
  =o_{\mathbb{P}}(1).
\]
The bounds also hold when the population operator varies with \(n\), with
the displayed leverage scale evaluated rowwise.
\end{proposition}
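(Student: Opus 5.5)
The plan is to write the regularized fluctuation as an i.i.d.\ sum and apply a matrix Bernstein inequality in its intrinsic-dimension form for the operator norm, and a second-moment computation for the Hilbert--Schmidt norm. Set \(\xi_i=T_\lambda^{-1/2}k_{x_i}\) and
\[
  A_i
  =
  \xi_i\otimes\xi_i-\E[\xi\otimes\xi],
  \qquad
  T_\lambda^{-1/2}(T_X-T)T_\lambda^{-1/2}=\frac1n\sum_{i=1}^n A_i .
\]
The \(A_i\) are independent, centered and self-adjoint, and \(\E[\xi\otimes\xi]=T_\lambda^{-1/2}TT_\lambda^{-1/2}\). By the definition of \(\mathcal L_\lambda\), \(\norm{\xi_i}^2\le\mathcal L_\lambda\) almost surely, so \(\norm{A_i}\le\mathcal L_\lambda+\varrho_\lambda\le 2\mathcal L_\lambda\). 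Here we use \(\varrho_\lambda\le\E\norm{\xi}^2=\mathcal N_1(\lambda)\le\mathcal L_\lambda\) by \cref{eq:leverage-scale-bounds}. For the variance proxy,
\[
  \E A_i^2\preceq\E[\norm{\xi}^2\,\xi\otimes\xi]\preceq\mathcal L_\lambda\,T_\lambda^{-1/2}TT_\lambda^{-1/2}=:V .
\]
Thus \(\norm{V}=\mathcal L_\lambda\varrho_\lambda\) and \(\Tr V/\norm V=\mathcal N_1(\lambda)/\varrho_\lambda=\mathfrak d_\lambda\).

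For the operator norm, the intrinsic-dimension Bernstein inequality (Minsker, or Tropp Thm.~7.7.1, extended to separable Hilbert spaces for trace-class operators) gives
\[
  \mathbb P(\delta_\lambda\ge t)\le 4\mathfrak d_\lambda\exp\!\Big(-\frac{nt^2/2}{\mathcal L_\lambda\varrho_\lambda+2\mathcal L_\lambda t/3}\Big),
\]
valid for \(t\ge\sqrt{\mathcal L_\lambda\varrho_\lambda/n}+\mathcal L_\lambda/(3n)\). Choosing \(t\) of order \(\sqrt{\mathcal L_\lambda\varrho_\lambda\log(e\mathfrak d_\lambda/\epsilon)/n}+\mathcal L_\lambda\log(e\mathfrak d_\lambda/\epsilon)/n\) and using \(\log(e\mathfrak d_\lambda/\epsilon)\lesssim_\epsilon\ell_\lambda^{\mathrm{int}}\) gives the first \(O_{\mathbb P}\) bound, with constants depending only on \(\epsilon\). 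The case \(T=0\) is trivial. For the Hilbert--Schmidt norm, independence and centering give
\[
  \E\eta_\lambda^2=\frac1n\E\norm{A_1}_{\mathfrak S_2}^2\le\frac1n\E\norm{\xi}^4\le\frac{\mathcal L_\lambda\mathcal N_1(\lambda)}n,
\]
and Chebyshev's inequality finishes. Because all constants are absolute, the bounds hold rowwise when \(T\) varies with \(n\).

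For the fixed-\(T\) statements, \(\varrho_\lambda=\lambda_1/(\lambda_1+\lambda)\to1\) and \(\mathfrak d_\lambda=\mathcal N_1(\lambda)(\lambda_1+\lambda)/\lambda_1\asymp\mathcal N_1(\lambda)\) with \(\mathcal N_1(\lambda)\ge\lambda_1/(\lambda_1+\lambda)\ge1/2\), so \(\ell_\lambda^{\mathrm{int}}\asymp\ell_\lambda\). The bound then reads \(\delta_\lambda=O_{\mathbb P}(\sqrt{v_\lambda}+v_\lambda)\). It remains to show \(v_\lambda\to0\), and this is the only delicate step: we must get it from \cref{eq:leverage-variance-compatibility} using the filter bound. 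With \(\sup(t+\lambda)\varphi_\lambda(t)\le A\) we have \(q_\lambda(t)\le At/(t+\lambda)\), so \(q_\lambda^2\le q_\lambda\le A\,t/(t+\lambda)\), which gives \(\mathcal N_q(\lambda)\le A\mathcal N_1(\lambda)\). Hence \(\mathcal N_1/\mathcal N_q\ge 1/A\), and \cref{eq:leverage-variance-compatibility} forces \(\sqrt{v_\lambda}=o(1)\). Then \(\sqrt{v_\lambda}+v_\lambda=O(\sqrt{v_\lambda})\), which concludes \(\delta_\lambda=O_{\mathbb P}(\sqrt{v_\lambda})=o_{\mathbb P}(1)\).

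The main technical obstacle is justifying the dimension-free Bernstein inequality in infinite dimensions with the \(\mathfrak d_\lambda\) factor. I would handle it by finite-rank truncation of \(\mathcal H\) onto spans of eigenvectors and a monotone limit, or by citing the Hilbert-space version directly.
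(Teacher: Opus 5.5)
Your proposal is correct and follows the paper's proof essentially step for step. Both arguments use the same centered summands with variance majorant $\mathcal{L}_\lambda T_\lambda^{-1/2}TT_\lambda^{-1/2}$ (norm $\mathcal{L}_\lambda\varrho_\lambda$, intrinsic dimension $\mathfrak{d}_\lambda$), fed into the intrinsic-dimension matrix Bernstein inequality and extended to $\mathcal{H}$ by finite-rank spectral compressions; the same second-moment bound $\E\eta_\lambda^2\le\mathcal{L}_\lambda\mathcal{N}_1(\lambda)/n$ followed by Markov; and the same use of $\mathcal{N}_q(\lambda)\lesssim\mathcal{N}_1(\lambda)$ together with \cref{eq:leverage-variance-compatibility} to force $v_\lambda\to0$ and absorb the linear term. (The differences are cosmetic: the two-sided bound carries $8\mathfrak{d}_\lambda$ rather than $4\mathfrak{d}_\lambda$, and your step $q_\lambda^2\le q_\lambda$ relies on the standing convention $0\le q_\lambda\le1$, which you could avoid by using $q_\lambda^2\le A^2\bigl(t/(t+\lambda)\bigr)^2$ directly.)
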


\begin{proof}
Write
\[
  Z_i
  =
  T_\lambda^{-1/2}
  \bigl(k_{x_i} \otimes k_{x_i}-T\bigr)
  T_\lambda^{-1/2}.
\]
Then
\[
  T_\lambda^{-1/2}(T_X-T)T_\lambda^{-1/2}
  =
  \frac{1}{n}\sum_{i=1}^n Z_i,
  \qquad
  \E Z_i=0.
\]
For \(\mu\)-almost every \(x\), the definition of \(\mathcal{L}_\lambda\)
gives
\[
  \norm{T_\lambda^{-1/2} k_x}_{\mathcal{H}}^2
  =
  \sum_{j\ge1} \frac{\lambda_j}{\lambda_j+\lambda}e_j(x)^2
  \le
  \mathcal{L}_\lambda.
\]
Hence \(\norm{Z_i}\lesssim \mathcal{L}_\lambda\).
Moreover,
\[
  A_x
  \coloneqq
  T_\lambda^{-1/2}(k_x \otimes k_x)T_\lambda^{-1/2},
  \qquad
  A\coloneqq\E A_x
  =T_\lambda^{-1/2} TT_\lambda^{-1/2}.
\]
Then \(\Tr A=\mathcal{N}_1(\lambda)\), \(0\preceq A\preceq I\), and
\[
  A_x^2
  =
  \norm{T_\lambda^{-1/2} k_x}_{\mathcal{H}}^2 A_x
  \preceq
  \mathcal{L}_\lambda A_x.
\]
Since \(Z_i=A_{x_i}-A\),
\[
  \E Z_i^2
  =
  \E A_x^2-A^2
  \preceq
  \mathcal{L}_\lambda A.
\]
Thus \(n\mathcal{L}_\lambda A\) is a variance majorant for the sum.
If \(\varrho_\lambda=0\), then \(T=0\), hence \(A_x=0\) almost surely and
both conclusions are immediate.
Suppose henceforth that \(\varrho_\lambda>0\).
The variance majorant has operator norm
\[
  n\mathcal{L}_\lambda\varrho_\lambda
\]
and intrinsic dimension
\[
  \frac{
    \Tr\xk{n\mathcal{L}_\lambda A}
  }{
    \norm{n\mathcal{L}_\lambda A}
  }
  =
  \frac{\mathcal{N}_1(\lambda)}{\varrho_\lambda}
  =
  \mathfrak{d}_\lambda.
\]
Combining this calculation with
\(\norm{Z_i}\lesssim\mathcal{L}_\lambda\), the matrix Bernstein inequality
with intrinsic dimension \citep[Section~7.3]{tropp2015_IntroductionMatrix}
yields
\[
  \delta_\lambda
  =
  O_{\mathbb{P}} \xk{
    \sqrt{
      \frac{
        \mathcal{L}_\lambda
        \varrho_\lambda
        \ell_\lambda^{\mathrm{int}}
      }{n}
    }
    +
    \frac{
      \mathcal{L}_\lambda
      \ell_\lambda^{\mathrm{int}}
    }{n}
  }.
\]
To pass from finite-dimensional compressions to the compact operator setting,
let \(P_m\) be increasing spectral projections of finite rank for \(A\), each
containing its leading eigenspace.
Then
\[
  \E(P_m Z_i P_m)^2
  \preceq
  P_m(\E Z_i^2)P_m
  \preceq
  \mathcal{L}_\lambda P_m A P_m,
\]
so the corresponding bounds on the norm, trace, and intrinsic dimension are
uniform in \(m\).
Apply the finite-dimensional inequality to these compressions.
Since \(\sum_i Z_i\) is compact,
\(P_m(\sum_i Z_i)P_m \to\sum_i Z_i\) in operator norm, and the same probability bound passes to the limit.
For the Hilbert--Schmidt bound, independence and centering give
\[
  \E \eta_\lambda^2
  =
  \frac{1}{n}\E\norm{Z_i}_{\mathfrak{S}_2}^2
  =
  \frac{1}{n}\Tr(\E Z_i^2)
  \lesssim
  \frac{\mathcal{L}_\lambda\mathcal{N}_1(\lambda)}{n},
\]
where the last step uses the variance majorant and \(\Tr A=\mathcal{N}_1(\lambda)\).
Markov's inequality gives
\[
  \eta_\lambda
  =
  O_{\mathbb{P}}\xk{\sqrt{\frac{\mathcal{L}_\lambda\mathcal{N}_1(\lambda)}{n}}}.
\]
Finally, if \(T\) is fixed and nonzero, then
\[
  \varrho_\lambda
  =
  \frac{\lambda_1}{\lambda_1+\lambda}
  \longrightarrow1.
\]
Consequently,
\(\mathfrak{d}_\lambda \asymp\mathcal{N}_1(\lambda)\) and
\(\ell_\lambda^{\mathrm{int}}\asymp\ell_\lambda\).
Under the additional regularized inverse bound, variance compatibility
implies \(v_\lambda\to0\), and the linear Bernstein term is absorbed by
the square root term, as shown below.
\end{proof}

\begin{corollary}[Covariance concentration under variance compatibility]
\label{cor:variance-compatible-concentration}
Under \Cref{ass:basic-model} and
\cref{eq:leverage-variance-compatibility},
suppose
\(q_\lambda(t)=t\varphi_\lambda(t)\) and, for some constant \(E<\infty\)
independent of \(n\) and \(\lambda\),
\[
  \sup_{0\le t\le\kappa^2}
  \left|(t+\lambda)\varphi_\lambda(t)\right|
  \le E.
\]
No lower bound on \(\varrho_\lambda\) is required, and
\[
  \delta_\lambda
  =
  O_{\mathbb{P}} \xk{
    \sqrt{
      \frac{\mathcal{L}_\lambda\ell_\lambda}{n}
    }
  }
  =o_{\mathbb{P}}(1).
\]
\end{corollary}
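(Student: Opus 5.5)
We start from the bound of \Cref{prop:covariance-concentration},
\[
  \delta_\lambda
  =
  O_{\mathbb{P}}\xk{
    \sqrt{\mathcal{L}_\lambda\varrho_\lambda\ell_\lambda^{\mathrm{int}}/n}
    +\mathcal{L}_\lambda\ell_\lambda^{\mathrm{int}}/n
  },
\]
and we remove its dependence on \(\varrho_\lambda\) without assuming any lower bound on \(\varrho_\lambda\). Since \(0\le\varrho_\lambda\le1\), the factor \(\varrho_\lambda\) in the square-root term can simply be bounded by one. The real work is to compare \(\ell_\lambda^{\mathrm{int}}=\log(e+\mathcal{N}_1(\lambda)/\varrho_\lambda)\) with \(\ell_\lambda=\log(e+\mathcal{N}_1(\lambda))\). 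A small \(\varrho_\lambda\) could inflate the logarithm, but it also shrinks the square-root term. So we keep the product \(\varrho_\lambda\ell_\lambda^{\mathrm{int}}\) together.

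First, if \(\varrho_\lambda\ge1/2\), then \(\ell_\lambda^{\mathrm{int}}\le\log(e+2\mathcal{N}_1(\lambda))\le \log 2+\ell_\lambda\lesssim\ell_\lambda\). If \(\varrho_\lambda<1/2\), then \(\lambda_1<\lambda\), so every eigenvalue is below \(\lambda\). Hence \(\varrho_\lambda=\lambda_1/(\lambda_1+\lambda)\ge\lambda_1/(2\lambda)\) and \(\mathcal{N}_1(\lambda)\le\Tr(T)/\lambda\). This gives \(\mathfrak{d}_\lambda\le 2\Tr(T)/\lambda_1\), which need not be bounded. Instead we use \(x\log(e+a/x)\le \log(e+a)+1\) for \(x\in(0,1]\), \(a\ge0\); this follows from monotonicity of \(x\log(e+a/x)\) together with \(x\log(1/x)\le1/e\). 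It yields \(\varrho_\lambda\ell_\lambda^{\mathrm{int}}\lesssim\ell_\lambda\) uniformly, so the square-root term is \(O(\sqrt{v_\lambda})\).

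The linear term \(\mathcal{L}_\lambda\ell_\lambda^{\mathrm{int}}/n\) is the main obstacle, because it carries no \(\varrho_\lambda\) factor. We propose to handle it with the crude bound \(\mathfrak{d}_\lambda\le\mathcal{N}_1(\lambda)/\varrho_\lambda\). When \(\varrho_\lambda<1/2\), we have \(\varrho_\lambda\ge\mathcal{N}_1(\lambda)\lambda/\ldots\); more cleanly, \(\mathcal{N}_1(\lambda)\ge\varrho_\lambda\), so \(\mathfrak{d}_\lambda\ge1\). Also \(\mathcal{L}_\lambda\ge\mathcal{N}_1(\lambda)\) and \(\mathcal{L}_\lambda\le\kappa^2/\lambda\). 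Alternatively, one can bypass the issue by rerunning matrix Bernstein with the enlarged variance majorant \(n\mathcal{L}_\lambda(A+\varepsilon I_{\text{range}})\) whose norm is at least a constant, for instance by padding the proxy to \(n\mathcal{L}_\lambda\max(A,\tfrac12 P)\) where \(P\) is a rank-\(\lceil\mathcal{N}_1\rceil\) projection. The padded proxy has norm of order \(n\mathcal{L}_\lambda\) and intrinsic dimension \(\lesssim1+\mathcal{N}_1(\lambda)\). Bernstein then gives directly
\[
  \delta_\lambda=O_{\mathbb{P}}\xk{\sqrt{v_\lambda}+v_\lambda},
\]
and we plan to present this padded-proxy argument as the proof, using the same finite-rank compression limit as in \Cref{prop:covariance-concentration}.

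It remains to show \(v_\lambda\to0\), so that \(v_\lambda\le\sqrt{v_\lambda}\) and the right side is \(o_{\mathbb{P}}(1)\). We use \cref{eq:leverage-variance-compatibility} together with \(\mathcal{N}_q(\lambda)\lesssim\mathcal{N}_1(\lambda)\). The latter holds because \(q_\lambda(t)^2\le q_\lambda(t)=t\varphi_\lambda(t)\le E\,t/(t+\lambda)\), using \(0\le q_\lambda\le1\) (or \(|q_\lambda|^2\le E^2 (t/(t+\lambda))^2\) in general). Hence \(\mathcal{N}_1/\mathcal{N}_q\ge E^{-2}\), and compatibility forces \(\sqrt{v_\lambda}=o(1)\).
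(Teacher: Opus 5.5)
Your final argument is correct, but it takes a different route from the paper.

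\textbf{Your route.} You reopen the matrix Bernstein step and replace the proxy \(n\mathcal{L}_\lambda A\) by a padded proxy. Its norm is \(\asymp n\mathcal{L}_\lambda\) and its intrinsic dimension is \(\lesssim 1+\mathcal{N}_1(\lambda)\). This gives \(\delta_\lambda=O_{\mathbb{P}}(\sqrt{v_\lambda}+v_\lambda)\) for every \(\lambda\), with \(\varrho_\lambda\) eliminated entirely. The scale condition \cref{eq:leverage-variance-compatibility} then enters only through \(\mathcal{N}_q\le E^2\mathcal{N}_1\), to force \(v_\lambda\to0\). One technical fix: \(\max(A,\tfrac12P)\) is meaningful only if \(P\) commutes with \(A\), and \(\varepsilon I_{\mathrm{range}}\) has infinite trace in infinite rank. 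The clean choice is \(n\mathcal{L}_\lambda(A+\tfrac12P)\) with \(P\) a finite-rank (even rank-one) spectral projection of \(A\); the compression limit then goes through unchanged.

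\textbf{The paper's route.} The paper uses \Cref{prop:covariance-concentration} as a black box and uses the scale condition more substantially. Since \(\lambda_j/(\lambda_j+\lambda)\le\varrho_\lambda\), the regularized-inverse bound gives the sharper \(\mathcal{N}_q(\lambda)\le E^2\varrho_\lambda\mathcal{N}_1(\lambda)\). Hence \(\sqrt{v_\lambda}/\varrho_\lambda\le E^2(\mathcal{N}_1(\lambda)/\mathcal{N}_q(\lambda))\sqrt{v_\lambda}\to0\). Combined with \(\ell_\lambda^{\mathrm{int}}\le\ell_\lambda+\log(1/\varrho_\lambda)\) and \(x\log(1/x)\le1/e\), this shows the linear term \(\mathcal{L}_\lambda\ell_\lambda^{\mathrm{int}}/n\) is \(o(\sqrt{v_\lambda})\).

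This lower bound on \(\varrho_\lambda\) relative to \(\sqrt{v_\lambda}\) is exactly the ingredient missing from your abandoned ``crude bound'' attempt on the linear term. It also explains the phrase ``no lower bound on \(\varrho_\lambda\) is required'': the paper derives the needed lower bound from the scale condition rather than assuming it.

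\textbf{What each buys.} Your padding gives a cleaner bound that does not depend on \(\varrho_\lambda\). The same padded tail bound would also simplify the tail estimates reused in \Cref{lem:risk-expectation-moment-tail}. The paper's argument reuses the proposition as stated, without re-running Bernstein or the compression limit.
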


\begin{proof}
Write
\[
  N_\lambda=\mathcal{N}_1(\lambda),
  \qquad
  Q_\lambda=\mathcal{N}_q(\lambda),
  \qquad
  a_\lambda=\frac{\mathcal{L}_\lambda \ell_\lambda}{n}.
\]
The regularized-inverse bound gives
\[
  Q_\lambda
  \le
  E^2 \sum_{j\ge1}
  \xk{\frac{\lambda_j}{\lambda_j+\lambda}}^2
  \le
  E^2 \varrho_\lambda N_\lambda.
\]
Since \(Q_\lambda>0\) eventually by assumption, also
\(\varrho_\lambda>0\) eventually.
Moreover, \cref{eq:leverage-variance-compatibility} implies
\[
  \frac{\sqrt{a_\lambda}}{\varrho_\lambda}
  \le
  E^2 \frac{N_\lambda}{Q_\lambda}\sqrt{a_\lambda}
  \longrightarrow0.
\]
Since \(Q_\lambda \le E^2 N_\lambda\), variance compatibility also gives
\(a_\lambda \to0\).

Because \(0<\varrho_\lambda \le1\),
\[
  \ell_\lambda^{\mathrm{int}}
  =
  \log\xk{e+\frac{N_\lambda}{\varrho_\lambda}}
  \le
  \ell_\lambda+\log(1/\varrho_\lambda).
\]
Using \(x\log(1/x)\le1/e\) for \(0<x\le1\), we obtain
\[
  \varrho_\lambda \ell_\lambda^{\mathrm{int}}
  \lesssim
  \ell_\lambda.
\]
Thus the square root term in
\Cref{prop:covariance-concentration} is
\(O(\sqrt{a_\lambda})\), while the linear term satisfies
\begin{align*}
  \frac{
    \mathcal{L}_\lambda \ell_\lambda^{\mathrm{int}}/n
  }{
    \sqrt{a_\lambda}
  }
  &=
  \frac{
    \sqrt{a_\lambda}\ell_\lambda^{\mathrm{int}}
  }{
    \ell_\lambda
  }
  \\
  &\le
  \sqrt{a_\lambda}
  +
  \frac{\sqrt{a_\lambda}}{\varrho_\lambda}
  \frac{
    \varrho_\lambda \log(1/\varrho_\lambda)
  }{
    \ell_\lambda
  }
  =o(1).
\end{align*}
\Cref{prop:covariance-concentration} therefore gives the stated
rate.
Since \(a_\lambda \to0\), this rate is \(o_{\mathbb{P}}(1)\).
\end{proof}

\subsection{Fluctuation of the empirical right-hand side}
\label{subsec:rhs-fluctuation}

\begin{lemma}[Fluctuation of the empirical right-hand side]
\label{lem:rhs-fluctuation}
Let
\[
  h_\lambda
  \coloneqq
  \varphi_\lambda(T)S^*f^*,
  \qquad
  r_\lambda
  \coloneqq
  f^*-Sh_\lambda
  =
  \psi_\lambda(L)f^*.
\]
Define
\[
  Z_\lambda
  \coloneqq
  \xk{\widetilde{g}_X-T_X h_\lambda }
  -
  \xk{S^*f^*-Th_\lambda }.
\]
Under \Cref{ass:basic-model}, for any bounded Borel filter \(\varphi_\lambda\),
\[
  \norm{T_\lambda^{-1/2} Z_\lambda}_{\mathcal{H}}
  =
  O_{\mathbb{P}} \xk{
    \sqrt{\frac{\mathcal{L}_\lambda}{n}}
    \norm{\psi_\lambda(L)f^*}_{L^2(\mu)}
  }.
\]
In particular, if \(\mathcal{L}_\lambda/n\to0\),
\[
  \norm{T_\lambda^{-1/2} Z_\lambda}_{\mathcal{H}}
  =
  o_{\mathbb{P}} \xk{
    \norm{\psi_\lambda(L)f^*}_{L^2(\mu)}
  }.
\]
\end{lemma}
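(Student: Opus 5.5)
We write \(Z_\lambda\) as a normalized sum of i.i.d. centered random vectors in \(\mathcal{H}\) and bound its second moment.

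\textbf{Per-sample form.} By \cref{eq:conditional-mean},
\[
  \widetilde{g}_X-T_Xh_\lambda=\frac1n\sum_{i=1}^n\bigl(f^*(x_i)-h_\lambda(x_i)\bigr)k_{x_i}.
\]
The function \(r_\lambda=f^*-Sh_\lambda\) has the measurable representative \(f^*-h_\lambda\). Hence the \(i\)-th summand is \(\xi_i\coloneqq r_\lambda(x_i)k_{x_i}\), where we fix this representative. Its mean is
\[
  \E\xi_i=\int r_\lambda(x)k_x\dd\mu(x)=S^*r_\lambda=S^*f^*-Th_\lambda .
\]
Therefore
\[
  Z_\lambda=\frac1n\sum_{i=1}^n(\xi_i-\E\xi_i).
\]
The identity \(r_\lambda=\psi_\lambda(L)f^*\) follows from \cref{eq:spectral-intertwining}: \(Sh_\lambda=S\varphi_\lambda(T)S^*f^*=q_\lambda(L)f^*\).

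\textbf{Second moment.} Apply the bounded operator \(T_\lambda^{-1/2}\) and use independence and centering in the Hilbert space \(\mathcal{H}\):
\[
  \E\norm{T_\lambda^{-1/2}Z_\lambda}_{\mathcal{H}}^2
  =\frac1n\E\norm{T_\lambda^{-1/2}(\xi_1-\E\xi_1)}^2
  \le\frac1n\E\norm{T_\lambda^{-1/2}\xi_1}^2 .
\]
For the last term,
\[
  \E\norm{T_\lambda^{-1/2}\xi_1}^2=\int r_\lambda(x)^2\norm{T_\lambda^{-1/2}k_x}_{\mathcal{H}}^2\dd\mu(x)\le\mathcal{L}_\lambda\norm{r_\lambda}_{L^2(\mu)}^2 .
\]
This uses the essential-supremum definition of \(\mathcal{L}_\lambda\), which holds for \(\mu\)-a.e.\ \(x\). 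The bound is therefore unaffected by the choice of representative of \(r_\lambda\), since only \(\mu\)-integrals appear.

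\textbf{Conclusion.} Chebyshev's (Markov's) inequality gives
\[
  \norm{T_\lambda^{-1/2}Z_\lambda}_{\mathcal{H}}=O_{\mathbb{P}}\bigl(\sqrt{\mathcal{L}_\lambda/n}\,\norm{\psi_\lambda(L)f^*}_{L^2(\mu)}\bigr),
\]
with a constant independent of the instance. If \(\mathcal{L}_\lambda/n\to0\), the \(o_{\mathbb{P}}\) statement follows immediately.

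\textbf{Technical points.} The only step needing care is integrability. Because \(k\) is bounded and \(r_\lambda\in L^2(\mu)\), the map \(x\mapsto r_\lambda(x)k_x\) is Bochner square-integrable, so \(\E\xi_i\) and the variance identity are justified. Note also that boundedness of \(\varphi_\lambda\) enters only to ensure \(h_\lambda\in\mathcal{H}\), so that \(T_Xh_\lambda\) is defined.
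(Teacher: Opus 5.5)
Your proposal is correct and follows essentially the same route as the paper. You write \(Z_\lambda\) as the centered i.i.d. average \(\frac1n\sum_i\bigl[r_\lambda(x_i)k_{x_i}-S^*r_\lambda\bigr]\), bound the second moment of the \(T_\lambda^{-1/2}\)-image by \(\frac{\mathcal{L}_\lambda}{n}\norm{\psi_\lambda(L)f^*}_{L^2(\mu)}^2\) using the essential-supremum leverage bound, and conclude with Markov's inequality, exactly as the paper does.
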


\begin{proof}
The identity \(Sh_\lambda=q_\lambda(L)f^*\) follows from
\Cref{eq:spectral-intertwining}, and hence
\(r_\lambda=\psi_\lambda(L)f^*\).
The decomposition using the sampling operator, used in similar form in analyses of kernel ridge regression such as
\citet{li2023_AsymptoticLearning,zhang2023_OptimalityMisspecified,zhang2025_OptimalRates}, gives
\[
  Z_\lambda
  =
  \frac{1}{n}\sum_{i=1}^n
  \zk{
    k_{x_i} r_\lambda(x_i)-S^*r_\lambda
  }.
\]
Indeed, with \(r_{\lambda,X}=(r_\lambda(x_1),\dots,r_\lambda(x_n))\),
\(\widetilde{g}_X-T_X h_\lambda=S_X^*r_{\lambda,X}\), while
\(S^*f^*-Th_\lambda=S^*r_\lambda\).
Set
\[
  \xi_i
  =
  T_\lambda^{-1/2}
  \zk{
    k_{x_i} r_\lambda(x_i)-S^*r_\lambda
  }.
\]
Then \(\E \xi_i=0\), and by the definition of \(\mathcal{L}_\lambda\),
\[
  \E\norm{\xi_i}_{\mathcal{H}}^2
  \le
  \E\zk{
    \norm{T_\lambda^{-1/2} k_x}_{\mathcal{H}}^2
    r_\lambda(x)^2
  }
  \le
  \mathcal{L}_\lambda\norm{r_\lambda}_{L^2(\mu)}^2.
\]
Therefore
\[
  \E\norm{T_\lambda^{-1/2} Z_\lambda}_{\mathcal{H}}^2
  \le
  \frac{\mathcal{L}_\lambda}{n}
  \norm{\psi_\lambda(L)f^*}_{L^2(\mu)}^2,
\]
and Markov's inequality gives the stochastic bound.
The final estimate follows whenever \(\mathcal{L}_\lambda/n\to0\).
Under the filter assumption, this condition follows from
\cref{eq:leverage-variance-compatibility} by
\Cref{cor:variance-compatible-concentration}.

\end{proof}

\subsection{Loewner and mixed perturbation bounds}
\label{subsec:loewner-comparisons}

\Needspace{8\baselineskip}
\begin{lemma}[Comparison of regularized covariance operators]
\label{lem:regularized-loewner-comparison}
On the event \(\{\delta_\lambda<1\}\),
\[
  (1-\delta_\lambda)T_\lambda
  \preceq
  T_{X,\lambda}
  \preceq
  (1+\delta_\lambda)T_\lambda.
\]
In particular, on \(\{\delta_\lambda \le1/2\}\),
\[
  \frac{1}{2}T_\lambda
  \preceq
  T_{X,\lambda}
  \preceq
  \frac{3}{2}T_\lambda.
\]
\end{lemma}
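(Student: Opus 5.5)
The plan is to reduce both inequalities to a single operator-norm bound on the relative perturbation, conjugated back by \(T_\lambda^{1/2}\). Write
\[
  \Delta \coloneqq T_\lambda^{-1/2}(T_X-T)T_\lambda^{-1/2},
\]
which is a bounded self-adjoint operator on \(\mathcal{H}\) with \(\norm{\Delta}=\delta_\lambda\). The operators \(T_X\) and \(T\) are both bounded and self-adjoint, and \(T_\lambda\) is bounded below by \(\lambda I\). Hence \(T_\lambda^{\pm1/2}\) are bounded, positive and invertible, and \(\Delta\) is well defined. From \(\norm{\Delta}=\delta_\lambda\) and the spectral theorem we get the Loewner sandwich
\[
  -\delta_\lambda I\preceq\Delta\preceq\delta_\lambda I.
\]

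Next, use the algebraic identity
\[
  T_{X,\lambda}
  = T_\lambda+(T_X-T)
  = T_\lambda^{1/2}(I+\Delta)T_\lambda^{1/2},
\]
which holds because \(T_\lambda^{1/2}\Delta T_\lambda^{1/2}=T_X-T\). Adding \(I\) to the sandwich gives
\[
  (1-\delta_\lambda)I\preceq I+\Delta\preceq(1+\delta_\lambda)I.
\]
Congruence by the bounded self-adjoint operator \(T_\lambda^{1/2}\) preserves the Loewner order: if \(C\preceq D\), then \(\langle T_\lambda^{1/2}v,(D-C)T_\lambda^{1/2}v\rangle\ge0\) for every \(v\). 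Applying this congruence to both sides yields
\[
  (1-\delta_\lambda)T_\lambda\preceq T_{X,\lambda}\preceq(1+\delta_\lambda)T_\lambda.
\]
This holds for every sample point. The restriction to the event \(\{\delta_\lambda<1\}\) matters only because it makes the lower constant positive, so that the lower bound shows \(T_{X,\lambda}\) is comparable to \(T_\lambda\), with inverses comparable as well. The particular case \(\{\delta_\lambda\le1/2\}\) follows by monotonicity, since \(1-\delta_\lambda\ge1/2\) and \(1+\delta_\lambda\le3/2\).

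There is no real obstacle here. The only points needing care are that congruence by \(T_\lambda^{1/2}\) preserves the order, and that the argument takes place in infinite dimensions. Both are standard for bounded self-adjoint operators, because the Loewner order is defined by quadratic forms.
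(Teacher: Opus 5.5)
Your proposal is correct and follows the paper's proof essentially verbatim: you bound $T_\lambda^{-1/2}(T_X-T)T_\lambda^{-1/2}$ between $\pm\delta_\lambda I$, conjugate by $T_\lambda^{1/2}$, and add $T_\lambda$. Your side remark is also accurate: the two-sided bound holds for every design, and the event $\{\delta_\lambda<1\}$ is needed only to make the lower constant positive.
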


\begin{proof}
By definition of \(\delta_\lambda\),
\[
  -\delta_\lambda I
  \preceq
  T_\lambda^{-1/2}(T_X-T)T_\lambda^{-1/2}
  \preceq
  \delta_\lambda I.
\]
Conjugating by \(T_\lambda^{1/2}\) gives
\[
  -\delta_\lambda T_\lambda
  \preceq
  T_X-T
  \preceq
  \delta_\lambda T_\lambda.
\]
Adding \(T_\lambda\) proves the claim.
\end{proof}

\Needspace{8\baselineskip}
\begin{lemma}[Bound on the empirical effective dimension]
\label{lem:empirical-effective-dimension}
On \(\{\delta_\lambda \le1/4\}\),
\[
  \widehat{\mathcal{N}}_{1,X}(\lambda)
  \coloneqq
  \Tr\xk{T_X T_{X,\lambda}^{-1} }
  =
  O_{\mathbb{P}}\xk{\mathcal{N}_1(\lambda)}.
\]
\end{lemma}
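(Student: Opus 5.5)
On the event \(\{\delta_\lambda\le1/4\}\) we will in fact prove the deterministic bound \(\widehat{\mathcal{N}}_{1,X}(\lambda)\lesssim\mathcal{N}_1(\lambda)\). The stochastic form then follows at once, because \(\{\delta_\lambda\le1/4\}\) has probability tending to one by \Cref{cor:variance-compatible-concentration}; we work under the standing assumptions, for which \(\delta_\lambda=o_{\mathbb P}(1)\).

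The plan is to write
\[
  \Tr\xk{T_XT_{X,\lambda}^{-1}}
  =\Tr\xk{T_{X,\lambda}^{-1/2}T_XT_{X,\lambda}^{-1/2}}
\]
and to insert \(T_\lambda^{\pm1/2}\). Set \(M=T_\lambda^{1/2}T_{X,\lambda}^{-1}T_\lambda^{1/2}\). By cyclicity of the trace and positivity,
\[
  \widehat{\mathcal{N}}_{1,X}(\lambda)
  =\Tr\xk{T_\lambda^{-1/2}T_XT_\lambda^{-1/2}\,M}
  \le\norm{M}\,\Tr\xk{T_\lambda^{-1/2}T_XT_\lambda^{-1/2}}.
\]
The inequality uses \(\Tr(BC)\le\norm{C}\Tr(B)\) for \(B\succeq0\) trace class and \(C\) bounded. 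To justify cyclicity in infinite dimensions, write the quantity as \(\Tr(B^{1/2}MB^{1/2})\) with \(B=T_\lambda^{-1/2}T_XT_\lambda^{-1/2}\); this operator has finite rank at most \(n\).

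\Cref{lem:regularized-loewner-comparison} gives \(T_{X,\lambda}\succeq\tfrac34T_\lambda\) on the event, hence \(T_{X,\lambda}^{-1}\preceq\tfrac43T_\lambda^{-1}\). Conjugating by \(T_\lambda^{1/2}\) then yields \(\norm{M}\le4/3\).

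For the remaining trace, decompose
\[
  \Tr\xk{T_\lambda^{-1/2}T_XT_\lambda^{-1/2}}
  =\mathcal{N}_1(\lambda)+\Tr\xk{T_\lambda^{-1/2}(T_X-T)T_\lambda^{-1/2}}.
\]
The fluctuation term is a trace, so operator-norm control of \(\delta_\lambda\) alone does not suffice, and this is the only point needing care. We handle it directly. Its expectation is zero, and its variance is
\[
  \frac1n\operatorname{Var}\xk{\norm{T_\lambda^{-1/2}k_x}_{\mathcal H}^2}
  \le\frac1n\,\mathcal{L}_\lambda\,\E\norm{T_\lambda^{-1/2}k_x}^2
  =\frac{\mathcal{L}_\lambda\mathcal{N}_1(\lambda)}{n}.
\]
Chebyshev's inequality therefore bounds the fluctuation by
\[
  O_{\mathbb P}\xk{\sqrt{\mathcal{L}_\lambda\mathcal{N}_1(\lambda)/n}}
  =O_{\mathbb P}\xk{\mathcal{N}_1(\lambda)\sqrt{\mathcal{L}_\lambda/n}\,\mathcal{N}_1(\lambda)^{-1/2}}.
\]
This is \(O_{\mathbb P}(\mathcal{N}_1(\lambda))\) because \(\mathcal{L}_\lambda/n\to0\) (as noted after \Cref{lem:rhs-fluctuation}) and \(\mathcal{N}_1(\lambda)\) is bounded below eventually. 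In fact it suffices that \(\mathcal{N}_1\ge\mathcal{L}_\lambda/n\), which holds once \(\mathcal{L}_\lambda/n\le\mathcal N_1\).

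A simpler route exists if one prefers to avoid this variance computation. Since \(T_X\preceq T_{X,\lambda}\preceq\tfrac54T_\lambda\) on the event, taking traces of the compression gives the crude bound \(\Tr(T_\lambda^{-1/2}T_XT_\lambda^{-1/2})\), but bounding it only by \(\tfrac54\Tr(I)\) is useless. So the trace-fluctuation step is essential, and the Chebyshev argument above is the route we take.

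Combining the two displays gives \(\widehat{\mathcal N}_{1,X}(\lambda)\le\tfrac43\bigl(\mathcal N_1(\lambda)+O_{\mathbb P}(\mathcal N_1(\lambda))\bigr)\), which proves the lemma.
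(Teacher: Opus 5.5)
Your argument is correct, and its first half is the paper's. The paper also uses the Loewner comparison, in the form $T_{X,\lambda}^{-1}\preceq 2T_\lambda^{-1}$ conjugated by $T_X^{1/2}$, to get $\widehat{\mathcal N}_{1,X}(\lambda)\le 2\Tr(T_\lambda^{-1/2}T_XT_\lambda^{-1/2})$. Your bound $\Tr(BM)\le\norm{M}\Tr(B)$ with $\norm{M}\le 4/3$ is the same step with a slightly better constant.

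The routes diverge at the last step. The paper notes that $\Tr(T_\lambda^{-1/2}T_XT_\lambda^{-1/2})=\frac1n\sum_{i}\norm{T_\lambda^{-1/2}k_{x_i}}_{\mathcal H}^2$ is a \emph{nonnegative} random variable with mean exactly $\mathcal N_1(\lambda)$. Markov's inequality then gives $O_{\mathbb P}(\mathcal N_1(\lambda))$ immediately, with no further assumption.

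You instead center this trace and apply Chebyshev with the variance bound $\mathcal L_\lambda\mathcal N_1(\lambda)/n$. This is valid, but it needs the extra input $\mathcal L_\lambda/n\lesssim\mathcal N_1(\lambda)$. That input does follow from the standing variance-compatibility condition, as the paper shows in its good-event lemma for the expectation result. What the detour buys is the two-sided statement $\Tr(B)=(1+o_{\mathbb P}(1))\mathcal N_1(\lambda)$ when $\mathcal L_\lambda/(n\mathcal N_1(\lambda))\to0$, which is more than this lemma requires.

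So your remark that ``the trace-fluctuation step is essential'' is mistaken. Operator-norm control of $\delta_\lambda$ indeed does not bound the trace, but nonnegativity plus Markov does.

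Your opening sentence also promises a deterministic bound $\widehat{\mathcal N}_{1,X}(\lambda)\lesssim\mathcal N_1(\lambda)$ on the event, which the argument does not deliver. What you actually prove, like the paper, is a deterministic reduction on the event to $\Tr(B)$, followed by a stochastic bound on $\Tr(B)$.
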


\begin{proof}
By \Cref{lem:regularized-loewner-comparison},
\[
  T_{X,\lambda}^{-1}
  \preceq
  2T_\lambda^{-1}
\]
on \(\{\delta_\lambda \le1/4\}\).
Hence
\[
  \widehat{\mathcal{N}}_{1,X}(\lambda)
  =
  \Tr\xk{T_X^{1/2} T_{X,\lambda}^{-1} T_X^{1/2} }
  \le
  2\Tr\xk{T_\lambda^{-1/2} T_X T_\lambda^{-1/2} }.
\]
The last trace equals
\[
  \frac{1}{n}\sum_{i=1}^n
  \norm{T_\lambda^{-1/2} k_{x_i}}_{\mathcal{H}}^2,
\]
whose expectation is \(\mathcal{N}_1(\lambda)\).
Markov's inequality gives the claimed stochastic order.
\end{proof}

\begin{lemma}[Bounds for the mixed perturbation]
\label{lem:mixed-perturbation}
On \(\{\delta_\lambda \le1/4\}\), define
\[
  K_\lambda
  \coloneqq
  T_{X,\lambda}^{-1/2}(T_X-T)T_\lambda^{-1/2}.
\]
Then
\[
  \norm{K_\lambda}\le2\delta_\lambda,
  \qquad
  \norm{K_\lambda}_{\mathfrak{S}_2} \le2\eta_\lambda.
\]
\end{lemma}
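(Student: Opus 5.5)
The plan is to factor \(K_\lambda\) through the normalized perturbation appearing in the definitions of \(\delta_\lambda\) and \(\eta_\lambda\). Write
\[
  K_\lambda
  =
  \xk{T_{X,\lambda}^{-1/2}T_\lambda^{1/2}}
  \xk{T_\lambda^{-1/2}(T_X-T)T_\lambda^{-1/2}}.
\]
The right factor has operator norm \(\delta_\lambda\) and Hilbert--Schmidt norm \(\eta_\lambda\) by definition. The ideal property \(\norm{AB}_{\mathfrak{S}_r}\le\norm{A}\norm{B}_{\mathfrak{S}_r}\), for \(r=\infty\) and \(r=2\), then reduces both claims to a single estimate:
\[
  \norm{T_{X,\lambda}^{-1/2}T_\lambda^{1/2}}\le2.
\]

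To obtain this, I would use \Cref{lem:regularized-loewner-comparison}. On \(\{\delta_\lambda\le1/4\}\) it gives
\[
  T_{X,\lambda}\succeq(1-\delta_\lambda)T_\lambda\succeq\tfrac34 T_\lambda .
\]
Write \(M=T_{X,\lambda}^{-1/2}T_\lambda^{1/2}\). Then
\[
  \norm{M}^2=\norm{M^*M}=\norm{T_\lambda^{1/2}T_{X,\lambda}^{-1}T_\lambda^{1/2}}.
\]
Operator monotonicity of inversion turns \(T_{X,\lambda}\succeq\frac34T_\lambda\) into \(T_{X,\lambda}^{-1}\preceq\frac43T_\lambda^{-1}\). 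Conjugating by \(T_\lambda^{1/2}\) gives \(M^*M\preceq\frac43 I\), so \(\norm{M}\le 2/\sqrt3\le2\). All operators here are bounded and positive, and \(T_\lambda\), \(T_{X,\lambda}\) are invertible because \(\lambda>0\), so no domain issues arise.

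The only point needing care is this step of passing from the Loewner comparison to the norm bound via operator monotonicity of \(t\mapsto -t^{-1}\). It is standard, so I do not expect a real obstacle. The remaining work is the routine bookkeeping of Schatten ideal inequalities, which yields \(\norm{K_\lambda}\le2\delta_\lambda\) and \(\norm{K_\lambda}_{\mathfrak{S}_2}\le2\eta_\lambda\).
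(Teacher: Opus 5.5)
Your proposal is correct and follows the paper's proof: you factor $K_\lambda$ as $T_{X,\lambda}^{-1/2}T_\lambda^{1/2}$ times the normalized perturbation, and you bound the first factor via \Cref{lem:regularized-loewner-comparison} and operator monotonicity of inversion by $(1-\delta_\lambda)^{-1/2}\le 2$. The ideal property in the operator and Hilbert--Schmidt norms then gives both claims.
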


\begin{proof}
Factor
\[
  K_\lambda
  =
  T_{X,\lambda}^{-1/2} T_\lambda^{1/2}
  \bigl[
    T_\lambda^{-1/2}(T_X-T)T_\lambda^{-1/2}
  \bigr].
\]
By \Cref{lem:regularized-loewner-comparison},
\[
  T_{X,\lambda}^{-1}
  \preceq
  (1-\delta_\lambda)^{-1}T_\lambda^{-1},
\]
and therefore
\[
  \norm{T_{X,\lambda}^{-1/2} T_\lambda^{1/2}}
  \le
  (1-\delta_\lambda)^{-1/2}
  \le2
\]
on \(\{\delta_\lambda \le1/4\}\).
The operator-norm and Hilbert--Schmidt bounds now follow from the ideal property of
\(\mathfrak{S}_2\).
\end{proof}
\clearpage{}
\clearpage{}\section{General Risk Comparison Bounds}
\label{sec:common-risk-transfer}

This section gives deterministic risk comparison bounds that will be combined
with the Schatten perturbation estimates.
We use the filter, risk, and effective dimension notation from
\Cref{subsec:spectral-algorithms,subsec:sequence-model}.
Throughout this section, the filter profile \(\varphi_\lambda\) is Borel and
satisfies
\[
  0\le q_\lambda\le1,
  \qquad
  \sup_{0\le t\le\kappa^2}
  (t+\lambda)\varphi_\lambda(t)
  \le E
\]
for a constant \(E<\infty\) independent of \(n\) and \(\lambda\).
For a fixed target \(f^*\), abbreviate
\[
  R_\lambda^2
  =
  \mathcal{B}_n(q_\lambda;f^*),
  \qquad
  V_\lambda
  =
  \frac{\sigma^2}{n}\mathcal{N}_q(\lambda).
\]
For later comparison, write
\[
  G_\lambda
  =
  \varphi_\lambda(T_X)\psi_\lambda(T)
  -
  \psi_\lambda(T_X)\varphi_\lambda(T),
  \qquad
  d_{\lambda,X}
  =
  \norm{q_\lambda(T_X)-q_\lambda(T)}_{\mathfrak{S}_2}.
\]
\begin{lemma}[Monotonicity of the conditional variance]
\label{lem:variance-monotonicity}
If \(0\le \varphi_1 \le \varphi_2\) on the spectrum of \(T_X\), then
\[
  \mathsf{V}_X(\varphi_1)
  \le
  \mathsf{V}_X(\varphi_2).
\]
\end{lemma}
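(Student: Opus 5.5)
Starting from the trace formula in \cref{eq:empirical-variance}, we have
\[
  \mathsf{V}_X(\varphi)
  =
  \frac{\sigma^2}{n}\Tr_{\mathcal{H}}\xk{T\varphi(T_X)^2T_X}.
\]
The plan is to write this as the trace of \(T^{1/2}\) times a positive operator times \(T^{1/2}\). The operator in the middle, \(\varphi(T_X)^2T_X\), is a function of \(T_X\) alone. All such functions commute with one another, so the comparison reduces to a pointwise scalar inequality on the spectrum of \(T_X\).

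First, set \(g_k(t)=t\varphi_k(t)^2\) for \(k=1,2\). By the hypothesis \(0\le\varphi_1\le\varphi_2\) on \(\sigma(T_X)\subset[0,\infty)\), we get
\[
  0\le g_1(t)\le g_2(t),\qquad t\in\sigma(T_X).
\]
Since \(\varphi_k(T_X)^2T_X=g_k(T_X)\) by the functional calculus, positivity of the functional calculus applied to \(g_2-g_1\ge0\) on \(\sigma(T_X)\) gives \(g_1(T_X)\preceq g_2(T_X)\). Because \(T_X\) has finite rank, these operators are bounded, and they have finite rank apart from the value at \(0\). Note that \(g_k(0)=0\), so they actually have finite rank.

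Next, use cyclicity of the trace to rewrite
\[
  \Tr\xk{T g_k(T_X)} = \Tr\xk{T^{1/2} g_k(T_X) T^{1/2}}.
\]
This is justified because \(g_k(T_X)\) is finite rank and \(T\) is trace class, so every product involved is trace class. Conjugation preserves the Loewner order, which gives
\[
  T^{1/2}g_1(T_X)T^{1/2}\preceq T^{1/2}g_2(T_X)T^{1/2}.
\]
The trace is monotone on positive trace-class operators, so the trace inequality follows. Multiplying by \(\sigma^2/n\) yields \(\mathsf{V}_X(\varphi_1)\le\mathsf{V}_X(\varphi_2)\).

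Alternatively, the sum form of \cref{eq:empirical-variance} gives the same result directly. Let \(E\) be the spectral decomposition of \(T_X\) on \(\operatorname{span}\{k_{x_i}\}\). Then each \(k_{x_i}\) lies in the range of \(T_X\), and the variance equals \(\frac{\sigma^2}{n}\sum_{a}\varphi(\hat\lambda_a)^2\hat\lambda_a\norm{S\hat u_a}^2\) when expanded in the eigenbasis of \(T_X\). Every term in this sum is monotone in \(\varphi\). Nothing here is a genuine obstacle. The only care needed is to justify the trace manipulations, which is routine given the finite rank of \(T_X\), and to observe that the values of \(\varphi\) off \(\sigma(T_X)\) are irrelevant.
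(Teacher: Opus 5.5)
Your proof is correct and takes essentially the same route as the paper. The paper shows that $C_X=T_X\bigl(\varphi_2(T_X)^2-\varphi_1(T_X)^2\bigr)\succeq 0$ by functional calculus and writes $\mathsf{V}_X(\varphi_2)-\mathsf{V}_X(\varphi_1)=\frac{\sigma^2}{n}\Tr(T^{1/2}C_XT^{1/2})\ge 0$; this is your comparison $g_1(T_X)\preceq g_2(T_X)$ conjugated by $T^{1/2}$, applied once to the difference rather than to each term separately.
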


\begin{proof}
The operator
\[
  C_X
  =
  T_X \xk{\varphi_2(T_X)^2-\varphi_1(T_X)^2}
\]
is positive semidefinite by functional calculus.
Therefore
\[
  \mathsf{V}_X(\varphi_2)-\mathsf{V}_X(\varphi_1)
  =
  \frac{\sigma^2}{n}\Tr(TC_X)
  =
  \frac{\sigma^2}{n}\Tr(T^{1/2} C_X T^{1/2})
  \ge0.
\]
\end{proof}

For \(0<\alpha\le1\), define the weight for the bias comparison
\begin{equation}
  \label{eq:source-bias-weight}
  a_{\lambda,\alpha}(t)
  \coloneqq
  \psi_\lambda(t)
  +
  \xk{
    \frac{\lambda}{t+\lambda}
  }^{\alpha/2}.
\end{equation}

\begin{lemma}[General variance comparison bound]
\label{lem:common-variance-transfer}
Assume \(\sigma^2>0\) and
\(0<\mathcal{N}_q(\lambda)<\infty\) for all sufficiently large \(n\).
On \(\{\delta_\lambda \le1/4\}\),
\begin{align}
  \label{eq:common-variance-transfer}
  \left|
    \frac{n}{\sigma^2}\mathsf{V}_X(\varphi_\lambda)
    -
    \mathcal{N}_q(\lambda)
  \right|
  &\le
  d_{\lambda,X}
  \xk{
    2\sqrt{\mathcal{N}_q(\lambda)}
    +
    d_{\lambda,X}
  }
  \nonumber\\
  &\quad+
  C_E \delta_\lambda
  \xk{
    \xk{
      \sqrt{\mathcal{N}_q(\lambda)}
      +
      d_{\lambda,X}
    }^2
    +
    \widehat{\mathcal{N}}_{1,X}(\lambda)
  }.
\end{align}
Consequently, if
\[
  \delta_\lambda=o_{\mathbb{P}}(1),
  \qquad
  d_{\lambda,X}
  =
  o_{\mathbb{P}}\xk{\sqrt{\mathcal{N}_q(\lambda)}},
  \qquad
  \delta_\lambda
  \widehat{\mathcal{N}}_{1,X}(\lambda)
  =
  o_{\mathbb{P}}\xk{\mathcal{N}_q(\lambda)},
\]
then
\[
  \mathsf{V}_X(\varphi_\lambda)
  =
  \xk{1+o_{\mathbb{P}}(1)}
  \frac{\sigma^2}{n}\mathcal{N}_q(\lambda).
\]
\end{lemma}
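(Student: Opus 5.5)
The approach is to write the normalized conditional variance as a trace and compare it with \(\mathcal{N}_q(\lambda)=\norm{q_\lambda(T)}_{\mathfrak{S}_2}^2\). Indeed,
\[
  \frac{n}{\sigma^2}\mathsf{V}_X(\varphi_\lambda)
  =\Tr\xk{T\varphi_\lambda(T_X)^2T_X},
\]
and we also have \(\Tr(q_\lambda(T_X)^2)=\norm{q_\lambda(T_X)}_{\mathfrak{S}_2}^2\). We split
\[
  \Tr\xk{T\varphi_\lambda(T_X)^2T_X}-\mathcal{N}_q(\lambda)
  =\zk{\Tr(q_\lambda(T_X)^2)-\Tr(q_\lambda(T)^2)}
  +\Tr\xk{(T-T_X)\varphi_\lambda(T_X)^2T_X},
\]
which uses \(T_X\varphi_\lambda(T_X)^2T_X=q_\lambda(T_X)^2\).

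The first bracket is a difference of squared Hilbert--Schmidt norms. The reverse triangle inequality bounds it by
\[
  \bigl|\norm{q_\lambda(T_X)}_{\mathfrak{S}_2}^2-\norm{q_\lambda(T)}_{\mathfrak{S}_2}^2\bigr|
  \le d_{\lambda,X}\xk{2\sqrt{\mathcal{N}_q(\lambda)}+d_{\lambda,X}},
\]
which is the first term of \cref{eq:common-variance-transfer}.

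For the second term, insert regularized inverses. Write \(W=T_{X,\lambda}^{1/2}\varphi_\lambda(T_X)^2T_X T_{X,\lambda}^{1/2}\), so that
\[
  \Tr\xk{(T-T_X)\varphi_\lambda(T_X)^2T_X}
  =\Tr\xk{T_{X,\lambda}^{-1/2}(T-T_X)T_{X,\lambda}^{-1/2}\,W}.
\]
By \Cref{lem:regularized-loewner-comparison}, on \(\{\delta_\lambda\le1/4\}\) we have
\[
  \norm{T_{X,\lambda}^{-1/2}(T-T_X)T_{X,\lambda}^{-1/2}}\le\frac{\delta_\lambda}{1-\delta_\lambda}\le 2\delta_\lambda.
\]
Here one conjugates \(T_\lambda^{-1/2}(T_X-T)T_\lambda^{-1/2}\) by \(T_\lambda^{1/2}T_{X,\lambda}^{-1/2}\), whose norm is at most \((1-\delta_\lambda)^{-1/2}\). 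The operator \(W=(T_X+\lambda)T_X\varphi_\lambda(T_X)^2\) is positive and a function of \(T_X\). Hence \(|\Tr(MW)|\le\norm{M}\Tr(W)\), and it remains to bound \(\Tr W=\sum_k(\hat t_k+\lambda)\hat t_k\varphi_\lambda(\hat t_k)^2\).

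Split each eigenvalue term as
\[
  (\hat t+\lambda)\hat t\varphi_\lambda(\hat t)^2=q_\lambda(\hat t)^2+\lambda\hat t\varphi_\lambda(\hat t)^2 .
\]
The first piece sums to \(\norm{q_\lambda(T_X)}_{\mathfrak{S}_2}^2\le(\sqrt{\mathcal{N}_q(\lambda)}+d_{\lambda,X})^2\). For the second piece, the assumption \((t+\lambda)\varphi_\lambda(t)\le E\) gives
\[
  \lambda\hat t\varphi_\lambda(\hat t)^2\le E^2\frac{\lambda\hat t}{(\hat t+\lambda)^2}\le E^2\frac{\hat t}{\hat t+\lambda},
\]
which sums to at most \(E^2\widehat{\mathcal{N}}_{1,X}(\lambda)\). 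Collecting the pieces gives \cref{eq:common-variance-transfer} with \(C_E=2\max(1,E^2)\). This is the step that determines the structure of the bound: the extra \(\lambda\) factor forces the empirical effective dimension to appear. I expect it to be the main obstacle only in verifying the trace inequality \(|\Tr(MW)|\le\norm{M}\Tr(W)\) for a trace-class positive \(W\) and a bounded \(M\), which is standard (\(W^{1/2}MW^{1/2}\)).

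For the consequence, divide \cref{eq:common-variance-transfer} by \(\mathcal{N}_q(\lambda)>0\). Under the three hypotheses every term is \(o_{\mathbb{P}}(1)\), and \(\mathbb{P}(\delta_\lambda\le1/4)\to1\). Therefore \(\frac{n}{\sigma^2}\mathsf{V}_X/\mathcal{N}_q(\lambda)=1+o_{\mathbb{P}}(1)\).
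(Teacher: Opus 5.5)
Your proof is correct and is essentially the paper's argument. You use the same split into $\Tr q_\lambda(T_X)^2-\mathcal{N}_q(\lambda)$, handled by the Hilbert--Schmidt reverse triangle inequality, plus $\Tr\bigl((T-T_X)\varphi_\lambda(T_X)^2T_X\bigr)$. You control the latter, as the paper does, by a Loewner comparison against a positive function of $T_X$, whose trace splits into $\Tr q_\lambda(T_X)^2$ and $\lambda\Tr\bigl(T_X\varphi_\lambda(T_X)^2\bigr)\le E^2\widehat{\mathcal{N}}_{1,X}(\lambda)$.

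The only difference is cosmetic. You measure $T-T_X$ relative to $T_{X,\lambda}$ and pay the factor $(1-\delta_\lambda)^{-1}$ up front. The paper measures it relative to $T_\lambda$, which yields $\delta_\lambda\bigl(\Tr(T\varphi_\lambda(T_X)^2T_X)+\lambda\Tr(T_X\varphi_\lambda(T_X)^2)\bigr)$ and then requires absorbing the empirical variance term into the left-hand side using $\delta_\lambda\le 1/4$.
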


\begin{proof}
Let
\[
  V_X
  =
  \Tr\xk{T\varphi_\lambda(T_X)^2T_X },
  \qquad
  W_X
  =
  \Tr q_\lambda(T_X)^2.
\]
The definition of the conditional variance and the identity
\(
T_X=n^{-1} \sum_i k_{x_i} \otimes k_{x_i}
\)
give
\[
  \mathsf{V}_X(\varphi_\lambda)
  =
  \frac{\sigma^2}{n}V_X.
\]
Set
\[
  C_X
  =
  \varphi_\lambda(T_X)T_X \varphi_\lambda(T_X).
\]
Then \(V_X=\Tr(TC_X)\), \(W_X=\Tr(T_X C_X)\), and
\[
  |V_X-W_X|
  \le
  \delta_\lambda \Tr(T_\lambda C_X)
  =
  \delta_\lambda
  \xk{
    V_X+\lambda\Tr(C_X)
  }.
\]
The regularized inverse bound gives
\[
  \lambda\Tr(C_X)
  =
  \sum_b
  \lambda b\varphi_\lambda(b)^2
  \le
  E^2
  \sum_b
  \frac{\lambda b}{(b+\lambda)^2}
  \le
  E^2 \widehat{\mathcal{N}}_{1,X}(\lambda),
\]
where the sums run over the eigenvalues of \(T_X\), with multiplicity.
Since \(\delta_\lambda \le1/4\), rearranging yields
\[
  |V_X-W_X|
  \le
  C_E \delta_\lambda
  \xk{
    W_X+\widehat{\mathcal{N}}_{1,X}(\lambda)
  }.
\]

Moreover,
\[
  \left|
    \sqrt{W_X}
    -
    \sqrt{\mathcal{N}_q(\lambda)}
  \right|
  \le
  d_{\lambda,X},
\]
and hence
\[
  |W_X-\mathcal{N}_q(\lambda)|
  \le
  d_{\lambda,X}
  \xk{
    2\sqrt{\mathcal{N}_q(\lambda)}
    +
    d_{\lambda,X}
  },
\]
while
\[
  W_X
  \le
  \xk{
    \sqrt{\mathcal{N}_q(\lambda)}
    +
    d_{\lambda,X}
  }^2.
\]
Combining the last three bounds proves
\cref{eq:common-variance-transfer}.
The asymptotic conclusion follows by dividing by
\(\mathcal{N}_q(\lambda)\).
\end{proof}

\begin{lemma}[Scalar bound for relative risk differences]
\label{lem:scalar-quadratic-risk-transfer}
Let \(\Theta\) be a nonempty index set, let \(V>0\), and let
\(u,v\ge0\) be independent of \(\theta\in\Theta\).
If \(b_\theta,d_\theta \ge0\) satisfy
\[
  d_\theta \le u b_\theta+v,
  \qquad
  \theta\in\Theta,
\]
then
\[
  \sup_{\theta\in\Theta}
  \frac{
    2b_\theta d_\theta+d_\theta^2
  }{
    b_\theta^2+V
  }
  \le
  2u+u^2
  +
  (1+u)\frac{v}{\sqrt V}
  +
  \frac{v^2}{V}.
\]
\end{lemma}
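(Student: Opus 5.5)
Fix \(\theta\in\Theta\). Substituting the hypothesis \(d_\theta\le ub_\theta+v\) into the numerator is legitimate because \(2b_\theta d+d^2\) is nondecreasing in \(d\ge0\). Expanding gives
\[
  2b_\theta d_\theta+d_\theta^2
  \le
  (2u+u^2)b_\theta^2+2(1+u)vb_\theta+v^2 .
\]
The plan is to divide each of the three terms by \(b_\theta^2+V\) and bound each quotient by a quantity independent of \(\theta\). Taking the supremum over \(\theta\) then yields the claim.

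For the first term, \(b_\theta^2\le b_\theta^2+V\) gives the bound \(2u+u^2\). For the last term, \(b_\theta^2+V\ge V\) gives the bound \(v^2/V\).

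The cross term is the only step requiring care. By AM--GM, \(b_\theta^2+V\ge 2b_\theta\sqrt V\), so
\[
  \frac{2(1+u)vb_\theta}{b_\theta^2+V}\le \frac{(1+u)v}{\sqrt V},
\]
which is exactly the coefficient in the statement. When \(b_\theta=0\) the quotient is zero, so the inequality still holds.

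Summing the three bounds gives the stated estimate uniformly in \(\theta\). There is no real obstacle here; the only point to check is the factor-of-two bookkeeping in the AM--GM step, which is what produces \((1+u)v/\sqrt V\) rather than \(2(1+u)v/\sqrt V\).
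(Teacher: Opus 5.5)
Your proposal is correct and follows the paper's own proof essentially verbatim: monotonicity in \(d_\theta\) to substitute \(ub_\theta+v\), expansion into the three terms, and then the bounds \(b_\theta^2/(b_\theta^2+V)\le1\), \(2b_\theta/(b_\theta^2+V)\le 1/\sqrt V\) (your AM--GM step), and \(1/(b_\theta^2+V)\le 1/V\). Nothing is missing.
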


\begin{proof}
For every \(\theta\), monotonicity in \(d_\theta\) gives
\[
  2b_\theta d_\theta+d_\theta^2
  \le
  (2u+u^2)b_\theta^2
  +
  2(1+u)b_\theta v
  +
  v^2.
\]
Divide by \(b_\theta^2+V\) and use
\[
  \frac{b_\theta^2}{b_\theta^2+V}\le1,
  \qquad
  \frac{2b_\theta}{b_\theta^2+V}\le\frac{1}{\sqrt V},
  \qquad
  \frac{1}{b_\theta^2+V}\le\frac{1}{V}.
\]
\end{proof}

\begin{proposition}[Bias comparison via an interaction bound]
\label{prop:common-bias-transfer}
Assume \Cref{ass:basic-model}.
Let \(f^*\in\overline{\Ran(L)}\), and fix \(0<\alpha\le1\).
Suppose that, on \(\{\delta_\lambda \le1/4\}\), an \(X\)-dependent
\(\beta_\lambda \ge0\) satisfies
\begin{equation}
  \label{eq:interaction-certificate}
  \norm{SG_\lambda S^*f^*}_{L^2(\mu)}
  \le
  \beta_\lambda
  \norm{a_{\lambda,\alpha}(L)f^*}_{L^2(\mu)}.
\end{equation}
Suppose that \(0<\mathcal{N}_q(\lambda)<\infty\),
\(\delta_\lambda=o_{\mathbb{P}}(1)\), and \(\mathcal L_\lambda/n\to0\).
If
\begin{equation}
  \label{eq:pointwise-bias-transfer-conditions}
  \beta_\lambda^2
  \norm{a_{\lambda,\alpha}(L)f^*}_{L^2(\mu)}^2
  =
  o_{\mathbb{P}}\xk{
    V_\lambda
  },
\end{equation}
then
\begin{equation}
  \label{eq:common-pointwise-bias-transfer}
  \mathsf{B}_X(q_\lambda)
  =
  \mathcal{B}_n(q_\lambda;f^*)
  +
  o_{\mathbb{P}}\xk{
    \mathcal{E}_n^{\mathsf{seq}}(q_\lambda;f^*)
  }.
\end{equation}

\end{proposition}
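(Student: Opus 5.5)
The plan is to derive an exact algebraic decomposition of the empirical residual \(S\varphi_\lambda(T_X)\widetilde{g}_X-f^*\) into the population residual plus two perturbation terms. One term is controlled by the interaction certificate \cref{eq:interaction-certificate}, the other by \Cref{lem:rhs-fluctuation}. \Cref{lem:scalar-quadratic-risk-transfer} then converts the resulting bound into the relative statement \cref{eq:common-pointwise-bias-transfer}.

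\emph{Step 1 (decomposition).} With \(h_\lambda=\varphi_\lambda(T)S^*f^*\), \(r_\lambda=\psi_\lambda(L)f^*\) and \(Z_\lambda\) as in \Cref{lem:rhs-fluctuation}, write \(\widetilde{g}_X=T_Xh_\lambda+(S^*f^*-Th_\lambda)+Z_\lambda\). By \cref{eq:spectral-intertwining}, \(S^*f^*-Th_\lambda=S^*\psi_\lambda(L)f^*=\psi_\lambda(T)S^*f^*\). Hence
\[
  \varphi_\lambda(T_X)\widetilde{g}_X
  =
  q_\lambda(T_X)\varphi_\lambda(T)S^*f^*
  +\varphi_\lambda(T_X)\psi_\lambda(T)S^*f^*
  +\varphi_\lambda(T_X)Z_\lambda .
\]
Substitute \(\varphi_\lambda(T_X)\psi_\lambda(T)=G_\lambda+\psi_\lambda(T_X)\varphi_\lambda(T)\) and use \(q_\lambda+\psi_\lambda=1\). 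The first two terms then collapse to \(h_\lambda+G_\lambda S^*f^*\). Since \(Sh_\lambda-f^*=-r_\lambda\), this gives
\[
  S\varphi_\lambda(T_X)\widetilde{g}_X-f^*
  =-r_\lambda+D_X,
  \qquad
  D_X\coloneqq SG_\lambda S^*f^*+S\varphi_\lambda(T_X)Z_\lambda .
\]
Consequently \(|\mathsf{B}_X(q_\lambda)-R_\lambda^2|\le 2R_\lambda d_X+d_X^2\) with \(d_X=\norm{D_X}_{L^2(\mu)}\), because \(\norm{r_\lambda}=R_\lambda\).

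\emph{Step 2 (bounding \(d_X\)).} The first piece of \(D_X\) is at most \(\beta_\lambda\norm{a_{\lambda,\alpha}(L)f^*}\) by the certificate. For the second piece, work on \(\{\delta_\lambda\le1/4\}\), whose probability tends to one, and factor
\[
  S\varphi_\lambda(T_X)Z_\lambda
  =
  \bigl(ST_\lambda^{-1/2}\bigr)
  \bigl(T_\lambda^{1/2}T_{X,\lambda}^{-1/2}\bigr)
  \bigl(T_{X,\lambda}\varphi_\lambda(T_X)\bigr)
  \bigl(T_{X,\lambda}^{-1/2}T_\lambda^{1/2}\bigr)
  T_\lambda^{-1/2}Z_\lambda .
\]
The factors are bounded in turn. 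First, \(\norm{ST_\lambda^{-1/2}}\le1\), since \(T_\lambda^{-1/2}TT_\lambda^{-1/2}\preceq I\). Next, the two Loewner factors are bounded by an absolute constant by \Cref{lem:regularized-loewner-comparison}. The middle factor is at most \(E\). Finally, \Cref{lem:rhs-fluctuation} together with \(\mathcal{L}_\lambda/n\to0\) gives \(\norm{T_\lambda^{-1/2}Z_\lambda}=o_{\mathbb{P}}(R_\lambda)\). Hence \(d_X\le u_XR_\lambda+v_X\) with \(u_X=o_{\mathbb{P}}(1)\) and \(v_X=\beta_\lambda\norm{a_{\lambda,\alpha}(L)f^*}\).

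\emph{Step 3 (relative bound).} Apply \Cref{lem:scalar-quadratic-risk-transfer} with \(b=R_\lambda\), \(V=V_\lambda\), \(u=u_X\) and \(v=v_X\). Recall \(\mathcal{E}_n^{\mathsf{seq}}=R_\lambda^2+V_\lambda\) and \(V_\lambda>0\). This yields
\[
  \frac{|\mathsf{B}_X(q_\lambda)-\mathcal{B}_n(q_\lambda;f^*)|}{\mathcal{E}_n^{\mathsf{seq}}}
  \le 2u_X+u_X^2+(1+u_X)\frac{v_X}{\sqrt{V_\lambda}}+\frac{v_X^2}{V_\lambda}.
\]
By \cref{eq:pointwise-bias-transfer-conditions}, \(v_X^2/V_\lambda=o_{\mathbb{P}}(1)\), so the right side is \(o_{\mathbb{P}}(1)\). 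The complement event \(\{\delta_\lambda>1/4\}\) has vanishing probability because \(\delta_\lambda=o_{\mathbb{P}}(1)\), which gives \cref{eq:common-pointwise-bias-transfer}.

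The main obstacle is Step 1: finding the decomposition in which the non-commutative error appears exactly as \(G_\lambda S^*f^*\). Once that identity is in place, the remaining steps are routine norm bookkeeping.
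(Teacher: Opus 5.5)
Your proposal is correct and follows the paper's proof essentially step for step. It uses the same decomposition \(\varphi_\lambda(T_X)\widetilde{g}_X-h_\lambda=\varphi_\lambda(T_X)Z_\lambda+G_\lambda S^*f^*\), the same Loewner and regularized-inverse control of \(S\varphi_\lambda(T_X)Z_\lambda\) by \(\|T_\lambda^{-1/2}Z_\lambda\|\) via \Cref{lem:rhs-fluctuation}, and the same application of \Cref{lem:scalar-quadratic-risk-transfer}; the only detail the paper states that you leave implicit is the degenerate case \(R_\lambda=0\), where \(Z_\lambda=0\) almost surely and one takes \(u_X=0\).
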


\begin{proof}[Proof of \Cref{prop:common-bias-transfer}]
It suffices to work on \(\{\delta_\lambda \le1/4\}\), whose probability tends
to one by assumption.
Set
\[
  h_\lambda
  =
  \varphi_\lambda(T)S^*f^*,
  \qquad
  r_\lambda
  =
  \psi_\lambda(L)f^*,
  \qquad
  e_\lambda
  =
  S\xk{
    \varphi_\lambda(T_X)\widetilde{g}_X-h_\lambda
  }.
\]
Then
\[
  S\varphi_\lambda(T_X)\widetilde{g}_X-f^*
  =
  -r_\lambda+e_\lambda,
\]
so
\[
  \left|
    \mathsf{B}_X(q_\lambda)-R_\lambda^2
  \right|
  \le
  2R_\lambda \norm{e_\lambda}_{L^2(\mu)}
  +
  \norm{e_\lambda}_{L^2(\mu)}^2.
\]

With \(Z_\lambda\) from \Cref{lem:rhs-fluctuation}, direct algebra gives
\[
  \varphi_\lambda(T_X)\widetilde{g}_X-h_\lambda
  =
  \varphi_\lambda(T_X)Z_\lambda
  +
  G_\lambda S^*f^*.
\]
The Loewner comparison and the regularized inverse bound imply
\begin{align*}
  \norm{
    T^{1/2} \varphi_\lambda(T_X)T_\lambda^{1/2}
  }
  &=
  \norm{
    T^{1/2} T_{X,\lambda}^{-1/2}
    \xk{
      T_{X,\lambda}^{1/2}
      \varphi_\lambda(T_X)
      T_{X,\lambda}^{1/2}
    }
    T_{X,\lambda}^{-1/2} T_\lambda^{1/2}
  }
  \\
  &\le
  \norm{T^{1/2} T_{X,\lambda}^{-1/2}}
  \norm{
    T_{X,\lambda}^{1/2}
    \varphi_\lambda(T_X)
    T_{X,\lambda}^{1/2}
  }
  \norm{T_{X,\lambda}^{-1/2} T_\lambda^{1/2}}
  \\
  &\le
  C_E.
\end{align*}
The middle factor is controlled by the regularized inverse bound, and the two
outside factors by \Cref{lem:regularized-loewner-comparison}.
Therefore
\[
  \norm{
    S\varphi_\lambda(T_X)Z_\lambda
  }_{L^2(\mu)}
  \le
  C_E
  \norm{
    T_\lambda^{-1/2} Z_\lambda
  }_{\mathcal{H}}.
\]
\Cref{lem:rhs-fluctuation} therefore gives a random
\[
  \omega_\lambda
  =
  O_{\mathbb{P}}\xk{
    \sqrt{
      \frac{\mathcal L_\lambda}{n}
    }
  }
  =
  o_{\mathbb{P}}(1)
\]
such that
\[
  \norm{
    S\varphi_\lambda(T_X)Z_\lambda
  }_{L^2(\mu)}
  \le
  \omega_\lambda R_\lambda.
\]
If \(R_\lambda=0\), the proof of
\Cref{lem:rhs-fluctuation} gives \(Z_\lambda=0\) almost surely, and we take
\(\omega_\lambda=0\).
Using \cref{eq:interaction-certificate}, we obtain
\[
  \norm{e_\lambda}_{L^2(\mu)}
  \le
  \omega_\lambda R_\lambda
  +
  \beta_\lambda
  \norm{a_{\lambda,\alpha}(L)f^*}_{L^2(\mu)}.
\]
Thus the preceding bound holds with
\[
  u_\lambda=\omega_\lambda,
  \qquad
  c_\lambda
  =
  \beta_\lambda
  \norm{a_{\lambda,\alpha}(L)f^*}_{L^2(\mu)}.
\]
The bias expansion above and
\Cref{lem:scalar-quadratic-risk-transfer}, applied to the singleton index set,
give
\[
  \frac{
    \left|
      \mathsf{B}_X(q_\lambda)-R_\lambda^2
    \right|
  }{
    R_\lambda^2+V_\lambda
  }
  \le
  2u_\lambda+u_\lambda^2
  +
  (1+u_\lambda)\frac{c_\lambda}{\sqrt{V_\lambda}}
  +
  \frac{c_\lambda^2}{V_\lambda}.
\]
The bound on \(\omega_\lambda\) gives \(u_\lambda=o_{\mathbb{P}}(1)\), while
\cref{eq:pointwise-bias-transfer-conditions} gives
\(c_\lambda^2/V_\lambda=o_{\mathbb{P}}(1)\).
Hence \(c_\lambda/\sqrt{V_\lambda}=o_{\mathbb{P}}(1)\), proving
\cref{eq:common-pointwise-bias-transfer}.

\end{proof}
\clearpage{}
\clearpage{}\section{Scale-Regular Lipschitz Filters via Schatten Class Bounds}
\label{sec:lipschitz-schatten}

This section proves \Cref{thm:conditional-risk-equivalence}.
Scalar Lipschitz regularity does not give a uniform DOI bound on
$\mathfrak{B}(\mathcal{H})$, but it does give a bound on every finite Schatten class.
Optimizing the Schatten exponent against the operator and Hilbert--Schmidt
concentration scales yields the comparison rate in
\cref{eq:lipschitz-schatten-scale}.

Recall that for a Hilbert space $\mathcal{H}$, we denote by $\mathfrak{B}=\mathfrak{B}(\mathcal{H})$ the space of all bounded operators on $\mathcal{H}$. For any spectral measures $E$ and $F$, denote by
$\mathfrak{M}_{\mathfrak{B}}=\mathfrak{M}_{\mathfrak{B}}(E,F)$ the space of all DOI kernel functions $\phi$ such that $\operatorname{DOI}_\phi^{E,F}$ is a bounded operator on $\mathfrak{B}$, with norm
\[
    \norm{\phi}_{\mathfrak{M}_{\mathfrak{B}}}
    =
    \norm{\operatorname{DOI}_\phi^{E,F}}_{\mathfrak{B}}.
\]

Likewise, for any spectral measures $E$, $F$, and any $1\leq r<\infty$, we denote by
$\mathfrak{M}_r=\mathfrak{M}_r(E,F)$ the space of all DOI kernel functions $\phi$ such that $\operatorname{DOI}_\phi^{E,F}$ is a bounded operator on the Schatten class $\mathfrak{S}_r=\mathfrak{S}_r(\mathcal{H})$, with norm
\[
    \norm{\phi}_{\mathfrak{M}_r}
    =
    \norm{\operatorname{DOI}_\phi^{E,F}}_{\mathfrak{S}_r}.
\]

The transformed size bounds in \Cref{ass:lipschitz-filter} give
\begin{equation}
  \label{eq:lipschitz-filter-basic-consequences}
  \sup_{0\le x\le\kappa^2}
  (x+\lambda)\varphi_\lambda(x)
  \le A,
  \qquad
  0\le\psi_\lambda(x)
  \le
  A\xk{\frac{\lambda}{x+\lambda}}^{\rho_\psi}.
\end{equation}
In particular, the residual has qualification at least
\(\rho_\psi\), and
\(\mathcal{N}_q(\lambda)<\infty\).

\begin{lemma}[Contractivity of the diagonal DOI]
\label{lem:contraction_on_diag}
Let $\mathcal H$ be a separable Hilbert space, and let
\[
E,F:\mathcal B(\mathbb R)\longrightarrow \mathfrak B(\mathcal H)
\]
be two spectral measures. Denote $D=\{(t,s)\in\mathbb R^2:t=s\}$, and for $X\in\mathfrak S_2(\mathcal H)$, define
\[
P_D(X):=
\operatorname{DOI}^{E,F}_{\mathbf 1_D}(X).
\]
For every \(1\le r<\infty\), the restriction of \(P_D\) to finite-rank operators
extends uniquely to a contraction on \(\mathfrak S_r(\mathcal H)\), agreeing
with the original DOI on \(\mathfrak S_2\cap\mathfrak S_r\):
\[
\|P_D(X)\|_{\mathfrak S_r}
\le
\|X\|_{\mathfrak S_r},
\qquad
X\in\mathfrak S_r(\mathcal H).
\]
\end{lemma}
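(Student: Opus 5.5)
The plan is to identify \(P_D\) concretely as a block-diagonal compression built from common atoms of \(E\) and \(F\). Then contractivity on \(\mathfrak S_r\) follows from the standard pinching inequality, and the extension is obtained by density.

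\textbf{Step 1: identify the atoms.} Let \(\Lambda=\{t\in\mathbb R: E(\{t\})\ne0 \text{ and } F(\{t\})\ne0\}\). By separability of \(\mathcal H\), the nonzero projections \(E(\{t\})\) are pairwise orthogonal, so there are only countably many of them. Hence \(\Lambda\) is countable.

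\textbf{Step 2: compute \(P_D\) on \(\mathfrak S_2\).} On \(\mathfrak S_2\), the DOI is the integral of the scalar kernel against the product spectral measure \(\mathcal E(\Delta_1\times\Delta_2)X=E(\Delta_1)XF(\Delta_2)\) acting on the Hilbert space \(\mathfrak S_2\). So
\[
P_D(X)=\mathcal E(D)X .
\]
I will show that \(\mathcal E(D)=\mathcal E(D_\Lambda)\) with \(D_\Lambda=\{(t,t):t\in\Lambda\}\). This means checking that \(\mathcal E(D\setminus D_\Lambda)=0\). The quantity \(\langle \mathcal E(\cdot)X,X\rangle_{\mathfrak S_2}\) is a finite measure. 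Testing it on rank-one \(X=x\otimes y\) gives the product measure \(\langle E(\cdot)x,x\rangle\otimes\langle F(\cdot)y,y\rangle\). By Fubini, this product measure charges the diagonal only through common atoms. Since rank-one operators span a dense subset of \(\mathfrak S_2\), this yields the claim. Consequently
\[
P_D(X)=\sum_{t\in\Lambda}E(\{t\})\,X\,F(\{t\}),
\]
with the series converging in \(\mathfrak S_2\).

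\textbf{Step 3: pinching on \(\mathfrak S_r\).} Write \(p_t=E(\{t\})\) and \(q_t=F(\{t\})\) for \(t\in\Lambda\). The \(p_t\) are mutually orthogonal projections, and so are the \(q_t\). Set \(p_0=I-\sum_t p_t\) and \(q_0=I-\sum_t q_t\). The map \(X\mapsto\sum_t p_tXq_t\) is a two-sided pinching. To show it is contractive on \(\mathfrak S_r\) for \(1\le r<\infty\), I will use the averaging trick. Enumerate the blocks and let \(\omega=(\omega_t)\) be i.i.d.\ uniform phases. Define the unitaries
\[
U_\omega=\sum_t\omega_t p_t+p_0,\qquad W_\omega=\sum_t\bar\omega_t q_t+q_0 .
\]
Then
\[
\E_\omega\, U_\omega X W_\omega = \sum_t p_tXq_t ,
\]
because the off-diagonal terms \(p_sXq_t\) with \(s\ne t\) and the terms involving \(p_0\) or \(q_0\) average to zero. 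This should be checked first for finitely many blocks, where the expectation is a finite-dimensional torus average, and then passed to the limit. The \(\mathfrak S_r\) norm is unitarily invariant and convex, so the average has norm at most \(\|X\|_{\mathfrak S_r}\).

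\textbf{Step 4: density and uniqueness.} For finite-rank \(X\), the truncated sums over finite sets of blocks converge to \(P_D(X)\) in \(\mathfrak S_2\). By Fatou's property for Schatten norms, the limit has \(\mathfrak S_r\) norm at most \(\|X\|_{\mathfrak S_r}\). Finite-rank operators are dense in \(\mathfrak S_r\) for \(r<\infty\), so \(P_D\) extends uniquely to a contraction on \(\mathfrak S_r\). To see that this extension agrees with the original DOI on \(\mathfrak S_2\cap\mathfrak S_r\), I will approximate such an \(X\) by finite-rank operators simultaneously in both norms, for instance by spectral truncations of the singular value decomposition. The limits in \(\mathfrak S_2\) and in \(\mathfrak S_r\) must then coincide, since both imply convergence in operator norm.

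\textbf{Main obstacle.} The main obstacle is Step 2: rigorously justifying that the diagonal carries mass only at common atoms. The Fubini argument against product scalar measures handles this once the DOI on \(\mathfrak S_2\) is realized as the spectral measure \(\mathcal E\).
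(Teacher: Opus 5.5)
Your Steps 1, 2 and 4 are sound, and Steps 1--2 match the paper's argument. Step 3, however, contains an identity that is false as written, and the contraction bound depends on it. With \(U_\omega=\sum_t\omega_tp_t+p_0\) and \(W_\omega=\sum_t\bar\omega_tq_t+q_0\), the term \(p_0Xq_0\) enters \(U_\omega XW_\omega\) with deterministic coefficient \(1\), so it does not average out. What you actually get is \(\mathbb{E}_\omega U_\omega XW_\omega=\sum_tp_tXq_t+p_0Xq_0\). For instance, if \(E\) and \(F\) have no common atoms, then \(p_0=q_0=I\) and \(U_\omega=W_\omega=I\). The average is then \(X\), whereas your Step 2 gives \(P_D(X)=0\).

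The finite-block version has the same defect. Set \(p_0^A=I-\sum_{t\in A}p_t\) and \(q_0^A=I-\sum_{t\in A}q_t\). The residual \(p_0^AXq_0^A\) survives the average, and it tends to \(p_0Xq_0\) rather than to zero. To fix this, give the two complements their own independent phases: \(U_\omega=\sum_{t\in A}\omega_tp_t+\omega_ap_0^A\) and \(W_\omega=\sum_{t\in A}\bar\omega_tq_t+\bar\omega_bq_0^A\), with \(\omega_a,\omega_b\) independent of each other and of \((\omega_t)\). Then every off-diagonal coefficient has mean zero: \(\omega_s\bar\omega_t\) for \(s\ne t\), and \(\omega_s\bar\omega_b\), \(\omega_a\bar\omega_t\), \(\omega_a\bar\omega_b\). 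The average is therefore exactly \(\sum_{t\in A}p_tXq_t\). Convexity and unitary invariance of \(\|\cdot\|_{\mathfrak S_r}\) then give the contraction bound you want.

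With that repair, your proof is a valid alternative to the paper's. The paper proves \(\|\sum_{\tau\in A}E_\tau XF_\tau\|\le\|X\|\) on \(\mathfrak B(\mathcal H)\) by a direct vector estimate, which never involves the complements \(p_0,q_0\) and so avoids your slip. It then dualizes to \(\mathfrak S_1\) and interpolates to \(\mathfrak S_r\). To pass from finite \(A\) to all common atoms, it uses absolute \(\mathfrak S_1\)-convergence for rank-one \(X\), via \(\sum_\tau\|E_\tau u\|\,\|F_\tau v\|\le\|u\|\,\|v\|\).

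Your route differs in two ways. Phase averaging avoids interpolation and gives contractivity in every unitarily invariant norm at once. Lower semicontinuity of \(\|\cdot\|_{\mathfrak S_r}\) under weak-operator (here operator-norm) convergence replaces the absolute-convergence estimate, and it works equally well for \(r<2\). Your final step, checking agreement with the DOI on \(\mathfrak S_2\cap\mathfrak S_r\) via simultaneous finite-rank approximation, is essentially the paper's.
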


\begin{proof}
Let
\[
\mathcal A
:=
\{\tau\in\mathbb R:
E(\{\tau\})\neq0,\;
F(\{\tau\})\neq0\}
\]
be the set of common atoms of $E$ and $F$. Note that $\mathcal H$ is separable, and the nonzero projections
$E(\{\tau\})$ corresponding to distinct atoms have mutually orthogonal
ranges, and similarly for $F$, hence $\mathcal A$ is at most countable.

Write $E_\tau:=E(\{\tau\})$, $
F_\tau:=F(\{\tau\})$. For any finite subset $A\subset\mathcal A$, define
\[
T_A(X):=
\sum_{\tau\in A}E_\tau X F_\tau.
\]
Since the projections $\{E_\tau\}_{\tau\in A}$ and
$\{F_\tau\}_{\tau\in A}$ are pairwise orthogonal, for any
$X\in\mathfrak B(\mathcal H)$,
\[
\begin{aligned}
\|T_A(X)\xi\|^2
&=
\sum_{\tau\in A}
\|E_\tau X F_\tau\xi\|^2
\le
\sum_{\tau\in A}\|XF_\tau\xi\|^2
\\
&\le
\|X\|^2
\sum_{\tau\in A}\|F_\tau\xi\|^2
=
\|X\|^2\left\|\sum_{\tau\in A}F_\tau\xi\right\|^2
\le
\|X\|^2\|\xi\|^2.
\end{aligned}
\]
Therefore, 
\[
\|T_A\|_{\mathfrak B\to\mathfrak B}\le1.
\]

Next, we use the duality between the trace class $\mathfrak S_1$ and $\mathfrak S_\infty=\mathfrak B$. For any $X\in\mathfrak S_1$, $Y\in\mathfrak B$, we have 
\[
\operatorname{Tr}(T_A(X)Y)
=
\operatorname{Tr}(XT_A^\sharp(Y)),\quad T_A^\sharp(Y)
=
\sum_{\tau\in A}F_\tau Y E_\tau.
\]
Thus the map dual to $T_A$ under the trace pairing is $T_A^\sharp$.
The same argument as for $T_A$ gives
\[
\|T_A^\sharp\|_{\mathfrak B\to\mathfrak B}\le1,
\]
so the dual characterization of the trace norm gives
\[
  \norm{T_A(X)}_{\mathfrak S_1}
  =
  \sup_{\|Y\|\leq 1}
  |\operatorname{Tr}(T_A(X)Y)|
  =
  \sup_{\|Y\|\leq 1}
  |\operatorname{Tr}(XT_A^\sharp(Y))|
  \leq 
  \sup_{\|Y\|\leq 1}
  \norm{X}_{\mathfrak S_1}\|T_A^\sharp(Y)\|\leq\norm{X}_{\mathfrak S_1}.
\]
Equivalently,
\[
  \norm{T_A}_{\mathfrak S_1\to\mathfrak S_1}\leq 1.
\]
Interpolating between $\mathfrak S_1$ and
$\mathfrak S_\infty=\mathfrak B(\mathcal H)$ yields
\[
  \norm{T_A}_{\mathfrak S_r\to\mathfrak S_r}\leq 1,\quad 1\leq r<\infty.
\]

We next identify the diagonal DOI on \(\mathfrak S_2\).
For a rank-one operator \(X=|u\rangle\langle v|\), set
\(\mu_u(B)=\|E(B)u\|^2\) and \(\nu_v(B)=\|F(B)v\|^2\).
The scalar spectral measure of \(X\) for the commuting left and right
spectral actions on \(\mathfrak S_2\) is \(\mu_u\otimes\nu_v\).
By Fubini's theorem,
\[
  (\mu_u\otimes\nu_v)(D)
  =\int\nu_v(\{t\})\,d\mu_u(t)
  =\sum_{\tau\in\mathcal A}
    \mu_u(\{\tau\})\nu_v(\{\tau\}).
\]
Thus this measure gives no mass to \(D\setminus D_{\rm at}\), where
\(D_{\rm at}=\{(\tau,\tau):\tau\in\mathcal A\}\).
By linearity and finite-rank density, the corresponding DOI vanishes
on all of \(\mathfrak S_2\).
Choose increasing finite sets \(A_N\) exhausting \(\mathcal A\), taking
\(A_N=\mathcal A\) if \(\mathcal A\) is finite, including the empty case.
Countable additivity of the spectral measure on \(\mathfrak S_2\) gives
\[
  T_{A_N}(X)\longrightarrow P_D(X)
  \quad\text{in }\mathfrak S_2,
  \qquad X\in\mathfrak S_2.
\]

To obtain convergence in every finite Schatten class, first let
\(X=|u\rangle\langle v|\).
Orthogonality and Cauchy--Schwarz give
\[
  \sum_{\tau\in\mathcal A}\|E_\tau X F_\tau\|_{\mathfrak S_1}
  =\sum_{\tau\in\mathcal A}\|E_\tau u\|\,\|F_\tau v\|
  \le
  \left(\sum_{\tau\in\mathcal A}\|E_\tau u\|^2\right)^{1/2}
  \left(\sum_{\tau\in\mathcal A}\|F_\tau v\|^2\right)^{1/2}
  \le\|u\|\,\|v\|.
\]
Hence the series converges absolutely in \(\mathfrak S_1\), and therefore
in every \(\mathfrak S_r\), to its already identified \(\mathfrak S_2\)-limit.
The same holds for every finite-rank \(X\) by linearity.
Passing to the limit in the contraction estimate for \(T_{A_N}\) gives
\(\|P_D(X)\|_{\mathfrak S_r}\le\|X\|_{\mathfrak S_r}\) on finite-rank operators.
Finite-rank density yields a unique contractive extension to \(\mathfrak S_r\).
Moreover, the uniform contraction bound for \(T_{A_N}\), together with
finite-rank approximation, gives convergence to this extension in
\(\mathfrak S_r\) for every \(X\in\mathfrak S_r\).
For \(X\in\mathfrak S_2\cap\mathfrak S_r\), both limits agree, since convergence
in either Schatten norm implies convergence in operator norm.
This proves the claimed compatibility with the original DOI.
\end{proof}

\begin{lemma}[Divided differences with two spectral measures]
\label{lem:two-measure-lipschitz-schatten}
Let \(f:\mathbb{R}\to\mathbb{R}\) be Lipschitz and define
\[
  f_0^{[1]}(x,y)
  =
  \frac{f(x)-f(y)}{x-y}\mathbf{1}_{\{x\ne y\}}(x,y).
\]
For \(2\le r<\infty\), compactly supported spectral measures \(E_1,E_2\),
and \(H\in\mathfrak{S}_r\),
\[
  \norm{
    \operatorname{DOI}_{f_0^{[1]}}^{E_1,E_2}(H)
  }_{\mathfrak{S}_r}
  \le
  C\frac{r^2}{r-1}
  \operatorname{Lip}(f)
  \norm{H}_{\mathfrak{S}_r}.
\]
The constant is independent of \(r,f,E_1,E_2\), and \(H\).
\end{lemma}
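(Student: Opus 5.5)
The plan is to reduce the two-measure statement to the known Schatten-class operator Lipschitz bound for a single pair of self-adjoint operators, using the Schur multiplier theorem of \citet{condeAlonso2023_SchurMultipliersSchatten} for the constant, and to use \Cref{lem:contraction_on_diag} to handle the convention that the kernel vanishes on the diagonal.

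\textbf{Step 1: normalize the kernel.} By scaling, assume \(\operatorname{Lip}(f)=1\). First treat the case where \(E_1,E_2\) are finitely supported with atoms \(\{a_i\}\) and \(\{b_j\}\), and \(H\) has finite rank. Then \(\operatorname{DOI}_{f_0^{[1]}}^{E_1,E_2}(H)\) is the block Schur multiplier
\[
\bigl(f_0^{[1]}(a_i,b_j)\,E_1(\{a_i\})HE_2(\{b_j\})\bigr)_{i,j}.
\]

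\textbf{Step 2: remove the diagonal and invoke the multiplier criterion.} Extend \(f\) to a \(C^1\) function by mollification. The mollified \(f_\varepsilon\) is still \(1\)-Lipschitz, and its divided difference converges pointwise off the diagonal. Decompose the full divided difference as
\[
f^{[1]}=f_0^{[1]}+\mathbf 1_D\, f'.
\]
For the full divided difference \(f^{[1]}\), the Conde-Alonso--González-Pérez--Parcet--Tablate Schur multiplier theorem applies to the Hörmander--Mikhlin-type kernel \(f^{[1]}(x,y)\). This is the standard route to Potapov--Sukochev with the sharp constant of Caspers et al. It gives
\[
\|f^{[1]}\|_{\mathfrak M_r}\le C\frac{r^2}{r-1}
\]
for all spectral measures simultaneously, because multiplier norms of continuous kernels are bounded by their Schur norms on the product of supports.

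The diagonal piece is \(\operatorname{DOI}_{\mathbf 1_D f'}\). On the finite-atom model it equals \(\sum_\tau f'(\tau)E_\tau HF_\tau\), with \(|f'|\le1\). This is the map \(T_A\) composed with a diagonal unimodular-bounded multiplication, which is contractive on \(\mathfrak S_r\) exactly as in \Cref{lem:contraction_on_diag}: interpolate the \(\mathfrak S_1\) and \(\mathfrak B\) bounds, using that diagonal-block multipliers with \(\ell^\infty\) coefficients are bounded by \(\sup|f'|\).

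Subtracting the two pieces gives \(\|f_0^{[1]}\|_{\mathfrak M_r}\le C r^2/(r-1)+1\). The \(+1\) is absorbed into the constant since \(r\ge2\).

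\textbf{Step 3: pass to limits.} Letting \(\varepsilon\to0\), pointwise bounded convergence of the kernels off the diagonal gives \(\mathfrak S_2\)-convergence of the DOIs on finite-rank \(H\). Lower semicontinuity of the \(\mathfrak S_r\) norm then transfers the bound. General compactly supported \(E_1,E_2\) are handled by discretizing: replace \(E_k\) by \(E_k\circ\pi_N^{-1}\), where \(\pi_N\) rounds to a grid. The difficulty is that rounding can create new coincidences \(x=y\), i.e.\ atoms on the diagonal, that were not there before. To avoid this, I would apply the bound to \(f^{[1]}\) for smooth \(f\), where the kernel is continuous and the approximation is harmless, and only afterwards split off the diagonal part using the genuine atoms through \(P_D\) of \Cref{lem:contraction_on_diag}. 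That lemma already establishes the \(\mathfrak S_r\)-extension and its compatibility with the \(\mathfrak S_2\) DOI. Finally, density of finite-rank operators in \(\mathfrak S_r\) extends the bound to all \(H\).

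\textbf{Main obstacle.} I expect the hardest step to be the justification that the multiplier bound from \citet{condeAlonso2023_SchurMultipliersSchatten} for \(f^{[1]}\) holds with two different spectral measures and the constant \(r^2/(r-1)\), uniformly in \(f\). Their criterion is stated for Schur multipliers on \(\mathbb R\times\mathbb R\). The transfer to arbitrary spectral measures (a transference or discretization step) and the control of the diagonal need care. My first attempt would be the discretization argument above, with continuity of \(f_\varepsilon^{[1]}\).
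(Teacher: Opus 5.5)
Your argument is correct, but it follows a different route from the paper.

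\textbf{The paper's route.} The paper first treats a single spectral measure $E$. For off-diagonal $H$ (that is, $P_D^EH=0$) it quotes the one-dimensional case of \citet[Theorem~5.1]{caspers2014_BestConstants} directly. It extends to all $H$ by writing $H=P_D^EH+(I-P_D^E)H$, using $\operatorname{DOI}_{f_0^{[1]}}\circ P_D^E=0$ and the contractivity from \Cref{lem:contraction_on_diag}, at the cost of a factor $2$. It then obtains the two-measure case from the direct-sum trick $\widetilde E=E_1\oplus E_2$, $\widetilde H=\bigl(\begin{smallmatrix}0&H\\0&0\end{smallmatrix}\bigr)$: the DOI of $\widetilde H$ has $\operatorname{DOI}^{E_1,E_2}_{f_0^{[1]}}(H)$ as its only nonzero block.

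\textbf{Your route.} You instead rederive the Caspers et al.\ estimate from the Conde-Alonso--Gonz{\'a}lez-P{\'e}rez--Parcet--Tablate criterion.
\begin{itemize}
\item For mollified $f_\varepsilon$, the full divided difference is continuous on $\mathbb R^2$ and satisfies $\mathcal H_1\lesssim\operatorname{Lip}(f)$. Hence the discretization in Steps 1--3 of \Cref{lem:log-spectral-schatten-multipliers} transfers the bound to any pair $E_1,E_2$ at once.
\item The diagonal piece is $X\mapsto P_D(f_\varepsilon'(A_1)X)$ with $A_1=\int x\,dE_1(x)$, so it is contractive.
\item The mollification is removed by dominated convergence of the $\mathfrak S_2$ DOI, since both kernels vanish on $D$. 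Because $\norm{\cdot}_{\mathfrak S_r}\le\norm{\cdot}_{\mathfrak S_2}$ for $r\ge2$, you do not even need lower semicontinuity.
\end{itemize}
Your ordering is exactly what makes this work: you discretize only the continuous full kernel and split off the diagonal afterwards using the genuine common atoms. Discretizing $f_0^{[1]}$ itself would fail, because rounding creates spurious diagonal atoms where that kernel jumps.

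\textbf{Comparison.} The paper's proof is shorter and uses Caspers et al.\ as a black box; the $P_D$ reduction is there to neutralize however that result treats the diagonal. Yours uses only the one external multiplier theorem the paper already needs for \Cref{lem:log-spectral-schatten-multipliers}. It also handles two measures without the $2\times2$ trick and makes the diagonal convention explicit. The price is the transference and approximation steps.

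\textbf{One correction.} The transference to spectral measures with multiplicity goes through the block amplification in Step 1 of \Cref{lem:log-spectral-schatten-multipliers}. So you need the completely bounded form of the Conde-Alonso et al.\ estimate, not just boundedness on $\mathfrak S_r(L^2(\mathbb R))$. Your remark about ``Schur norms on the product of supports'' should be replaced by that. With the general discretization in place, your Step 1 on finitely supported measures is superfluous.
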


\begin{proof}
We first consider the case of one spectral measure \(E_1=E_2=E\). Let
\(P_D^E=\operatorname{DOI}_{\mathbf{1}_D}^{E,E}\) be the diagonal projection, where
\(D=\{(x,y):x=y\}\) is the diagonal set.
By \Cref{lem:contraction_on_diag}, \(P_D^E\) is a contraction on
\(\mathfrak{S}_r\).

If \(H\in\mathfrak{S}_2 \cap\mathfrak{S}_r=\mathfrak{S}_2\) is off-diagonal (i.e. \(P_D^E H=0\)), then the one-dimensional specialization of
\citet[Theorem~5.1]{caspers2014_BestConstants} gives
\begin{equation}\label{eq:off-diagonal-original-estimate}
  \norm{
    \operatorname{DOI}_{f_0^{[1]}}^{E,E}(H)
  }_{\mathfrak{S}_r}
  \le
  C\frac{r^2}{r-1}
  \operatorname{Lip}(f)
  \norm{H}_{\mathfrak{S}_r},
\end{equation}
where the constant $C>0$ is independent of $r$, $f$, $E$, and $H$. For general $H$, note that
\[
    \operatorname{DOI}^{E,E}_{f_0^{[1]}}\circ\operatorname{DOI}^{E,E}_{\mathbf{1}_D}
    =
    \operatorname{DOI}^{E,E}_{f_0^{[1]} \cdot\mathbf{1}_D}=\operatorname{DOI}^{E,E}_0=0
\]
on $\mathfrak{S}_2$; moreover, since $P_D$ is a contractive projection, we have
\[
    P_D(I-P_D)H=(P_D-P_D^2)H=(P_D-P_D)H=0,
\]
hence 
\begin{equation}\label{eq:non-off-diagonal-estimate}
  \begin{aligned}
    \norm{
    \operatorname{DOI}_{f_0^{[1]}}^{E,E}(H)
    }_{\mathfrak{S}_r}
    &=
    \norm{
    \operatorname{DOI}_{f_0^{[1]}}^{E,E}(P_D H+(I-P_D)H)
    }_{\mathfrak{S}_r}\\
    &=\norm{
    \operatorname{DOI}_{f_0^{[1]}}^{E,E}((I-P_D)H)}_{\mathfrak{S}_r}\\
    &\le
    C\frac{r^2}{r-1}\operatorname{Lip}(f)\norm{(I-P_D)H}_{\mathfrak{S}_r}\\
    &\le
    2C\frac{r^2}{r-1}\operatorname{Lip}(f)\norm{H}_{\mathfrak{S}_r},
  \end{aligned}
\end{equation}
where we use (\ref{eq:off-diagonal-original-estimate}) in the first inequality and the contractivity of $P_D$ in the last one.
Since \(\mathfrak{S}_2\) is dense in \(\mathfrak{S}_r\) for \(r\ge2\), this estimate extends to all \(H\in\mathfrak{S}_r\).

Next, for two spectral measures $E_1$ and $E_2$ on a Hilbert space $\mathcal{H}$, define
$\widetilde{E}=E_1 \oplus E_2$, which is a spectral measure on $\widetilde{\mathcal{H}}=\mathcal{H}\oplus\mathcal{H}$. Accordingly, define
\[
  \widetilde{H}
  =
  \begin{pmatrix}
    0&H\\
    0&0
  \end{pmatrix}:\widetilde{\mathcal{H}}\to\widetilde{\mathcal{H}},\quad \widetilde{H}(h_1,h_2)=(H(h_2),0),\quad\forall (h_1,h_2)\in\widetilde{\mathcal{H}}.
\]
Then $\widetilde{H}\in\mathfrak{S}_r(\widetilde{\mathcal{H}})$, and $\norm{\widetilde{H}}_{\mathfrak{S}_r(\widetilde{\mathcal{H}})}=\norm{H}_{\mathfrak{S}_r({\mathcal{H}})}$. Applying the extended estimate (\ref{eq:non-off-diagonal-estimate}) to $\widetilde{H}$ and $\widetilde{E}$, we obtain
\[
    \norm{
    \operatorname{DOI}^{\widetilde{E},\widetilde{E}}_{f_0^{[1]}}(\widetilde{H})
    }_{\mathfrak{S}_r(\widetilde{\mathcal{H}})}
    \leq 
    C\frac{r^2}{r-1}\operatorname{Lip}(f)\norm{\widetilde{H}}_{\mathfrak{S}_r(\widetilde{\mathcal{H}})}.
\]
Finally, note that 
\[
    \operatorname{DOI}^{\widetilde{E},\widetilde{E}}_{f_0^{[1]}}(\widetilde{H})
    =
    \begin{pmatrix} 
        0 & \operatorname{DOI}^{E_1,E_2}_{f_0^{[1]}}(H) \\ 
        0 & 0
    \end{pmatrix},
\]
hence 
\[
\begin{aligned}
    \norm{
    \operatorname{DOI}^{E_1,E_2}_{f_0^{[1]}}(H)
    }_{\mathfrak{S}_r(\mathcal{H})}
    &=
    \norm{\operatorname{DOI}^{\widetilde{E},\widetilde{E}}_{f_0^{[1]}}(\widetilde{H})}_{\mathfrak{S}_r(\widetilde{\mathcal{H}})}\\
    &\le
    C\frac{r^2}{r-1}\operatorname{Lip}(f)\norm{\widetilde{H}}_{\mathfrak{S}_r(\widetilde{\mathcal{H}})}\\
    &=
    C\frac{r^2}{r-1}\operatorname{Lip}(f)\norm{H}_{\mathfrak{S}_r(\mathcal{H})}.
\end{aligned}
\]
This proves the lemma.
\end{proof}

\begin{lemma}[Universal Schur multiplier]
\label{lem:universal_schur_multiplier}
    For $0<\alpha\leq 1$, define
    \[
    \vartheta_\alpha(z)
    =
    \frac{
    z e^{(1-\alpha)z/2}
    }{
    2\sinh(z/2)
    },
    \qquad
    \vartheta_\alpha(0)=1,
    \]
    then $\Phi_\alpha(t,s)=\vartheta_\alpha(t-s)$ is a universal Schur multiplier; in other words, for any $r\geq 1$ and any spectral measures $E,F$ defined on $\mathbb{R}$, 
    \[
        \norm{
            \operatorname{DOI}_{\Phi_\alpha}^{E,F}
        }_{\mathfrak{S}_r}
        \leq 
        C_\alpha,
    \]
    where $C_\alpha$ is independent of $r$, $E$ and $F$. 
\end{lemma}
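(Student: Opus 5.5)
The plan is to show that \(\vartheta_\alpha\) is the Fourier transform of a finite complex Borel measure on \(\mathbb{R}\). Once that is known, the universal multiplier property follows from the standard Fourier-representation argument. Suppose
\[
  \vartheta_\alpha(z)=\int_{\mathbb{R}} e^{i\xi z}\dd\nu_\alpha(\xi)
\]
with \(\norm{\nu_\alpha}<\infty\). Then \(\Phi_\alpha(t,s)=\int e^{i\xi t}e^{-i\xi s}\dd\nu_\alpha(\xi)\), and for \(H\) of finite rank,
\[
  \operatorname{DOI}_{\Phi_\alpha}^{E,F}(H)
  =
  \int_{\mathbb{R}} U_\xi H V_\xi^*\dd\nu_\alpha(\xi),
  \qquad
  U_\xi=\int e^{i\xi t}\dd E(t),
  \quad
  V_\xi=\int e^{i\xi s}\dd F(s).
\]
Here \(U_\xi\) and \(V_\xi\) are unitary, and the integral is a Bochner integral in \(\mathfrak{S}_r\). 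Unitary invariance of Schatten norms gives \(\norm{\operatorname{DOI}(H)}_{\mathfrak{S}_r}\le\norm{\nu_\alpha}\norm{H}_{\mathfrak{S}_r}\). This holds for every \(1\le r<\infty\), and also on \(\mathfrak{B}\). The bound passes from finite-rank operators to all of \(\mathfrak{S}_r\) by density, and it agrees with the \(\mathfrak{S}_2\) DOI on \(\mathfrak{S}_2\cap\mathfrak{S}_r\), as in \Cref{lem:contraction_on_diag}. So \(C_\alpha=\norm{\nu_\alpha}\) works.

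To produce \(\nu_\alpha\), first rewrite \(\vartheta_\alpha\):
\[
  \vartheta_\alpha(z)=\frac{z e^{(1-\alpha)z/2}}{e^{z/2}-e^{-z/2}}=\frac{z\,e^{-\alpha z/2}}{1-e^{-z}}.
\]
\textbf{Case \(\alpha=1\).} Here \(\vartheta_1(z)=\frac{z/2}{\sinh(z/2)}\). This is an even Schwartz-type function that is analytic in the strip \(|\operatorname{Im} z|<2\pi\). Its Fourier transform is explicit, \(\propto \operatorname{sech}^2(\pi\xi)\) up to scaling, which is positive and integrable. So \(\nu_1\) is absolutely continuous with finite mass.

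\textbf{Case \(0<\alpha<1\).} The function is no longer decaying as \(z\to+\infty\): \(\vartheta_\alpha(z)\sim z e^{-\alpha z/2}\to0\) there, while as \(z\to-\infty\) it behaves like \(|z|e^{-(2-\alpha)|z|/2}\to0\). So \(\vartheta_\alpha\) does decay exponentially in both directions, at rates \(\alpha/2\) and \(1-\alpha/2\). It is also real-analytic, with poles only at \(z=2\pi i k\), \(k\ne0\). I would verify \(\widehat{\vartheta_\alpha}\in L^1\) by a Sobolev criterion: show \(\vartheta_\alpha,\vartheta_\alpha'\in L^2(\mathbb{R})\), then apply Cauchy--Schwarz,
\[
  \int|\widehat g|\le\Bigl(\int(1+\xi^2)|\widehat g|^2\Bigr)^{1/2}\Bigl(\int(1+\xi^2)^{-1}\Bigr)^{1/2}.
\]
Both \(\vartheta_\alpha\) and its derivative are smooth with exponential decay at \(\pm\infty\), the derivative decaying like \(|z|e^{-c|z|}\). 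Hence \(\vartheta_\alpha\in H^1(\mathbb{R})\), and \(\norm{\nu_\alpha}\lesssim\norm{\vartheta_\alpha}_{H^1}=:C_\alpha\), which depends only on \(\alpha\). This argument covers \(\alpha=1\) as well, so the explicit formula there is not needed.

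The main obstacle is mild: checking smoothness at \(z=0\). This is a removable singularity: \(z/(1-e^{-z})\) is analytic near \(0\), and its value there is \(1\), consistent with \(\vartheta_\alpha(0)=1\). The other point to check is that the decay of \(\vartheta_\alpha'\) is uniform on the tails. This follows from writing \(\vartheta_\alpha'\) as a combination of \(e^{-\alpha z/2}\), \(z e^{-\alpha z/2}\), and derivatives of \((1-e^{-z})^{-1}\), all of which are bounded for \(z\ge1\). The region \(z\le-1\) is handled symmetrically using \(e^{-z}\vartheta_{2-\alpha}\)-type rewriting, namely \(\vartheta_\alpha(z)=\frac{-z\,e^{(1-\alpha/2)z}}{1-e^{z}}\).
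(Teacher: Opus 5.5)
Your proposal is correct and follows the paper's proof essentially step for step. Exponential decay of $\vartheta_\alpha$ and $\vartheta_\alpha'$, at rates $\alpha/2$ and $1-\alpha/2$, gives $\vartheta_\alpha\in H^1(\mathbb{R})$. Cauchy--Schwarz with the weights $(1+\xi^2)^{\pm1/2}$ together with Plancherel then gives $\widehat{\vartheta}_\alpha\in L^1$, and the DOI is written as an average of unitary conjugations $U_E(\xi)XU_F(\xi)^*$ against a finite measure, so unitary invariance yields a bound independent of $r$. The separate $\alpha=1$ computation is not needed, and the phrase ``no longer decaying as $z\to+\infty$'' is a slip, as your very next sentence shows.
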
 

\begin{proof}
Let 
\[
    \hat{\vartheta}_\alpha(\xi)=\int_{\mathbb{R}} \vartheta_\alpha(z) e^{-iz\xi} dz
\]
be the Fourier transform of $\vartheta_\alpha(z)$. Direct differentiation shows that $\vartheta_\alpha$ is smooth on $\mathbb{R}$ and
\[
    \left|\vartheta_\alpha^{(k)}(z)\right|
    \leq
    C_\alpha(1+|z|)
    \begin{cases}
      e^{-\alpha z/2}, & z\ge0,\\
      e^{(2-\alpha)z/2}, & z<0,
    \end{cases}
    \quad 
    k=0,1,2
\]
for some constant $C_\alpha>0$ depending only on $\alpha$. Therefore, the Fourier transform of $\vartheta_\alpha(z)$ satisfies 
\[\begin{aligned}
    \norm{\hat{\vartheta}_\alpha}_{L^1(\mathbb{R})}&=\int_{\mathbb{R}}|\hat{\vartheta}_\alpha(\xi)|d\xi=\int_{\mathbb{R}}(1+\xi^2)^{-1/2}\cdot(1+\xi^2)^{1/2}|\hat{\vartheta}_\alpha(\xi)|d\xi\\
    &\leq
    \xk{\int_{\mathbb{R}} \frac{d\xi}{1+\xi^2}}^{1/2}
    \cdot
    \xk{\int_{\mathbb{R}}(1+\xi^2)|\hat{\vartheta}_\alpha(\xi)|^2d\xi}^{1/2}\\
    &=\sqrt{\pi}\cdot\zk{2\pi\xk{\norm{\vartheta_\alpha}_{L^2(\mathbb{R})}^2+\norm{\vartheta_\alpha'}_{L^2(\mathbb{R})}^2 }}^{1/2}\\
    &\leq C_\alpha<\infty,
\end{aligned}\]
where the last equality follows from Plancherel's theorem.

Define $d\mu_\alpha(\xi)=\frac{1}{2\pi}\hat{\vartheta}_\alpha(\xi)d\xi$. Then, by the Fourier inversion formula,
\[
    \Phi_\alpha(t,s)=\vartheta_\alpha(t-s)=\int_{\mathbb{R}} e^{i\xi t} e^{-i\xi s} d\mu_\alpha(\xi).
\]
The measure \(\mu_\alpha\) may be complex, but its total variation satisfies
\[
  \|\mu_\alpha\|_{\mathrm{TV}}
  =\frac{1}{2\pi}\|\widehat\vartheta_\alpha\|_{L^1(\mathbb R)}
  \le C_\alpha.
\]
Let \(U_E(\xi)=\int e^{i\xi t}\,dE(t)\) and
\(U_F(\xi)=\int e^{i\xi s}\,dF(s)\), which are unitary operators.
For \(X\in\mathfrak S_2\), Fourier inversion and Fubini's theorem give
\[
  \operatorname{DOI}_{\Phi_\alpha}^{E,F}(X)
  =\int_{\mathbb R}U_E(\xi)XU_F(\xi)^*\,d\mu_\alpha(\xi).
\]
For every \(1\le r<\infty\) and \(X\in\mathfrak S_r\), the integrand is
continuous in \(\mathfrak S_r\), as follows first for finite-rank operators
from strong continuity of the unitaries and then by finite-rank approximation.
Thus the integral defines a bounded map on \(\mathfrak S_r\), with
\[
  \left\|\int_{\mathbb R}U_E(\xi)XU_F(\xi)^*\,d\mu_\alpha(\xi)\right\|_{\mathfrak S_r}
  \le\int_{\mathbb R}\|X\|_{\mathfrak S_r}\,d|\mu_\alpha|(\xi)
  \le C_\alpha\|X\|_{\mathfrak S_r}.
\]
It agrees with the Hilbert--Schmidt DOI on the intersection.
The same integral, interpreted in the weak operator sense, gives the
operator-norm bound on \(\mathfrak B(\mathcal H)\).
This proves the lemma.
\end{proof}
\begin{theorem}[One-dimensional HMS Schur multiplier]
\label{thm:external-hms-schur-multiplier}
Let \(M\in C^1(\mathbb R^2\setminus\{t=s\})\), and set
\[
  \mathcal H_1(M)
  :=
  \sum_{k=0}^1
  \left\|
    |t-s|^k
    \left(
      |\partial_t^kM(t,s)|
      +
      |\partial_s^kM(t,s)|
    \right)
  \right\|_{L^\infty(\mathbb R^2\setminus\{t=s\})}.
\]
If \(\mathcal H_1(M)<\infty\), then, for every \(1<r<\infty\), the Schur
multiplier \(S_M\), initially defined on
\(\mathfrak S_2(L^2(\mathbb R))\cap\mathfrak S_r(L^2(\mathbb R))\),
extends to a completely bounded map on \(\mathfrak S_r(L^2(\mathbb R))\),
and
\[
  \left\|
    S_M:
    \mathfrak{S}_r(L^2(\mathbb{R}))
    \to
    \mathfrak{S}_r(L^2(\mathbb{R}))
  \right\|_{\mathrm{cb}}
  \leq
  C\frac{r^2}{r-1}\mathcal H_1(M).
\]
This is the \(n=1\) specialization of Theorem~A and Remark~2.1 of
\citet{condeAlonso2023_SchurMultipliersSchatten}.
\end{theorem}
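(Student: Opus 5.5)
The statement is imported rather than proved from scratch, so the plan is to check that it is exactly the case \(n=1\) of Theorem~A of \citet{condeAlonso2023_SchurMultipliersSchatten}, with the explicit constant from their Remark~2.1. I would carry this out in three steps.

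\textbf{Step 1: match the hypotheses.} Theorem~A assumes a Hörmander--Mikhlin-type condition on a kernel \(M\) defined on \(\mathbb R^n\times\mathbb R^n\) off the diagonal. It requires
\[
  |t-s|^{|\gamma|}\bigl(|\partial_t^\gamma M(t,s)|+|\partial_s^\gamma M(t,s)|\bigr)\lesssim 1
\]
for all multi-indices with \(|\gamma|\le [n/2]+1\). For \(n=1\) this range is \(|\gamma|\le1\). The conditions for \(\gamma=0\) and \(|\gamma|=1\) are then exactly the two summands in \(\mathcal H_1(M)\).

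The regularity asked of \(M\) in their theorem is \(C^{[n/2]+1}\) away from the diagonal. For \(n=1\) this is \(C^1(\mathbb R^2\setminus\{t=s\})\), as assumed here. I would also check that their multiplier is normalized the same way as ours. Their \(S_M\) acts on kernels by pointwise multiplication, \((x_{ts})\mapsto (M(t,s)x_{ts})\). This is the DOI with respect to the multiplication spectral measures on \(L^2(\mathbb R)\), so it coincides with ours on \(\mathfrak S_2\).

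\textbf{Step 2: track the constant.} Remark~2.1 of that paper records how the bound depends on \(r\): it is of order \(r^2/(r-1)\), uniformly in \(1<r<\infty\), and linear in the HMS seminorm. The linear dependence follows by homogeneity, since \(S_{cM}=cS_M\). So the bound takes the form \(C\,\frac{r^2}{r-1}\,\mathcal H_1(M)\) with \(C\) absolute. Their conclusion is complete boundedness, which in particular gives ordinary boundedness.

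\textbf{Step 3: the extension.} The map \(S_M\) is a priori defined on \(\mathfrak S_2\cap\mathfrak S_r\), which contains the finite-rank operators and is dense in \(\mathfrak S_r\). The uniform estimate therefore yields a unique bounded extension. Complete boundedness passes to this extension, because matrix amplifications \(\mathfrak S_r(\ell^2_m\otimes L^2(\mathbb R))\) are handled by the same density argument.

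\textbf{Main obstacle.} The only delicate point is bookkeeping: making sure the cited theorem really is stated for Schatten classes on \(L^2(\mathbb R)\) with Lebesgue measure. It must not carry hidden extra hypotheses, such as requiring \(M\) to be bounded on the diagonal, or requiring the stronger decay conditions that appear in their Theorem~B. I would resolve this by reading Theorem~A against the exact definition of \(\mathcal H_1\) given above.
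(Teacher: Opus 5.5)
Your proposal is correct and takes the same route as the paper. The paper gives no independent argument: it invokes Theorem~A and Remark~2.1 of \citet{condeAlonso2023_SchurMultipliersSchatten} with $n=1$, where $[n/2]+1=1$ and the HMS seminorm reduces exactly to $\mathcal H_1(M)$. Your extra bookkeeping is sound, namely the homogeneity in $M$ and the extension from the dense subspace $\mathfrak S_2\cap\mathfrak S_r$, including its matrix amplifications.
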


\begin{lemma}[Scalar multiplier in logarithmic coordinates]
\label{lem:log-spectral-schatten-multipliers}
For \(0<\alpha\le1\), define
\[
  \chi_\alpha(z)=\frac{e^{-z/2}-e^{(1-\alpha)z/2}}{2\sinh(z/2)},\quad \chi_\alpha(0)=\frac{\alpha}{2}-1.
\]
Then, for any spectral measures $E$ and $F$ with compact support, the multiplier function 
\[
  \chi_{\alpha,0}(t,s)
  =
  \chi_\alpha(t-s)\mathbf{1}_{\{t\ne s\}}(t,s)
\]
satisfies
\[
  \norm{
    \chi_{\alpha,0}
  }_{\mathfrak{M}_r}
  \le
  C_\alpha r,
  \qquad
  2\le r<\infty,
\]
where the constant $C_\alpha>0$ is independent of $r$, $E$, and $F$. 
\end{lemma}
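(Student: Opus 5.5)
The plan is to reduce the off-diagonal kernel \(\chi_{\alpha,0}\) to the full translation-invariant kernel \(\chi_\alpha(t-s)\). That kernel is continuous, so \Cref{thm:external-hms-schur-multiplier} bounds it on \(L^2(\mathbb R)\). We then transfer this bound to arbitrary compactly supported spectral measures \(E,F\), and finally reinsert the diagonal using \Cref{lem:contraction_on_diag}.

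\emph{Step 1: scalar properties of \(\chi_\alpha\).} At \(z=0\) both the numerator \(e^{-z/2}-e^{(1-\alpha)z/2}\) and the denominator \(2\sinh(z/2)\) vanish to first order, so the singularity is removable and \(\chi_\alpha\in C^\infty(\mathbb R)\).
\begin{itemize}
  \item As \(z\to+\infty\), \(\chi_\alpha(z)=O(e^{-z}+e^{-\alpha z/2})\).
  \item As \(z\to-\infty\), \(\chi_\alpha(z)\to-1\) with an exponentially small error.
  \item Differentiating these asymptotic forms shows \(|\chi_\alpha'(z)|\lesssim_\alpha e^{-\alpha|z|/2}\) for \(|z|\ge1\); on \([-1,1]\) the derivative is bounded by smoothness.
\end{itemize}
Hence, with \(M(t,s)=\chi_\alpha(t-s)\),
\[
  \mathcal H_1(M)\le 2\sup_z|\chi_\alpha(z)|+2\sup_z|z\chi_\alpha'(z)|\le C_\alpha<\infty .
\]
The limit \(-1\) at \(-\infty\) versus \(0\) at \(+\infty\) is a sign-type jump. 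This is why no Fourier-\(L^1\) argument as in \Cref{lem:universal_schur_multiplier} is available, and why linear growth in \(r\) is the natural outcome. \Cref{thm:external-hms-schur-multiplier} then gives
\[
  \|S_M\|_{\mathrm{cb},\mathfrak S_r(L^2(\mathbb R))}\le C\frac{r^2}{r-1}C_\alpha\le 2CC_\alpha\,r,
  \qquad r\ge2 .
\]

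\emph{Step 2: transference to spectral measures.} I expect this to be the main obstacle, and I would handle it in three moves.
\begin{itemize}
  \item \emph{Finite point sets.} Since \(M\) is continuous, the Lafforgue--de la Salle principle (and Caspers--de la Salle for the cb version) shows that for any finite sets \(\{t_i\},\{s_j\}\subset\mathbb R\), the matrix Schur multiplier \([\chi_\alpha(t_i-s_j)]\) on \(\mathfrak S_r\) has norm at most \(\|S_M\|_{\mathrm{cb}}\). One tests \(S_M\) against operators with kernels concentrated near \((t_i,s_j)\) and uses continuity of \(M\). The cb norm is needed so that the bound holds with arbitrary multiplicities, i.e.\ for projection-valued rather than scalar atoms.
  \item \emph{Discretized spectral measures.} For compactly supported \(E,F\), let \(E_N,F_N\) be finite-partition discretizations concentrating mass at partition points. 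The DOI with kernel \(\chi_\alpha(t-s)\) for \(E_N,F_N\) is exactly such a block Schur multiplier, so its \(\mathfrak S_r\) norm is at most \(2CC_\alpha r\).
  \item \emph{Limit.} Because \(\chi_\alpha(t-s)\) is bounded and uniformly continuous on the compact support, the \(E_N,F_N\)-DOIs converge to the \(E,F\)-DOI in \(\mathfrak S_2\) (hence in operator norm) for each finite-rank \(H\), by the \(\mathfrak S_2\) sup-norm bound. Lower semicontinuity of the \(\mathfrak S_r\) norm under operator-norm convergence preserves the bound for finite-rank \(H\), and density extends it to \(\mathfrak S_r\).
\end{itemize}
This gives \(\|\chi_\alpha(t-s)\|_{\mathfrak M_r(E,F)}\le C_\alpha' r\).

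\emph{Step 3: removing the diagonal.} On \(\mathfrak S_2\) we have the identity
\[
  \operatorname{DOI}^{E,F}_{\chi_{\alpha,0}}=\operatorname{DOI}^{E,F}_{\chi_\alpha(t-s)}-\chi_\alpha(0)\,P_D ,
\]
since the two kernels differ only on \(D\), where \(\chi_\alpha(t-s)=\chi_\alpha(0)\). \Cref{lem:contraction_on_diag} says \(P_D\) extends contractively to \(\mathfrak S_r\). Therefore
\[
  \|\chi_{\alpha,0}\|_{\mathfrak M_r}\le C_\alpha' r+|\alpha/2-1|\le C_\alpha r
\]
for \(r\ge2\), with a constant independent of \(r\), \(E\) and \(F\), as claimed.
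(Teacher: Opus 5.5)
Your proposal is correct and follows the paper's proof essentially step for step. It uses the same $\mathcal H_1$ bound fed into the Conde-Alonso--Gonz\'alez-P\'erez--Parcet--Tablate theorem, transference to finite point sets using the cb norm to handle multiplicities, discretization of $E,F$ with a uniform-continuity limit in $\mathfrak S_2$, and removal of the diagonal via \Cref{lem:contraction_on_diag}. The only difference is that you cite the Lafforgue--de la Salle discretization principle, whereas the paper proves that finite-point step directly by embedding with normalized interval indicators and also writes out the finite-rank block reduction you leave implicit.
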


\begin{proof}
The singularity of \(\chi_\alpha\) at zero is removable, and its Taylor expansion is
\[
  \chi_\alpha(z)=\frac{\alpha}{2}-1
  +\frac{\alpha(2-\alpha)}8z+O_\alpha(z^2).
\]
For the tails, the identities
\[
  \chi_\alpha(z)
  =\frac{e^{-z}-e^{-\alpha z/2}}{1-e^{-z}},\qquad
  \chi_\alpha(-w)+1
  =\frac{e^{\alpha w/2}-1}{e^w-1}
  \quad(z,w>0)
\]
give, upon differentiation,
\[
  \begin{aligned}
    |\chi_\alpha(z)|+|\chi_\alpha'(z)|
    &\le C_\alpha e^{-\alpha z/2}, &&z\ge1,\\
    |\chi_\alpha(z)+1|+|\chi_\alpha'(z)|
    &\le C_\alpha e^{(1-\alpha/2)z}, &&z\le-1.
  \end{aligned}
\]
Consequently, for \(M(t,s)=\chi_\alpha(t-s)\),
\[
  \mathcal H_1(M)
  \leq
  2\sup_{z\in\mathbb R}
  \left(
    |\chi_\alpha(z)|
    +
    |z\chi_\alpha'(z)|
  \right)
  \leq C_\alpha.
\]
Thus \Cref{thm:external-hms-schur-multiplier} yields
\begin{equation}\label{eq:log-spectral-cb-input}
  \left\|
    S_{\chi_\alpha(t-s)}:
    \mathfrak{S}_r(L^2(\mathbb{R}))
    \to
    \mathfrak{S}_r(L^2(\mathbb{R}))
  \right\|_{\mathrm{cb}}
  \leq
  C_\alpha \frac{r^2}{r-1},
  \qquad
  2\le r<\infty.
\end{equation}
Here, \(\|\cdot\|_{\mathrm{cb}}\) denotes the completely bounded norm,
so the estimate holds with the same constant after every finite matrix amplification.
Since \(r^2/(r-1)\leq2r\) for \(r\geq2\), we absorb this factor into
\(C_\alpha r\) below.

\emph{Step 1: finite point sets.}
Let \(t_1,\ldots,t_m\) and \(s_1,\ldots,s_n\) be distinct within
each list, and let
\[
  A=(A_{ij})_{i\le m,j\le n}
  \in
  \mathfrak{S}_r(
    \mathbb{C}^n\otimes\mathbb{C}^d,
    \mathbb{C}^m\otimes\mathbb{C}^d
  ).
\]
Choose interval neighborhoods \(U_i^\varepsilon\ni t_i\) and
\(V_j^\varepsilon\ni s_j\) whose lengths tend to zero as \(\varepsilon\to0\).
Require the intervals to be pairwise disjoint within each family;
intervals from different families may overlap.
With
\[
  e_i^\varepsilon
  :=
  |U_i^\varepsilon|^{-1/2}\mathbf{1}_{U_i^\varepsilon},
  \qquad
  f_j^\varepsilon
  :=
  |V_j^\varepsilon|^{-1/2}\mathbf{1}_{V_j^\varepsilon},
\]
define isometric embeddings 
\[
  \mathcal U_\varepsilon:\mathbb C^m\to L^2(\mathbb R),
  \quad
  \delta_i\mapsto e_i^\varepsilon,
  \qquad
  \mathcal V_\varepsilon:\mathbb C^n\to L^2(\mathbb R),
  \quad
  \delta_j\mapsto f_j^\varepsilon.
\]
Then, the finite-rank operator
\[
  \widetilde A_\varepsilon
  :=
  (\mathcal U_\varepsilon\otimes I_d)
  A
  (\mathcal V_\varepsilon\otimes I_d)^*
  =
  \sum_{i,j}
  |e_i^\varepsilon\rangle
  \langle f_j^\varepsilon|
  \otimes A_{ij}
\]
has the same nonzero singular values as \(A\), hence $\norm{\widetilde{A}_\varepsilon}_{\mathfrak S_r}=\norm{A}_{\mathfrak S_r}$. Here, $|\cdot\rangle$ and $\langle\cdot|$ denote the Dirac brackets:
\[
  |e_i^\varepsilon\rangle
  \langle f_j^\varepsilon|:\,L^2(\mathbb R)\to L^2(\mathbb R),\quad h\mapsto\langle f_j^\varepsilon,h\rangle_{L^2(\mathbb R)}e_i^\varepsilon.
\]
Define 
\[
  B_\varepsilon
  :=
  (\mathcal U_\varepsilon\otimes I_d)^*
  S_{\chi_\alpha(t-s)}(\widetilde A_\varepsilon)
  (\mathcal V_\varepsilon\otimes I_d),  
\]
where $S_{\chi_\alpha(t-s)}$ is understood as its amplification, acting entrywise on $\widetilde{A}_\varepsilon=(\widetilde{A}_\varepsilon^{ab})_{a,b=1}^d$: 
\[
  S_{\chi_\alpha(t-s)}(\widetilde{A}_\varepsilon):=(S_{\chi_\alpha(t-s)}(\widetilde{A}_\varepsilon^{ab}))_{a,b=1}^d=(S_{\chi_\alpha(t-s)}\otimes id_{\mathbb C^{d\times d}})(\widetilde{A}_\varepsilon).
\]
Applying the amplified estimate \eqref{eq:log-spectral-cb-input} to
\(\widetilde A_\varepsilon\) and compressing yields
\[
  \norm{
    B_\varepsilon
  }_{\mathfrak S_r}
  \leq 
  \norm{
    S_{\chi_\alpha(t-s)}(\widetilde{A}_\varepsilon)
  }_{\mathfrak S_r}
  \leq 
  C_\alpha\frac{r^2}{r-1}
  \norm{
    \widetilde{A}_\varepsilon
  }_{\mathfrak S_r}
  =
  C_\alpha\frac{r^2}{r-1}
  \norm{
  (A_{ij})_{i\leq m,j\leq n}
  }_{\mathfrak S_r},
\]
where the first inequality follows from contractivity of compression and
the last equality from preservation of nonzero singular values under the isometric embeddings.
The \((i,j)\)-block of
\(B_\varepsilon\) is
\[
  (B_\varepsilon)_{ij}
  =
  c_{ij}^\varepsilon A_{ij},
  \qquad
  c_{ij}^\varepsilon
  :=
  \frac{1}{|U_i^\varepsilon||V_j^\varepsilon|}
  \int_{U_i^\varepsilon}
  \int_{V_j^\varepsilon}
  \chi_\alpha(u-v)\,dv\,du.
\]
As \(\varepsilon\to0\), we have 
\(c_{ij}^\varepsilon\to\chi_\alpha(t_i-s_j)\) by continuity of \(\chi_\alpha\).
Since the number and dimensions of the blocks are fixed, this convergence
also holds in the Schatten norm of the block matrix.
Taking the limit gives
\begin{equation}
  \left\|
    \big(
      \chi_\alpha(t_i-s_j)A_{ij}
    \big)_{i\le m,j\le n}
  \right\|_{\mathfrak S_r}
  \leq
  C_\alpha r
  \left\|
    (A_{ij})_{i\le m,j\le n}
  \right\|_{\mathfrak S_r}.
  \label{eq:log-spectral-finite-points}
\end{equation}

\emph{Step 2: finite spectral partitions.}
Let \(E\) and \(F\) be spectral measures supported on compact intervals
\(I\) and \(J\), respectively.
For finite Borel partitions
\[
  I=\bigsqcup_{i=1}^m\Delta_i,
  \qquad
  J=\bigsqcup_{j=1}^n\Gamma_j,
  \qquad
  E_i:=E(\Delta_i),
  \quad
  F_j:=F(\Gamma_j),
\]
choose representatives \(t_i\in\Delta_i\) and \(s_j\in\Gamma_j\).
For a finite-rank \(X\), set
\[
  K_{E,i}
  :=
  \operatorname{span}
  \bigcup_{j=1}^n
  \operatorname{Ran}(E_iXF_j),
  \qquad
  K_{F,j}
  :=
  \operatorname{span}
  \bigcup_{i=1}^m
  \operatorname{Ran}(F_jX^*E_i).
\]
Then 
\[
  K_E:=\bigoplus_{i=1}^m K_{E,i},
  \qquad
  K_F:=\bigoplus_{j=1}^n K_{F,j}
\]
are finite-dimensional.
Since \(X=P_{K_E}XP_{K_F}\), we may regard $X$ as an operator between these spaces.
Choose \(d\) no smaller than the dimensions of these summands, and select
isometries
\[
  \iota_i:K_{E,i}\to\mathbb C^d,
  \qquad
  \jmath_j:K_{F,j}\to\mathbb C^d.
\]
Define
\[
  J_E:\,K_E\to\mathbb C^m\otimes\mathbb C^d,
  \qquad
  \xi=(\xi_i)_i
  \mapsto
  \sum_{i=1}^m\delta_i\otimes\iota_i\xi_i,
\]
and
\[
  J_F:\,K_F\to\mathbb C^n\otimes\mathbb C^d,
  \qquad
  \eta=(\eta_j)_j
  \mapsto
  \sum_{j=1}^n\delta_j\otimes\jmath_j\eta_j.
\]
Then, the operator
\[
  B
  :=
  J_E(X|_{K_F})J_F^*
  :\,
  \mathbb C^n\otimes\mathbb C^d
  \to
  \mathbb C^m\otimes\mathbb C^d
\]
has blocks
\[
  B_{ij}
  =
  \iota_i(E_iXF_j)\jmath_j^*,
  \qquad
  \|B\|_{\mathfrak S_r}
  =
  \|X\|_{\mathfrak S_r}.
\]
Set
\[
  Y
  :=
    \sum_{i=1}^m\sum_{j=1}^n
    \chi_\alpha(t_i-s_j)E_iXF_j
  .
\]
As for \(X\), we have \(Y=P_{K_E}YP_{K_F}\).
A direct blockwise computation gives
\[
  J_E(Y|_{K_F})J_F^*
  =
  \big(
    \chi_\alpha(t_i-s_j)B_{ij}
  \big)_{i,j}.
\]
Applying \eqref{eq:log-spectral-finite-points} to $B$ gives
\begin{equation}
  \left\|
    \sum_{i=1}^m\sum_{j=1}^n
    \chi_\alpha(t_i-s_j)E_iXF_j
  \right\|_{\mathfrak S_r}
  =
  \norm{
    (\chi_\alpha(t_i-s_j)B_{ij})_{i,j}
  }_{\mathfrak S_r}
  \leq 
  C_\alpha r
  \norm{
    (B_{ij})_{i,j}
  }_{\mathfrak S_r}
  =
  C_\alpha r\|X\|_{\mathfrak S_r}.
  \label{eq:log-spectral-finite-partitions}
\end{equation}
This estimate holds for every finite-rank $X$.
Since finite-rank operators are dense in $\mathfrak S_r$, it extends to all
\(X\in\mathfrak S_r\).

\emph{Step 3: general spectral measures.}
Choose finite partitions
\((\Delta_i^{(N)})_i\) of \(I\) and
\((\Gamma_j^{(N)})_j\) of \(J\) with 
\[
  \delta_N:=\max\left\{
    \max_i\operatorname{diam}(\Delta_i^{(N)}),\,
    \max_j\operatorname{diam}(\Gamma_j^{(N)})
  \right\}\longrightarrow0
\] 
as $N\to\infty$, together with
representatives \(t_i^{(N)}\in\Delta_i^{(N)}\) and
\(s_j^{(N)}\in\Gamma_j^{(N)}\).
For
\[
  M_N(t,s)
  :=
  \sum_{i,j}
  \chi_\alpha\big(t_i^{(N)}-s_j^{(N)}\big)
  \mathbf{1}_{\Delta_i^{(N)}}(t)
  \mathbf{1}_{\Gamma_j^{(N)}}(s),
\]
the estimate for finite sums \eqref{eq:log-spectral-finite-partitions} yields,
for every \(X\in\mathfrak S_r\),
\begin{equation}\label{eq:log-spectral-simple-doi}
  \left\|
    \operatorname{DOI}_{M_N}^{E,F}(X)
  \right\|_{\mathfrak S_r}
  \leq
  C_\alpha r\|X\|_{\mathfrak S_r}.
\end{equation}
The function \(\chi_\alpha\) is uniformly continuous on the compact interval \(I-J\).
Writing \(\omega\) for its modulus of continuity on this interval, we have
\[
  \|M_N-\chi_\alpha(\cdot-\cdot)\|_\infty
  :=\sup_{t\in I,s\in J}|M_N(t,s)-\chi_\alpha(t-s)|
  \le\omega(2\delta_N)\longrightarrow0.
\]
For \(X\in\mathfrak S_2\), the Hilbert--Schmidt DOI estimate gives
\[
\begin{aligned}
  \left\|
    \operatorname{DOI}_{M_N(t,s)-\chi_\alpha(t-s)}^{E,F}(X)
  \right\|_{\mathfrak S_r}
  &\leq
  \left\|
    \operatorname{DOI}_{M_N(t,s)-\chi_\alpha(t-s)}^{E,F}(X)
  \right\|_{\mathfrak S_2}\\
  &\leq
  \|M_N(t,s)-\chi_\alpha(t-s)\|_{\infty}
  \|X\|_{\mathfrak S_2}
  \longrightarrow0.
\end{aligned}
\]
Here, \(r\geq2\) is used in the first inequality.
Thus, taking the limit in \eqref{eq:log-spectral-simple-doi} gives 
\begin{equation}
  \left\|
    \operatorname{DOI}_{\chi_\alpha(t-s)}^{E,F}(X)
  \right\|_{\mathfrak S_r}
  \leq
  C_\alpha r\|X\|_{\mathfrak S_r},\qquad X\in\mathfrak S_2.
  \label{eq:log-spectral-continuous-doi}
\end{equation}
Since \(\mathfrak S_2\) is dense in \(\mathfrak S_r\) for \(2\le r<\infty\),
this DOI has a unique bounded extension to \(\mathfrak S_r\).
The extension agrees with the Hilbert--Schmidt DOI on \(\mathfrak S_2\)
and satisfies the same estimate for every \(X\in\mathfrak S_r\).

Finally, let \(D:=\{(t,s):t=s\}\) and
\(P_D:=\operatorname{DOI}_{\mathbf{1}_D}^{E,F}\).
By \Cref{lem:contraction_on_diag},
\[
  \|P_D(X)\|_{\mathfrak S_r}\le\|X\|_{\mathfrak S_r}.
\]
Since
\[
  \chi_{\alpha,0}(t,s)
  =
  \chi_\alpha(t-s)-\chi_\alpha(0)\mathbf{1}_D(t,s),
\]
\eqref{eq:log-spectral-continuous-doi} yields
\[
  \left\|
    \operatorname{DOI}_{\chi_{\alpha,0}}^{E,F}(X)
  \right\|_{\mathfrak S_r}
  \leq
  \left(
    C_\alpha r+|\chi_\alpha(0)|
  \right)
  \|X\|_{\mathfrak S_r}
  \leq
  C_\alpha r\|X\|_{\mathfrak S_r},
\]
which completes the proof.
\end{proof}

\begin{lemma}[Bound for the interaction multiplier]
\label{lem:lipschitz-schatten-multipliers}
Under \Cref{ass:lipschitz-filter}, let
\[
  0<\alpha\le\min\{1,2\rho_\psi\}.
\]
For any \(2\le r<\infty\) and any spectral measures $E,F$ supported on $[0,\kappa^2]$, we have
\begin{equation}
  \label{eq:lipschitz-interaction-schatten-bound}
  \left\|
    (x+\lambda)(y+\lambda)
    \frac{
      \varphi_\lambda(x)\psi_\lambda(y)
      -
      \psi_\lambda(x)\varphi_\lambda(y)
    }{
      (x-y)a_{\lambda,\alpha}(y)
    }
    \mathbf{1}_{\{x\ne y\}}(x,y)
  \right\|_{\mathfrak{M}_r(E,F)}
  \le
  C_{A,\alpha,\rho_\psi} r
\end{equation}
for some constant $C_{A,\alpha,\rho_\psi}>0$. In particular, the constant $C_{A,\alpha,\rho_\psi}$ is independent of $r$, $E$, and $F$.
\end{lemma}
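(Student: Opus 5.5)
The plan is to pass to the logarithmic spectral coordinates of \Cref{ass:lipschitz-filter}, writing \(x+\lambda=\lambda e^t\) and \(y+\lambda=\lambda e^s\) with \(t,s\in J_\lambda\). Then \((x+\lambda)\varphi_\lambda(x)=F_\lambda(t)\), \(\psi_\lambda(x)=e^{-\rho_\psi t}P_\lambda(t)\) (abbreviating \(P_\lambda=P_{\lambda,\rho_\psi}\)), and \(x-y=\lambda(e^t-e^s)\). The multiplier in \cref{eq:lipschitz-interaction-schatten-bound} becomes
\[
  M(t,s)=\frac{F_\lambda(t)e^s\psi_\lambda(y)-F_\lambda(s)e^t\psi_\lambda(x)}{(e^t-e^s)\,a_{\lambda,\alpha}(y)}\mathbf 1_{\{t\ne s\}},
\]
where \(E,F\) are replaced by their images under this change of variables. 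The map \(x\mapsto t\) is monotone, so \(\mathfrak M_r\)-norms are unchanged.

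I will decompose \(M\) into finitely many pieces, each a product of three kinds of factor. The first kind is a bounded function of \(t\) alone or of \(s\) alone; these act by left or right multiplication, with \(\mathfrak M_r\)-norm at most their sup, independently of \(r\). The second kind is a function \(\vartheta(t-s)\) of the form in \Cref{lem:universal_schur_multiplier} or \Cref{lem:log-spectral-schatten-multipliers}, with norm \(C_\alpha\) or \(C_\alpha r\). The third kind is a zero-diagonal divided difference \((G(t)-G(s))/(t-s)\) with \(G\in\{F_\lambda,P_\lambda\}\); by \Cref{lem:two-measure-lipschitz-schatten} and the uniform Lipschitz bound \cref{eq:scale-lipschitz-difference}, its norm is at most \(C A\,r^2/(r-1)\le 2CAr\). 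Because \(\mathfrak M_r\) is an algebra under pointwise multiplication, with submultiplicative norm on \(\mathfrak S_2\cap\mathfrak S_r\), each piece can contain at most one factor growing linearly in \(r\); all other factors must be \(r\)-free.

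Concretely, write \(F_\lambda(t)e^s-F_\lambda(s)e^t=e^s(F_\lambda(t)-F_\lambda(s))-F_\lambda(s)(e^t-e^s)\). This yields three terms:
\[
  \frac{\psi_\lambda(x)}{a_{\lambda,\alpha}(y)}\,\frac{F_\lambda(t)-F_\lambda(s)}{t-s}\,\frac{t-s}{e^{t-s}-1},
  \qquad
  -\frac{\psi_\lambda(x)F_\lambda(s)}{a_{\lambda,\alpha}(y)},
  \qquad
  F_\lambda(t)\,\frac{e^s(\psi_\lambda(y)-\psi_\lambda(x))}{(e^t-e^s)\,a_{\lambda,\alpha}(y)}.
\]
For the first two terms I will use \(a_{\lambda,\alpha}(y)\ge e^{-\alpha s/2}\) and \(\rho_\psi\ge\alpha/2\). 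These give
\[
  \frac{\psi_\lambda(x)}{a_{\lambda,\alpha}(y)}=\underbrace{P_\lambda(t)e^{-(\rho_\psi-\alpha/2)t}}_{\text{bdd in }t}\;e^{-\alpha(t-s)/2}\;\underbrace{\frac{e^{-\alpha s/2}}{a_{\lambda,\alpha}(y)}}_{\in(0,1]}.
\]
Hence the first term becomes (bounded in \(t\)) \(\times\) \(F\)-divided difference \(\times\) \(\vartheta_\alpha(t-s)\) \(\times\) (bounded in \(s\)), where \(\vartheta_\alpha(z)=ze^{-\alpha z/2}/(e^z-1)\) is exactly the universal multiplier of \Cref{lem:universal_schur_multiplier}. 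The second term is \(e^{-\alpha(t-s)/2}\) times bounded diagonal factors. It must be paired with the \(\chi_\alpha\)-type pieces so as to form a bounded kernel.

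For the third term I will write \(\psi_\lambda(y)-\psi_\lambda(x)=e^{-\rho_\psi s}(P_\lambda(s)-P_\lambda(t))+P_\lambda(t)(e^{-\rho_\psi s}-e^{-\rho_\psi t})\), and will reuse \(e^{-\alpha t/2}\) and \(e^{-\alpha s/2}\) factors with \(\rho_\psi\ge\alpha/2\). The aim is to land on the kernels \(e^{-z/2}/(2\sinh(z/2))\) and \(e^{(1-\alpha)z/2}/(2\sinh(z/2))\) combined as \(\chi_\alpha(z)\), with \(z=t-s\), possibly after adding back the second term. This is presumably why \(\chi_\alpha\) was isolated in \Cref{lem:log-spectral-schatten-multipliers}, at cost \(C_\alpha r\).

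The main obstacle is the regime \(t<s\) (\(z\to-\infty\)). There the kernels \(z/(e^z-1)\) and \(1/(e^z-1)\) grow or tend to a nonzero constant. Moreover, a naive split of \(\psi_\lambda(y)-\psi_\lambda(x)\) leaves the difference quotient of \(P_\lambda\) multiplied by a kernel with no decay, so it is not a bounded Schur multiplier on all of \(\mathbb R\). To handle this I will first try to choose the splitting so that the growing parts combine exactly into \(\chi_\alpha(z)+1\), which decays like \(e^{(1-\alpha/2)z}\). I will use \(a_{\lambda,\alpha}(y)\ge\psi_\lambda(y)\) to absorb \(\psi_\lambda(y)/a_{\lambda,\alpha}(y)\le1\) as an \(s\)-diagonal factor, and shift exponents between \(e^{-\rho_\psi s}\) and \(e^{-\alpha s/2}\), so that every divided-difference factor is multiplied only by a kernel of \(z\) with \(\mathcal H_1\)-type bounds. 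If this bookkeeping closes, each piece has norm at most \(C_{A,\alpha,\rho_\psi}r\) and summing gives the lemma. The zero-diagonal convention is harmless by \Cref{lem:contraction_on_diag}, since removing the diagonal costs only an additive bounded term.
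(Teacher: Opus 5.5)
Your change of variables, the formula for \(M(t,s)\), and the architecture (bounded one-variable factors, a kernel in \(z=t-s\), at most one \(r\)-growing factor per piece) are right. However, the decomposition you chose does not close, and the part you leave open is exactly the content of the lemma.

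\textbf{The first term is not a \(\vartheta_\alpha\) term.} Its \(z\)-kernel is
\[
  \frac{z\,e^{-\alpha z/2}}{e^{z}-1}=e^{-z}\vartheta_\alpha(z),
  \qquad\text{whereas}\qquad
  \vartheta_\alpha(z)=\frac{z\,e^{(1-\alpha)z/2}}{2\sinh(z/2)}=\frac{z\,e^{(1-\alpha/2)z}}{e^{z}-1}.
\]
Your kernel grows like \(|z|e^{\alpha|z|/2}\) as \(z\to-\infty\), and \(z\) reaches \(-\log(1+\kappa^2/\lambda)\). This is not an artifact of the bounds. Take the smoothed cutoff with \(\rho_\psi=\alpha=1\), small \(\lambda\), \(x=0\) and \(y=\kappa^2\). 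There your first two terms are each of exact order \(\lambda^{-1/2}\) and cancel only to leading order; the third vanishes since \(F_\lambda(0)=0\). Testing the DOI on a rank-one operator between atoms at these two points shows that each of these pieces has \(\mathfrak M_r\)-norm at least \(c\lambda^{-1/2}\). So the split \(e^s(F_\lambda(t)-F_\lambda(s))-F_\lambda(s)(e^t-e^s)\) cannot be estimated termwise. Your third term has a related defect, which you note yourself: it pairs \((P_{\lambda,\rho_\psi})^{[1]}_0\) with the non-decaying kernel \(z/(e^z-1)\).

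\textbf{The missing idea.} Attach \(e^t\), not \(e^s\), to the \(F_\lambda\)-difference. Also normalize the residual everywhere by the weight exponent \(\alpha/2\), which is the factor \(P_{\lambda,\rho_\psi}(t)e^{-(\rho_\psi-\alpha/2)t}\) you already found in your first term. Set \(P_\lambda(t)=e^{\alpha t/2}\psi_\lambda^\sharp(e^t-1)\), which is bounded and Lipschitz because \(\rho_\psi\ge\alpha/2\). Then \(a_{\lambda,\alpha}(y)=e^{-\alpha s/2}(1+P_\lambda(s))\) exactly, and
\[
  M=D_\lambda(s)\,\frac{F_\lambda(t)P_\lambda(s)-e^{(1-\alpha/2)z}P_\lambda(t)F_\lambda(s)}{e^{z}-1},
  \qquad
  D_\lambda(s)=\frac{1}{1+P_\lambda(s)}\in(0,1].
\]
Add and subtract so that both divided differences carry the weight \(e^{(1-\alpha/2)z}/(e^z-1)=\vartheta_\alpha(z)/z\). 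Using \(\chi_\alpha(z)=(1-e^{(1-\alpha/2)z})/(e^z-1)\), this gives
\[
  M=D_\lambda(s)\Bigl[P_\lambda(t)\vartheta_\alpha(z)(F_\lambda)^{[1]}_0(t,s)-F_\lambda(t)\vartheta_\alpha(z)(P_\lambda)^{[1]}_0(t,s)+F_\lambda(t)P_\lambda(s)\chi_{\alpha,0}(t,s)\Bigr].
\]
Each piece is now controlled:
\begin{enumerate}
  \item \(\vartheta_\alpha\) decays at both ends and costs \(O(1)\) by \Cref{lem:universal_schur_multiplier}.
  \item Each divided difference costs \(O(r)\) by \Cref{lem:two-measure-lipschitz-schatten}, after extending \(F_\lambda,P_\lambda\) from \(J_\lambda\) to \(\mathbb R\), for instance by composing with the nearest-point projection.
  \item The only non-decaying kernel is \(\chi_\alpha\), which tends to \(-1\) as \(z\to-\infty\). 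It multiplies one-variable factors only and costs \(O(r)\) by \Cref{lem:log-spectral-schatten-multipliers}.
\end{enumerate}
This is the paper's argument.
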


\begin{proof}
Set \(K_\lambda=\kappa^2/\lambda\).
For \(0\le u,v\le K_\lambda\), put \(t=\log(1+u)\) and \(s=\log(1+v)\).
On \(J_\lambda=[0,\log(1+K_\lambda)]\), define
\[
  F_\lambda(t)=e^t\phi_\lambda^\sharp(e^t-1),
  \qquad
  P_\lambda(t)=e^{\alpha t/2}\psi_\lambda^\sharp(e^t-1).
\]
The function \(F_\lambda\) is bounded and Lipschitz uniformly in \(\lambda\)
by \Cref{ass:lipschitz-filter}.
Since
\[
  P_\lambda(t)=e^{-(\rho_\psi-\alpha/2)t}P_{\lambda,\rho_\psi}(t),
  \qquad \rho_\psi-\alpha/2\ge0,
\]
the same holds for \(P_\lambda\), with constants depending only on
\(A,\alpha,\rho_\psi\).
Extend both functions to \(\mathbb R\) by composition with the nearest-point
projection onto \(J_\lambda\).
This preserves their bounds, nonnegativity, and Lipschitz constants.

Let $z=t-s$ and
\[
  D_\lambda(s)=\frac{1}{1+P_\lambda(s)}.
\]
Since $0\leq\psi_\lambda^\sharp \leq 1$, we have $0<D_\lambda \le 1$.
With all divided differences and scalar kernels assigned value zero on the
diagonal, direct algebra gives
\begin{align}
  \label{eq:log-interaction-factorization}
  &(x+\lambda)(y+\lambda)
    \frac{
      \varphi_\lambda(x)\psi_\lambda(y)
      -
      \psi_\lambda(x)\varphi_\lambda(y)
    }{
      (x-y)a_{\lambda,\alpha}(y)
    }
    \mathbf{1}_{\{x\neq y\}}(x,y)\\
  =&(1+u)(1+v)
  \frac{
    \phi_\lambda^\sharp(u)\psi_\lambda^\sharp(v)-\psi_\lambda^\sharp(u)\phi_\lambda^\sharp(v)
  }{
    (u-v)\xk{\psi_\lambda^\sharp(v)+(1+v)^{-\alpha/2}}
  }
  \mathbf{1}_{\{u\neq v\}}(u,v)\\
  =&
  D_\lambda(s)
  \zk{
    -F_\lambda(t)\vartheta_\alpha(z)(P_\lambda)_0^{[1]}(t,s)
    +
    P_\lambda(t)\vartheta_\alpha(z)(F_\lambda)_0^{[1]}(t,s)
    +
    F_\lambda(t)P_\lambda(s)\chi_{\alpha,0}(t,s)
  }.
\end{align}
Since changing variables in the spectral integrals preserves DOI multiplier norms, it suffices to estimate the \(\mathfrak M_r\) norm of the kernel on the right-hand side. By \Cref{lem:two-measure-lipschitz-schatten}, \Cref{lem:universal_schur_multiplier} and
\Cref{lem:log-spectral-schatten-multipliers}, we have
\[
    \norm{
        D_\lambda(s)F_\lambda(t)\vartheta_\alpha(z)(P_\lambda)_0^{[1]}(t,s)
    }_{\mathfrak{M}_r}
    \leq
    \norm{D_\lambda}_{L^\infty} \cdot\norm{F_\lambda}_{L^\infty} \cdot\norm{\vartheta_\alpha}_{\mathfrak{M}_r} \cdot\norm{(P_\lambda)_0^{[1]}}_{\mathfrak{M}_r}\leq C_{A,\alpha,\rho_\psi} r,
\]
\[
    \norm{
        D_\lambda(s)P_\lambda(t)\vartheta_\alpha(z)(F_\lambda)_0^{[1]}(t,s)
    }_{\mathfrak{M}_r}
    \leq
    \norm{D_\lambda}_{L^\infty} \cdot\norm{P_\lambda}_{L^\infty} \cdot\norm{\vartheta_\alpha}_{\mathfrak{M}_r}
    \cdot\norm{(F_\lambda)_0^{[1]}}_{\mathfrak{M}_r}
    \leq C_{A,\alpha,\rho_\psi} r,
\]
\[
    \norm{
        D_\lambda(s)F_\lambda(t)P_\lambda(s)\chi_{\alpha,0}(t,s)
    }_{\mathfrak{M}_r}
    \leq
    \norm{D_\lambda}_{L^\infty} \cdot\norm{F_\lambda}_{L^\infty} \cdot\norm{P_\lambda}_{L^\infty}
    \cdot\norm{\chi_{\alpha,0}}_{\mathfrak{M}_r}
    \leq C_{A,\alpha,\rho_\psi} r.
\]
Combining the estimates above yields the conclusion.
\end{proof}

Recall that \(D=\{(x,y):x=y\}\), and let
\(P_D^{T_X,T}=\operatorname{DOI}_{\mathbf1_D}^{E_{T_X},E_T}\).
For each common atom \(\tau\) of \(E_{T_X}\) and \(E_T\), the spectral identities give
\[
  E_{T_X}(\{\tau\})(T_X-T)E_T(\{\tau\})
  =(\tau-\tau)E_{T_X}(\{\tau\})E_T(\{\tau\})=0.
\]
The atomic representation in the proof of \Cref{lem:contraction_on_diag}
therefore yields \(P_D^{T_X,T}(T_X-T)=0\).
Consequently, for every \(f\in\mathfrak M_2(E_{T_X},E_T)\),
\begin{equation}\label{eq:zero-diagonal-contribution}
  \operatorname{DOI}_{f\mathbf1_D}^{E_{T_X},E_T}(T_X-T)=0.
\end{equation}
Since \(K_\lambda=T_{X,\lambda}^{-1/2}(T_X-T)T_\lambda^{-1/2}\),
left and right multiplication by these bounded spectral functions preserves
the vanishing diagonal blocks.
Thus \(P_D^{T_X,T}(K_\lambda)=0\), and
\begin{equation}
  \label{eq:zero-diagonal-mixed-perturbation}
  \operatorname{DOI}_{f\mathbf1_D}^{E_{T_X},E_T}(K_\lambda)=0.
\end{equation}
In particular, changing a bounded DOI kernel only on \(D\) does not change
its action on \(T_X-T\) or \(K_\lambda\).

\begin{lemma}[Hilbert--Schmidt bound for the shrinkage difference]
\label{lem:lipschitz-shrinkage-hs-difference}
Under \Cref{ass:lipschitz-filter}, on
\(\{\delta_\lambda \le1/4\}\),
\[
  \norm{
    q_\lambda(T_X)-q_\lambda(T)
  }_{\mathfrak{S}_2}
  \le
  C\eta_\lambda.
\]
\end{lemma}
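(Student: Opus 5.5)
We prove the bound with the Birman--Solomyak formula on \(\mathfrak S_2\), after moving the factors \((x+\lambda)^{1/2}\), \((y+\lambda)^{1/2}\) so that the perturbation that appears is the mixed operator \(K_\lambda\) of \Cref{lem:mixed-perturbation} rather than \(T_X-T\).

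\textbf{Step 1: DOI representation.} By \Cref{thm:birman-solomyak-doi}, applied with \(A=T_X\), \(B=T\) and the Lipschitz function \(q_\lambda\) on \([0,\kappa^2]\),
\[
  q_\lambda(T_X)-q_\lambda(T)
  =\operatorname{DOI}^{E_{T_X},E_T}_{(q_\lambda)_0^{[1]}}(T_X-T).
\]
For this we first need \(q_\lambda\) Lipschitz (with a possibly \(\lambda\)-dependent constant) and \(T_X-T\in\mathfrak S_2\). The latter holds because both operators are trace class. The former follows from \Cref{ass:lipschitz-filter}: \(\psi_\lambda^\sharp(e^t-1)=e^{-\rho_\psi t}P_{\lambda,\rho_\psi}(t)\) is Lipschitz in \(t\), and \(\mathrm{d}t/\mathrm{d}x\le1/\lambda\).

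\textbf{Step 2: factor out the weights.} Write \(T_X-T=T_{X,\lambda}^{1/2}K_\lambda T_\lambda^{1/2}\). Since multiplication by functions of \(T_X\) on the left and of \(T\) on the right commutes with the DOI, we get
\[
  q_\lambda(T_X)-q_\lambda(T)=\operatorname{DOI}_{m_\lambda}^{E_{T_X},E_T}(K_\lambda),
  \qquad
  m_\lambda(x,y)=(x+\lambda)^{1/2}(y+\lambda)^{1/2}(q_\lambda)_0^{[1]}(x,y).
\]
The identity \cref{eq:zero-diagonal-mixed-perturbation} shows that the value of \(m_\lambda\) on the diagonal is irrelevant. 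Justifying this identity is routine on \(\mathfrak S_2\): approximate by finite spectral partitions, or use the multiplicativity of the DOI in its kernel.

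\textbf{Step 3: a uniform sup bound on \(m_\lambda\).} By the \(\mathfrak S_2\) bound for bounded kernels,
\[
  \norm{q_\lambda(T_X)-q_\lambda(T)}_{\mathfrak S_2}
  \le\sup|m_\lambda|\,\norm{K_\lambda}_{\mathfrak S_2}
  \le 2\sup|m_\lambda|\,\eta_\lambda
\]
on \(\{\delta_\lambda\le1/4\}\), using \Cref{lem:mixed-perturbation}. So everything reduces to showing \(\sup_{x\ne y}|m_\lambda(x,y)|\le C\) uniformly in \(\lambda\).

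\textbf{The main obstacle: the bound on \(m_\lambda\).} Set \(u=x/\lambda\), \(v=y/\lambda\), \(t=\log(1+u)\), \(s=\log(1+v)\), and let \(Q(t)=q^\sharp(e^t-1)=1-e^{-\rho_\psi t}P_{\lambda,\rho_\psi}(t)\). This \(Q\) is bounded and Lipschitz with constant depending only on \(A\) and \(\rho_\psi\), since it is a product of bounded Lipschitz functions. Then
\[
  m_\lambda=\sqrt{(1+u)(1+v)}\,\frac{Q(t)-Q(s)}{e^t-e^s}.
\]
Using \(|Q(t)-Q(s)|\le C\min\{|t-s|,1\}\) and \(|e^t-e^s|=e^{(t+s)/2}\,2\sinh(|t-s|/2)\), we obtain
\[
  |m_\lambda|\le C\,\frac{\min\{|z|,1\}}{2\sinh(|z|/2)},\qquad z=t-s,
\]
which is bounded uniformly in \(\lambda\). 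This gives the lemma with \(C\) depending only on \(A\) and \(\rho_\psi\).
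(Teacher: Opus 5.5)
Your proof is correct and follows the paper's argument essentially step for step: you apply Birman--Solomyak on $\mathfrak{S}_2$, rewrite $T_X-T=T_{X,\lambda}^{1/2}K_\lambda T_\lambda^{1/2}$ to move the weights into the kernel, and bound the weighted divided difference uniformly in $\lambda$ through the logarithmic change of variables, which produces the factor $(t-s)/(2\sinh((t-s)/2))$. The only cosmetic difference is that you get the Lipschitz property of $Q$ from $q_\lambda=1-\psi_\lambda$ and $P_{\lambda,\rho_\psi}$, whereas the paper writes $Q_\lambda(t)=(1-e^{-t})F_\lambda(t)$; either works, and your truncation $\min\{|z|,1\}$ is unnecessary because $|z|/(2\sinh(|z|/2))\le 1$.
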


\begin{proof}
Set
\[
  Q_\lambda(t)
  =
  q_\lambda \xk{\lambda(e^t-1)}
  =
  (1-e^{-t})F_\lambda(t).
\]
By the transformed filter assumption (\Cref{ass:lipschitz-filter}), $Q_\lambda$ is uniformly bounded and
Lipschitz.
For $x=\lambda u$, $y=\lambda v$, $u=e^t-1$, $v=e^s-1$, and $t\neq s$, we compute that 
\[
  \sqrt{\lambda+x}\sqrt{\lambda+y}
  \frac{
    q_\lambda(x)-q_\lambda(y)
  }{
    x-y
  }
  =
  \sqrt{1+u}\sqrt{1+v}
  \frac{
    q_\lambda(\lambda u)-q_\lambda(\lambda v)
  }{
    u-v
  }
  =
  \frac{t-s}{2\sinh((t-s)/2)}
  (Q_\lambda)_0^{[1]}(t,s),
\]
hence 
\[
    \left|\sqrt{\lambda+x}\sqrt{\lambda+y}
  \frac{
    q_\lambda(x)-q_\lambda(y)
  }{
    x-y
  }
    \right|
    =
    \left|
        \frac{t-s}{2\sinh((t-s)/2)}
        (Q_\lambda)_0^{[1]}(t,s)
    \right|
    <C_0<\infty
\]
for some constant $C_0>0$ independent of $\lambda$.

Since \(T_X\) and \(T\) are trace class, \(T_X-T\in\mathfrak{S}_2\).
For each fixed \(\lambda>0\), the representation
\(q_\lambda(x)=Q_\lambda(\log(1+x/\lambda))\)
shows that \(q_\lambda\) is Lipschitz on \([0,\kappa^2]\).
Applying \Cref{thm:birman-solomyak-doi} gives
\[\begin{aligned}
    q_\lambda(T_X)-q_\lambda(T)
    &=
    \operatorname{DOI}_{(q_\lambda)^{[1]}_0}^{E_{T_X},E_T}(T_X-T)\\
    &=\operatorname{DOI}_{(q_\lambda)^{[1]}_0}^{E_{T_X},E_T}(T_{X,\lambda}^{1/2} K_\lambda T_\lambda^{1/2})\\
    &=\operatorname{DOI}_{\Theta_\lambda}^{E_{T_X},E_T}(K_\lambda),
\end{aligned}\]
where 
\[
    \Theta_\lambda(x,y)=\sqrt{x+\lambda}\sqrt{y+\lambda}\frac{q_\lambda(x)-q_\lambda(y)}{x-y}\mathbf{1}_{\{x\neq y\}}(x,y).
\]
By (\ref{eq:zero-diagonal-mixed-perturbation}), the kernel values on $D$ do not affect the DOI acting on $K_\lambda$.
We may therefore set the kernel to zero on $D$.

Finally, since the $\mathfrak{M}_2$ norm of the DOI kernel is precisely its essential supremum norm,
\[
  \norm{
    q_\lambda(T_X)-q_\lambda(T)
  }_{\mathfrak{S}_2}
  \leq
  \norm{
    \operatorname{DOI}_{\Theta_\lambda}^{E_{T_X},E_T}(K_\lambda)
  }_{\mathfrak{S}_2}
  \leq 
  \|\Theta_\lambda\|_{L^\infty}\cdot \norm{K_\lambda}_{\mathfrak{S}_2}
  \leq
  C\eta_\lambda
\]
where we use \Cref{lem:mixed-perturbation} in the last inequality. 
\end{proof}

\begin{lemma}[Interaction bound in the prediction norm]
\label{lem:lipschitz-embedded-interaction}
Under \Cref{ass:lipschitz-filter}, let
\[
  0<\alpha\le\min\{1,2\rho_\psi\},
  \qquad
  G_\lambda
  =
  \varphi_\lambda(T_X)\psi_\lambda(T)
  -
  \psi_\lambda(T_X)\varphi_\lambda(T).
\]
For \(g\in\overline{\Ran(L)}\), on
\(\{\delta_\lambda \le1/4\}\), and for every \(2\le r<\infty\),
\[
  \norm{
    SG_\lambda S^*g
  }_{L^2(\mu)}
  \le
  C_{A,\alpha,\rho_\psi}
  r\norm{K_\lambda}_{\mathfrak{S}_r}
  \norm{
    a_{\lambda,\alpha}(L)g
  }_{L^2}.
\]
\end{lemma}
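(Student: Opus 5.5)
The plan is to write \(G_\lambda\) as a double operator integral acting on the mixed perturbation \(K_\lambda\), and then read off the prediction-norm bound from \Cref{lem:lipschitz-schatten-multipliers}.

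\emph{Scalar identity.} For scalars,
\[
  \varphi(x)\psi(y)-\psi(x)\varphi(y)
  =
  \varphi(x)\bigl(1-y\varphi(y)\bigr)-\bigl(1-x\varphi(x)\bigr)\varphi(y)
  =
  \varphi(x)-\varphi(y).
\]
So \(G_\lambda=\varphi_\lambda(T_X)-\varphi_\lambda(T)\). I would still keep the interaction form of the kernel, because it carries the weight \(a_{\lambda,\alpha}(y)\).

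\emph{DOI representation.} Since \(\varphi_\lambda\) is Lipschitz on \([0,\kappa^2]\) for fixed \(\lambda\), \Cref{thm:birman-solomyak-doi} gives
\[
  G_\lambda=\operatorname{DOI}^{E_{T_X},E_T}_{(\varphi_\lambda)^{[1]}_0}(T_X-T).
\]
Substitute \(T_X-T=T_{X,\lambda}^{1/2}K_\lambda T_\lambda^{1/2}\). Then pull the right factor \(a_{\lambda,\alpha}(T)\) out as a spectral function of \(T\). Writing \(M_\lambda(x,y)\) for the kernel in \eqref{eq:lipschitz-interaction-schatten-bound}, this gives on \(\mathfrak S_2\)
\[
  G_\lambda
  =
  T_{X,\lambda}^{-1/2}\,
  \operatorname{DOI}^{E_{T_X},E_T}_{M_\lambda}(K_\lambda)\,
  T_\lambda^{-1/2}a_{\lambda,\alpha}(T).
\]
The kernel \(M_\lambda\) equals \((x+\lambda)(y+\lambda)(\varphi_\lambda(x)-\varphi_\lambda(y))/((x-y)a_{\lambda,\alpha}(y))\) off the diagonal. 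Two facts justify this step. First, \(a_{\lambda,\alpha}\ge(\lambda/(\kappa^2+\lambda))^{\alpha/2}>0\), so dividing by it is harmless for fixed \(\lambda\). Second, by \eqref{eq:zero-diagonal-mixed-perturbation}, diagonal values do not matter. Factoring functions of \(x\) and of \(y\) out of a DOI kernel is justified on \(\mathfrak S_2\) for bounded kernels.

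\emph{Prediction norm.} Apply \(S\) on the left and \(S^*g\) on the right. By \eqref{eq:spectral-intertwining}, \(T_\lambda^{-1/2}a_{\lambda,\alpha}(T)S^*g=S^*(L+\lambda)^{-1/2}a_{\lambda,\alpha}(L)g\). Moreover \(\|S^*(L+\lambda)^{-1/2}\|\le1\), since \(S(T+\lambda)^{-1}S^*=L(L+\lambda)^{-1}\preceq I\). On the left, \(\|ST_{X,\lambda}^{-1/2}\|=\|T^{1/2}T_{X,\lambda}^{-1/2}\|\le\|T_\lambda^{1/2}T_{X,\lambda}^{-1/2}\|\le2\) by \Cref{lem:regularized-loewner-comparison}. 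Together these give
\[
  \|SG_\lambda S^*g\|_{L^2}
  \le
  2\,\bigl\|\operatorname{DOI}_{M_\lambda}(K_\lambda)\bigr\|
  \,\|a_{\lambda,\alpha}(L)g\|_{L^2}.
\]
Finally, use \(\|\cdot\|\le\|\cdot\|_{\mathfrak S_r}\) and \Cref{lem:lipschitz-schatten-multipliers}, which bounds \(\|M_\lambda\|_{\mathfrak M_r}\le C r\). This yields the claim.

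\emph{Main obstacle.} The delicate step is identifying the Hilbert--Schmidt DOI of \(M_\lambda\) applied to \(K_\lambda\) with the bounded \(\mathfrak S_r\) extension, so that the \(\mathfrak S_r\) multiplier bound applies. Since \(K_\lambda\in\mathfrak S_2\subset\mathfrak S_r\) for \(r\ge2\), and the extensions built in \Cref{lem:two-measure-lipschitz-schatten,lem:universal_schur_multiplier,lem:log-spectral-schatten-multipliers} agree with the \(\mathfrak S_2\) DOI on \(\mathfrak S_2\), this identification goes through. The second point needing care is the rigorous factoring of the unbounded-looking weights \((x+\lambda)\) and \(1/a_{\lambda,\alpha}(y)\). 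These are bounded for fixed \(\lambda\), so factoring is legitimate on \(\mathfrak S_2\), and this should be verified first.
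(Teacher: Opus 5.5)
Your architecture matches the paper's. You represent $G_\lambda$ as a DOI acting on $K_\lambda$, move $a_{\lambda,\alpha}(T)$ to the right, use intertwining and the Loewner comparison, and finish with \Cref{lem:lipschitz-schatten-multipliers}. The gap is in the step that produces the DOI representation: your scalar identity drops the cross terms. The correct expansion is
\[
  \varphi(x)\psi(y)-\psi(x)\varphi(y)
  =\varphi(x)-\varphi(y)+(x-y)\varphi(x)\varphi(y),
\]
so $G_\lambda=\varphi_\lambda(T_X)-\varphi_\lambda(T)+\varphi_\lambda(T_X)(T_X-T)\varphi_\lambda(T)$, not $\varphi_\lambda(T_X)-\varphi_\lambda(T)$.

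Kernel ridge regression shows the extra term is real. There $G_\lambda=\lambda(T_X+\lambda)^{-1}(T+\lambda)^{-1}-\lambda(T_X+\lambda)^{-1}(T+\lambda)^{-1}=0$, whereas $\varphi_\lambda(T_X)-\varphi_\lambda(T)=(T_X+\lambda)^{-1}-(T+\lambda)^{-1}\neq0$. So the Birman--Solomyak formula you invoke represents the wrong operator. Your claim that the kernel of \eqref{eq:lipschitz-interaction-schatten-bound} equals $(x+\lambda)(y+\lambda)(\varphi_\lambda)^{[1]}_0(x,y)/a_{\lambda,\alpha}(y)$ off the diagonal is also false.

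This is not cosmetic. For kernel ridge regression the latter kernel is $-1/a_{\lambda,\alpha}(y)$. Followed literally, your chain therefore produces the inner operator $-K_\lambda a_{\lambda,\alpha}(T)^{-1}$. The multiplier it defines has norm $\norm{a_{\lambda,\alpha}(T)^{-1}}\asymp\lambda^{-\alpha/2}$, so no $\lambda$-uniform bound $Cr$ is available. The cancellation in the interaction numerator $\varphi_\lambda(x)\psi_\lambda(y)-\psi_\lambda(x)\varphi_\lambda(y)$ is exactly what absorbs the weight $1/a_{\lambda,\alpha}(y)$, and your simplification discards it. Your displayed factorization of $G_\lambda$ is true only because these two false equalities compensate.

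The repair is local. With the corrected identity, $\varphi_\lambda(T_X)-\varphi_\lambda(T)$ is the Birman--Solomyak DOI of $(\varphi_\lambda)^{[1]}_0$. The extra term $\varphi_\lambda(T_X)(T_X-T)\varphi_\lambda(T)$ is exactly the DOI of the product kernel $\varphi_\lambda(x)\varphi_\lambda(y)$ applied to $T_X-T$. The sum of these two kernels agrees off the diagonal with $Q_\lambda(x,y)=(\varphi_\lambda(x)\psi_\lambda(y)-\psi_\lambda(x)\varphi_\lambda(y))/(x-y)$, and the diagonal contributes nothing by \eqref{eq:zero-diagonal-contribution}. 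The paper instead checks $\operatorname{DOI}^{E_{T_X},E_T}_{Q_\lambda}(T_X-T)=G_\lambda$ directly, by letting the factor $x-y$ act as $T_X-T$ inside the spectral integrals.

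Once $G_\lambda=\operatorname{DOI}_{Q_\lambda}(T_X-T)$ is established, your factorization $G_\lambda=T_{X,\lambda}^{-1/2}\operatorname{DOI}_{M_\lambda}(K_\lambda)T_\lambda^{-1/2}a_{\lambda,\alpha}(T)$ is justified. The rest of your argument is then correct and essentially the paper's. Your bound $\norm{ST_{X,\lambda}^{-1/2}}\le2$ is the same estimate as the paper's $\norm{Sh}_{L^2(\mu)}^2\le2\langle h,T_{X,\lambda}h\rangle_{\mathcal H}$.
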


\begin{proof}
Recall that $T_\lambda=T+\lambda I$, $T_{X,\lambda}=T_X+\lambda I$, 
and $K_\lambda
=
T_{X,\lambda}^{-1/2}
(T_X-T)
T_\lambda^{-1/2}$. Moreover, we denote by $E_{T_X}$ and $E_T$ the spectral measures of
$T_X$ and $T$, respectively.

We first rewrite $G_\lambda$ as a
DOI. Define
\[
Q_\lambda(x,y)
=
\frac{
\varphi_\lambda(x)\psi_\lambda(y)
-
\psi_\lambda(x)\varphi_\lambda(y)}
{x-y}
\mathbf{1}_{\{x\neq y\}} .
\]
Note that 
\[
Q_\lambda(x,y)
=
(x+\lambda)^{-1}
M_{\lambda,\alpha}(x,y)
(y+\lambda)^{-1}a_{\lambda,\alpha}(y),
\]
where 
\[
M_{\lambda,\alpha}(x,y)
=
(x+\lambda)(y+\lambda)
\frac{
\varphi_\lambda(x)\psi_\lambda(y)
-
\psi_\lambda(x)\varphi_\lambda(y)}
{(x-y)a_{\lambda,\alpha}(y)}
\mathbf{1}_{\{x\neq y\}}
\]
is the $\mathfrak{S}_2$-DOI multiplier defined in \Cref{lem:lipschitz-schatten-multipliers}. Since multiplication by bounded
functions of one variable preserves the DOI multiplier property, we obtain
that $Q_\lambda$ is also an $\mathfrak{S}_2$-DOI multiplier. Then, we compute that
\[\begin{aligned}
&\iint
Q_\lambda(x,y)
\,dE_{T_X}(x)
(T_X-T)
dE_T(y)\\
=&\iint\frac{\varphi_\lambda(x)\psi_\lambda(y)-\psi_\lambda(x)\varphi_\lambda(y)}{x-y}\cdot dE_{T_X}(x)(T_X-T)dE_T(y)\\
=&\iint\frac{\varphi_\lambda(x)\psi_\lambda(y)-\psi_\lambda(x)\varphi_\lambda(y)}{x-y}\cdot dE_{T_X}(x)(x-y)dE_T(y)\\
=&\iint(\varphi_\lambda(x)\psi_\lambda(y)-\psi_\lambda(x)\varphi_\lambda(y))dE_{T_X}(x)dE_T(y)\\
=&\varphi_\lambda(T_X)\psi_\lambda(T)-\psi_\lambda(T_X)\varphi_\lambda(T)\\
=&G_\lambda,
\end{aligned}\]
where the first equality follows because the kernel values on the diagonal do not affect this DOI, by (\ref{eq:zero-diagonal-contribution}).

It follows that
\[
\begin{aligned}
T_{X,\lambda}^{1/2} G_\lambda S^*g
&=\iint T_{X,\lambda}^{1/2} Q_\lambda(x,y)dE_{T_X}(x)(T_X-T)dE_T(y)S^*g\\
&=
\iint
M_{\lambda,\alpha}(x,y)dE_{T_X}(x)
K_\lambda
dE_T(y)w,
\end{aligned}
\]
where $w=a_{\lambda,\alpha}(T)T_\lambda^{-1/2} S^*g$. By \cref{eq:spectral-intertwining}, we have
\[
    w=a_{\lambda,\alpha}(T)T_\lambda^{-1/2} S^*g
    =S^*a_{\lambda,\alpha}(L)(L+\lambda I)^{-1/2}g,
\]
then
\[
\begin{aligned}
    \norm{w}_{\mathcal{H}}^2
    &=\left\langle
      a_{\lambda,\alpha}(L)(L+\lambda I)^{-1/2}g,
      La_{\lambda,\alpha}(L)(L+\lambda I)^{-1/2}g
    \right\rangle_{L^2}\\
    &=
    \norm{
        \sqrt{\frac{L}{L+\lambda}}a_{\lambda,\alpha}(L)g
    }_{L^2}^2
    \leq\norm{a_{\lambda,\alpha}(L)g}_{L^2}^2.
\end{aligned}
\]
By Lemma \ref{lem:lipschitz-schatten-multipliers}, we obtain
\begin{equation}\label{eq:embedded-interaction-step}
\begin{aligned}
    \|T_{X,\lambda}^{1/2} G_\lambda S^*g\|_{\mathcal{H}}
    &\leq
    \norm{
        \iint
        M_{\lambda,\alpha}(x,y)dE_{T_X}(x)K_\lambda dE_T(y)
    }_{\mathfrak{B}}
    \cdot\norm{w}_{\mathcal{H}}\\
    &\leq 
    \norm{
        \iint
        M_{\lambda,\alpha}(x,y)dE_{T_X}(x)K_\lambda dE_T(y)
    }_{\mathfrak{S}_r}
    \cdot\norm{w}_{\mathcal{H}}\\
    &\leq C_{A,\alpha,\rho_\psi} r
    \norm{K_\lambda}_{\mathfrak{S}_r}
    \cdot
    \norm{a_{\lambda,\alpha}(L)g}_{L^2}.
\end{aligned}
\end{equation}

As in \Cref{lem:regularized-loewner-comparison}, on the event $\{\delta_\lambda \le 1/4\}$,
\[
    -\delta_\lambda T_\lambda \preceq T_X-T\preceq\delta_\lambda T_\lambda,
\]
hence 
\[
T\preceq T_\lambda \preceq\frac{4}{3}T_{X,\lambda} \preceq 2T_{X,\lambda},
\]
Consequently, for every $h\in\mathcal{H}$,
\[
\|Sh\|_{L^2(\mu)}^2
=
\langle h,Th\rangle_{\mathcal{H}}
\le
2\langle h,T_{X,\lambda} h\rangle_{\mathcal{H}}.
\]
Taking
\[
h=G_\lambda S^*g
\]
gives
\[
\|SG_\lambda S^*g\|_{L^2(\mu)}
\le
\sqrt2
\|T_{X,\lambda}^{1/2} G_\lambda S^*g\|_{\mathcal{H}}.
\]

Combining this estimate with \cref{eq:embedded-interaction-step} gives
\[
\|SG_\lambda S^*g\|_{L^2(\mu)}
\le
C_{A,\alpha,\rho_\psi} r
\|K_\lambda\|_{\mathfrak{S}_r}
\|a_{\lambda,\alpha}(L)g\|_{L^2(\mu)},
\]
which proves the lemma.
\end{proof}

\begin{lemma}[Logarithmic bound from Schatten interpolation]
\label{lem:logarithmic-schatten-interpolation}
For every \(H\in\mathfrak{S}_2\),
\begin{equation}
  \label{eq:optimized-schatten-endpoint}
  \inf_{2\le r<\infty}
  r\norm{H}_{\mathfrak{S}_r}
  \le
  C\norm{H}_{\mathfrak{B}}
  \log\xk{
    e+
    \frac{\norm{H}_{\mathfrak{S}_2}}{\norm{H}_{\mathfrak{B}}}
  },
\end{equation}
with the right-hand side interpreted as zero when \(H=0\).
Under \Cref{ass:basic-model,ass:lipschitz-filter} and
\cref{eq:leverage-variance-compatibility},
\[
  \inf_{2\le r<\infty}
  r\norm{K_\lambda}_{\mathfrak{S}_r}
  =
  O_{\mathbb{P}} \xk{
    \gamma_\lambda
  }.
\]
\end{lemma}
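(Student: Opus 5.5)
The plan has two parts. First I prove the deterministic inequality \cref{eq:optimized-schatten-endpoint} by Schatten interpolation with a choice of \(r\). Then I combine it with the concentration bounds of \Cref{prop:covariance-concentration} and \Cref{lem:mixed-perturbation}, using the monotonicity of \(x\mapsto x\log(e+c/x)\) to reach \(\gamma_\lambda\).

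\emph{Deterministic part.} Let \(H\in\mathfrak S_2\) be nonzero, and write \(b=\norm{H}_{\mathfrak B}\), \(h=\norm{H}_{\mathfrak S_2}\), so that \(h\ge b\). For \(r\ge2\), bounding the singular values by \(s_j^r\le b^{r-2}s_j^2\) gives the interpolation bound
\[
  \norm{H}_{\mathfrak S_r}\le b^{1-2/r}h^{2/r}=b\,(h/b)^{2/r}.
\]
Choose \(r=\max\{2,\,2\log(h/b)\}\). If \(h/b\le e\), take \(r=2\): then \(r\norm{H}_{\mathfrak S_r}=2h\le 2e\,b\). Otherwise \((h/b)^{2/r}=e\), so
\[
  r\norm{H}_{\mathfrak S_r}\le 2e\,b\log(h/b)\le 2e\,b\log(e+h/b).
\]
In both cases the bound has the required form.

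\emph{Probabilistic part.} Work on the event \(\{\delta_\lambda\le1/4\}\). Under \Cref{ass:lipschitz-filter} and \cref{eq:leverage-variance-compatibility}, \Cref{cor:variance-compatible-concentration} shows that this event has probability tending to one. On it, \Cref{lem:mixed-perturbation} gives \(\norm{K_\lambda}_{\mathfrak B}\le2\delta_\lambda\) and \(\norm{K_\lambda}_{\mathfrak S_2}\le2\eta_\lambda\). The function \(g(x)=x\log(e+c/x)\) is nondecreasing in \(x>0\) for fixed \(c>0\), since its derivative equals \(\log(e+c/x)-\frac{c/x}{e+c/x}\ge 1-1\ge0\). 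Applying this with \(x=\norm{K_\lambda}_{\mathfrak B}\le 2\delta_\lambda\) and \(c=\norm{K_\lambda}_{\mathfrak S_2}\le2\eta_\lambda\) gives
\[
  \inf_r r\norm{K_\lambda}_{\mathfrak S_r}\le C\,\delta_\lambda\log(e+\eta_\lambda/\delta_\lambda).
\]
Now set \(\bar\delta=M\sqrt{\mathcal L_\lambda\ell_\lambda/n}\) and \(\bar\eta=M\sqrt{\mathcal L_\lambda\mathcal N_1(\lambda)/n}\). For a large constant \(M\), \Cref{cor:variance-compatible-concentration} and \Cref{prop:covariance-concentration} give \(\delta_\lambda\le\bar\delta\) and \(\eta_\lambda\le\bar\eta\) with probability at least \(1-\epsilon\). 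On that event, monotonicity in \(c\) lets us replace \(\eta_\lambda\) by \(\bar\eta\), and monotonicity of \(g\) lets us replace \(\delta_\lambda\) by \(\bar\delta\). This yields
\[
  C\bar\delta\log(e+\bar\eta/\bar\delta)=CM\sqrt{\mathcal L_\lambda\ell_\lambda/n}\,\log\xk{e+\sqrt{\mathcal N_1(\lambda)/\ell_\lambda}}=CM\gamma_\lambda,
\]
since \(\bar\eta/\bar\delta=\sqrt{\mathcal N_1(\lambda)/\ell_\lambda}\). This is exactly \(O_{\mathbb P}(\gamma_\lambda)\). If \(\delta_\lambda=0\), then \(K_\lambda=0\) and the bound is trivial.

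\emph{Main obstacle.} The key point is that the operator-norm bound must carry the factor \(\ell_\lambda\) rather than \(\ell^{\mathrm{int}}_\lambda\); this is exactly what \Cref{cor:variance-compatible-concentration} provides under the filter assumption. The second point is the two monotonicity substitutions. Replacing \(\eta_\lambda\) and \(\delta_\lambda\) by their upper bounds is legitimate only because \(g\) is increasing in \(x\) and \(\log(e+c/x)\) is increasing in \(c\). Without this, a small \(\delta_\lambda\) would make the ratio \(\eta_\lambda/\delta_\lambda\) large and the logarithm would be hard to control.
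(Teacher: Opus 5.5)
Your proposal is correct and follows the paper's proof essentially step for step: the singular-value interpolation $\|H\|_{\mathfrak S_r}\le\|H\|_{\mathfrak B}^{1-2/r}\|H\|_{\mathfrak S_2}^{2/r}$ with a logarithmic choice of $r$, then the bounds from \Cref{cor:variance-compatible-concentration}, \Cref{prop:covariance-concentration} and \Cref{lem:mixed-perturbation}, combined through the monotonicity of $x\mapsto x\log(e+c/x)$. The differences are cosmetic. The paper takes $r=2\log(e+\|H\|_{\mathfrak S_2}/\|H\|_{\mathfrak B})$, which is automatically at least $2$ and so avoids your case split. Your write-up, in turn, spells out the two monotone substitutions that the paper leaves implicit.
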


\begin{proof}
Let $a=\norm{H}_{\mathfrak{B}}$ and $b=\norm{H}_{\mathfrak{S}_2}$.
If \(a=0\), then \(H=0\) and the conclusion is immediate.
For \(a>0\), we claim that
\[
  \norm{H}_{\mathfrak{S}_r}
  \le
  a^{1-2/r} b^{2/r}.
\]
In fact, if $\{s_j\}_{j=1}^\infty$ is the sequence of the singular values of $H$, then $a=\max_j s_j$, and
\[
    \norm{H}_{\mathfrak{S}_r}^r=\sum_{j=1}^\infty s_j^r=\sum_{j=1}^\infty s_j^{r-2} \cdot s_j^2 \leq a^{r-2} \sum_{j=1}^\infty s_j^2=a^{r-2} b^2.
\]
Choose
\[
  r
  =
  2\log\xk{e+\frac{b}{a}
  },
\]
then $(b/a)^{2/r}\le e$, and hence
\[
    r\norm{H}_{\mathfrak{S}_r}
    \leq
    ra\xk{\frac{b}{a}}^{2/r}
    \leq
    2e\,a\log\xk{e+\frac{b}{a}},
\]
which proves
\cref{eq:optimized-schatten-endpoint}.

By \Cref{cor:variance-compatible-concentration},
\[
  \norm{K_\lambda}_{\mathfrak{B}}
  =
  O_{\mathbb{P}} \xk{
    \sqrt{
      \frac{\mathcal{L}_\lambda\ell_\lambda}{n}
    }
  }.
\]
By \Cref{prop:covariance-concentration,lem:mixed-perturbation},
\[
  \norm{K_\lambda}_{\mathfrak{S}_2}
  =
  O_{\mathbb{P}} \xk{
    \sqrt{\frac{\mathcal{L}_\lambda\mathcal{N}_1(\lambda)}{n}}
  }.
\]
The function $x\mapsto x\log(e+c/x)$ is increasing for $x,c>0$, hence we can
apply \cref{eq:optimized-schatten-endpoint} to $K_\lambda$ and obtain
\[
  \inf_{2\le r<\infty}
  r\norm{K_\lambda}_{\mathfrak{S}_r}
  =
  O_{\mathbb{P}} \zk{
    \sqrt{
      \frac{\mathcal{L}_\lambda\ell_\lambda}{n}
    }
    \log\xk{
      e+
      \sqrt{
        \frac{\mathcal{N}_1(\lambda)}{\ell_\lambda}
      }
    }
  }.
\]
\end{proof}

\begin{proposition}[Variance comparison for Lipschitz filters]
\label{prop:lipschitz-variance-comparison}
Under \Cref{ass:basic-model,ass:lipschitz-filter} and
\cref{eq:leverage-variance-compatibility},
\begin{equation}\label{eq:lipschitz-variance-estimate}
  \mathsf{V}_X(\varphi_\lambda)
  =
  \xk{1+o_{\mathbb{P}}(1)}
  \mathcal{V}_n(q_\lambda).
\end{equation}
Consequently,
\begin{equation}\label{eq:lipschitz-variance-additive-estimate}
  \mathsf{V}_X(\varphi_\lambda)
  =
  \mathcal{V}_n(q_\lambda)
  +
  o_{\mathbb{P}} \xk{
    \mathcal{E}_n^{\mathsf{seq}}(q_\lambda;f^*)
  }.
\end{equation} 
\end{proposition}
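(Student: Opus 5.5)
The plan is to feed the three hypotheses of \Cref{lem:common-variance-transfer} with the operator estimates of \Cref{sec:operator-concentration} and the Hilbert--Schmidt bound of \Cref{lem:lipschitz-shrinkage-hs-difference}. First, by \cref{eq:lipschitz-filter-basic-consequences} the regularized inverse bound holds with \(E=A\). Hence \Cref{cor:variance-compatible-concentration} applies and gives \(\delta_\lambda=O_{\mathbb P}(\sqrt{\mathcal L_\lambda\ell_\lambda/n})=o_{\mathbb P}(1)\). In particular \(\mathbb P(\delta_\lambda\le1/4)\to1\), so we may work on this event throughout. The positivity \(0<\mathcal N_q(\lambda)<\infty\) holds eventually by \Cref{ass:conditional-equivalence-scale} and the remark following \cref{eq:lipschitz-filter-basic-consequences}.

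Second, we control \(d_{\lambda,X}\). \Cref{lem:lipschitz-shrinkage-hs-difference} gives \(d_{\lambda,X}\le C\eta_\lambda\), and \Cref{prop:covariance-concentration} gives \(\eta_\lambda=O_{\mathbb P}(\sqrt{\mathcal L_\lambda\mathcal N_1(\lambda)/n})\). We need this to be \(o_{\mathbb P}(\sqrt{\mathcal N_q(\lambda)})\), which is equivalent to
\[
  \frac{\mathcal L_\lambda\mathcal N_1(\lambda)}{n\,\mathcal N_q(\lambda)}\to0 .
\]
Write \(N=\mathcal N_1(\lambda)\), \(Q=\mathcal N_q(\lambda)\) and \(a=\mathcal L_\lambda\ell_\lambda/n\); variance compatibility \cref{eq:leverage-variance-compatibility} says exactly \((N/Q)\sqrt a\to0\). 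From \(Q\le A^2N\) we get \(N/Q\ge A^{-2}\), hence \(a\to0\). The target ratio equals \((N/Q)\,a/\ell_\lambda\le (N/Q)\,a=\bigl[(N/Q)\sqrt a\bigr]\sqrt a\to0\), since \(\ell_\lambda\ge1\). So the second hypothesis holds.

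Third, \Cref{lem:empirical-effective-dimension} gives \(\widehat{\mathcal N}_{1,X}(\lambda)=O_{\mathbb P}(N)\). Combined with the rate for \(\delta_\lambda\), this yields \(\delta_\lambda\widehat{\mathcal N}_{1,X}(\lambda)=O_{\mathbb P}(N\sqrt a)=o_{\mathbb P}(Q)\), directly by \cref{eq:leverage-variance-compatibility}. \Cref{lem:common-variance-transfer} then gives \cref{eq:lipschitz-variance-estimate}. For \cref{eq:lipschitz-variance-additive-estimate}, write \(\mathsf V_X-\mathcal V_n=o_{\mathbb P}(1)\,\mathcal V_n\) and use \(\mathcal V_n\le\mathcal E_n^{\mathsf{seq}}\) from \cref{eq:sequence-risk}.

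The only delicate point is the second step: the Hilbert--Schmidt scale \(\sqrt{\mathcal L_\lambda N/n}\) must be compared with \(\sqrt Q\), whereas \cref{eq:leverage-variance-compatibility} is phrased in terms of \(N/Q\). The bound \(Q\lesssim N\) bridges the two, and it is supplied by the regularized inverse estimate. Everything else reduces to bookkeeping, together with the uniformity remark, since all constants depend only on \(A\).
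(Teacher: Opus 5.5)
Your proposal is correct and follows the paper's proof essentially step for step. You verify the three hypotheses of \Cref{lem:common-variance-transfer} as the paper does: \Cref{cor:variance-compatible-concentration} controls $\delta_\lambda$; \Cref{lem:lipschitz-shrinkage-hs-difference} together with \Cref{prop:covariance-concentration} controls $d_{\lambda,X}$, using the same factorization $\mathcal L_\lambda N/(nQ)=[(N/Q)\sqrt a]\sqrt a/\ell_\lambda$; and \Cref{lem:empirical-effective-dimension} controls $\delta_\lambda\widehat{\mathcal N}_{1,X}(\lambda)$. You then obtain the additive form from $\mathcal V_n(q_\lambda)\le\mathcal E_n^{\mathsf{seq}}(q_\lambda;f^*)$, exactly as the paper does.
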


\begin{proof}
Write
\[
  N_\lambda=\mathcal{N}_1(\lambda),
  \qquad
  Q_\lambda=\mathcal{N}_q(\lambda),
  \qquad
  v_\lambda=\frac{\mathcal{L}_\lambda \ell_\lambda}{n}.
\]
Combining the covariance bounds in
\Cref{prop:covariance-concentration,cor:variance-compatible-concentration}
with
\Cref{lem:lipschitz-shrinkage-hs-difference,lem:empirical-effective-dimension}
gives
\[
  d_{\lambda,X}
  =
  \norm{
    q_\lambda(T_X)-q_\lambda(T)
  }_{\mathfrak{S}_2}
  =
  O_{\mathbb{P}} \xk{
    \sqrt{\frac{\mathcal{L}_\lambda N_\lambda}{n}}
  },
  \qquad
  \delta_\lambda=O_{\mathbb{P}}(\sqrt{v_\lambda}),
  \qquad
  \widehat{\mathcal{N}}_{1,X}(\lambda)
  =
  O_{\mathbb{P}}(N_\lambda),
\]
The variance condition
\cref{eq:leverage-variance-compatibility} gives
\[
  \delta_\lambda
  \frac{
    \widehat{\mathcal{N}}_{1,X}(\lambda)
  }{
    Q_\lambda
  }
  =
  o_{\mathbb{P}}(1).
\]
It also gives
\[
  \frac{d_{\lambda,X}^2}{Q_\lambda}
  =
  O_{\mathbb{P}} \xk{
    \frac{\mathcal{L}_\lambda N_\lambda}{nQ_\lambda}
  }
  =
  O_{\mathbb{P}} \zk{
    \xk{
      \frac{N_\lambda}{Q_\lambda}\sqrt{v_\lambda}
    }
    \frac{\sqrt{v_\lambda}}{\ell_\lambda}
  }
  =
  o_{\mathbb{P}}(1).
\]
Since \(\delta_\lambda=o_{\mathbb{P}}(1)\) by
\Cref{cor:variance-compatible-concentration}, these bounds imply
that the right-hand side of \cref{eq:common-variance-transfer}, divided by
\(Q_\lambda\), converges to zero in probability.
Applying \Cref{lem:common-variance-transfer} therefore proves
\cref{eq:lipschitz-variance-estimate}.

The additive comparison \cref{eq:lipschitz-variance-additive-estimate} follows because the sequence risk is at least its
variance term.
\end{proof}

\begin{proposition}[Bias comparison for Lipschitz filters]
\label{prop:lipschitz-bias-comparison}
Under \Cref{ass:basic-model,ass:lipschitz-filter} and
\Cref{ass:conditional-equivalence-scale},
\[
  \mathsf{B}_X(q_\lambda)
  =
  \mathcal{B}_n(q_\lambda;f^*)
  +
  o_{\mathbb{P}} \xk{
    \mathcal{E}_n^{\mathsf{seq}}(q_\lambda;f^*)
  }.
\]
\end{proposition}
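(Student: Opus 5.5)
The plan is to apply \Cref{prop:common-bias-transfer} with \(\alpha=\alpha_\psi=\min\{2\rho_\psi,1\}\), which satisfies \(0<\alpha\le\min\{1,2\rho_\psi\}\) as \Cref{lem:lipschitz-embedded-interaction} requires. The three side conditions of that proposition come first. The filter assumption gives \(0<\mathcal{N}_q(\lambda)<\infty\) eventually, using the first bullet of \Cref{ass:conditional-equivalence-scale} together with \cref{eq:lipschitz-filter-basic-consequences}. \Cref{cor:variance-compatible-concentration}, applied with \(E=A\), gives \(\delta_\lambda=o_{\mathbb{P}}(1)\) and, as shown in its proof, \(\mathcal{L}_\lambda\ell_\lambda/n\to0\), hence \(\mathcal{L}_\lambda/n\to0\). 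In particular \(\mathbb{P}\{\delta_\lambda\le1/4\}\to1\), so we may work on this event throughout.

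Next I will produce the interaction certificate \cref{eq:interaction-certificate}. On \(\{\delta_\lambda\le1/4\}\), \Cref{lem:lipschitz-embedded-interaction} applied with \(g=f^*\in\overline{\Ran(L)}\) holds for every \(2\le r<\infty\). Taking the infimum over \(r\), I set
\[
  \beta_\lambda \coloneqq C_{A,\alpha,\rho_\psi}\inf_{2\le r<\infty} r\norm{K_\lambda}_{\mathfrak{S}_r},
\]
which is \(X\)-measurable and nonnegative. \Cref{lem:logarithmic-schatten-interpolation} then gives \(\beta_\lambda=O_{\mathbb{P}}(\gamma_\lambda)\).

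It remains to verify \cref{eq:pointwise-bias-transfer-conditions}. Since \(a_{\lambda,\alpha}(t)=\psi_\lambda(t)+(\lambda/(t+\lambda))^{\alpha/2}\) and, by \cref{eq:lipschitz-filter-basic-consequences}, \(0\le\psi_\lambda(t)\le A(\lambda/(t+\lambda))^{\rho_\psi}\le A(\lambda/(t+\lambda))^{\alpha/2}\) (because \(\rho_\psi\ge\alpha/2\) and \(\lambda/(t+\lambda)\le1\)), we get \(a_{\lambda,\alpha}(t)^2\le(1+A)^2(\lambda/(t+\lambda))^{\alpha}\). Hence
\[
  \norm{a_{\lambda,\alpha}(L)f^*}_{L^2(\mu)}^2\le(1+A)^2\mathcal{S}_{f^*}(\lambda).
\]
Combining this with \(\beta_\lambda^2=O_{\mathbb{P}}(\gamma_\lambda^2)\) and the spectral balance condition \cref{eq:leverage-fixed-target-scale} yields
\[
  \beta_\lambda^2\norm{a_{\lambda,\alpha}(L)f^*}^2=O_{\mathbb{P}}\bigl(\gamma_\lambda^2\mathcal{S}_{f^*}(\lambda)\bigr)=o_{\mathbb{P}}(\mathcal{N}_q(\lambda)/n)=o_{\mathbb{P}}(V_\lambda),
\]
using \(\sigma^2>0\) fixed (or bounded below uniformly). \Cref{prop:common-bias-transfer} then gives the claim.

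The only delicate point I expect is bookkeeping rather than analysis. First, the constant in \(\beta_\lambda\) must not depend on \(r\); that is why the infimum is taken after \Cref{lem:lipschitz-embedded-interaction}, which holds simultaneously for all \(r\) with an \(r\)-independent constant. Second, the exponent in the definition of \(\mathcal{S}_{f^*}\) must match \(\alpha_\psi\) exactly. The case \(\mathcal{S}_{f^*}(\lambda)=0\) is trivial, since then \(f^*=0\).
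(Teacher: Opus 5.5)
Your proposal is correct and follows the paper's proof essentially step for step. Both arguments take the certificate $\beta_\lambda=C_{A,\alpha_\psi,\rho_\psi}\inf_{2\le r<\infty} r\norm{K_\lambda}_{\mathfrak{S}_r}$ from \Cref{lem:lipschitz-embedded-interaction}, get the rate $O_{\mathbb{P}}(\gamma_\lambda)$ from \Cref{lem:logarithmic-schatten-interpolation}, use the qualification bound $\norm{a_{\lambda,\alpha_\psi}(L)f^*}^2\le(A+1)^2\mathcal{S}_{f^*}(\lambda)$ together with spectral balance, and conclude with \Cref{prop:common-bias-transfer}; you are also slightly more explicit than the paper in checking the side condition $\mathcal{L}_\lambda/n\to0$ via the proof of \Cref{cor:variance-compatible-concentration}.
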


\begin{proof}
Set
\[
  \beta_\lambda
  =
  C_{A,\alpha_\psi,\rho_\psi}
  \inf_{2\le r<\infty}
  r\norm{K_\lambda}_{\mathfrak{S}_r}.
\]
\Cref{lem:lipschitz-embedded-interaction} verifies
\cref{eq:interaction-certificate} with
\(\alpha=\alpha_\psi\), simultaneously for all
\(g\in\overline{\Ran(L)}\), and
\Cref{lem:logarithmic-schatten-interpolation} gives
\[
  \beta_\lambda
  =
  O_{\mathbb{P}} \xk{
    \gamma_\lambda
  }.
\]
Since \(\alpha_\psi/2\le\rho_\psi\), the qualification bound gives
\[
  \norm{
    a_{\lambda,\alpha_\psi}(L)f^*
  }_{L^2(\mu)}^2
  \le
  (A+1)^2\mathcal{S}_{f^*}(\lambda).
\]
The spectral balance condition for the fixed target therefore yields
\[
  \beta_\lambda^2
  \norm{
    a_{\lambda,\alpha_\psi}(L)f^*
  }_{L^2(\mu)}^2
  =
  o_{\mathbb{P}} \xk{
    \frac{\sigma^2}{n}\mathcal{N}_q(\lambda)
  }.
\]
Moreover,
\Cref{cor:variance-compatible-concentration} gives
\(\delta_\lambda=o_{\mathbb{P}}(1)\).
\Cref{prop:common-bias-transfer} proves the proposition.
\end{proof}

\begin{proof}[Proof of \Cref{thm:conditional-risk-equivalence}]
Combining
\Cref{prop:lipschitz-bias-comparison,prop:lipschitz-variance-comparison}
and using the exact risk decomposition \cref{eq:np-risk} gives
\[
  \mathcal{E}_n^{\mathsf{np}}(q_\lambda;f^*\mid X)
  =
  \mathcal{E}_n^{\mathsf{seq}}(q_\lambda;f^*)
  +
  o_{\mathbb{P}} \xk{
    \mathcal{E}_n^{\mathsf{seq}}(q_\lambda;f^*)
  },
\]
which is equivalent to
\cref{eq:conditional-risk-equivalence}.
All constants in the preceding bounds depend only on the constants in the
assumptions.
The same argument therefore gives the asserted uniform
\(o_{\mathbb{P}}(1)\) remainder whenever those assumptions hold uniformly.
\end{proof}
\clearpage{}
\clearpage{}\section{Risk Equivalence in Expectation}
\label{sec:risk-equivalence-expectation}

This section proves the risk comparison in
\Cref{thm:risk-equivalence-expectation}.
We integrate the risk comparison bounds over the event where the empirical
operator is well conditioned and use a global estimator bound on its complement.

We use \(v_\lambda\) from \Cref{sec:operator-concentration} and
\(R_\lambda,d_{\lambda,X}\) and the regularized inverse bound constant \(E\)
from \Cref{sec:common-risk-transfer}.
Let
\[
  \mathcal{G}_\lambda
  =
  \{\delta_\lambda \le1/4\}.
\]
For a fixed target, also set
\[
  D_\lambda
  =
  \mathcal{E}_n^{\mathsf{seq}}(q_\lambda;f^*)
  =
  R_\lambda^2+\frac{\sigma^2}{n}\mathcal{N}_q(\lambda).
\]
The positive noise variance and the first item of
\Cref{ass:conditional-equivalence-scale} ensure that
\(D_\lambda>0\) for all sufficiently large \(n\).

\begin{lemma}[Second-moment and tail bounds]
\label{lem:risk-expectation-moment-tail}
Under the assumptions of
\Cref{thm:conditional-risk-equivalence}, there are constants
\(c,C\in(0,\infty)\), independent of \(n\) and \(\lambda\), such that
\begin{align}
  \label{eq:bad-design-tail}
  \mathbb{P}_X(\mathcal{G}_\lambda^{\mathsf{c}})
  &\le
  C\exp\xk{-c\frac{n}{\mathcal{L}_\lambda}},
  \\
  \label{eq:operator-second-moments}
  \E_X \delta_\lambda^2
  &\le Cv_\lambda,
  &
  \E_X \eta_\lambda^2
  &\le C\frac{\mathcal{L}_\lambda \mathcal{N}_1(\lambda)}{n}.
\end{align}
Use \(A_x\) and \(A\) from the proof of
\Cref{prop:covariance-concentration}, and set \(B_i=A_{x_i}-A\).
For \(1\le i\le n\), define the leave-one-out event
\[
  \mathcal{B}_{\lambda,-i}
  =
  \dk{
    \left\|
      \frac{1}{n}\sum_{j\ne i} B_j
    \right\|_{\mathfrak{B}}
    >
    \frac{1}{8}
  }.
\]
For all sufficiently large \(n\),
\begin{equation}
  \label{eq:leave-one-out-bad-design-tail}
  \mathcal{G}_\lambda^{\mathsf{c}}
  \subseteq
  \mathcal{B}_{\lambda,-i},
  \qquad
  \max_{1\le i\le n}
  \mathbb{P}_X(\mathcal{B}_{\lambda,-i})
  \le
  C\exp\xk{-c\frac{n}{\mathcal{L}_\lambda}}.
\end{equation}
Moreover,
\begin{equation}
  \label{eq:global-delta-bound}
  \delta_\lambda \le1+\mathcal{L}_\lambda
  \quad\text{almost surely},
\end{equation}
and, on \(\mathcal{G}_\lambda\),
\begin{equation}
  \label{eq:good-empirical-dimension-bound}
  \widehat{\mathcal{N}}_{1,X}(\lambda)
  \le
  \frac{2}{n}\sum_{i=1}^n
  \norm{T_\lambda^{-1/2}k_{x_i}}_{\mathcal H}^2.
\end{equation}

For the fixed-target residual, define
\[
  U_\lambda
  =
  \norm{T_\lambda^{-1/2} Z_\lambda}_{\mathcal{H}}.
\]
Then
\begin{equation}
  \label{eq:rhs-second-moment}
  \E_X U_\lambda^2
  \le
  \frac{\mathcal{L}_\lambda}{n}R_\lambda^2.
\end{equation}
\end{lemma}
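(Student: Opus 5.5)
We will prove the bounds item by item, reusing the notation of \Cref{prop:covariance-concentration}: $Z_i=B_i=A_{x_i}-A$, $\norm{B_i}\le\mathcal{L}_\lambda+1$, and $\E B_i^2\preceq\mathcal{L}_\lambda A$. This gives $\norm{\E B_i^2}\le\mathcal{L}_\lambda$ and intrinsic dimension $\mathfrak d_\lambda$.

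\textbf{Tail bound \cref{eq:bad-design-tail}.} We apply the intrinsic-dimension matrix Bernstein inequality \citep[Section~7.3]{tropp2015_IntroductionMatrix} at the fixed level $1/4$. It gives
\[
  \mathbb P(\delta_\lambda>1/4)\le C\mathfrak d_\lambda\exp(-cn/\mathcal L_\lambda),
\]
since both the variance and range terms scale like $\mathcal L_\lambda/n$. The prefactor $\mathfrak d_\lambda\lesssim\mathcal N_1(\lambda)/\varrho_\lambda$ has to be absorbed. For this we use $\mathcal N_1(\lambda)\le\mathcal L_\lambda$, and $\varrho_\lambda$ bounded below eventually (as in \Cref{cor:variance-compatible-concentration}). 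Together with $\mathcal L_\lambda\ell_\lambda/n\to0$ from variance compatibility, this gives $\log\mathfrak d_\lambda\le \tfrac c2 n/\mathcal L_\lambda$ for large $n$, after halving $c$. Absorbing this polynomial prefactor is the one genuinely delicate point; if $\varrho_\lambda$ is not bounded below we fall back on $\mathfrak d_\lambda\le\mathcal N_1/\varrho_\lambda\lesssim N_\lambda^2/Q_\lambda$, which is polynomial in $\mathcal L_\lambda$ and is likewise dominated.

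\textbf{Leave-one-out events \cref{eq:leave-one-out-bad-design-tail}.} We write
\[
  T_\lambda^{-1/2}(T_X-T)T_\lambda^{-1/2}=\tfrac1n\textstyle\sum_{j\ne i}B_j+\tfrac1nB_i,
\]
so $\delta_\lambda\le\norm{n^{-1}\sum_{j\ne i}B_j}+(\mathcal L_\lambda+1)/n$. Because $\mathcal L_\lambda/n\to0$, the last term is below $1/8$ for large $n$. Hence $\delta_\lambda>1/4$ forces $\mathcal B_{\lambda,-i}$. The probability of $\mathcal B_{\lambda,-i}$ is bounded by the same Bernstein argument, applied to $n-1$ summands at level $1/8$, uniformly in $i$.

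\textbf{Moment and remaining bounds.} For the moment bounds \cref{eq:operator-second-moments}, the $\eta_\lambda$ bound is the exact computation already done in \Cref{prop:covariance-concentration}. For $\delta_\lambda$ we integrate the Bernstein tail:
\[
  \E\delta_\lambda^2=\int_0^\infty 2t\,\mathbb P(\delta_\lambda>t)\,dt,
\]
with
\[
  \mathbb P(\delta_\lambda>t)\le C\mathfrak d_\lambda\exp\!\bigl(-c\min\{nt^2/\mathcal L_\lambda,\ nt/\mathcal L_\lambda\}\bigr).
\]
We split at $t_0^2\asymp v_\lambda$. The region $t\le t_0$ contributes $t_0^2\asymp v_\lambda$. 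On $t>t_0$, the factor $\ell_\lambda$ built into $v_\lambda$ absorbs $\log\mathfrak d_\lambda$, as in the proof of \Cref{cor:variance-compatible-concentration}, and the exponential remainder contributes $O(\mathcal L_\lambda/n+(\mathcal L_\lambda/n)^2)\lesssim v_\lambda$.

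The almost-sure bound \cref{eq:global-delta-bound} is immediate from $\norm{A_x}\le\mathcal L_\lambda$ and $\norm{A}\le1$. The bound \cref{eq:good-empirical-dimension-bound} is the first display in the proof of \Cref{lem:empirical-effective-dimension}. There $T_{X,\lambda}^{-1}\preceq(1-\delta_\lambda)^{-1}T_\lambda^{-1}\preceq 2T_\lambda^{-1}$ on $\mathcal G_\lambda$, and then we expand the trace. Finally, \cref{eq:rhs-second-moment} is exactly the second-moment computation in the proof of \Cref{lem:rhs-fluctuation}.
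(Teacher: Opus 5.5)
Your route is the paper's route: intrinsic-dimension matrix Bernstein for $\delta_\lambda$, a fixed-level tail with the prefactor absorbed, tail integration for $\E_X\delta_\lambda^2$, and the reverse triangle inequality for the leave-one-out inclusion. The remaining items ($\eta_\lambda$, \cref{eq:global-delta-bound}, \cref{eq:good-empirical-dimension-bound}, \cref{eq:rhs-second-moment}) come from the same earlier computations the paper cites. The gap is in the step you flag yourself: absorbing $\mathfrak{d}_\lambda=\mathcal{N}_1(\lambda)/\varrho_\lambda$.

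\Cref{cor:variance-compatible-concentration} does not give a lower bound on $\varrho_\lambda$. It only shows $\sqrt{v_\lambda}/\varrho_\lambda\to0$, and it says explicitly that no lower bound is required. The limit $\varrho_\lambda\to1$ is available only for a fixed nonzero $T$, from \Cref{prop:covariance-concentration}.

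Your fallback does not repair this. $\mathcal{N}_1(\lambda)^2/\mathcal{N}_q(\lambda)$ is not bounded by a polynomial in $\mathcal{L}_\lambda$, since variance compatibility only gives $\mathcal{N}_1(\lambda)/\mathcal{N}_q(\lambda)=o(v_\lambda^{-1/2})$. Moreover, $v_\lambda\to0$ controls $\log\mathcal{N}_1(\lambda)\le\ell_\lambda$, not $\log\mathcal{L}_\lambda$, and $\log\mathcal{L}_\lambda$ need not be $o(n/\mathcal{L}_\lambda)$.

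The moment bound has the same defect in sharper form. You replaced the variance proxy $\norm{\E B_i^2}\le\mathcal{L}_\lambda\varrho_\lambda$ by $\mathcal{L}_\lambda$. Then the factor $\ell_\lambda$ in $v_\lambda$ cannot absorb $\log\mathfrak{d}_\lambda=\log\mathcal{N}_1(\lambda)+\log(1/\varrho_\lambda)$ when $\log(1/\varrho_\lambda)\gg\ell_\lambda$. Your integration then gives at best $O(\mathcal{L}_\lambda\ell_\lambda^{\mathrm{int}}/n)$, not $O(v_\lambda)$.

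The fix is what the paper does. Keep $\varrho_\lambda$ in the variance term, and write the Bernstein tail with $\ell_\lambda^{\mathrm{int}}+t$ inside the threshold. Then use two facts proved inside \Cref{cor:variance-compatible-concentration}: $\varrho_\lambda\ell_\lambda^{\mathrm{int}}\lesssim\ell_\lambda$ (via $x\log(1/x)\le1/e$), and $\mathcal{L}_\lambda\ell_\lambda^{\mathrm{int}}/n=o(\sqrt{v_\lambda})$.

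The second fact gives $\log\mathfrak{d}_\lambda\le\ell_\lambda^{\mathrm{int}}=o(n/\mathcal{L}_\lambda)$. So taking $t=c_0n/\mathcal{L}_\lambda$ yields \cref{eq:bad-design-tail}, and the same choice applied to $n-1$ summands gives the leave-one-out tail. Integrating gives $\E_X\delta_\lambda^2\lesssim\mathcal{L}_\lambda\varrho_\lambda\ell_\lambda^{\mathrm{int}}/n+(\mathcal{L}_\lambda\ell_\lambda^{\mathrm{int}}/n)^2\lesssim v_\lambda$.

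For a fixed nonzero $T$ your argument is already complete. However, the lemma is invoked with constants uniform over families of instances, and that is exactly the regime where $\varrho_\lambda$ may tend to zero.
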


\begin{proof}
The matrix Bernstein argument in
\Cref{prop:covariance-concentration} yields, for every \(t\ge0\),
\[
  \mathbb{P}_X \dk{
    \delta_\lambda
    >
    C\zk{
      \sqrt{
        \frac{
          \mathcal{L}_\lambda\varrho_\lambda
          (\ell_\lambda^{\mathrm{int}}+t)
        }{n}
      }
      +
      \frac{
        \mathcal{L}_\lambda(\ell_\lambda^{\mathrm{int}}+t)
      }{n}
    }
  }
  \le2e^{-t}.
\]
Variance compatibility and the regularized inverse bound imply
\[
  \varrho_\lambda\ell_\lambda^{\mathrm{int}}
  \lesssim
  \ell_\lambda,
  \qquad
  \frac{\mathcal{L}_\lambda\ell_\lambda^{\mathrm{int}}}{n}
  =o(\sqrt{v_\lambda}).
\]
Integrating this tail bound yields
\(\E_X\delta_\lambda^2\le Cv_\lambda\).
Taking \(t=c_0 n/\mathcal{L}_\lambda\), with a sufficiently small fixed \(c_0>0\),
and using the two preceding comparisons gives
\cref{eq:bad-design-tail}.
The Hilbert--Schmidt second-moment bound follows from
\Cref{prop:covariance-concentration}.

By definition, \(\norm{A_x}\le\mathcal{L}_\lambda\) almost surely, while
\(0\preceq A\preceq I\).
Consequently,
\[
  \delta_\lambda
  \le
  \frac{1}{n}\sum_{i=1}^n \norm{A_{x_i}}_{\mathfrak{B}}
  +
  \norm{A}_{\mathfrak{B}}
  \le
  \mathcal{L}_\lambda+1,
\]
which proves \cref{eq:global-delta-bound}.
Because \(\mathcal{L}_\lambda/n\to0\), for all sufficiently large \(n\),
\[
  \frac{\norm{B_i}_{\mathfrak{B}}}{n}
  \le
  \frac{\mathcal{L}_\lambda+1}{n}
  \le\frac{1}{8}
  \qquad\text{almost surely}.
\]
The reverse triangle inequality therefore gives the event inclusion in
\cref{eq:leave-one-out-bad-design-tail}.
For the leave-one-out sum, the variance majorant, intrinsic dimension, and
summand bound have orders \(n\mathcal{L}_\lambda\varrho_\lambda\),
\(\mathcal{N}_1(\lambda)/\varrho_\lambda\), and \(\mathcal{L}_\lambda\), respectively.
Since \(\mathcal{L}_\lambda\ell_\lambda^{\mathrm{int}}/n=o(1)\), the prefactor arising
from the intrinsic dimension is absorbed into the exponential tail.
Applying the matrix Bernstein inequality at threshold \(n/8\) therefore gives
the probability bound in \cref{eq:leave-one-out-bad-design-tail}; replacing
\(n\) by \(n-1\) changes only the constants.

On \(\mathcal{G}_\lambda\), the Loewner argument in
\Cref{lem:empirical-effective-dimension} gives
\[
  \widehat{\mathcal{N}}_{1,X}(\lambda)
  \le
  \frac{2}{n}\sum_{i=1}^n
  \norm{T_\lambda^{-1/2} k_{x_i}}_{\mathcal{H}}^2,
\]
which is \cref{eq:good-empirical-dimension-bound}.

The second-moment bound \cref{eq:rhs-second-moment} is proved in
\Cref{lem:rhs-fluctuation}.
\end{proof}

\begin{lemma}[Expected risk comparison on the good design event]
\label{lem:risk-expectation-good-event}
Under the assumptions of \Cref{thm:conditional-risk-equivalence},
\begin{equation}
  \label{eq:good-event-l1-transfer}
  \E_X \zk{
    \mathbf{1}_{\mathcal{G}_\lambda}
    \frac{
      \left|
        \mathcal{E}_n^{\mathsf{np}}(q_\lambda;f^*\mid X)
        -D_\lambda
      \right|
    }{D_\lambda}
  }
  \longrightarrow0.
\end{equation}
\end{lemma}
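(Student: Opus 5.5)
Since the sequence risk $D_\lambda$ is deterministic, the plan is to integrate the deterministic, pointwise bounds of \Cref{lem:common-variance-transfer} and \Cref{prop:common-bias-transfer} over $\mathcal{G}_\lambda$ rather than to pass from $o_{\mathbb{P}}$ statements to expectations directly. On $\mathcal{G}_\lambda$, \cref{eq:np-risk} gives
\[
  \left|\mathcal{E}_n^{\mathsf{np}}(q_\lambda;f^*\mid X)-D_\lambda\right|
  \le
  \left|\mathsf{B}_X(q_\lambda)-R_\lambda^2\right|
  +\frac{\sigma^2}{n}\left|\frac{n}{\sigma^2}\mathsf{V}_X(\varphi_\lambda)-\mathcal{N}_q(\lambda)\right|,
\]
and we bound each term by an explicit function of $\delta_\lambda$, $\eta_\lambda$, $\widehat{\mathcal{N}}_{1,X}(\lambda)$, $U_\lambda$, and $\beta_\lambda$ whose normalized expectation can be controlled through \Cref{lem:risk-expectation-moment-tail}.

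\emph{Variance term.} By \cref{eq:common-variance-transfer}, \Cref{lem:lipschitz-shrinkage-hs-difference}, and $d_{\lambda,X}\le C\eta_\lambda$, the normalized variance error on $\mathcal{G}_\lambda$ is at most
\[
  C\Bigl(\frac{\eta_\lambda}{\sqrt{Q_\lambda}}+\frac{\eta_\lambda^2}{Q_\lambda}\Bigr)
  +C\delta_\lambda\Bigl(1+\frac{\eta_\lambda^2}{Q_\lambda}\Bigr)
  +C\delta_\lambda\frac{\widehat{\mathcal{N}}_{1,X}(\lambda)}{Q_\lambda},
\]
where $Q_\lambda=\mathcal{N}_q(\lambda)$. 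Taking expectations, \cref{eq:operator-second-moments} gives $\E\eta_\lambda^2/Q_\lambda\lesssim \mathcal{L}_\lambda N_\lambda/(nQ_\lambda)\to0$, exactly as in \Cref{prop:lipschitz-variance-comparison}, and Jensen handles the first-order terms. Because $\delta_\lambda\le1/4$ on $\mathcal{G}_\lambda$, we have $\delta_\lambda\eta_\lambda^2\le\eta_\lambda^2/4$. For the last term we use \cref{eq:good-empirical-dimension-bound} together with Cauchy--Schwarz:
\[
  \E\bigl[\delta_\lambda\cdot\tfrac{1}{n}\textstyle\sum_i\|T_\lambda^{-1/2}k_{x_i}\|^2\bigr]
  \le \mathcal{L}_\lambda^{0}\cdots
\]
More simply, each summand is at most $\mathcal{L}_\lambda$ but has mean $N_\lambda$. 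Write the product as $\frac1n\sum_i\|T_\lambda^{-1/2}k_{x_i}\|^2\delta_\lambda$ and bound $\delta_\lambda$ by the leave-one-out version $\delta_{\lambda,-i}$ plus $(\mathcal{L}_\lambda+1)/n$. Independence then gives the bound $N_\lambda(\E\delta_{\lambda,-i}+\mathcal{L}_\lambda/n)\lesssim N_\lambda\sqrt{v_\lambda}$. Divided by $Q_\lambda$, this tends to zero by \cref{eq:leverage-variance-compatibility}.

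\emph{Bias term.} From the proof of \Cref{prop:common-bias-transfer}, $\|e_\lambda\|\le C_EU_\lambda+c_\lambda$ with $c_\lambda=\beta_\lambda\|a_{\lambda,\alpha_\psi}(L)f^*\|$ and $\beta_\lambda\le Cr\|K_\lambda\|_{\mathfrak S_r}$. Hence the normalized bias error is at most
\[
  \frac{2R_\lambda\|e_\lambda\|+\|e_\lambda\|^2}{R_\lambda^2+V_\lambda}
  \lesssim \frac{U_\lambda}{R_\lambda}+\frac{U_\lambda^2}{R_\lambda^2}+\frac{c_\lambda}{\sqrt{V_\lambda}}+\frac{c_\lambda^2}{V_\lambda}.
\]
By \cref{eq:rhs-second-moment}, the $U_\lambda$ terms have expectation $O(\sqrt{\mathcal{L}_\lambda/n})\to0$. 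For $c_\lambda$ we need $\E[\mathbf 1_{\mathcal G_\lambda}\beta_\lambda^2]\lesssim\gamma_\lambda^2$, which \cref{eq:leverage-fixed-target-scale} then makes negligible. To get this, fix the deterministic exponent $r_\lambda=2\log(e+\sqrt{N_\lambda/\ell_\lambda})$ instead of optimizing over $r$ pathwise. With $\|K_\lambda\|_{\mathfrak S_r}\le\|K_\lambda\|^{1-2/r}\|K_\lambda\|_{\mathfrak S_2}^{2/r}$, Hölder's inequality with exponents $r/(r-2)$ and $r/2$ applied to the squares gives
\[
  \E\bigl[\mathbf 1_{\mathcal G}\|K_\lambda\|_{\mathfrak S_r}^2\bigr]\le(\E 4\delta_\lambda^2)^{1-2/r}(\E4\eta_\lambda^2)^{2/r}\lesssim v_\lambda\bigl(N_\lambda/\ell_\lambda\bigr)^{2/r}\lesssim v_\lambda,
\]
so $\E[\mathbf 1_{\mathcal G}\beta_\lambda^2]\lesssim r_\lambda^2v_\lambda=\gamma_\lambda^2$.

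\emph{Main obstacle.} The delicate point is the cross term $\delta_\lambda\widehat{\mathcal{N}}_{1,X}(\lambda)$, where $\delta_\lambda$ and the empirical leverages are dependent. The leave-one-out decoupling above is the approach I would try first, using the events $\mathcal{B}_{\lambda,-i}$ and the second-moment bound on the leave-one-out sum, which the Bernstein argument supplies just as it does for $\delta_\lambda$. Summing the expectations of the variance and bias contributions then gives \cref{eq:good-event-l1-transfer}.
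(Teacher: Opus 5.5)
Your proposal is correct and, apart from one step, coincides with the paper's proof. The variance envelope, the $O(\sqrt{\mathcal L_\lambda/n})$ control of the $U_\lambda$ terms via \cref{eq:rhs-second-moment}, and the bound $\E_X[\mathbf 1_{\mathcal G_\lambda}\beta_\lambda^2]\lesssim\gamma_\lambda^2$ are exactly what the paper does. For the last of these, both you and the paper fix the deterministic exponent $2\log(e+\sqrt{\mathcal N_1(\lambda)/\ell_\lambda})$ and apply H\"older to the interpolation inequality.

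The difference is the cross term $\delta_\lambda\widehat{\mathcal N}_{1,X}(\lambda)/\mathcal N_q(\lambda)$.

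\emph{The paper's route.} It uses Cauchy--Schwarz. Via \cref{eq:good-empirical-dimension-bound} and independence it bounds $\E_X[\mathbf 1_{\mathcal G_\lambda}\widehat{\mathcal N}_{1,X}(\lambda)^2]\lesssim\mathcal N_1(\lambda)^2+\mathcal L_\lambda\mathcal N_1(\lambda)/n$. It then has to prove separately, from $\mathcal N_q(\lambda)\le E^2\varrho_\lambda\mathcal N_1(\lambda)$ and variance compatibility, that $\mathcal L_\lambda/(n\mathcal N_1(\lambda))=o(1)$, so that this second moment is $O(\mathcal N_1(\lambda)^2)$.

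\emph{Your route.} Leave-one-out decoupling needs only first moments. Because $\|T_\lambda^{-1/2}k_{x_i}\|_{\mathcal H}^2$ is independent of $\delta_{\lambda,-i}$, you obtain the bound $\mathcal N_1(\lambda)(\E\delta_{\lambda,-i}+\mathcal L_\lambda/n)$ directly and bypass that auxiliary estimate. The cost is that you invoke the leave-one-out structure, which the paper reserves for the bad-event lemma.

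Three small remarks on your write-up.
\begin{enumerate}
\item $\E\delta_{\lambda,-i}\lesssim\sqrt{v_\lambda}$ needs neither a separate Bernstein bound nor the events $\mathcal B_{\lambda,-i}$. The same triangle inequality gives $\delta_{\lambda,-i}\le\delta_\lambda+\|B_i\|_{\mathfrak B}/n$, and \cref{eq:operator-second-moments} then applies.
\item You may use $\|B_i\|_{\mathfrak B}\le\mathcal L_\lambda$ rather than $\mathcal L_\lambda+1$. This holds because $B_i$ is a difference of positive operators with norms at most $\mathcal L_\lambda$ and $\varrho_\lambda\le\mathcal N_1(\lambda)\le\mathcal L_\lambda$, and it justifies dropping the ``$+1$''.
\item The abandoned fragment before ``More simply'' should be deleted.
\end{enumerate}
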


\begin{proof}
We first treat the variance.
On \(\mathcal{G}_\lambda\), the Hilbert--Schmidt bounds for the shrinkage difference give
\[
  d_{\lambda,X}
  \le C\eta_\lambda.
\]
Combining
\cref{eq:common-variance-transfer,eq:good-empirical-dimension-bound} and using
\(\delta_\lambda \le1/4\), we obtain
\begin{align}
  \label{eq:good-variance-envelope}
  \mathbf{1}_{\mathcal{G}_\lambda}
  \frac{
    \left|
      \mathsf{V}_X(\varphi_\lambda)
      -
      \frac{\sigma^2}{n}\mathcal{N}_q(\lambda)
    \right|
  }{
    \frac{\sigma^2}{n}\mathcal{N}_q(\lambda)
  }
  \le
  C\mathbf{1}_{\mathcal{G}_\lambda}
  \xk{
    \delta_\lambda
    +
    \frac{\eta_\lambda}{\sqrt{\mathcal{N}_q(\lambda)}}
    +
    \frac{\eta_\lambda^2}{\mathcal{N}_q(\lambda)}
    +
    \frac{\widehat{\mathcal N}_{1,X}(\lambda)}{\mathcal{N}_q(\lambda)}\delta_\lambda
  }.
\end{align}
The variance-compatibility condition implies
\[
  \frac{\mathcal{N}_1(\lambda)}{\mathcal{N}_q(\lambda)}\sqrt{v_\lambda}\to0,
  \qquad
  \frac{\mathcal{L}_\lambda \mathcal{N}_1(\lambda)}{n\mathcal{N}_q(\lambda)}
  =
  \xk{
    \frac{\mathcal{N}_1(\lambda)}{\mathcal{N}_q(\lambda)}\sqrt{v_\lambda}
  }
  \frac{\sqrt{v_\lambda}}{\ell_\lambda}
  \to0.
\]
Moreover, \cref{eq:good-empirical-dimension-bound}, independence, and
\(\norm{T_\lambda^{-1/2}k_X}_{\mathcal H}^2\le\mathcal L_\lambda\)
give
\[
  \E_X\zk{
    \mathbf{1}_{\mathcal G_\lambda}
    \widehat{\mathcal N}_{1,X}(\lambda)^2
  }
  \lesssim
  \mathcal{N}_1(\lambda)^2+\frac{\mathcal L_\lambda \mathcal{N}_1(\lambda)}{n}.
\]
The regularized inverse bound gives
\(\mathcal{N}_q(\lambda)\le E^2\varrho_\lambda \mathcal{N}_1(\lambda)\), where
\(\varrho_\lambda\le\min\{1,\mathcal{N}_1(\lambda)\}\).
Consequently, variance compatibility implies
\[
  \frac{\mathcal L_\lambda}{n\mathcal{N}_1(\lambda)}
  =\frac{v_\lambda}{\ell_\lambda \mathcal{N}_1(\lambda)}
  \le
  E^4
  \xk{\frac{\mathcal{N}_1(\lambda)}{\mathcal{N}_q(\lambda)}\sqrt{v_\lambda}}^2
  \frac{\varrho_\lambda^2}{\ell_\lambda \mathcal{N}_1(\lambda)}
  \le
  E^4
  \xk{\frac{\mathcal{N}_1(\lambda)}{\mathcal{N}_q(\lambda)}\sqrt{v_\lambda}}^2
  =o(1).
\]
Thus Cauchy--Schwarz gives
\[
  \E_X\zk{
    \mathbf{1}_{\mathcal G_\lambda}
    \delta_\lambda
    \frac{\widehat{\mathcal N}_{1,X}(\lambda)}{\mathcal{N}_q(\lambda)}
  }
  \lesssim
  \frac{\mathcal{N}_1(\lambda)}{\mathcal{N}_q(\lambda)}\sqrt{v_\lambda}
  \longrightarrow0.
\]
Thus \Cref{lem:risk-expectation-moment-tail} and Cauchy--Schwarz show that the
expectation of the right-hand side of
\cref{eq:good-variance-envelope} tends to zero.

For the bias, let \(\alpha=\alpha_\psi\).
Since \(\alpha/2\le\rho_\psi\), the qualification bound and
\cref{eq:source-bias-weight,eq:fixed-target-spectral-tail} give
\[
  \norm{a_{\lambda,\alpha}(L)f^*}_{L^2(\mu)}
  \le
  (A+1)\sqrt{\mathcal{S}_{f^*}(\lambda)}.
\]
On \(\mathcal{G}_\lambda\), the proof of
\Cref{prop:common-bias-transfer} applies with
\[
  u_\lambda=C\frac{U_\lambda}{R_\lambda},
  \qquad
  c_\lambda
  =
  C\beta_\lambda \sqrt{\mathcal{S}_{f^*}(\lambda)},
\]
where \(u_\lambda=0\) if \(R_\lambda=0\).
By \cref{eq:rhs-second-moment},
\begin{equation}
  \label{eq:good-u-second-moment}
  \E_X u_\lambda^2
  \le C\frac{\mathcal{L}_\lambda}{n}
  \longrightarrow0.
\end{equation}

Put
\[
  m_\lambda
  =
  2\log\xk{
    e+\sqrt{\frac{\mathcal{N}_1(\lambda)}{\ell_\lambda}}
  }>2.
\]
The Schatten interpolation inequality and
\Cref{lem:mixed-perturbation} give, on \(\mathcal{G}_\lambda\),
\[
  \beta_\lambda
  \le
  Cm_\lambda
  \delta_\lambda^{1-2/m_\lambda}
  \eta_\lambda^{2/m_\lambda}.
\]
Hölder's inequality and \cref{eq:operator-second-moments} therefore yield
\begin{align}
  \label{eq:lipschitz-beta-second-moment}
  \E_X \zk{
    \mathbf{1}_{\mathcal{G}_\lambda} \beta_\lambda^2
  }
  &\le
  Cm_\lambda^2
  \xk{\E_X \delta_\lambda^2 }^{1-2/m_\lambda}
  \xk{\E_X \eta_\lambda^2 }^{2/m_\lambda}
  \\
  &\le
  C\gamma_\lambda^2.
\end{align}
Indeed,
\((\mathcal{N}_1(\lambda)/\ell_\lambda)^{2/m_\lambda}\) is uniformly bounded.

The spectral balance condition for the fixed target gives
\begin{equation}
  \label{eq:expectation-fixed-target-balance}
  \frac{n\mathcal{S}_{f^*}(\lambda)}{\mathcal{N}_q(\lambda)}
  \E_X \zk{
    \mathbf{1}_{\mathcal{G}_\lambda} \beta_\lambda^2
  }
  \longrightarrow0.
\end{equation}
Apply \Cref{lem:scalar-quadratic-risk-transfer} as in the proof of
\Cref{prop:common-bias-transfer}.
Equations
\cref{eq:good-u-second-moment,eq:expectation-fixed-target-balance}, together with
Cauchy--Schwarz for the linear and cross terms, show that
\[
  \E_X \zk{
    \mathbf{1}_{\mathcal{G}_\lambda}
    \frac{
      \left|\mathsf{B}_X(q_\lambda)-R_\lambda^2 \right|
    }{D_\lambda}
  }
  \longrightarrow0.
\]
Adding the variance comparison proves \cref{eq:good-event-l1-transfer}.
\end{proof}

\begin{lemma}[Negligible risk contribution from bad designs]
\label{lem:risk-expectation-bad-event}
Suppose the hypotheses of \Cref{thm:risk-equivalence-expectation} hold.
Then
\begin{equation}
  \label{eq:fixed-bad-event-negligible}
  \E_X \zk{
    \mathbf{1}_{\mathcal{G}_\lambda^{\mathsf{c}}}
    \frac{
      \mathcal{E}_n^{\mathsf{np}}(q_\lambda;f^*\mid X)
    }{D_\lambda}
  }
  \longrightarrow0.
\end{equation}
\end{lemma}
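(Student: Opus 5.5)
The approach is to bound the conditional risk crudely but globally, by a deterministic quantity polynomial in \(1/\lambda\), and then pay for it with the exponentially small probability of \(\mathcal{G}_\lambda^{\mathsf{c}}\) from \Cref{lem:risk-expectation-moment-tail}. \Cref{ass:risk-equivalence-expectation} is exactly what makes this trade favourable. By \cref{eq:np-risk}, \(\mathcal{E}_n^{\mathsf{np}}(q_\lambda;f^*\mid X)=\mathsf{B}_X(q_\lambda)+\mathsf{V}_X(\varphi_\lambda)\), and I treat the two terms separately.

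\emph{Variance.} By \cref{eq:lipschitz-filter-basic-consequences}, \(0\le\varphi_\lambda(t)\le A/(t+\lambda)\) and \(0\le q_\lambda\le1\). Hence \(\varphi_\lambda(T_X)^2T_X=\varphi_\lambda(T_X)q_\lambda(T_X)\preceq (A/\lambda) I\). Therefore
\[
  \mathsf{V}_X(\varphi_\lambda)
  =\frac{\sigma^2}{n}\Tr\xk{T^{1/2}\varphi_\lambda(T_X)^2T_XT^{1/2}}
  \le\frac{\sigma^2 A\kappa^2}{n\lambda}
\]
almost surely. Multiplying by \(\mathbf 1_{\mathcal{G}_\lambda^{\mathsf{c}}}\) and using \cref{eq:bad-design-tail} gives
\(\E_X[\mathbf 1_{\mathcal{G}^{\mathsf{c}}_\lambda}\mathsf{V}_X]\le C\sigma^2(n\lambda)^{-1}e^{-cn/\mathcal{L}_\lambda}\).

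\emph{Bias.} I bound \(\mathsf{B}_X\le 2\norm{f^*}_{L^2}^2+2\norm{S\varphi_\lambda(T_X)S_X^*f_X^*}_{L^2}^2\). Next, \(\norm{\varphi_\lambda(T_X)S_X^*}^2=\norm{\varphi_\lambda(T_X)^2T_X}\le A/\lambda\) and \(\norm{S}\le\kappa\). Writing \(\norm{f_X^*}_n^2=n^{-1}\sum_i f^*(x_i)^2\), this gives
\[
  \mathsf{B}_X\le 2\norm{f^*}_{L^2}^2+\frac{2\kappa^2A}{\lambda}\norm{f_X^*}_n^2 .
\]
The delicate point is that \(f^*\) is only in \(L^2(\mu)\), not bounded, so \(\mathbf 1_{\mathcal{G}^{\mathsf{c}}_\lambda}\norm{f^*_X}_n^2\) cannot be handled by Hölder without losing the exponential tail. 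This is where I use the leave-one-out events from \cref{eq:leave-one-out-bad-design-tail}. Since \(\mathcal{G}_\lambda^{\mathsf{c}}\subseteq\mathcal{B}_{\lambda,-i}\) for every \(i\), we have \(\mathbf 1_{\mathcal{G}^{\mathsf{c}}_\lambda}f^*(x_i)^2\le \mathbf 1_{\mathcal{B}_{\lambda,-i}}f^*(x_i)^2\). The event \(\mathcal{B}_{\lambda,-i}\) depends only on \((x_j)_{j\ne i}\) and is therefore independent of \(x_i\). Hence
\[
  \E_X\zk{\mathbf 1_{\mathcal{G}^{\mathsf{c}}_\lambda}\norm{f_X^*}_n^2}
  \le\max_i\mathbb{P}(\mathcal{B}_{\lambda,-i})\norm{f^*}_{L^2}^2
  \le C\norm{f^*}_{L^2}^2e^{-cn/\mathcal{L}_\lambda}.
\]
Together with \cref{eq:bad-design-tail} for the first term, this yields \(\E_X[\mathbf 1_{\mathcal{G}^{\mathsf{c}}_\lambda}\mathsf{B}_X]\le C(1+\lambda^{-1})\norm{f^*}_{L^2}^2e^{-cn/\mathcal{L}_\lambda}\).

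\emph{Conclusion.} Since \(D_\lambda\ge\sigma^2\mathcal{N}_q(\lambda)/n\) and \(\lambda\le\kappa^2\) eventually, the left side of \cref{eq:fixed-bad-event-negligible} is at most
\[
  C\xk{1+\norm{f^*}^2_{L^2}/\sigma^2}\frac{n}{\lambda\mathcal{N}_q(\lambda)}\,e^{-cn/\mathcal{L}_\lambda}
  =C'\exp\xk{\log\frac{n}{\lambda\mathcal{N}_q(\lambda)}-c\frac{n}{\mathcal{L}_\lambda}}.
\]
\Cref{ass:risk-equivalence-expectation} says \(\log(e+n/(\lambda\mathcal{N}_q))=o(n/\mathcal{L}_\lambda)\), so the exponent tends to \(-\infty\) and the bound vanishes. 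For uniformity over a family, the constants depend only on \(A,\kappa\), the tail constants \(c,C\), and uniform bounds on \(\norm{f^*}_{L^2}\) and \(\sigma^{-2}\).

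The step I expect to be the main obstacle is the bias term for an unbounded \(f^*\). It is resolved only through the leave-one-out decoupling and the inclusion \(\mathcal{G}_\lambda^{\mathsf{c}}\subseteq\mathcal{B}_{\lambda,-i}\), which holds for large \(n\) because \(\mathcal{L}_\lambda/n\to0\).
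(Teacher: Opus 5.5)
Your argument is correct and follows essentially the paper's route: a deterministic envelope for the conditional risk that is polynomial in $H_\lambda=n/(\lambda\mathcal{N}_q(\lambda))$, leave-one-out decoupling through $\mathcal{B}_{\lambda,-i}$ to handle the unbounded point values, and the exponential tail \cref{eq:bad-design-tail} beaten via \cref{eq:leverage-expectation-scale}. Your direct bound $\norm{\varphi_\lambda(T_X)S_X^*}^2\le A/\lambda$ applied to $f_X^*$ is a leaner version of the paper's detour through $h_\lambda$ and the residual $r_\lambda$, and your trace bound on $\mathsf{V}_X$ is even sharper than the paper's.

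The only adjustment is in your uniformity remark. The paper does not assume a uniform bound on $\norm{f^*}_{L^2(\mu)}$. Instead it derives $\norm{f^*}_{L^2(\mu)}^2\le E^2(1+\kappa^2H_\lambda)b_\lambda$, where $b_\lambda=n\mathcal{S}_{f^*}(\lambda)\gamma_\lambda^2/\mathcal{N}_q(\lambda)=o(1)$, and your polynomial envelope absorbs this bound just as well.
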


\begin{proof}
We first derive a global envelope for a fixed target.
Use the population solution \(h_\lambda\) and residual \(r_\lambda\)
from the proof of \Cref{prop:common-bias-transfer}, with the measurable
representative \(r_\lambda(x)=f^*(x)-h_\lambda(x)\), and set
\[
  W_\lambda
  =
  \norm{
    \frac{1}{n}\sum_{i=1}^n
    T_\lambda^{-1/2} k_{x_i} r_\lambda(x_i)
  }_{\mathcal{H}}.
\]
The identity
\[
  \widetilde{g}_X
  =
  T_X h_\lambda
  +
  \frac{1}{n}\sum_{i=1}^n k_{x_i} r_\lambda(x_i)
\]
and \(q_\lambda(T_X)=T_X \varphi_\lambda(T_X)\) imply
\[
  \mathsf{B}_X(q_\lambda)
  \le
  C\xk{
    R_\lambda^2
    +
    \norm{h_\lambda}_{\mathcal{H}}^2
    +
    \lambda^{-2} W_\lambda^2
  }.
\]
Here we used \(0\le q_\lambda \le1\),
\(\norm{S}\le\kappa\), and
\(\norm{\varphi_\lambda(T_X)}\le E/\lambda\).
The regularized inverse bound gives
\begin{equation}
  \label{eq:global-population-solution-bound}
  \norm{h_\lambda}_{\mathcal{H}}^2
  \le
  \frac{E^2}{4\lambda}
  \norm{f^*}_{L^2(\mu)}^2.
\end{equation}
Indeed, the spectral theorem and
\(t\varphi_\lambda(t)^2\le E^2t/(t+\lambda)^2
  \le E^2/(4\lambda)\)
give the claim.

The conditional variance has the global bound
\begin{align}
  \label{eq:global-variance-bound}
  \mathsf{V}_X(\varphi_\lambda)
  &\le
  \frac{\sigma^2 \norm{T}}{n}
  \Tr\xk{\varphi_\lambda(T_X)^2T_X }
  \\
  &\le
  C\frac{\sigma^2}{\lambda},
\end{align}
because \(T_X\) has rank at most \(n\) and
\(t/(t+\lambda)^2\le1/(4\lambda)\).
Set
\[
  Y_{\lambda,i}
  =
  \norm{T_\lambda^{-1/2} k_{x_i}}_{\mathcal{H}}^2
  r_\lambda(x_i)^2.
\]
Convexity gives
\[
  W_\lambda^2
  \le
  \frac{1}{n}\sum_{i=1}^n Y_{\lambda,i}.
\]
Since \(Y_{\lambda,i}\) is independent of
\(\mathcal{B}_{\lambda,-i}\), exchangeability,
\cref{eq:leave-one-out-bad-design-tail}, and the leverage bound give
\begin{align}
  \label{eq:uncentered-rhs-bad-event}
  \E_X \zk{
    W_\lambda^2 \mathbf{1}_{\mathcal{G}_\lambda^{\mathsf{c}}}
  }
  &\le
  \frac{1}{n}\sum_{i=1}^n
  \E_X \zk{
    Y_{\lambda,i} \mathbf{1}_{\mathcal{B}_{\lambda,-i}}
  }
  \\
  &\le
  C\mathcal{L}_\lambda R_\lambda^2
  \exp\xk{-c\frac{n}{\mathcal{L}_\lambda}}.
\end{align}

Set
\[
  H_\lambda
  =
  \frac{n}{\lambda \mathcal{N}_q(\lambda)}.
\]
The regularized inverse bound implies
\[
  \mathcal{N}_q(\lambda)
  \le
  E^2 \mathcal{N}_1(\lambda),
  \qquad
  \lambda \mathcal{N}_q(\lambda)
  \le
  E^2 \Tr(T).
\]
Since \(\mathcal{L}_\lambda/n\to0\), for all sufficiently large \(n\),
\begin{equation}
  \label{eq:polynomial-envelope-scale}
  \lambda^{-1} \le H_\lambda,
  \qquad
  \mathcal{N}_1(\lambda) \le n\le CH_\lambda.
\end{equation}
Let \(P_\lambda=\mathbb{P}_X(\mathcal{G}_\lambda^{\mathsf{c}})\).
If \(R_\lambda>0\), \cref{eq:uncentered-rhs-bad-event} and
\(D_\lambda \ge R_\lambda^2\) give
\[
  \frac{\lambda^{-2}}{D_\lambda}
  \E_X \zk{
    W_\lambda^2 \mathbf{1}_{\mathcal{G}_\lambda^{\mathsf{c}}}
  }
  \le
  C\lambda^{-2}\mathcal{L}_\lambda
  \exp\xk{-c\frac{n}{\mathcal{L}_\lambda}}
  \le
  CH_\lambda^3
  \exp\xk{-c\frac{n}{\mathcal{L}_\lambda}}.
\]
If \(R_\lambda=0\), then \(W_\lambda=0\) almost surely.
The remaining terms in
\cref{eq:global-population-solution-bound,eq:global-variance-bound}, divided by
\(D_\lambda\), contribute at most
\(C(1+\norm{f^*}_{L^2(\mu)}^2)(1+H_\lambda)P_\lambda\).
To control the target norm uniformly, put
\[
  b_\lambda
  =\frac{n\mathcal S_{f^*}(\lambda)\gamma_\lambda^2}{\mathcal{N}_q(\lambda)}
  =o(1).
\]
Since \(\mathcal{N}_q(\lambda)\le E^2\mathcal{N}_1(\lambda)\le E^2\mathcal L_\lambda\)
and \(n\gamma_\lambda^2\ge\mathcal L_\lambda\),
\(\mathcal S_{f^*}(\lambda)\le E^2b_\lambda\).
The spectral theorem, \(0<\alpha_\psi\le1\), and
\cref{eq:polynomial-envelope-scale} give
\[
  \norm{f^*}_{L^2(\mu)}^2
  \le
  \xk{1+\frac{\kappa^2}{\lambda}}^{\alpha_\psi}
  \mathcal S_{f^*}(\lambda)
  \le
  E^2(1+\kappa^2H_\lambda)b_\lambda.
\]
Thus the remaining contribution is bounded by
\(C(1+H_\lambda)^2P_\lambda\) for all sufficiently large \(n\),
uniformly whenever the assumptions hold uniformly.
Put \(x_\lambda=n/\mathcal{L}_\lambda\).
The expectation condition
\cref{eq:leverage-expectation-scale} gives directly
\(\log(e+H_\lambda)=o(x_\lambda)\).
Hence \cref{eq:bad-design-tail} implies
\[
  H_\lambda^a P_\lambda^b \longrightarrow0
  \qquad
  \text{for every fixed }a<\infty\text{ and }b>0.
\]
This proves \cref{eq:fixed-bad-event-negligible}.
\end{proof}

\begin{proof}[Proof of \Cref{thm:risk-equivalence-expectation}]
On \(\mathcal{G}_\lambda\), use
\Cref{lem:risk-expectation-good-event}; on its complement, use
\Cref{lem:risk-expectation-bad-event} and
\[
  \frac{
    \left|
      \mathcal{E}_n^{\mathsf{np}}(q_\lambda;f^*\mid X)-D_\lambda
    \right|
  }{D_\lambda}
  \le
  1+
  \frac{\mathcal{E}_n^{\mathsf{np}}(q_\lambda;f^*\mid X)}{D_\lambda}.
\]
Together with the tower identity \cref{eq:risk-in-expectation}, this gives
\[
  \frac{
    \left|
      \mathcal{E}_n^{\mathsf{np}}(q_\lambda;f^*)-D_\lambda
    \right|
  }{D_\lambda}
  \le
  \E_X \zk{
    \frac{
      \left|
        \mathcal{E}_n^{\mathsf{np}}(q_\lambda;f^*\mid X)-D_\lambda
      \right|
    }{D_\lambda}
  }
  \longrightarrow0,
\]
which proves \cref{eq:risk-equivalence-expectation}.
All bounds on the good design event and its complement have constants depending
only on the constants in the assumptions, and every vanishing deterministic
remainder above is controlled by the scale conditions.
The same argument therefore gives the asserted uniform \(o(1)\) remainder.
\end{proof}
\clearpage{}
\clearpage{}\section{Scale Verification and Proofs for Applications}
\label{sec:pinsker-minimax}

This appendix first verifies the scale conditions under polynomial and exponential eigenvalue decay and for high-dimensional spherical kernels.
It then derives the Pinsker constant under exact polynomial counting and proves the matching minimax lower bound.
Finally, it establishes sharp constants and optimal rates in the high-dimensional spherical model.
The application proofs use the scale verifications below and the expectation risk equivalence proved in \Cref{sec:risk-equivalence-expectation}.

\subsection{Verification of the scale conditions}

\label{subsec:scale-verification-examples}

\subsubsection{Polynomial eigenvalue decay}

\begin{proposition}[Scale conditions under polynomial eigenvalue decay]
  \label{prop:polynomial-scale-verification}
  Suppose \Cref{ass:lipschitz-filter} holds and, for some \(\beta>1\),
  \[
    \lambda_j\asymp j^{-\beta}.
  \]
  Let \(f^*\in\mathcal{F}_s(R)\) for fixed \(s,R>0\), and let
  \(\lambda\asymp n^{-\theta}\) for some \(0<\theta<\beta\).
  Suppose also that
  \(\mathcal{L}_\lambda\lesssim\mathcal{N}_1(\lambda)\).
  Then the two conditions in \Cref{ass:conditional-equivalence-scale} and
  the additional expectation condition
  \cref{eq:leverage-expectation-scale} hold.
  The conclusion is uniform over families with fixed \(\beta,s,\theta,\rho_\psi\)
  and uniform eigenvalue, source-radius, filter, leverage-comparison, and
  regularization comparison constants in \(\lambda\asymp n^{-\theta}\).
\end{proposition}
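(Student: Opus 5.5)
The plan is to compute every scale quantity explicitly under $\lambda_j\asymp j^{-\beta}$ and $\lambda\asymp n^{-\theta}$, and then check the three displayed conditions one at a time. Standard comparisons of the eigenvalue sums with integrals give
\[
  \mathcal{N}_1(\lambda)\asymp\lambda^{-1/\beta}\asymp n^{\theta/\beta},
\]
and therefore $\ell_\lambda\asymp\log n$. The leverage hypothesis then gives $\mathcal{L}_\lambda\lesssim n^{\theta/\beta}$, so that
\[
  \frac{\mathcal{L}_\lambda\ell_\lambda}{n}\lesssim n^{\theta/\beta-1}\log n .
\]
This is polynomially small because $\theta<\beta$. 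Consequently $\gamma_\lambda^2\lesssim n^{\theta/\beta-1}(\log n)^3$, since the extra factor $\log(e+\sqrt{\mathcal{N}_1/\ell_\lambda})$ is itself $O(\log n)$.

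For the variance condition I need a lower bound $\mathcal{N}_q(\lambda)\gtrsim\mathcal{N}_1(\lambda)$. I would take this from the qualification bound \cref{eq:lipschitz-filter-basic-consequences}. It gives $\psi_\lambda(t)\le A(\lambda/(t+\lambda))^{\rho_\psi}\le 1/2$ once $t\ge c\lambda$ for a constant $c$ depending only on $A$ and $\rho_\psi$. Hence $q_\lambda(\lambda_j)\ge1/2$ on these indices, and
\[
  \mathcal{N}_q(\lambda)\ge\tfrac14\#\{j:\lambda_j\ge c\lambda\}\asymp\lambda^{-1/\beta}.
\]
In particular $\mathcal{N}_q(\lambda)>0$ for large $n$, and \cref{eq:leverage-variance-compatibility} reduces to $O\bigl(\sqrt{n^{\theta/\beta-1}\log n}\bigr)=o(1)$.

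For the spectral balance condition, write $f^*=L^{s/2}h$ with $\|h\|\le R$. Then
\[
  \mathcal{S}_{f^*}(\lambda)\le R^2\sup_{t}\Bigl(\frac{\lambda}{t+\lambda}\Bigr)^{\alpha_\psi}t^{s}\lesssim R^2\lambda^{\min\{s,\alpha_\psi\}}.
\]
Since $\alpha_\psi\le1$ and $s>0$, this is at most $R^2\lambda^{\sigma}$ with $\sigma=\min\{s,\alpha_\psi\}>0$; I would also simply use $\mathcal{S}_{f^*}\le R^2\kappa^{2s}$ whenever that is more convenient. The condition requires $\mathcal{S}_{f^*}(\lambda)\gamma_\lambda^2=o(\mathcal{N}_q/n)$. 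Because $\mathcal{N}_q/n\asymp n^{\theta/\beta-1}$, dividing by it leaves the ratio
\[
  \lambda^{\sigma}(\log n)^3\asymp n^{-\theta\sigma}(\log n)^3\to0 .
\]
The point I expect to need the most care is that the polynomial factors $n^{\theta/\beta-1}$ cancel exactly between $\gamma_\lambda^2$ and $\mathcal{N}_q/n$. That cancellation uses the leverage comparison $\mathcal{L}_\lambda\lesssim\mathcal{N}_1$ together with the lower bound $\mathcal{N}_q\gtrsim\mathcal{N}_1$, and it is why a strictly positive power of $\lambda$ must be extracted from $\mathcal{S}_{f^*}$ to beat the $\log^3$ loss. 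Here the positivity of both $s$ and $\rho_\psi$ is essential.

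Finally, for \cref{eq:leverage-expectation-scale} I use $\lambda\mathcal{N}_q(\lambda)\asymp\lambda^{1-1/\beta}$. Then $n/(\lambda\mathcal{N}_q)$ is polynomial in $n$, so its logarithm is $O(\log n)$, and the condition becomes $n^{\theta/\beta-1}\log n\to0$, which holds. Every constant above depends only on the eigenvalue, source, filter, leverage and regularization comparison constants, which gives the asserted uniformity.
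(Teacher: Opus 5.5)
Your proposal is correct and follows essentially the same route as the paper. Both arguments use $\mathcal{N}_1(\lambda)\asymp\lambda^{-1/\beta}$, the qualification lower bound $\mathcal{N}_q(\lambda)\ge\tfrac14\#\{j:\lambda_j\ge c\lambda\}\asymp\lambda^{-1/\beta}$, and $\mathcal{S}_{f^*}(\lambda)\lesssim\lambda^{\min\{s,\alpha_\psi\}}$, then substitute directly into the three conditions. The only harmless imprecision is that you state $\mathcal{N}_q(\lambda)/n\asymp n^{\theta/\beta-1}$ and $\lambda\mathcal{N}_q(\lambda)\asymp\lambda^{1-1/\beta}$ after proving only the lower bounds. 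Only those lower bounds are used, and the matching upper bound follows from $q_\lambda(t)\le A\,t/(t+\lambda)$, as the paper records.
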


\begin{proof}
Splitting the defining sum at an index of order
\(\lambda^{-1/\beta}\) gives
\[
  \mathcal{N}_1(\lambda)
  \asymp
  \lambda^{-1/\beta},
  \qquad
  \ell_\lambda
  \asymp
  \log(1/\lambda).
\]
The regularized inverse bound implied by \Cref{ass:lipschitz-filter} yields
\[
  q_\lambda(t)
  \le
  A\frac{t}{t+\lambda},
\]
and hence
\[
  \mathcal{N}_q(\lambda)
  \le
  A^2\sum_{j\ge1}
  \xk{\frac{\lambda_j}{\lambda_j+\lambda}}^2
  \lesssim
  \mathcal{N}_1(\lambda).
\]
The qualification bound gives
\(
  \psi_\lambda(t)\le A(\lambda/t)^{\rho_\psi}
\)
for \(t>0\).
Thus, for a fixed sufficiently large \(c>0\),
\(q_\lambda(t)\ge1/2\) whenever \(t\ge c\lambda\), and consequently
\[
  \mathcal{N}_q(\lambda)
  \ge
  \frac14\#\{j:\lambda_j\ge c\lambda\}
  \gtrsim
  \lambda^{-1/\beta}.
\]
Therefore
\begin{equation}
  \label{eq:polynomial-effective-dimensions}
  \mathcal{N}_q(\lambda)
  \asymp
  \mathcal{N}_1(\lambda)
  \asymp
  \lambda^{-1/\beta}.
\end{equation}

Since \(\lambda\asymp n^{-\theta}\),
\[
  \frac{\mathcal{N}_1(\lambda)}{\mathcal{N}_q(\lambda)}
  \sqrt{\frac{\mathcal{L}_\lambda\ell_\lambda}{n}}
  \lesssim
  n^{(\theta/\beta-1)/2}\sqrt{\log n}
  =o(1).
\]

Let \(\alpha_s=\min\{s,\alpha_\psi\}>0\).
The source condition gives
\(
  \mathcal{S}_{f^*}(\lambda)\lesssim\lambda^{\alpha_s}
\), while
\[
  \gamma_\lambda^2
  \asymp
  \frac{\lambda^{-1/\beta}}{n}
  \xk{\log(1/\lambda)}^3.
\]
Using \cref{eq:polynomial-effective-dimensions}, we obtain
\[
  \frac{
    n\mathcal{S}_{f^*}(\lambda)\gamma_\lambda^2
  }{
    \mathcal{N}_q(\lambda)
  }
  \lesssim
  \lambda^{\alpha_s}
  \xk{\log(1/\lambda)}^3
  =
  o(1).
\]
This proves the two conditional scale conditions.
Finally,
\[
  \frac{\mathcal{L}_\lambda}{n}
  \log\xk{e+\frac{n}{\lambda\mathcal{N}_q(\lambda)}}
  \lesssim
  n^{\theta/\beta-1}\log n
  =
  o(1),
\]
which proves \cref{eq:leverage-expectation-scale}.
The uniform statement follows because every comparison above uses only the
corresponding uniform constants.
\end{proof}

\begin{proposition}[Polynomial eigenvalue decay under the general leverage bound]
  \label{prop:polynomial-universal-leverage-verification}
  Suppose \Cref{ass:basic-model,ass:lipschitz-filter} holds and, for some
  \(\beta>1\),
  \[
    \lambda_j\asymp j^{-\beta}.
  \]
  Let \(f^*\in\mathcal{F}_s(R)\) for fixed \(s,R>0\), and let
  \(\lambda\asymp n^{-\theta}\) for some \(0<\theta<1\).
  If
  \[
    \min\{s,\alpha_\psi\}>1-1/\beta,
  \]
  then the two conditions in \Cref{ass:conditional-equivalence-scale} and
  the additional expectation condition
  \cref{eq:leverage-expectation-scale} hold.
  The conclusion is uniform over families with fixed \(\beta,s,\theta,\rho_\psi\),
  a uniform kernel bound, and uniform eigenvalue, source-radius, filter, and
  regularization comparison constants in \(\lambda\asymp n^{-\theta}\).
\end{proposition}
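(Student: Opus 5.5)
The plan follows the proof of \Cref{prop:polynomial-scale-verification}, with \(\mathcal{L}_\lambda\lesssim\mathcal{N}_1(\lambda)\) replaced by the universal bound \(\mathcal{L}_\lambda\le\kappa^2/\lambda\) from \cref{eq:leverage-scale-bounds}. The spectral estimates that do not involve the leverage carry over unchanged. The split of the sums at an index of order \(\lambda^{-1/\beta}\), together with the regularized inverse and qualification bounds implied by \Cref{ass:lipschitz-filter}, gives
\[
\mathcal{N}_q(\lambda)\asymp\mathcal{N}_1(\lambda)\asymp\lambda^{-1/\beta},
\qquad
\ell_\lambda\asymp\log(1/\lambda),
\]
exactly as in \cref{eq:polynomial-effective-dimensions}. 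The source condition again gives \(\mathcal{S}_{f^*}(\lambda)\lesssim\lambda^{\alpha_s}\) with \(\alpha_s=\min\{s,\alpha_\psi\}\).

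For the variance compatibility \cref{eq:leverage-variance-compatibility}, the ratio \(\mathcal{N}_1/\mathcal{N}_q\) is bounded. It remains to show \(\mathcal{L}_\lambda\ell_\lambda/n\lesssim n^{\theta-1}\log n\to0\), and this holds precisely because \(\theta<1\). The expectation condition \cref{eq:leverage-expectation-scale} is handled the same way. We have \(n/(\lambda\mathcal{N}_q(\lambda))\asymp n\lambda^{1/\beta-1}\), which is polynomial in \(n\), so its logarithm is \(O(\log n)\). The left side of \cref{eq:leverage-expectation-scale} is then \(O(n^{\theta-1}\log n)=o(1)\).

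The main step is the spectral balance condition \cref{eq:leverage-fixed-target-scale}. With the universal leverage bound,
\[
\gamma_\lambda^2\lesssim \frac{\log(1/\lambda)^3}{n\lambda}.
\]
Multiplying by \(n/\mathcal{N}_q(\lambda)\asymp n\lambda^{1/\beta}\) and by \(\mathcal{S}_{f^*}(\lambda)\lesssim\lambda^{\alpha_s}\) gives
\[
\frac{n\,\mathcal{S}_{f^*}(\lambda)\gamma_\lambda^2}{\mathcal{N}_q(\lambda)}
\lesssim \lambda^{\alpha_s-1+1/\beta}\,\log(1/\lambda)^3 .
\]
This tends to zero exactly when \(\alpha_s>1-1/\beta\), which is the hypothesis \(\min\{s,\alpha_\psi\}>1-1/\beta\). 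The logarithmic factor is absorbed by the strict polynomial gap.

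Two checks remain. The first is the logarithm inside \(\gamma_\lambda\): it is \(\log(e+\sqrt{\mathcal{N}_1/\ell_\lambda})\asymp\log(1/\lambda)\), so the power-three logarithm is correct. The second is that \(\mathcal{N}_q(\lambda)>0\) eventually, which follows from the lower bound on \(\mathcal{N}_q\). Uniformity over families follows because every comparison uses only the kernel bound \(\kappa\), the eigenvalue constants, the source radius, the filter constant \(A\), and the constants in \(\lambda\asymp n^{-\theta}\).
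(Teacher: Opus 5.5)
Your proposal is correct and follows the paper's proof essentially step for step. You reuse the effective-dimension comparison \(\mathcal{N}_q(\lambda)\asymp\mathcal{N}_1(\lambda)\asymp\lambda^{-1/\beta}\), substitute the universal bound \(\mathcal{L}_\lambda\lesssim\lambda^{-1}\), and obtain the same three estimates \(n^{(\theta-1)/2}\sqrt{\log n}\), \(\lambda^{\alpha_s-1+1/\beta}(\log(1/\lambda))^3\), and \(n^{\theta-1}\log n\) for the three scale conditions.
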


\begin{proof}
The comparison \cref{eq:polynomial-effective-dimensions} still holds.  Write
\(\alpha_s=\min\{s,\alpha_\psi\}\).  The universal envelope in
\cref{eq:leverage-scale-bounds} gives
\[
  \mathcal{L}_\lambda\lesssim\lambda^{-1}.
\]
Since \(\ell_\lambda\asymp\log(1/\lambda)\),
\[
  \frac{\mathcal{N}_1(\lambda)}{\mathcal{N}_q(\lambda)}
  \sqrt{\frac{\mathcal{L}_\lambda\ell_\lambda}{n}}
  \lesssim
  n^{(\theta-1)/2}\sqrt{\log n}
  =o(1).
\]
Moreover, the source condition and the definition of \(\gamma_\lambda\)
give
\[
  \mathcal{S}_{f^*}(\lambda)\lesssim\lambda^{\alpha_s},
  \qquad
  \gamma_\lambda^2
  \lesssim
  \frac{\lambda^{-1}}{n}
  \xk{\log(1/\lambda)}^3.
\]
Thus,
\[
  \frac{
    n\mathcal{S}_{f^*}(\lambda)\gamma_\lambda^2
  }{
    \mathcal{N}_q(\lambda)
  }
  \lesssim
  \lambda^{\alpha_s-1+1/\beta}
  \xk{\log(1/\lambda)}^3
  =o(1).
\]
Finally,
\[
  \frac{\mathcal{L}_\lambda}{n}
  \log\xk{e+\frac{n}{\lambda\mathcal{N}_q(\lambda)}}
  \lesssim
  n^{\theta-1}\log n
  =o(1).
\]
The asserted uniformity follows from the corresponding uniform bounds.
\end{proof}

\subsubsection{Exponential eigenvalue decay}

\begin{proposition}[Scale conditions under exponential eigenvalue decay]
  \label{prop:exponential-scale-verification}
  Suppose \Cref{ass:lipschitz-filter} holds and, for some \(c_0,\nu>0\),
  \[
    \lambda_j\asymp \exp(-c_0j^\nu).
  \]
  Let \(f^*\in\mathcal{F}_s(R)\) for fixed \(s,R>0\), and let
  \(\lambda\asymp n^{-\theta}\) for any \(\theta>0\).
  Suppose also that
  \(\mathcal{L}_\lambda\lesssim\mathcal{N}_1(\lambda)\).
  Then the two conditions in \Cref{ass:conditional-equivalence-scale} and
  the additional expectation condition
  \cref{eq:leverage-expectation-scale} hold.
  The conclusion is uniform over families with fixed \(c_0,\nu,s,\theta,\rho_\psi\)
  and uniform eigenvalue, source-radius, filter, leverage-comparison, and
  regularization comparison constants in \(\lambda\asymp n^{-\theta}\).
\end{proposition}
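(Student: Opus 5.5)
I would follow the proof of \Cref{prop:polynomial-scale-verification}, replacing the power-law spectral sums by their exponential analogues. The first step is to compute the effective dimensions. For \(\lambda_j\asymp\exp(-c_0j^\nu)\), the index at which \(\lambda_j\) crosses \(\lambda\) is
\[
  J_\lambda\asymp(\log(1/\lambda)/c_0)^{1/\nu}.
\]
I would split \(\mathcal{N}_1(\lambda)=\sum_j\lambda_j/(\lambda_j+\lambda)\) at a shifted index \(J_\lambda\pm K\), where the shift \(K\) absorbs the comparison constants. Below the split each term is at most one. Above it, the terms are at most \(\lambda_j/\lambda\), and their sum is dominated by a constant multiple of its first term, because the ratios \(\lambda_{j+1}/\lambda_j\) are uniformly small for large \(j\) (when \(\nu<1\), one sums the tail \(\sum_{j>J}e^{-c_0j^\nu}\lesssim J^{1-\nu}e^{-c_0J^\nu}\) instead). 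This gives
\[
  \mathcal{N}_1(\lambda)\asymp(\log(1/\lambda))^{1/\nu}\asymp(\log n)^{1/\nu},
  \qquad
  \ell_\lambda\asymp\log\log n.
\]
I expect this tail estimate, made uniform in the constants of \(\lambda_j\asymp\exp(-c_0j^\nu)\), to be the only delicate step. I would handle it by comparing the sum with \(\int_J^\infty e^{-c_0x^\nu}\dd x\) after rescaling \(\lambda\) by the eigenvalue comparison constants.

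Next, exactly as in \Cref{prop:polynomial-scale-verification}, the regularized inverse bound gives \(\mathcal{N}_q(\lambda)\lesssim\mathcal{N}_1(\lambda)\). The qualification bound gives \(q_\lambda\ge1/2\) on \(\{t\ge c\lambda\}\), so \(\mathcal{N}_q(\lambda)\gtrsim\#\{j:\lambda_j\ge c\lambda\}\asymp(\log(1/\lambda))^{1/\nu}\). Hence
\[
  \mathcal{N}_q(\lambda)\asymp\mathcal{N}_1(\lambda)\asymp(\log n)^{1/\nu}.
\]
In particular \(\mathcal{N}_q(\lambda)>0\) for large \(n\). Together with \(\mathcal{L}_\lambda\lesssim\mathcal{N}_1(\lambda)\), condition \cref{eq:leverage-variance-compatibility} reduces to
\[
  \sqrt{(\log n)^{1/\nu}\log\log n/n}=o(1).
\]

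For the balance condition, let \(\alpha_s=\min\{s,\alpha_\psi\}\). The source condition gives \(\mathcal{S}_{f^*}(\lambda)\le R^2\sup_t t^s(\lambda/(t+\lambda))^{\alpha_\psi}\lesssim\lambda^{\alpha_s}\). From the definition of \(\gamma_\lambda\),
\[
  \gamma_\lambda^2\lesssim\frac{(\log n)^{1/\nu}(\log\log n)^3}{n}.
\]
Then
\[
  \frac{n\,\mathcal{S}_{f^*}(\lambda)\gamma_\lambda^2}{\mathcal{N}_q(\lambda)}
  \lesssim n^{-\theta\alpha_s}(\log\log n)^3=o(1).
\]
Finally, \(n/(\lambda\mathcal{N}_q(\lambda))\lesssim n^{1+\theta}\), so
\[
  \frac{\mathcal{L}_\lambda}{n}\log\Bigl(e+\frac{n}{\lambda\mathcal{N}_q(\lambda)}\Bigr)
  \lesssim\frac{(\log n)^{1/\nu+1}}{n}=o(1),
\]
which is \cref{eq:leverage-expectation-scale}.

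Every bound above uses only the eigenvalue, source-radius, filter, leverage-comparison and regularization comparison constants. The claimed uniformity over families therefore follows as in the polynomial case.
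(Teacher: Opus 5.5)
Your proposal is correct and follows essentially the same route as the paper: split \(\mathcal{N}_1(\lambda)\) at an index of order \((\log(1/\lambda))^{1/\nu}\), reuse the \(\mathcal{N}_q\asymp\mathcal{N}_1\) comparison from the polynomial case, and substitute into the three scale expressions. This gives exactly the paper's bounds \((\log n)^{1/(2\nu)}\sqrt{\log\log n}/\sqrt n\), \(n^{-\theta\alpha_s}(\log\log n)^3\), and \((\log n)^{1+1/\nu}/n\). One small correction: under a two-sided comparison the ratios \(\lambda_{j+1}/\lambda_j\) need not be small. The tail should therefore be bounded through the envelope \(e^{-c_0j^\nu}\), as in your fallback integral comparison, which works for every \(\nu>0\).
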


\begin{proof}
Set \(m_\lambda=\xk{\log(1/\lambda)}^{1/\nu}\).
Splitting the defining sum at an index of order \(m_\lambda\) gives
\[
  \mathcal{N}_1(\lambda)
  \asymp
  m_\lambda,
  \qquad
  \ell_\lambda
  \asymp
  \log\log(1/\lambda).
\]
The upper and lower comparisons with \(\mathcal{N}_q(\lambda)\) used in the
preceding proof apply verbatim, while
\(
  \#\{j:\lambda_j\ge c\lambda\}\asymp m_\lambda
\)
for every fixed \(c>0\).
Hence
\[
  \mathcal{N}_q(\lambda)
  \asymp
  \mathcal{N}_1(\lambda)
  \asymp
  \xk{\log(1/\lambda)}^{1/\nu}.
\]
Since \(\lambda\asymp n^{-\theta}\),
\[
  \frac{\mathcal{N}_1(\lambda)}{\mathcal{N}_q(\lambda)}
  \sqrt{\frac{\mathcal{L}_\lambda\ell_\lambda}{n}}
  \lesssim
  \frac{(\log n)^{1/(2\nu)}\sqrt{\log\log n}}{\sqrt{n}}
  =o(1),
\]

Let \(\alpha_s=\min\{s,\alpha_\psi\}>0\).
The source condition and the preceding estimates of the effective dimensions give
\[
  \frac{
    n\mathcal{S}_{f^*}(\lambda)\gamma_\lambda^2
  }{
    \mathcal{N}_q(\lambda)
  }
  \lesssim
  n^{-\theta\alpha_s}(\log\log n)^3
  =o(1).
\]
Finally,
\[
  \frac{\mathcal{L}_\lambda}{n}
  \log\xk{e+\frac{n}{\lambda\mathcal{N}_q(\lambda)}}
  \lesssim
  \frac{(\log n)^{1+1/\nu}}{n}
  =o(1).
\]
This proves the three scale conditions, including their uniform version under
the stated uniform bounds.
\end{proof}

\subsubsection{High-dimensional spherical kernels}

In the spherical setting of \Cref{sec:high-dimensional-scale-verification}, let \(\mu_{d,k}\) and \(D_{d,k}\) denote the degree-\(k\) eigenvalue and multiplicity.
For each fixed degree \(k\), the spherical spectral asymptotics give
\begin{equation}
  \label{eq:high-dimensional-spectral-asymptotics}
  \mu_{d,k}\sim a_kk!\,d^{-k},
  \qquad
  D_{d,k}\sim\frac{d^k}{k!},
  \qquad
  \max_{j\ge k+1}\mu_{d,j}=O(d^{-(k+1)}).
\end{equation}
These estimates follow from \citet[Lemma 4.4]{lu2024_PinskerBound} and the corresponding spherical spectral bounds in \citet[Appendix B.1]{zhang2025_OptimalRates}.
Their convention uses \(\mathbb S^d\) rather than \(\mathbb S^{d-1}\); replacing their dimension by \(d-1\) leaves the asymptotic equivalents unchanged.
In particular, for all sufficiently large \(d\), degree \(k\) has the largest eigenvalue among degrees \(j\ge k\), for every fixed \(k\) needed below.

\begin{proposition}
  \label{prop:high-dimensional-scale-verification}
  In the spherical setting of \Cref{sec:high-dimensional-scale-verification}, fix an integer \(1\le m<\gamma\).
  There exists \(c_*>0\), depending only on \(m\), \(\Phi\), and the filter constants, such that for every fixed \(c\in(0,c_*]\), the sequence \(\lambda=c\,d^{-m}\) satisfies
  \Cref{ass:conditional-equivalence-scale,ass:risk-equivalence-expectation} uniformly over the source ball with fixed \(s,R>0\).
  More generally, the same conclusion holds for any deterministic sequence \(\lambda\asymp d^{-m}\) such that \(\liminf_{d\to\infty}q_\lambda(\mu_{d,m})>0\), where \(\mu_{d,m}\) is the eigenvalue on the spherical harmonics of degree \(m\).
\end{proposition}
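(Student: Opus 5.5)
The plan is to compute the four scale quantities \(\mathcal{L}_\lambda\), \(\mathcal{N}_1(\lambda)\), \(\mathcal{N}_q(\lambda)\), \(\mathcal{S}_{f^*}(\lambda)\) explicitly up to constants in \(d\). Once they are known, the three conditions \cref{eq:leverage-variance-compatibility}, \cref{eq:leverage-fixed-target-scale} and \cref{eq:leverage-expectation-scale} reduce to polynomial-in-\(d\) comparisons, which hold because \(m<\gamma\).

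\emph{Step 1: leverage equals effective dimension.} By rotation invariance of \(\mu_d\) and the addition theorem for spherical harmonics, \(\sum_{l=1}^{D_{d,k}}e_{k,l}(x)^2=D_{d,k}\) for every \(x\in\mathbb S^{d-1}\). The feature expansion of \(k_x\) then gives
\[
  \norm{T_\lambda^{-1/2}k_x}_{\mathcal H}^2=\sum_k\frac{\mu_{d,k}}{\mu_{d,k}+\lambda}D_{d,k}=\mathcal{N}_1(\lambda)
\]
for every \(x\). Hence \(\mathcal{L}_\lambda=\mathcal{N}_1(\lambda)\) exactly, with no constant.

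\emph{Step 2: \(\mathcal{N}_q(\lambda)\asymp\mathcal{N}_1(\lambda)\asymp d^m\) and \(\ell_\lambda\asymp\log d\).} Using \cref{eq:high-dimensional-spectral-asymptotics}, split \(\mathcal{N}_1(\lambda)\) by degree.
\begin{itemize}
\item Degrees \(k<m\) contribute at most \(\sum_{k<m}D_{d,k}=O(d^{m-1})\).
\item Degree \(m\) contributes \(D_{d,m}\mu_{d,m}/(\mu_{d,m}+\lambda)\asymp d^m\).
\item Degrees \(k>m\) contribute at most \(\lambda^{-1}\sum_{k>m}\mu_{d,k}D_{d,k}\le\lambda^{-1}\Tr(T)\le\Phi(1)\lambda^{-1}\asymp d^m\).
\end{itemize}
For the upper bound on \(\mathcal{N}_q\), the regularized inverse bound \cref{eq:lipschitz-filter-basic-consequences} gives \(\mathcal{N}_q(\lambda)\le A^2\mathcal{N}_1(\lambda)\). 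For the lower bound, \(\mathcal{N}_q(\lambda)\ge q_\lambda(\mu_{d,m})^2D_{d,m}\). When \(\lambda=c\,d^{-m}\), the ratio \(\mu_{d,m}/\lambda\to a_mm!/c\). The qualification bound \(\psi_\lambda(t)\le A(\lambda/t)^{\rho_\psi}\) therefore forces \(q_\lambda(\mu_{d,m})\ge1/2\) once \(c\le c_*:=a_mm!\,(2A)^{-1/\rho_\psi}/2\). In the general case, the hypothesis \(\liminf q_\lambda(\mu_{d,m})>0\) is used directly. Either way \(\mathcal{N}_q(\lambda)\gtrsim d^m\).

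\emph{Step 3: checking the conditions.} With \(n\asymp d^\gamma\), each condition becomes a power of \(d\) times logarithms.
\begin{itemize}
\item Compatibility \cref{eq:leverage-variance-compatibility}: the left side is \(\lesssim\sqrt{d^{m-\gamma}\log d}\to0\).
\item Expectation condition \cref{eq:leverage-expectation-scale}: since \(n/(\lambda\mathcal{N}_q(\lambda))\asymp d^\gamma\), it becomes \(d^{m-\gamma}\log d\to0\).
\item Spectral balance \cref{eq:leverage-fixed-target-scale}: here \(\gamma_\lambda^2\lesssim d^m(\log d)^3/n\), so it suffices that \(\mathcal{S}_{f^*}(\lambda)(\log d)^3\to0\) uniformly over the source ball.
\end{itemize}
For the last point, write \(f^*=L^{s/2}h\) and bound
\[
  \mathcal{S}_{f^*}(\lambda)\le R^2\sup_{t\in[0,\kappa^2]}t^s\Bigl(\frac{\lambda}{t+\lambda}\Bigr)^{\alpha_\psi}\lesssim R^2\lambda^{\min\{s,\alpha_\psi\}}\asymp d^{-m\min\{s,\alpha_\psi\}}.
\]
This decays polynomially and so absorbs the \((\log d)^3\) factor. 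Uniformity holds because every constant used depends only on \(m\), \(\Phi\), \(s\), \(R\), \(\gamma\), the uniform noise variance, and the filter constants.

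\emph{Main obstacle.} I expect the delicate part to be Step 2: making the degree-by-degree bounds uniform in \(d\). This requires that the lower degrees genuinely contribute only \(O(d^{m-1})\), that degree \(m\) sits at the scale of \(\lambda\) for all large \(d\), and that the tail is controlled by the trace bound rather than by the asymptotic equivalents, which hold only for fixed degree. I would lean on \cref{eq:high-dimensional-spectral-asymptotics} only for the finitely many degrees \(k\le m\) and use \(\Tr(T)=\Phi(1)\) for everything above.
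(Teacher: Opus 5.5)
Your proposal is correct and follows essentially the same route as the paper. The addition formula gives \(\mathcal L_\lambda=\mathcal N_1(\lambda)\) exactly, the trace bound together with the degree-\(m\) block (where \(q_\lambda(\mu_{d,m})\) is bounded below either by the qualification bound for small \(c\) or by hypothesis) gives \(\mathcal L_\lambda=\mathcal N_1(\lambda)\asymp\mathcal N_q(\lambda)\asymp d^m\), and the source bound \(\mathcal S_{f^*}(\lambda)\lesssim\lambda^{\min\{s,\alpha_\psi\}}\) then reduces all three conditions to \(m<\gamma\). Your degree-by-degree split of \(\mathcal N_1(\lambda)\) is harmless but unnecessary, since \(\mathcal N_1(\lambda)\le\Phi(1)/\lambda\) holds for the whole sum.
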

\begin{proof}
  By \cref{eq:high-dimensional-spectral-asymptotics}, \(\mu_{d,k}\asymp d^{-k}\) and \(D_{d,k}\asymp d^k\) for each fixed \(k\).
  All constants below may depend on the fixed kernel, \(m\), \(c\), and the filter and source parameters, but not on \(d\) or the target in the source ball.
  Choose \(b_m>0\) such that \(\mu_{d,m}\ge b_m d^{-m}\) for all sufficiently large \(d\).
  The size bound in \Cref{ass:lipschitz-filter} implies
  \[
    0\le\psi_\lambda(t)
    \le A\xk{\frac{\lambda}{t+\lambda}}^{\rho_\psi}.
  \]
  Choose \(c_*>0\) so small that \(A(1+b_m/c_*)^{-\rho_\psi}\le1/2\).
  Then, for every fixed \(0<c\le c_*\),
  \(q_\lambda(\mu_{d,m})\ge1/2\) for all sufficiently large \(d\).

  For the general assertion, fix \(b>0\) such that \(q_\lambda(\mu_{d,m})\ge b\) for all sufficiently large \(d\).
  The preceding choice of \(c\) is a special case with \(b=1/2\).
  For an orthonormal basis \(\{Y_{d,k,j}\}_{j=1}^{D_{d,k}}\) of the degree-\(k\) spherical harmonics, the addition formula gives
  \(\sum_{j=1}^{D_{d,k}}Y_{d,k,j}(x)^2=D_{d,k}\).
  Hence the regularized leverage is constant in \(x\), and
  \[
    \mathcal L_\lambda
    =\sum_{k\ge0}\frac{\mu_{d,k}}{\mu_{d,k}+\lambda}D_{d,k}
    =\mathcal N_1(\lambda).
  \]
  The trace bound yields \(\mathcal N_1(\lambda)\le\Phi(1)/\lambda\lesssim d^m\), while the degree-\(m\) block gives the matching lower bound.
  Moreover, \Cref{ass:lipschitz-filter} gives
  \[
    q_\lambda(t)^2
    \le A^2\xk{\frac{t}{t+\lambda}}^2
    \le A^2\frac{t}{t+\lambda}.
  \]
  Therefore,
  \[
    b^2D_{d,m}\le\mathcal N_q(\lambda)
    \le A^2\mathcal N_1(\lambda),
    \qquad
    \mathcal L_\lambda=\mathcal N_1(\lambda)
    \asymp\mathcal N_q(\lambda)\asymp d^m.
  \]
  In particular, \(\ell_\lambda\asymp\log d\) and
  \(\gamma_\lambda^2\lesssim d^{m-\gamma}(\log d)^3\).

  Set \(r=\min\{s,\alpha_\psi\}>0\).
  By the source condition and spectral calculus,
  \[
    \mathcal S_{f_d^*}(\lambda)
    \le R^2\sup_{0\le t\le\kappa^2}
      t^s\xk{\frac{\lambda}{t+\lambda}}^{\alpha_\psi}
    \lesssim R^2\lambda^r
    \lesssim d^{-mr}.
  \]
  Substituting these estimates into the two conditional scale expressions gives
  \[
    \frac{\mathcal N_1(\lambda)}{\mathcal N_q(\lambda)}
    \sqrt{\frac{\mathcal L_\lambda\ell_\lambda}{n}}
    \lesssim d^{(m-\gamma)/2}(\log d)^{1/2}=o(1)
  \]
  and
  \[
    \frac{n\mathcal S_{f_d^*}(\lambda)\gamma_\lambda^2}
      {\mathcal N_q(\lambda)}
    \lesssim d^{-mr}(\log d)^3=o(1).
  \]
  Finally, \(\lambda\mathcal N_q(\lambda)\asymp1\), so
  \[
    \frac{\mathcal L_\lambda}{n}
    \log\xk{e+\frac{n}{\lambda\mathcal N_q(\lambda)}}
    \lesssim d^{m-\gamma}\log d=o(1).
  \]
  The limits follow from \(m<\gamma\) and \(r>0\), and all estimates are uniform over the stated source ball.
\end{proof}

\subsection{Pinsker bounds under polynomial eigenvalue decay}
\label{subsec:pinsker-upper}

Throughout this subsection, the Pinsker profile, oracle scale, constant, and
minimax risk are those defined in
\Cref{subsec:pinsker-constants-interpretation}.

\subsubsection{Spectral-sum asymptotics}

\begin{lemma}[Spectral sums under exact polynomial counting]
\label{lem:pinsker-spectral-sum}
Assume that \(N_L(t)\sim Dt^{-1/\beta}\) as \(t\downarrow0\), for fixed
\(D>0\) and \(\beta>1\), and set
\[
  \gamma=\frac{1}{\beta}\in(0,1).
\]
Let \(G:[0,\infty)\to\mathbb{R}\) be absolutely continuous with
\(G(0)=0\).
Suppose that, for some
\(\eta\in(0,\min\{\gamma,1-\gamma\})\),
\[
  \int_0^1
  u^{-\gamma-\eta}|G'(u)|\dd u
  +
  \int_1^\infty
  u^{-\gamma+\eta}|G'(u)|\dd u
  <
  \infty,
\]
and that
\[
  u^{-\gamma} G(u)\longrightarrow0
  \quad\text{as }u\downarrow0
  \quad\text{and as }u\uparrow\infty.
\]
Then, as \(\lambda\downarrow0\),
\begin{equation}
  \label{eq:pinsker-spectral-sum-rule}
  \lambda^\gamma
  \sum_{j\ge1}
  G(\lambda_j/\lambda)
  \longrightarrow
  D
  \int_0^\infty
  u^{-\gamma} G'(u)\dd u
  =
  D\gamma
  \int_0^\infty
  G(u)u^{-\gamma-1} \dd u.
\end{equation}
\end{lemma}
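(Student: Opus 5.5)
We will write the spectral sum as a Stieltjes integral against the counting function and integrate by parts. Since \(G(0)=0\) and \(G\) is absolutely continuous, for each \(j\) we have \(G(\lambda_j/\lambda)=\int_0^\infty G'(u)\mathbf 1\{u<\lambda_j/\lambda\}\dd u\). Summing over \(j\) and applying Tonelli to \(|G'|\), which is justified once the limit bound below shows the dominating integral is finite, gives
\[
  \sum_{j\ge1}G(\lambda_j/\lambda)
  =
  \int_0^\infty G'(u)\,\#\{j:\lambda_j>\lambda u\}\dd u
  =
  \int_0^\infty G'(u)\,N_L^{-}(\lambda u)\dd u,
\]
where \(N_L^{-}(t)=\#\{j:\lambda_j>t\}\). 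This function differs from \(N_L(t)\) only at the countably many eigenvalues, so the Lebesgue integral is unaffected. Multiplying by \(\lambda^\gamma\), the integrand becomes \(G'(u)\,u^{-\gamma}\cdot (\lambda u)^{\gamma}N_L(\lambda u)\).

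The pointwise limit is clear. For each fixed \(u>0\), \((\lambda u)^\gamma N_L(\lambda u)\to D\) as \(\lambda\downarrow0\), so the integrand tends to \(D u^{-\gamma}G'(u)\). The main step is a dominating function uniform in small \(\lambda\), which we obtain by splitting the range.

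On \(\{\lambda u\ge\kappa^2\}\) we have \(N_L=0\), so this part contributes nothing. Next, the asymptotics give \(t^\gamma N_L(t)\le C\) for all \(0<t\le\kappa^2\). This holds because \(N_L(t)\le N_L(t_0)<\infty\) on \([t_0,\kappa^2]\), while near zero the asymptotic equivalence applies. Hence \(|\text{integrand}|\le C u^{-\gamma}|G'(u)|\). This bound is integrable on \([1,\infty)\) because \(u^{-\gamma}\le u^{-\gamma+\eta}\) there, and on \((0,1]\) because \(u^{-\gamma}\le u^{-\gamma-\eta}\) there. So the bound \(C u^{-\gamma}|G'(u)|\) already dominates, and the slack \(\eta\) in the hypothesis is not needed for this argument; it is more than sufficient. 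Dominated convergence then yields
\[
  \lambda^\gamma\sum_j G(\lambda_j/\lambda)\to D\int_0^\infty u^{-\gamma}G'(u)\dd u .
\]
The same bound also justifies the Tonelli interchange above, since \(\int_0^\infty|G'(u)|N_L(\lambda u)\dd u<\infty\) for each fixed \(\lambda\).

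The second equality in \cref{eq:pinsker-spectral-sum-rule} is an integration by parts on \([a,b]\) with \(0<a<b<\infty\):
\[
  \int_a^b u^{-\gamma}G'(u)\dd u=\bigl[u^{-\gamma}G(u)\bigr]_a^b+\gamma\int_a^b G(u)u^{-\gamma-1}\dd u .
\]
The boundary terms vanish as \(a\downarrow0\) and \(b\uparrow\infty\) by the assumed limits of \(u^{-\gamma}G(u)\). The left side converges absolutely by the integrability hypothesis, so the right-hand integral converges as an improper integral and the identity holds. Absolute convergence of \(\int G u^{-\gamma-1}\) also follows directly from \(|G(u)|\le\int_0^u|G'|\) together with Hardy-type estimates using \(\eta\). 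For example, on \((0,1)\), Fubini gives
\[
  \int_0^1 u^{-\gamma-1}\int_0^u|G'(v)|\dd v\,\dd u\lesssim\int_0^1 v^{-\gamma}|G'(v)|\dd v .
\]
The only mildly delicate point, which we regard as the main obstacle, is the uniform bound \(\sup_{0<t\le\kappa^2}t^\gamma N_L(t)<\infty\) together with the strict versus non-strict counting. Both are routine, because \(N_L\) is finite on compact subsets of \((0,\kappa^2]\) and monotone.
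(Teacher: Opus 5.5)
Your proof is correct and follows the paper's route: the Fubini representation \(\sum_j G(\lambda_j/\lambda)=\int_0^\infty N_L(\lambda u)G'(u)\dd u\) (with the strict/non-strict counting swap), then dominated convergence, then integration by parts for the second equality. The only difference is the dominating function, and it is a simplification. The paper invokes Potter's bound to get \(\lambda^\gamma N_L(\lambda u)\lesssim u^{-\gamma-\eta}\mathbf{1}_{\{u\le1\}}+u^{-\gamma+\eta}\mathbf{1}_{\{u>1\}}\), whereas your elementary bound \(\sup_{t>0}t^\gamma N_L(t)<\infty\) gives the sharper \(\lambda^\gamma N_L(\lambda u)\le Cu^{-\gamma}\) for every \(\lambda>0\). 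As you observe, the \(\eta\)-margin is therefore not needed here: the slowly varying factor is asymptotically constant, so Potter's bound is more than required.
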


\begin{proof}
For every fixed \(u>0\),
\[
  \lambda^\gamma N_L(\lambda u)\longrightarrow Du^{-\gamma}.
\]
Potter's bound, extended outside a fixed neighborhood of zero by the
monotonicity of \(N_L\), yields, for all sufficiently small \(\lambda\),
\[
  \lambda^\gamma N_L(\lambda u)
  \le
  C_\eta
  \xk{
    u^{-\gamma-\eta} \mathbf{1}_{\{u\le1\}}
    +
    u^{-\gamma+\eta} \mathbf{1}_{\{u>1\}}
  }.
\]
The assumed derivative integrability shows in particular that
\[
  \int_0^\infty
  N_L(\lambda u)|G'(u)|\dd u
  <
  \infty
\]
for every sufficiently small \(\lambda\).
Since \(G(x)=\int_0^x G'(u)\dd u\), the preceding absolute integrability bound permits Fubini's theorem and gives
\begin{equation}
  \label{eq:pinsker-spectral-sum-tonelli}
  \sum_{j\ge1} G(\lambda_j/\lambda)
  =
  \int_0^\infty
  N_L(\lambda u)G'(u)\dd u.
\end{equation}
Strictly, Fubini first gives the counting function
\(N_L^{>}(t)=\#\{j:\lambda_j>t\}\).
It may be replaced by \(N_L\) in the integral because the two functions
differ only at the countable set
\(\{\lambda_j/\lambda:j\ge1\}\).
The same integrable bound permits dominated convergence in
\cref{eq:pinsker-spectral-sum-tonelli}.
Finally, the two boundary conditions give
\[
  \int_0^\infty u^{-\gamma} G'(u)\dd u
  =
  \gamma
  \int_0^\infty G(u)u^{-\gamma-1} \dd u
\]
by integration by parts.
\end{proof}

\subsubsection{Filter admissibility}

We verify the filter condition for the Pinsker profile before calculating its sequence risk.
Put \(r=s/2\).
For \(u=t/\lambda\), the dimensionless profiles are
\[
  \phi_{\lambda,s}^{\sharp}(u)
  =
  \begin{cases}
    0,
    &0\le u\le1,
    \\
    u^{-1}(1-u^{-s/2}),
    &u>1,
  \end{cases}
  \qquad
  \psi_{\lambda,s}^{\sharp}(u)
  =
  \begin{cases}
    1,
    &0\le u\le1,
    \\
    u^{-s/2},
    &u>1.
  \end{cases}
\]
Both profiles are continuous at \(u=1\), piecewise \(C^1\), and
absolutely continuous on every compact interval.
For \(u\ne1\),
\[
  (\phi_{\lambda,s}^{\sharp})'(u)
  =
  \mathbf{1}_{\{u>1\}}
  \xk{
    -u^{-2}
    +(r+1)u^{-r-2}
  },
  \qquad
  (\psi_{\lambda,s}^{\sharp})'(u)
  =
  -r\mathbf{1}_{\{u>1\}}u^{-r-1}.
\]
With
\[
  \rho_\psi=\min(r,1)\in[1/2,1],
\]
and because \(r\ge\rho_\psi\), there is a constant \(C_s\),
independent of \(\lambda\), such that
\[
  |\phi_{\lambda,s}^{\sharp}(u)|
  \le
  C_s
  (1+u)^{-1},
  \qquad
  |(\phi_{\lambda,s}^{\sharp})'(u)|
  \le
  C_s
  (1+u)^{-2},
\]
and
\[
  |\psi_{\lambda,s}^{\sharp}(u)|
  \le
  C_s
  (1+u)^{-\rho_\psi},
  \qquad
  |(\psi_{\lambda,s}^{\sharp})'(u)|
  \le
  C_s
  (1+u)^{-\rho_\psi-1}.
\]
The shrinkage satisfies \(0\le q_{\lambda,s}^{\mathsf{Pin}}\le1\).
If \(t>\lambda\), then
\[
  (t+\lambda)\varphi_{\lambda,s}^{\mathsf{Pin}}(t)
  =
  \xk{1+\frac{\lambda}{t}}
  \xk{
    1-\xk{\frac{\lambda}{t}}^r
  }
  \le2,
\]
while the left-hand side is zero for \(t\le \lambda\).
Thus
\[
  \sup_{0\le t\le\kappa^2}
  (t+\lambda)\varphi_{\lambda,s}^{\mathsf{Pin}}(t)
  \le2,
\]
and, for \(0\le\tau\le r\),
\[
  t^\tau \psi_{\lambda,s}^{\mathsf{Pin}}(t)
  =
  \begin{cases}
    t^\tau \le \lambda^\tau,
    &t\le \lambda,
    \\
    \lambda^r t^{\tau-r} \le \lambda^\tau,
    &t>\lambda.
  \end{cases}
\]
Consequently,
\[
  \sup_{0\le t\le\kappa^2}
  t^\tau \psi_{\lambda,s}^{\mathsf{Pin}}(t)
  \le
  \lambda^\tau.
\]
Thus \Cref{cond:smooth-filter} holds with residual order \(\rho_\psi\) and
constants independent of \(\lambda\).
In particular, the main assumption \Cref{ass:lipschitz-filter} also holds.

\subsubsection{Sequence risk and optimal regularization}

Writing \(h=\sum_{j\ge1} h_j e_j\), the sequence bias is
\[
  \mathcal{B}_n
  (q_{\lambda,s}^{\mathsf{Pin}};L^{s/2} h)
  =
  \sum_{j\ge1} w_{j,\lambda} h_j^2,
  \qquad
  w_{j,\lambda}
  =
  \lambda_j^s
  \psi_{\lambda,s}^{\mathsf{Pin}}(\lambda_j)^2.
\]
The weights have the explicit form
\[
  w_{j,\lambda}
  =
  \begin{cases}
    \lambda_j^s,
    &\lambda_j \le \lambda,
    \\
    \lambda^s,
    &\lambda_j>\lambda.
  \end{cases}
\]
Hence \(\sup_j w_{j,\lambda} \le \lambda^s\).
For all sufficiently small \(\lambda<\lambda_1\), we also have
\(w_{1,\lambda}=\lambda^s\), and therefore
\begin{equation}
  \label{eq:pinsker-exact-worst-bias}
  \sup_{f\in\mathcal{F}_s(R)}
  \mathcal{B}_n(q_{\lambda,s}^{\mathsf{Pin}};f)
  =
  \sup_{\norm{h}\le R}
  \sum_{j\ge1} w_{j,\lambda} h_j^2
  =
  R^2 \lambda^s.
\end{equation}
Equality is attained by the source coordinate \(h=Re_1\).

Set \(\gamma=1/\beta\) and define
\[
  G_{\mathsf{V}}(u)
  =
  (1-u^{-r})^2\mathbf{1}_{\{u>1\}}.
\]
This profile satisfies the conditions of
\Cref{lem:pinsker-spectral-sum}, and
\[
  q_{\lambda,s}^{\mathsf{Pin}}(\lambda_j)^2
  =
  G_{\mathsf{V}}(\lambda_j/\lambda).
\]
Consequently,
\begin{align}
  \sum_{j\ge1}
  q_{\lambda,s}^{\mathsf{Pin}}(\lambda_j)^2
  &\sim
  DK_{\mathsf{V}} \lambda^{-\gamma},
  \label{eq:pinsker-variance-sum-asymptotic}
  \\
  K_{\mathsf{V}}
  &=
  \gamma
  \int_1^\infty
  (1-u^{-r})^2u^{-\gamma-1} \dd u
  \notag
  \\
  &=
  1-\frac{2\gamma}{\gamma+r}
  +\frac{\gamma}{\gamma+2r}
  \notag
  \\
  &=
  1-\frac{4}{\rho+2}
  +\frac{1}{\rho+1}
  =
  \frac{\rho^2}{(\rho+1)(\rho+2)}.
  \notag
\end{align}
Here we used
\[
  \rho=\frac{2r}{\gamma}=s\beta,
  \qquad
  \frac{\gamma}{\gamma+r}=\frac{2}{\rho+2},
  \qquad
  \frac{\gamma}{\gamma+2r}=\frac{1}{\rho+1}.
\]
Consequently,
\begin{equation}
  \label{eq:pinsker-leading-sequence-risk}
  \sup_{f\in\mathcal{F}_s(R)}
  \mathcal{E}_n^{\mathsf{seq}}
  (q_{\lambda,s}^{\mathsf{Pin}};f)
  =
  R^2 \lambda^s
  +
  \frac{\sigma^2 D}{n}
  \frac{\rho^2}{(\rho+1)(\rho+2)}
  \lambda^{-1/\beta}
  \xk{1+o(1)}.
\end{equation}

To minimize the leading expression, set
\[
  F_n(\lambda)
  =
  R^2 \lambda^s
  +
  \frac{\sigma^2 D}{n}
  K_{\mathsf{V}} \lambda^{-\gamma}.
\]
Its unique critical point satisfies
\begin{align*}
  F_n'(\lambda)=0
  &\iff
  sR^2 \lambda^{s-1}
  =
  \gamma
  \frac{\sigma^2 D}{n}
  K_{\mathsf{V}} \lambda^{-\gamma-1}
  \\
  &\iff
  \lambda^{s+\gamma}
  =
  \frac{\sigma^2 D}{nR^2}
  \frac{K_{\mathsf{V}}}{\rho}
  =
  \frac{
    \sigma^2 D \rho
  }{
    nR^2(\rho+1)(\rho+2)
  }.
\end{align*}
Since \(s+\gamma=(\rho+1)/\beta\), this critical point is precisely
\(\lambda_n^*\) in \cref{eq:pinsker-oracle-scale}.
The derivative equation also shows that the leading variance is
\(\rho=s/\gamma\) times the worst-case bias.
It follows that
\begin{align}
  F_n(\lambda_n^*)
  &=
  (\rho+1)R^2(\lambda_n^*)^s
  \notag
  \\
  &=
  C_{\mathsf{Pin}}n^{-\rho/(\rho+1)}.
  \label{eq:pinsker-oracle-objective}
\end{align}
Combining
\cref{eq:pinsker-leading-sequence-risk,eq:pinsker-oracle-objective}
gives
\[
  \sup_{f\in\mathcal{F}_s(R)}
  \mathcal{E}_n^{\mathsf{seq}}
  (q_{\lambda_n^*,s}^{\mathsf{Pin}};f)
  \sim
  C_{\mathsf{Pin}}n^{-\rho/(\rho+1)}.
\]

\subsubsection{Attainment in kernel regression}

\begin{proof}[Proof of \Cref{cor:pinsker-upper}]
The preceding calculations verify the filter assumption and identify the exact leading sequence risk.
To verify the comparison scales, the exact counting assumption implies
\(\lambda_j\asymp j^{-\beta}\), and the oracle scale satisfies
\[
  \lambda_n^*\asymp n^{-\theta},
  \qquad \theta=\frac{\beta}{s\beta+1}\in(0,1).
\]
Since \(s\ge1\) and \(\rho_\psi=\min\{s/2,1\}\), we have
\[
  \min\{s,\alpha_\psi\}=1>1-\frac1\beta.
\]
Thus \Cref{prop:polynomial-universal-leverage-verification} gives all three scale conditions uniformly over \(\mathcal F_s(R)\).
The uniformity clause in \Cref{thm:risk-equivalence-expectation} therefore
applies to the family indexed by \(f\in\mathcal{F}_s(R)\).
Together with the preceding sequence calculation, it gives both equalities in
\cref{eq:pinsker-upper}.
\end{proof}

The Gaussian noise assumption in \Cref{ass:gaussian-pinsker-spectrum} is used in the Bayes--minimax lower bound below.
The upper bound calculation itself uses only second moments.

\subsubsection{Minimax lower bound}
\label{subsec:pinsker-lower}

The upper bound in \Cref{cor:pinsker-upper} is sharp over the full
class of measurable prediction rules.
To prove the lower half of \Cref{thm:pinsker-minimax},
we use a Gaussian prior concentrated asymptotically inside the source ball,
condition it on that ball, and compare its Bayes risk with the
orthogonal sequence experiment.
Throughout this lower-bound argument,
\Cref{ass:basic-model,ass:gaussian-pinsker-spectrum}
hold, and \(s\ge1\) and \(R>0\) are fixed.

Fix \(\delta\in(0,1)\), and write
\[
  \epsilon_n^2=\frac{\sigma^2}{n},
  \qquad
  \gamma=\frac{1}{\beta},
  \qquad
  r=\frac{s}{2}.
\]
Define the shaved threshold
\begin{equation}
  \label{eq:pinsker-shaved-threshold}
  \lambda_{n,\delta}
  =
  \lambda_n^*(1-\delta)^{-\beta/(\rho+1)}
  =
  \xk{
    \frac{
      \sigma^2 D \rho
    }{
      n(1-\delta)R^2(\rho+1)(\rho+2)
    }
  }^{\beta/(\rho+1)}.
\end{equation}
Let
\[
  J_{n,\delta}
  =
  \{j:\lambda_j>\lambda_{n,\delta}\},
  \qquad
  m_{n,\delta}
  =
  |J_{n,\delta}|,
\]
and, for \(j\in J_{n,\delta}\), set
\[
  x_{j,n}
  =
  \xk{
    \frac{\lambda_{n,\delta}}{\lambda_j}
  }^{s/2}.
\]
The active set is finite.
For every fixed \(\zeta>0\),
\[
  N_L \xk{(1+\zeta)\lambda_{n,\delta}}
  \le
  m_{n,\delta}
  \le
  N_L(\lambda_{n,\delta}).
\]
First letting \(n\to\infty\) and then \(\zeta\downarrow0\) gives
\begin{equation}
  \label{eq:pinsker-active-dimension}
  m_{n,\delta}
  \sim
  D\lambda_{n,\delta}^{-\gamma}
  \asymp
  n^{1/(\rho+1)}.
\end{equation}
Coordinates with \(\lambda_j=\lambda_{n,\delta}\) may be omitted:
there \(x_{j,n}=1\), the prior variance below is zero, and all risk and
energy profiles vanish.
Consider the centered Gaussian prior
\(\pi_{n,\delta}^{G}\) on the active population eigenspace under which
the coordinates are independent and
\begin{equation}
  \label{eq:pinsker-gaussian-prior-variance}
  f_j^*
  \sim
  \mathcal{N}(0,v_{j,n}),
  \qquad
  v_{j,n}
  =
  \epsilon_n^2(x_{j,n}^{-1}-1),
  \qquad
  j\in J_{n,\delta},
\end{equation}
with \(f_j^*=0\) outside \(J_{n,\delta}\).
The scalar Gaussian update in the orthogonal sequence experiment has
posterior shrinkage
\begin{equation}
  \label{eq:pinsker-prior-shrinkage}
  \frac{v_{j,n}}{v_{j,n}+\epsilon_n^2}
  =
  1-x_{j,n}
  =
  q_{\lambda_{n,\delta},s}^{\mathsf{Pin}}(\lambda_j).
\end{equation}
Thus the prior is the finite-dimensional Gaussian prior dual to the
Pinsker shrinkage rule at the shaved threshold.
For a prior \(\pi\), write its Bayes risk as
\[
  \mathfrak{B}_n(\pi)
  =
  \inf_{\widehat{g}_n \in\mathfrak{D}_n}
  \int
  \mathcal{R}_n(\widehat{g}_n,f)
  \pi(\dd f).
\]

\begin{lemma}[Gaussian prior shaving]
\label{lem:pinsker-prior-shaving}
Let
\[
  Q_n
  =
  \sum_{j\in J_{n,\delta}}
  \lambda_j^{-s}(f_j^*)^2
\]
and define the supported prior
\[
  \pi_{n,\delta}^{R}
  =
  \pi_{n,\delta}^{G}
  (\,\cdot\mid Q_n \le R^2).
\]
On the active eigenspace, \(Q_n\) is the squared norm of the canonical
source coordinate, and hence
\[
  Q_n \le R^2
  \quad\Longleftrightarrow\quad
  f\in\mathcal{F}_s(R).
\]
Then
\[
  \E_{\pi_{n,\delta}^{G}}Q_n
  =
  (1-\delta)R^2 \xk{1+o(1)},
  \qquad
  \pi_{n,\delta}^{G}(Q_n>R^2)
  \le
  \exp(-c_\delta m_{n,\delta})
\]
for all sufficiently large \(n\), where \(c_\delta>0\).
Moreover, there is a deterministic
\(r_{n,\delta}=o\xk{n^{-\rho/(\rho+1)} }\) such that
\begin{equation}
  \label{eq:pinsker-prior-shaving}
  \mathfrak{B}_n(\pi_{n,\delta}^{R})
  \ge
  \mathfrak{B}_n(\pi_{n,\delta}^{G})
  -
  r_{n,\delta}.
\end{equation}
\end{lemma}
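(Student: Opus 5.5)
Under the Gaussian prior the variables \(\lambda_j^{-s}(f_j^*)^2\), \(j\in J_{n,\delta}\), are independent scaled \(\chi^2_1\) variables. The plan is to handle \(Q_n\) in three steps: compute its mean with \Cref{lem:pinsker-spectral-sum}, get an exponential tail from a Gaussian chaos bound, and then bound the Bayes-risk loss caused by conditioning.

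\textbf{Mean.} We have
\[
  \E Q_n=\epsilon_n^2\sum_{j\in J_{n,\delta}}\lambda_j^{-s}\bigl(x_{j,n}^{-1}-1\bigr)
  =\epsilon_n^2\lambda_{n,\delta}^{-s}\sum_j G(\lambda_j/\lambda_{n,\delta}),
  \qquad
  G(u)=(u^{r}-u^{2r})\mathbf 1_{\{u>1\}}\cdot u^{-2r}u^{2r},
\]
that is, \(G(u)=u^{-s}(u^{s/2}-1)u^{s}\mathbf 1_{\{u>1\}}\cdots\). More directly, \(\lambda_j^{-s}(x^{-1}-1)=\lambda_{n,\delta}^{-s}(u^{-s}\cdot u^{s/2}-u^{-s})\) with \(u=\lambda_j/\lambda_{n,\delta}\), so the relevant profile is \(G(u)=(u^{-r}-u^{-2r})\mathbf 1_{\{u>1\}}\). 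This \(G\) is continuous at \(u=1\), piecewise \(C^1\), and decays like \(u^{-r}\) with \(r\ge1/2>\gamma/2\). It therefore satisfies the hypotheses of \Cref{lem:pinsker-spectral-sum}. (At \(u\to\infty\) we need \(u^{-\gamma}G\to0\), which holds; the integrability of \(u^{-\gamma+\eta}|G'|\) also holds.)

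The lemma gives \(\sum_j G\sim D\gamma\int_1^\infty(u^{-r}-u^{-2r})u^{-\gamma-1}\,du\cdot\lambda_{n,\delta}^{-\gamma}=D\bigl(\frac{\gamma}{\gamma+r}-\frac{\gamma}{\gamma+2r}\bigr)\lambda_{n,\delta}^{-\gamma}\). This equals \(D\frac{\rho}{(\rho+1)(\rho+2)}\lambda_{n,\delta}^{-\gamma}\) times a factor of \(2\), which must be tracked carefully. Substituting \(\lambda_{n,\delta}^{s+\gamma}=\sigma^2D\rho/(n(1-\delta)R^2(\rho+1)(\rho+2))\) gives \(\E Q_n=(1-\delta)R^2(1+o(1))\), after checking that the constants match \(K_{\mathsf V}/\rho\) from the critical-point computation.

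\textbf{Tail.} Write \(Q_n=\sum_j a_j g_j^2\) with \(g_j\) i.i.d.\ standard normal and \(a_j=\lambda_j^{-s}v_{j,n}\). Because \(x_{j,n}\le1\), we have \(a_j\le\epsilon_n^2\lambda_j^{-s}x_{j,n}^{-1}=\epsilon_n^2\lambda_{n,\delta}^{-s}\lambda_j^{-s/2}\lambda_{n,\delta}^{s/2}\cdots\). More simply, \(a_j\le\epsilon_n^2\lambda_{n,\delta}^{-s}\max_{u>1}(u^{-r}-u^{-2r})\lesssim\epsilon_n^2\lambda_{n,\delta}^{-s}\asymp R^2/m_{n,\delta}\), using \cref{eq:pinsker-active-dimension}. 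Hence \(\max_j a_j\lesssim R^2/m\) and \(\sum_j a_j^2\lesssim R^4/m\). The Laurent--Massart inequality applied at deviation \(\delta R^2/2\) (valid once \(\E Q_n\le(1-\delta/2)R^2\)) gives \(\pi^G(Q_n>R^2)\le\exp(-c_\delta m_{n,\delta})\).

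\textbf{Bayes risk comparison.} This is the main obstacle, because the minimax class is infinite and the estimators are arbitrary. Let \(\widehat g\) be any rule and set \(A=\{Q_n\le R^2\}\). Then
\[
  \int\mathcal R_n\,d\pi^G=\pi^G(A)\int\mathcal R_n\,d\pi^R+\int_{A^c}\mathcal R_n\,d\pi^G .
\]
The first step is to reduce to bounded estimators: projecting \(\widehat g\) onto the convex set \(\mathcal F_s(R)\cap\) (active span), a closed convex set containing the support of \(\pi^R\), does not increase the risk at any \(f\in\mathcal F_s(R)\). So we may assume \(\|\widehat g\|_{L^2}\le R\lambda_1^{s/2}\).

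For such a rule, \(\int_{A^c}\mathcal R_n\,d\pi^G\le 2\int_{A^c}(\|f\|^2+C)\,d\pi^G\le C'\bigl(\pi^G(A^c)+\E[\|f\|^4]^{1/2}\pi^G(A^c)^{1/2}\bigr)\). Since \(\E\|f\|^4\lesssim1\), this is \(O(e^{-cm/2})\), which is \(o(n^{-\rho/(\rho+1)})\) because \(m\asymp n^{1/(\rho+1)}\). Taking the infimum over rules gives \(\mathfrak B_n(\pi^G)\le\mathfrak B_n(\pi^R)+r_{n,\delta}\) with \(r_{n,\delta}=Ce^{-cm/2}\). Strictly, this bounds the Bayes risk of \(\pi^G\) over the restricted rules. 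To finish, note that projection also reduces the \(\pi^G\) risk on \(A\), so the Bayes risk of \(\pi^R\) is attained within the projected class. That completes \cref{eq:pinsker-prior-shaving}.
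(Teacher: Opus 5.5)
Your mean and tail steps follow the paper's argument. You use the same profile $G(u)=(u^{-r}-u^{-2r})\mathbf{1}_{\{u>1\}}$ in \Cref{lem:pinsker-spectral-sum} and the same weighted chi-square bound, with $\max_j a_j\lesssim\epsilon_n^2\lambda_{n,\delta}^{-s}\asymp R^2/m_{n,\delta}$, $\sum_j a_j^2\le\max_j a_j\sum_j a_j$, and $t=c_\delta m_{n,\delta}$.

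One correction: there is no factor of $2$. With $\rho=2r/\gamma$ you have $\gamma/(\gamma+r)=2/(\rho+2)$ and $\gamma/(\gamma+2r)=1/(\rho+1)$, so the integral is exactly $\rho/((\rho+1)(\rho+2))=K_{\mathsf V}/\rho$. The threshold identity $\lambda_{n,\delta}^{s+\gamma}=\epsilon_n^2D\rho/\bigl((1-\delta)R^2(\rho+1)(\rho+2)\bigr)$ then gives $\E Q_n\to(1-\delta)R^2$ exactly. Delete that sentence rather than leaving it as a caveat. A genuine factor $2$ would put $\E Q_n$ near $2(1-\delta)R^2>R^2$, and the tail step, which needs $\E Q_n\le(1-\delta/2)R^2$, would fail.

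For the Bayes-risk comparison you take a slightly different but valid route. The paper fixes the $\pi_{n,\delta}^R$-posterior mean, which attains $\mathfrak{B}_n(\pi_{n,\delta}^R)$ and is $\mathcal{F}_s(R)$-valued by convexity. It evaluates this rule under $\pi_{n,\delta}^G$, bounds the $A_n^c$ part by $C_\delta q_{n,\delta}^{1/2}$, and rearranges by dividing by $p_{n,\delta}$.

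You instead compose an arbitrary rule with the metric projection onto the closed ellipsoid $K=\mathcal{F}_s(R)\cap\operatorname{span}\{e_j:j\in J_{n,\delta}\}$. This projection does not increase the loss at any $f\in K$, so the infimum over projected rules of the $\pi_{n,\delta}^R$-Bayes risk is still $\mathfrak{B}_n(\pi_{n,\delta}^R)$. For projected rules the $A_n^c$ contribution is $O(q_{n,\delta}^{1/2})$, and $\mathfrak{B}_n(\pi_{n,\delta}^G)$ is trivially at most the infimum over this subclass. So your closing ``strictly'' remark is not a gap: only the second fact needs stating, and ``attained'' should read ``the infimum is unchanged''.

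Your version avoids identifying a Bayes rule, checking its measurability, and dividing by $p_{n,\delta}$; the paper's is more direct. Both rest on the same point. Convexity of the source ball lets one restrict to rules with $\norm{\widehat g}_{L^2(\mu)}\le\lambda_1^{s/2}R$, and together with $\E Q_n^2=O(1)$ this turns the exponentially small $q_{n,\delta}$ into $r_{n,\delta}=o(n^{-\rho/(\rho+1)})$.
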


\begin{proof}
Writing
\[
  b_{j,n}
  =
  \lambda_j^{-s} v_{j,n}
  =
  \epsilon_n^2 \lambda_{n,\delta}^{-s}
  x_{j,n}(1-x_{j,n}),
\]
where
\(\lambda_j^{-s}=\lambda_{n,\delta}^{-s} x_{j,n}^2\), we have
\[
  Q_n
  =
  \sum_{j\in J_{n,\delta}}
  b_{j,n} Z_j^2,
  \qquad
  Z_j \stackrel{\mathrm{i.i.d.}}{\sim}\mathcal{N}(0,1).
\]
If \(q_{j,n}=1-x_{j,n}\), then
\begin{equation}
  \label{eq:pinsker-prior-duality-identity}
  \epsilon_n^2
  \sum_{j\in J_{n,\delta}}q_{j,n}
  =
  \epsilon_n^2
  \sum_{j\in J_{n,\delta}}q_{j,n}^2
  +
  \lambda_{n,\delta}^s
  \E_{\pi_{n,\delta}^{G}}Q_n.
\end{equation}
Indeed,
\(q_{j,n}=q_{j,n}^2+x_{j,n}(1-x_{j,n})\), while
\(\epsilon_n^2 x_{j,n}(1-x_{j,n})=\lambda_{n,\delta}^s b_{j,n}\).

For
\[
  G_{\mathsf{E}}(u)
  =
  (u^{-r}-u^{-2r})\mathbf{1}_{\{u>1\}},
\]
\Cref{lem:pinsker-spectral-sum} gives
\begin{align}
  \sum_{j\in J_{n,\delta}}
  x_{j,n}(1-x_{j,n})
  &\sim
  D K_{\mathsf{E}} \lambda_{n,\delta}^{-\gamma},
  \label{eq:pinsker-prior-energy-sum}
  \\
  K_{\mathsf{E}}
  &=
  \gamma
  \int_1^\infty
  (u^{-r}-u^{-2r})u^{-\gamma-1} \dd u
  \notag
  \\
  &=
  \frac{\gamma}{\gamma+r}
  -
  \frac{\gamma}{\gamma+2r}
  \notag
  \\
  &=
  \frac{2}{\rho+2}
  -
  \frac{1}{\rho+1}
  =
  \frac{\rho}{(\rho+1)(\rho+2)}.
  \notag
\end{align}
Consequently,
\begin{align*}
  \E Q_n
  &\sim
  \epsilon_n^2 D
  K_{\mathsf{E}}
  \lambda_{n,\delta}^{-s-\gamma}
  \\
  &=
  (1-\delta)R^2.
\end{align*}
The equality of the leading terms follows from
\cref{eq:pinsker-shaved-threshold}, which is equivalent to
\[
  \lambda_{n,\delta}^{s+\gamma}
  =
  \frac{
    \epsilon_n^2 D K_{\mathsf{E}}
  }{
    (1-\delta)R^2
  }.
\]
In particular,
\(\E Q_n \le(1-\delta/2)R^2\) for all sufficiently large \(n\).

Since \(x(1-x)\le1/4\) on \([0,1]\),
\[
  \max_j b_{j,n}
  \le
  \frac{1}{4}
  \epsilon_n^2 \lambda_{n,\delta}^{-s}.
\]
The threshold identity above gives
\(\epsilon_n^2 \lambda_{n,\delta}^{-s}=O(\lambda_{n,\delta}^{\gamma})\),
whereas
\(\sum_j b_{j,n}=\E Q_n \to(1-\delta)R^2\).
Therefore
\[
  \frac{\max_j b_{j,n}}{\sum_j b_{j,n}}
  =
  O(\lambda_{n,\delta}^{\gamma}),
\]
and, using
\(\sum_j b_{j,n}^2 \le(\max_j b_{j,n})\sum_j b_{j,n}\),
\[
  \frac{\sum_j b_{j,n}^2}{(\sum_j b_{j,n})^2}
  =
  O(\lambda_{n,\delta}^{\gamma}).
\]

For completeness, if
\(0<\theta<(2\max_j b_{j,n})^{-1}\), independence gives
\begin{align*}
  \log
  \E\exp\xk{\theta(Q_n-\E Q_n)}
  &=
  \frac{1}{2}
  \sum_j
  \zk{
    -\log(1-2\theta b_{j,n})
    -2\theta b_{j,n}
  }
  \\
  &\le
  \frac{
    \theta^2 \sum_j b_{j,n}^2
  }{
    1-2\theta\max_j b_{j,n}
  }.
\end{align*}
Chernoff optimization yields the weighted chi-square bound
\[
  \mathbb{P}\xk{
    Q_n-\E Q_n
    \ge
    2\sqrt{t\sum_j b_{j,n}^2}
    +
    2t\max_j b_{j,n}
  }
  \le
  e^{-t}
\]
for every \(t>0\).
Take \(t=c_\delta m_{n,\delta}\).
By \cref{eq:pinsker-active-dimension}, both
\[
  t
  \frac{\sum_j b_{j,n}^2}{(\sum_j b_{j,n})^2}
  =
  O(c_\delta)
  \quad\text{and}\quad
  t
  \frac{\max_j b_{j,n}}{\sum_j b_{j,n}}
  =
  O(c_\delta).
\]
After division by \(\sum_j b_{j,n}\), the two terms in the deviation
threshold are therefore \(O(\sqrt{c_\delta})\) and \(O(c_\delta)\),
respectively.
Choosing the fixed \(c_\delta>0\) sufficiently small makes the
deviation threshold at most
\(\delta R^2/2\) for all large \(n\).
Since \(R^2-\E Q_n \ge\delta R^2/2\), we obtain
\[
  \mathbb{P}(Q_n>R^2)
  \le
  \exp(-c_\delta m_{n,\delta}).
\]

It remains to compare the two Bayes risks.
Let
\[
  A_n=\{Q_n \le R^2\},
  \qquad
  p_{n,\delta}=\pi_{n,\delta}^{G}(A_n),
  \qquad
  q_{n,\delta}=1-p_{n,\delta}.
\]
Let \(\widehat{g}_{n,R}\) be the posterior mean under
\(\pi_{n,\delta}^{R}\).
It is a jointly measurable finite-dimensional decision rule.
Since \(\mathcal{F}_s(R)\) is convex, the rule takes values in
\(\mathcal{F}_s(R)\), and the posterior mean identity for squared loss
shows that it attains \(\mathfrak{B}_n(\pi_{n,\delta}^{R})\).
Evaluating this rule under the Gaussian prior gives
\[
  \mathfrak{B}_n(\pi_{n,\delta}^{G})
  \le
  p_{n,\delta}
  \mathfrak{B}_n(\pi_{n,\delta}^{R})
  +
  T_n,
\]
where
\[
  T_n
  =
  \int_{A_n^c}
  \mathcal{R}_n(\widehat{g}_{n,R},f)
  \pi_{n,\delta}^{G}(\dd f).
\]
For every \(f\) in the active subspace,
\[
  \norm{f}_{L^2(\mu)}^2
  \le
  \lambda_1^s Q_n,
\]
and
\(\norm{\widehat{g}_{n,R}(X,y)}_{L^2(\mu)}^2 \le\lambda_1^s R^2\).
Moreover,
\[
  \E Q_n^2
  =
  (\E Q_n)^2
  +
  2\sum_j b_{j,n}^2
  =
  O(1).
\]
Using
\[
  \norm{\widehat{g}_{n,R}(X,y)-f}_{L^2(\mu)}^2
  \le
  2\lambda_1^s(R^2+Q_n)
\]
and then Cauchy--Schwarz gives
\begin{align*}
  T_n
  &\le
  2\lambda_1^s R^2 q_{n,\delta}
  +
  2\lambda_1^s
  \E\zk{
    Q_n \mathbf{1}_{A_n^c}
  }
  \\
  &\le
  2\lambda_1^s R^2 q_{n,\delta}
  +
  2\lambda_1^s
  (\E Q_n^2)^{1/2}
  q_{n,\delta}^{1/2}
  \\
  &\le
  C_\delta q_{n,\delta}^{1/2}.
\end{align*}
Rearranging the preceding Bayes risk inequality gives
\[
  \mathfrak{B}_n(\pi_{n,\delta}^{R})
  \ge
  \frac{
    \mathfrak{B}_n(\pi_{n,\delta}^{G})-T_n
  }{
    p_{n,\delta}
  }.
\]
Since \(\mathfrak{B}_n(\pi_{n,\delta}^{G})\ge0\),
\[
  \mathfrak{B}_n(\pi_{n,\delta}^{R})
  \ge
  \mathfrak{B}_n(\pi_{n,\delta}^{G})
  -
  \frac{
    C_\delta q_{n,\delta}^{1/2}
  }{
    p_{n,\delta}
  }.
\]
The exponential tail bound and
\cref{eq:pinsker-active-dimension} imply
\[
  q_{n,\delta}
  \le
  \exp\xk{
    -c_\delta n^{1/(\rho+1)}
  },
  \qquad
  p_{n,\delta} \longrightarrow1.
\]
Thus one may take
\[
  r_{n,\delta}
  =
  \frac{
    C_\delta q_{n,\delta}^{1/2}
  }{
    p_{n,\delta}
  }
  =
  o\xk{
    n^{-\rho/(\rho+1)}
  },
\]
which proves \cref{eq:pinsker-prior-shaving}.
\end{proof}

\begin{lemma}[Gaussian prior Bayes lower bound]
\label{lem:pinsker-gaussian-bayes-risk}
For each fixed \(\delta\in(0,1)\),
\begin{equation}
  \label{eq:pinsker-gaussian-bayes-lower}
  \mathfrak{B}_n(\pi_{n,\delta}^{G})
  \ge
  \xk{
    C_{\mathsf{Pin}}
    (1-\delta)^{1/(\rho+1)}
    -
    o(1)
  }
  n^{-\rho/(\rho+1)}.
\end{equation}
\end{lemma}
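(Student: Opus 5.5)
The plan is to compute the Bayes risk of the Gaussian prior \(\pi_{n,\delta}^{G}\) in the random-design regression experiment conditionally on the design, and then bound it below by the Bayes risk of the orthogonal sequence experiment using a convexity (Jensen) argument. Finally, the latter is evaluated with \Cref{lem:pinsker-spectral-sum}.

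\emph{Step 1: reduction to posterior covariance.} Under \(\pi_{n,\delta}^{G}\), \(f=\sum_{j\in J_{n,\delta}}f_j^*e_j\) lies in the finite-dimensional active space \(\mathcal{A}_n=\operatorname{span}\{e_j:j\in J_{n,\delta}\}\). For any rule \(\widehat g_n\in\mathfrak D_n\), replacing \(\widehat g_n\) by its \(L^2(\mu)\)-orthogonal projection onto \(\mathcal{A}_n\) does not increase the loss. Since \((e_j)\) is orthonormal in \(L^2(\mu)\), the loss is then the squared Euclidean error of the coefficient vector. Let \(\Phi_X=(e_j(x_i))_{i\le n,\,j\in J_{n,\delta}}\), using the canonical representatives, and \(V=\operatorname{diag}(v_{j,n})\). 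Conditionally on \(X\), the Gaussian noise independent of the design gives the linear Gaussian model \(y=\Phi_X f+\varepsilon\) with \(f\sim\mathcal N(0,V)\). The Bayes rule is the posterior mean, which is jointly measurable in \((X,y)\), and
\[
  \mathfrak B_n(\pi_{n,\delta}^{G})
  =\E_X\Tr\Bigl(\bigl(V^{-1}+\sigma^{-2}\Phi_X^{\mathsf T}\Phi_X\bigr)^{-1}\Bigr).
\]
Here \(V\) is positive definite, since coordinates with \(v_{j,n}=0\) were removed.

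\emph{Step 2: Jensen.} The map \(M\mapsto\Tr((V^{-1}+M)^{-1})\) is convex on positive semidefinite matrices, because matrix inversion is operator convex on positive definite matrices and trace is linear and monotone. Orthonormality of the \(e_j\) in \(L^2(\mu)\) gives \(\E_X\Phi_X^{\mathsf T}\Phi_X=nI\). Jensen's inequality in finite dimensions therefore yields
\[
  \mathfrak B_n(\pi_{n,\delta}^{G})
  \ge\Tr\bigl((V^{-1}+\epsilon_n^{-2}I)^{-1}\bigr)
  =\sum_{j\in J_{n,\delta}}\frac{v_{j,n}\epsilon_n^2}{v_{j,n}+\epsilon_n^2}
  =\epsilon_n^2\sum_{j\in J_{n,\delta}}(1-x_{j,n}),
\]
where the last equality is \cref{eq:pinsker-prior-shrinkage}. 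The right-hand side is the sequence-model Bayes risk.

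\emph{Step 3: spectral sum.} Apply \Cref{lem:pinsker-spectral-sum} with \(G(u)=(1-u^{-r})\mathbf 1_{\{u>1\}}\). This function is absolutely continuous with \(G(0)=0\), and \(G'(u)=ru^{-r-1}\mathbf 1_{\{u>1\}}\) satisfies the integrability conditions because \(r>0\) and \(\gamma<1\). Moreover \(u^{-\gamma}G(u)\to0\) at both ends. The lemma gives
\[
  \sum_{j\in J_{n,\delta}}(1-x_{j,n})\sim D\lambda_{n,\delta}^{-\gamma}\,\gamma\int_1^\infty(1-u^{-r})u^{-\gamma-1}\dd u
  =D\lambda_{n,\delta}^{-\gamma}\,\frac{\rho}{\rho+2}.
\]
Since \(\lambda_{n,\delta}^{-\gamma}=(1-\delta)^{1/(\rho+1)}(\lambda_n^*)^{-\gamma}\), it remains to compare with \(C_{\mathsf{Pin}}n^{-\rho/(\rho+1)}\). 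From \cref{eq:pinsker-oracle-objective}, \(F_n(\lambda_n^*)\) equals \((\rho+1)/\rho\) times its variance part, so
\[
  F_n(\lambda_n^*)=\frac{\rho+1}{\rho}\,\epsilon_n^2DK_{\mathsf V}(\lambda_n^*)^{-\gamma}=\epsilon_n^2D\,\frac{\rho}{\rho+2}\,(\lambda_n^*)^{-\gamma}.
\]
Combining these gives exactly \cref{eq:pinsker-gaussian-bayes-lower}.

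The main obstacle is Step 1 with arbitrary jointly measurable rules valued in \(\mathcal G\). Here I would justify carefully that projecting onto \(\mathcal A_n\) is measurable and that the conditional Bayes risk equals the posterior-covariance trace for almost every design. The latter needs Gaussian noise independent of \(X\), as in \Cref{ass:gaussian-pinsker-spectrum}. The Jensen step itself is routine because the active dimension \(m_{n,\delta}\) is finite.
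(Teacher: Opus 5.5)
Your proposal is correct and follows essentially the same argument as the paper: conditional Gaussian conjugacy gives \(\mathfrak{B}_n(\pi_{n,\delta}^{G})=\E_X\Tr(C_X)\), operator convexity of the inverse together with \(\E_X G_X=I\) bounds this below by the sequence Bayes risk \(\epsilon_n^2\sum_{j\in J_{n,\delta}}(1-x_{j,n})\), and \Cref{lem:pinsker-spectral-sum} with \(G_{\mathsf{B}}\) evaluates that sum. The only difference is in the bookkeeping of the constant. You use the fact that \(F_n(\lambda_n^*)\) equals \(\frac{\rho+1}{\rho}\) times its variance part, together with \(\lambda_{n,\delta}^{-\gamma}=(1-\delta)^{1/(\rho+1)}(\lambda_n^*)^{-\gamma}\). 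The paper instead uses the threshold identity for \(\lambda_{n,\delta}^{s+\gamma}\) and the relations \(K_{\mathsf{V}}=\rho K_{\mathsf{E}}\) and \(K_{\mathsf{B}}=(\rho+1)K_{\mathsf{E}}\).
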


\begin{proof}
Given \(X=(x_1,\dots,x_n)\), let
\[
  \Phi_X
  =
  \bigl(e_j(x_i)\bigr)_{
    1\le i\le n,\,
    j\in J_{n,\delta}
  },
  \qquad
  G_X
  =
  \frac{1}{n}\Phi_X^\top \Phi_X,
\]
and let
\[
  V_n
  =
  \operatorname{diag}
  (v_{j,n}:j\in J_{n,\delta}).
\]
Conditional on the design, the experiment on the active subspace is the
Gaussian linear model
\[
  y=\Phi_X \theta+\varepsilon,
  \qquad
  \theta=(f_j^*:j\in J_{n,\delta}),
  \qquad
  \varepsilon\sim\mathcal{N}(0,\sigma^2 I_n).
\]
Since
\[
  \sigma^{-2} \Phi_X^\top \Phi_X
  =
  \epsilon_n^{-2} G_X,
\]
Gaussian conjugacy gives
\[
  \theta\mid X,y
  \sim
  \mathcal{N}\xk{
    C_X \sigma^{-2} \Phi_X^\top y,
    C_X
  },
\]
where
\[
  C_X
  =
  \xk{
    V_n^{-1}
    +
    \epsilon_n^{-2} G_X
  }^{-1},
\]
and the posterior mean is jointly measurable in \((X,y)\).
Because the active \(e_j\)'s are orthonormal in \(L^2(\mu)\), prediction
loss on this subspace is Euclidean coefficient loss.
The posterior mean identity for squared loss therefore gives
\[
  \mathfrak{B}_n(\pi_{n,\delta}^{G})
  =
  \E_X \Tr(C_X).
\]

The Bayes risk in the corresponding orthogonal sequence experiment is
\begin{align}
  B_{n,\delta}^{\mathsf{seq}}
  &=
  \Tr
  \xk{
    V_n^{-1}
    +
    \epsilon_n^{-2} I
  }^{-1}
  \notag
  \\
  &=
  \epsilon_n^2
  \sum_{j\in J_{n,\delta}}
  (1-x_{j,n}).
  \label{eq:pinsker-sequence-bayes-risk}
\end{align}
The last equality follows coordinatewise from
\[
  \xk{
    v_{j,n}^{-1}+\epsilon_n^{-2}
  }^{-1}
  =
  \frac{
    v_{j,n} \epsilon_n^2
  }{
    v_{j,n}+\epsilon_n^2
  }
  =
  \epsilon_n^2(1-x_{j,n}).
\]

For
\[
  G_{\mathsf{B}}(u)
  =
  (1-u^{-r})\mathbf{1}_{\{u>1\}},
\]
\Cref{lem:pinsker-spectral-sum} gives
\begin{align}
  \sum_{j\in J_{n,\delta}}
  (1-x_{j,n})
  &\sim
  D K_{\mathsf{B}} \lambda_{n,\delta}^{-\gamma},
  \label{eq:pinsker-bayes-sum}
  \\
  K_{\mathsf{B}}
  &=
  \gamma
  \int_1^\infty
  (1-u^{-r})u^{-\gamma-1} \dd u
  =
  1-\frac{\gamma}{\gamma+r}
  \notag
  \\
  &=
  \frac{\rho}{\rho+2}.
  \notag
\end{align}
The constants governing the variance in the upper bound, the prior energy, and
the Bayes risk satisfy
\begin{equation}
  \label{eq:pinsker-constant-relations}
  K_{\mathsf{V}}
  =
  \rho K_{\mathsf{E}},
  \qquad
  K_{\mathsf{B}}
  =
  K_{\mathsf{V}}+K_{\mathsf{E}}
  =
  (\rho+1)K_{\mathsf{E}}.
\end{equation}
Combining
\cref{eq:pinsker-bayes-sum,eq:pinsker-shaved-threshold} gives
\begin{align*}
  B_{n,\delta}^{\mathsf{seq}}
  &\sim
  \epsilon_n^2 D
  K_{\mathsf{B}} \lambda_{n,\delta}^{-\gamma}
  \\
  &=
  (\epsilon_n^2 D)^{\rho/(\rho+1)}
  \xk{(1-\delta)R^2}^{1/(\rho+1)}
  K_{\mathsf{B}}
  K_{\mathsf{E}}^{-1/(\rho+1)}
  \xk{1+o(1)}.
\end{align*}
Using
\cref{eq:pinsker-constant-relations,eq:pinsker-upper-constant}
reduces the preceding asymptotic expression to
\begin{equation}
  \label{eq:pinsker-sequence-bayes-constant}
  B_{n,\delta}^{\mathsf{seq}}
  \sim
  C_{\mathsf{Pin}}
  (1-\delta)^{1/(\rho+1)}
  n^{-\rho/(\rho+1)}.
\end{equation}

Because the active eigenfunctions are orthonormal in \(L^2(\mu)\),
\[
  \E_X G_X=I.
\]
The inverse map is operator convex on the positive-definite cone.
Jensen's inequality therefore gives
\begin{align*}
  \mathfrak{B}_n(\pi_{n,\delta}^{G})
  &=
  \E_X \Tr
  \xk{
    V_n^{-1}+\epsilon_n^{-2} G_X
  }^{-1}
  \\
  &\ge
  \Tr
  \xk{
    V_n^{-1}+\epsilon_n^{-2} \E_X G_X
  }^{-1}
  \\
  &=
  B_{n,\delta}^{\mathsf{seq}}.
\end{align*}
Combining this exact comparison with
\cref{eq:pinsker-sequence-bayes-constant} proves
\cref{eq:pinsker-gaussian-bayes-lower}.
\end{proof}

\begin{proof}[Proof of \Cref{thm:pinsker-minimax}]
The prior \(\pi_{n,\delta}^{R}\) is supported on
\(\mathcal{F}_s(R)\).
For every \(\widehat{g}_n \in\mathfrak{D}_n\),
\[
  \sup_{f\in\mathcal{F}_s(R)}
  \mathcal{R}_n(\widehat{g}_n,f)
  \ge
  \int
  \mathcal{R}_n(\widehat{g}_n,f)
  \pi_{n,\delta}^{R}(\dd f).
\]
Taking the infimum over the same decision class gives
\begin{equation}
  \label{eq:pinsker-bayes-minimax}
  \mathfrak{R}_n \zk{\mathcal{F}_s(R)}
  \ge
  \mathfrak{B}_n(\pi_{n,\delta}^{R}).
\end{equation}
Thus
\Cref{lem:pinsker-prior-shaving,lem:pinsker-gaussian-bayes-risk} give, for
each fixed \(\delta\in(0,1)\),
\[
  \mathfrak{R}_n \zk{\mathcal{F}_s(R)}
  \ge
  \xk{
    C_{\mathsf{Pin}}
    (1-\delta)^{1/(\rho+1)}
    -
    o(1)
  }
  n^{-\rho/(\rho+1)}.
\]
Letting first \(n\to\infty\) and then \(\delta\downarrow0\) yields
\[
  \liminf_{n\to\infty}
  n^{\rho/(\rho+1)}
  \mathfrak{R}_n \zk{\mathcal{F}_s(R)}
  \ge C_{\mathsf{Pin}}.
\]

Conversely, the prediction rule
\(S\widehat{f}_{\lambda_n^*,s}^{\mathsf{Pin}}\)
is admissible in \(\mathfrak{D}_n\), so
\Cref{cor:pinsker-upper} gives
\[
  \mathfrak{R}_n \zk{\mathcal{F}_s(R)}
  \le
  \sup_{f\in\mathcal{F}_s(R)}
  \mathcal{E}_n^{\mathsf{np}}
  (q_{\lambda_n^*,s}^{\mathsf{Pin}};f)
  =
  \xk{C_{\mathsf{Pin}}+o(1)}
  n^{-\rho/(\rho+1)}.
\]
This proves \cref{eq:pinsker-minimax}.
Both deterministic quantities in \cref{eq:pinsker-minimax-attainment} are
positive and have leading term
\(C_{\mathsf{Pin}}n^{-\rho/(\rho+1)}\), so their ratio converges to one.
\end{proof}

\subsection{High-dimensional rates and sharp constants}
\label{subsec:high-dimensional-pinsker-proof}

This subsection proves \Cref{cor:high-dimensional-pinsker,cor:high-dimensional-krr-rate} in the spherical model of \Cref{subsec:high-dimensional-optimality}.
We use the notation \(p,\delta,A_j,B_p\) from that subsection and the spherical spectral estimates in \cref{eq:high-dimensional-spectral-asymptotics}.

\subsubsection{Sharp constants for the smoothed cutoff}

\begin{proof}[Proof of \Cref{cor:high-dimensional-pinsker}]
  We first verify the filter condition.
  The dimensionless profiles are
  \[
    \phi^\sharp(u)=\frac{\chi_w(u)}{u},
    \qquad \psi^\sharp(u)=1-\chi_w(u),
  \]
  where \(\phi^\sharp=0\) near zero.
  On \([0,1/2]\) the profiles are constant; on \([1/2,2]\) they and their derivatives are bounded; on \([2,\infty)\) they equal \(u^{-1}\) and zero, respectively.
  Thus \Cref{cond:smooth-filter} holds with \(\rho_\psi=1\), with constants depending on the fixed profile but not on \(d\).

  Since \(\lambda_d=\mu_{d,p}\asymp d^{-p}\) and \(q_{\lambda_d}(\mu_{d,p})=w>0\), the general assertion in \Cref{prop:high-dimensional-scale-verification} verifies all three scale conditions.
  Indeed, \(p\ge1\) and \(\gamma=p(s+1)+\delta>p\).
  We may therefore apply \Cref{thm:risk-equivalence-expectation} uniformly over \(\mathcal F_{s,d}(R)\).

  The spectral gaps in \cref{eq:high-dimensional-spectral-asymptotics} imply, for all sufficiently large \(d\),
  \[
    q_{\lambda_d}(\mu_{d,k})=
    \begin{cases}
      1,&k<p,\\
      w,&k=p,\\
      0,&k>p.
    \end{cases}
  \]
  The variance is independent of the target, and the supremum of the squared bias over the source ball is \(R^2\max_{k\ge0}\mu_{d,k}^s\psi_{\lambda_d}(\mu_{d,k})^2\).
  Hence the worst-case sequence risk is exactly
  \begin{equation}
    \label{eq:high-dimensional-block-risk}
    \frac{\sigma^2}{n}
      \left(\sum_{k=0}^{p-1}D_{d,k}+w^2D_{d,p}\right)
    +R^2\max\left\{(1-w)^2\mu_{d,p}^s,\mu_{d,p+1}^s\right\}.
  \end{equation}

  If \(\delta>0\), then \(w=1\), so \cref{eq:high-dimensional-block-risk} is
  \[
    (1+o(1))\left(B_p d^{p-\gamma}+A_{p+1}d^{-(p+1)s}\right).
  \]
  Comparing the two exponents gives the three stated constants for \(0<\delta<s\), \(\delta=s\), and \(s<\delta<s+1\).
  If \(\delta=0\), then \(\gamma=p(s+1)\) and \(0<w<1\).
  The lower-degree variance and the bias from degrees above \(p\) are negligible, leaving
  \[
    \left(B_pw^2+A_p(1-w)^2+o(1)\right)d^{-ps}.
  \]
  The choice \(w=A_p/(A_p+B_p)\) minimizes this quadratic and gives \(A_pB_p/(A_p+B_p)\).
  Uniform expectation risk equivalence transfers each of these sequence-risk equivalents to the empirical spectral estimator.

  For the matching lower bound, apply \citet[Theorem 3.1 and Appendix D]{lu2024_PinskerBound}.
  Their source ball has squared radius denoted by \(R\), so their radius parameter is \(R^2\) in our notation, and their sample-size constant is our \(\eta\).
  Their lower-bound argument restricts to independent Gaussian noise with variance \(\sigma^2\), so it applies to the noise model here.
  With these substitutions, their lower bound is
  \[
    \mathfrak R_{n,d}(\mathcal F_{s,d}(R))
    \ge \bigl(C_{s,\gamma}-o(1)\bigr)d^{-\zeta}.
  \]
  The empirical spectral estimator is an admissible prediction rule and supplies the matching upper bound just proved.
  Since \(C_{s,\gamma}>0\), this also gives the ratio statement in \cref{eq:high-dimensional-pinsker-attainment}.
\end{proof}

\subsubsection{Optimal rates for kernel ridge regression}

\begin{proof}[Proof of \Cref{cor:high-dimensional-krr-rate}]
  Put \(h=\tfrac12\min\{\delta,s\}\), so \(a=p+h\) and \(0\le h\le1/2\).
  For KRR, the effective-dimension estimates in \citet[Lemma 23]{zhang2025_OptimalRates} and the spherical addition formula yield
  \[
    \mathcal L_\lambda=\mathcal N_1(\lambda)\asymp d^a,
    \qquad
    \mathcal N_q(\lambda)
    \asymp d^p+d^{2a-p-1}
    \asymp d^p.
  \]
  To bound the population bias, split the spectrum after degree \(p\).
  On the first \(p+1\) subspaces, \(\mu_{d,k}\gtrsim d^{-p}\), and \(s-2<0\), so
  \[
    \mu_{d,k}^s\left(\frac{\lambda}{\mu_{d,k}+\lambda}\right)^2
    \le\lambda^2\mu_{d,k}^{s-2}
    \lesssim d^{-ps-2h}.
  \]
  On the remaining subspaces, the residual is at most one and the eigenvalues are \(O(d^{-(p+1)})\), so the squared bias is bounded by \(CR^2d^{-(p+1)s}\).
  Combining the bias and variance bounds gives
  \[
    \sup_{f\in\mathcal F_{s,d}(R)}
      \mathcal E_n^{\mathsf{seq}}(q_\lambda^{\mathsf{KR}};f)
    \lesssim d^{p-\gamma}+d^{-ps-2h}+d^{-(p+1)s}
    \lesssim d^{-\zeta},
  \]
  because \(ps+2h=ps+\min\{\delta,s\}=\zeta\).
  KRR satisfies \Cref{ass:lipschitz-filter} with \(\alpha_\psi=1\).
  Since \(0<s\le1\), the uniform source bound gives
  \(\mathcal S_f(\lambda)\lesssim\lambda^s\).
  The conditional scale expressions are therefore bounded by
  \[
    \frac{\mathcal N_1}{\mathcal N_q}
      \sqrt{\frac{\mathcal L_\lambda\ell_\lambda}{n}}
    \lesssim d^{(3h-ps-\delta)/2}(\log d)^{1/2}
  \]
  and
  \[
    \sup_{f\in\mathcal F_{s,d}(R)}
      \frac{n\mathcal S_f(\lambda)\gamma_\lambda^2}{\mathcal N_q}
    \lesssim d^{-ps+h(1-s)}(\log d)^3.
  \]
  Here \(3h-\delta\le s/2\), so the first exponent is at most \(-s/4<0\), since \(p\ge1\).
  Also \(-ps+h(1-s)\le-ps+s(1-s)/2<0\).
  Finally, \(\gamma-a=ps+\delta-h>0\), and the expectation scale expression is \(O(d^{a-\gamma}\log d)=o(1)\).
  Thus uniform expectation risk equivalence applies at the stated KRR scale.

  Risk equivalence yields the same upper bound for KRR.
  The matching minimax lower bound used in \Cref{cor:high-dimensional-pinsker} applies to every estimator, including KRR, and completes the proof.
\end{proof}
\clearpage{}


\addcontentsline{toc}{section}{References}


\begin{thebibliography}{27}
\providecommand{\natexlab}[1]{#1}
\providecommand{\url}[1]{\texttt{#1}}
\expandafter\ifx\csname urlstyle\endcsname\relax
  \providecommand{\doi}[1]{doi: #1}\else
  \providecommand{\doi}{doi: \begingroup \urlstyle{rm}\Url}\fi

\bibitem[Johnstone(2017)]{johnstone2017_GaussianEstimation}
Iain~M. Johnstone.
\newblock Gaussian estimation: Sequence and wavelet models.
\newblock Unpublished manuscript, 2017.

\bibitem[Li et~al.(2024)Li, Gan, Shi, and Lin]{li2024_GeneralizationError}
Yicheng Li, Weiye Gan, Zuoqiang Shi, and Qian Lin.
\newblock Generalization error curves for analytic spectral algorithms under
  power-law decay, January 2024.
\newblock URL \url{https://doi.org/10.2139/ssrn.4633576}.
\newblock arXiv: 2401.01599.

\bibitem[Conde-Alonso et~al.(2023)Conde-Alonso, Gonz{\'a}lez-P{\'e}rez, Parcet,
  and Tablate]{condeAlonso2023_SchurMultipliersSchatten}
Jos{\'e}~M. Conde-Alonso, Adri{\'a}n~M. Gonz{\'a}lez-P{\'e}rez, Javier Parcet,
  and Eduardo Tablate.
\newblock Schur multipliers in {Schatten}--von {Neumann} classes.
\newblock \emph{Annals of Mathematics}, 198\penalty0 (3):\penalty0 1229--1260,
  October 2023.
\newblock \doi{10.4007/annals.2023.198.3.5}.
\newblock URL \url{https://doi.org/10.4007/annals.2023.198.3.5}.

\bibitem[Lo~Gerfo et~al.(2008)Lo~Gerfo, Rosasco, Odone, De~Vito, and
  Verri]{gerfo2008_SpectralAlgorithms}
L.~Lo~Gerfo, Lorenzo Rosasco, Francesca Odone, Ernesto De~Vito, and Alessandro
  Verri.
\newblock Spectral algorithms for supervised learning.
\newblock \emph{Neural Computation}, 20\penalty0 (7):\penalty0 1873--1897,
  2008.
\newblock \doi{10.1162/neco.2008.05-07-517}.
\newblock URL
  \url{https://www.semanticscholar.org/paper/cb8c9ff361a33445cfba85fbb48dc39315bd166c}.

\bibitem[Bauer et~al.(2007)Bauer, Pereverzev, and
  Rosasco]{bauer2007_RegularizationAlgorithms}
Frank Bauer, Sergei Pereverzev, and Lorenzo Rosasco.
\newblock On regularization algorithms in learning theory.
\newblock \emph{Journal of Complexity}, 23\penalty0 (1):\penalty0 52--72, 2007.
\newblock \doi{10.1016/j.jco.2006.07.001}.
\newblock URL \url{https://doi.org/10.1016/j.jco.2006.07.001}.

\bibitem[Caponnetto and De~Vito(2007)]{caponnetto2007_OptimalRates}
Andrea Caponnetto and Ernesto De~Vito.
\newblock Optimal rates for the regularized least-squares algorithm.
\newblock \emph{Foundations of Computational Mathematics}, 7\penalty0
  (3):\penalty0 331--368, 2007.
\newblock \doi{10.1007/s10208-006-0196-8}.
\newblock URL \url{https://doi.org/10.1007/s10208-006-0196-8}.

\bibitem[Blanchard and M{\"u}cke(2018)]{blanchard2018_OptimalRates}
Gilles Blanchard and Nicole M{\"u}cke.
\newblock Optimal rates for regularization of statistical inverse learning
  problems.
\newblock \emph{Foundations of Computational Mathematics}, 18\penalty0
  (4):\penalty0 971--1013, 2018.
\newblock \doi{10.1007/s10208-017-9359-7}.
\newblock URL \url{https://doi.org/10.1007/s10208-017-9359-7}.

\bibitem[Brown and Low(1996)]{brown1996_AsymptoticEquivalence}
Lawrence~D. Brown and Mark~G. Low.
\newblock Asymptotic equivalence of nonparametric regression and white noise.
\newblock \emph{The Annals of Statistics}, 24\penalty0 (6):\penalty0
  2384--2398, 1996.
\newblock \doi{10.1214/aos/1032181159}.
\newblock URL \url{https://doi.org/10.1214/aos/1032181159}.

\bibitem[Brown et~al.(2002)Brown, Cai, Low, and
  Zhang]{brown2002_AsymptoticEquivalence}
Lawrence~D. Brown, T.~Tony Cai, Mark~G. Low, and Cun-Hui Zhang.
\newblock Asymptotic equivalence theory for nonparametric regression with
  random design.
\newblock \emph{The Annals of Statistics}, 30\penalty0 (3):\penalty0 688--707,
  2002.
\newblock \doi{10.1214/aos/1028674838}.
\newblock URL \url{https://doi.org/10.1214/aos/1028674838}.

\bibitem[Rei{\ss}(2008)]{reiss2008_AsymptoticEquivalence}
Markus Rei{\ss}.
\newblock Asymptotic equivalence for nonparametric regression with multivariate
  and random design.
\newblock \emph{The Annals of Statistics}, 36\penalty0 (4):\penalty0
  1957--1982, 2008.
\newblock \doi{10.1214/07-AOS525}.
\newblock URL \url{https://doi.org/10.1214/07-AOS525}.

\bibitem[Sollich and Halees(2002)]{sollich2002_LearningCurves}
Peter Sollich and Anason Halees.
\newblock Learning curves for gaussian process regression: Approximations and
  bounds.
\newblock \emph{Neural Computation}, 14\penalty0 (6):\penalty0 1393--1428,
  2002.
\newblock \doi{10.1162/089976602753712990}.
\newblock URL \url{https://doi.org/10.1162/089976602753712990}.

\bibitem[Bordelon et~al.(2020)Bordelon, Canatar, and
  Pehlevan]{bordelon2020_SpectrumDependent}
Blake Bordelon, Abdulkadir Canatar, and Cengiz Pehlevan.
\newblock Spectrum dependent learning curves in kernel regression and wide
  neural networks.
\newblock In \emph{Proceedings of the 37th International Conference on Machine
  Learning}, pages 1024--1034. PMLR, 2020.
\newblock URL \url{https://proceedings.mlr.press/v119/bordelon20a.html}.

\bibitem[Cheng et~al.(2024)Cheng, Lucchi, Kratsios, and
  Belius]{cheng2024_ComprehensiveAnalysis}
Tin~Sum Cheng, Aurelien Lucchi, Anastasis Kratsios, and David Belius.
\newblock A comprehensive analysis on the learning curve in kernel ridge
  regression.
\newblock In \emph{Advances in Neural Information Processing Systems 37}.
  Neural Information Processing Systems Foundation, 2024.
\newblock \doi{10.52202/079017-0778}.
\newblock URL \url{https://doi.org/10.52202/079017-0778}.

\bibitem[Li et~al.(2023)Li, Zhang, and Lin]{li2023_AsymptoticLearning}
Yicheng Li, Haobo Zhang, and Qian Lin.
\newblock On the asymptotic learning curves of kernel ridge regression under
  power-law decay.
\newblock In \emph{37th Conference on Neural Information Processing Systems
  (NeurIPS 2023)}, 2023.
\newblock URL \url{https://openreview.net/forum?id=E4P5kVSKlT}.

\bibitem[Velikanov et~al.(2024)Velikanov, Panov, and
  Yarotsky]{velikanov2024_GeneralizationError}
Maksim Velikanov, Maxim Panov, and Dmitry Yarotsky.
\newblock Generalization error of spectral algorithms, March 2024.
\newblock URL \url{https://arxiv.org/abs/2403.11696}.

\bibitem[Zhang et~al.(2024)Zhang, Li, and
  Lin]{zhang2024_OptimalityMisspecified}
Haobo Zhang, Yicheng Li, and Qian Lin.
\newblock On the optimality of misspecified spectral algorithms.
\newblock \emph{Journal of Machine Learning Research}, 25\penalty0
  (188):\penalty0 1--50, 2024.
\newblock URL \url{https://www.jmlr.org/papers/v25/23-0383.html}.

\bibitem[Lu et~al.(2024{\natexlab{a}})Lu, Zhang, Li, and
  Lin]{lu2024_SaturationEffects}
Weihao Lu, Haobo Zhang, Yicheng Li, and Qian Lin.
\newblock On the saturation effects of spectral algorithms in large dimensions.
\newblock In \emph{38th Conference on Neural Information Processing Systems
  (NeurIPS 2024)}, September 2024{\natexlab{a}}.
\newblock \doi{10.52202/079017-0225}.
\newblock URL \url{https://openreview.net/forum?id=kJzecLYsRi}.

\bibitem[Birman and Solomyak(2003)]{birman2003_DoubleOperator}
Mikhail~Sh. Birman and Michael Solomyak.
\newblock Double operator integrals in a {Hilbert} space.
\newblock \emph{Integral Equations and Operator Theory}, 47\penalty0
  (2):\penalty0 131--168, October 2003.
\newblock \doi{10.1007/s00020-003-1157-8}.
\newblock URL \url{https://doi.org/10.1007/s00020-003-1157-8}.

\bibitem[Peller(2016)]{peller2016_MultipleOperator}
V.~V. Peller.
\newblock Multiple operator integrals in perturbation theory.
\newblock \emph{Bulletin of Mathematical Sciences}, 6\penalty0 (1):\penalty0
  15--88, April 2016.
\newblock \doi{10.1007/s13373-015-0073-y}.
\newblock URL \url{http://link.springer.com/10.1007/s13373-015-0073-y}.

\bibitem[Aleksandrov and Peller(2016)]{aleksandrov2016_OperatorLipschitz}
Alexei Aleksandrov and Vladimir Peller.
\newblock Operator {Lipschitz} functions ({English} translation), November
  2016.
\newblock URL \url{http://arxiv.org/abs/1611.01593}.
\newblock arXiv: 1611.01593.

\bibitem[Potapov and Sukochev(2011)]{potapov2011_OperatorLipschitzFunctions}
Denis Potapov and Fedor Sukochev.
\newblock Operator-{Lipschitz} functions in {Schatten}--von {Neumann} classes.
\newblock \emph{Acta Mathematica}, 207\penalty0 (2):\penalty0 375--389, 2011.
\newblock \doi{10.1007/s11511-012-0072-8}.
\newblock URL \url{http://projecteuclid.org/euclid.acta/1485892583}.

\bibitem[Caspers et~al.(2014)Caspers, Montgomery-Smith, Potapov, and
  Sukochev]{caspers2014_BestConstants}
M.~Caspers, S.~Montgomery-Smith, D.~Potapov, and F.~Sukochev.
\newblock The best constants for operator {Lipschitz} functions on {Schatten}
  classes.
\newblock \emph{Journal of Functional Analysis}, 267\penalty0 (10):\penalty0
  3557--3579, November 2014.
\newblock \doi{10.1016/j.jfa.2014.08.018}.
\newblock URL
  \url{https://linkinghub.elsevier.com/retrieve/pii/S0022123614003450}.

\bibitem[Zhang et~al.(2025)Zhang, Li, Lu, and Lin]{zhang2025_OptimalRates}
Haobo Zhang, Yicheng Li, Weihao Lu, and Qian Lin.
\newblock Optimal rates of kernel ridge regression under source condition in
  large dimensions.
\newblock \emph{Journal of Machine Learning Research}, 26\penalty0
  (219):\penalty0 1--63, 2025.
\newblock ISSN 1533-7928.
\newblock URL \url{http://jmlr.org/papers/v26/23-1679.html}.

\bibitem[Pinsker(1980)]{pinsker1980_OptimalFiltering}
M.~S. Pinsker.
\newblock Optimal filtering of square-integrable signals in gaussian noise.
\newblock \emph{Problems of Information Transmission}, 16\penalty0
  (2):\penalty0 120--133, 1980.
\newblock URL \url{https://www.mathnet.ru/eng/ppi/v16/i2/p52}.

\bibitem[Lu et~al.(2024{\natexlab{b}})Lu, Ding, Zhang, and
  Lin]{lu2024_PinskerBound}
Weihao Lu, Jialin Ding, Haobo Zhang, and Qian Lin.
\newblock On the {Pinsker} {Bound} of {Inner} {Product} {Kernel} {Regression}
  in {Large} {Dimensions}, September 2024{\natexlab{b}}.
\newblock URL \url{https://arxiv.org/abs/2409.00915}.
\newblock arXiv: 2409.00915v2.

\bibitem[Tropp(2015)]{tropp2015_IntroductionMatrix}
Joel~A. Tropp.
\newblock An introduction to matrix concentration inequalities.
\newblock \emph{Foundations and Trends in Machine Learning}, 8\penalty0
  (1--2):\penalty0 1--230, May 2015.
\newblock \doi{10.1561/2200000048}.
\newblock URL
  \url{https://www.emerald.com/ftmal/article/8/1-2/1/1332387/An-Introduction-to-Matrix-Concentration}.

\bibitem[Zhang et~al.(2023)Zhang, Li, Lu, and
  Lin]{zhang2023_OptimalityMisspecified}
Haobo Zhang, Yicheng Li, Weihao Lu, and Qian Lin.
\newblock On the optimality of misspecified kernel ridge regression.
\newblock In \emph{40th International Conference on Machine Learning (ICML
  2023)}, 2023.
\newblock URL \url{https://openreview.net/forum?id=Kg2al3GXBR}.

\end{thebibliography}
\end{document}